\providecommand{\U}[1]{\protect\rule{.1in}{.1in}}
\newtheorem{theorem}{Theorem}[section]
\newtheorem{corollary}[theorem]{Corollary}
\newtheorem{definition}[theorem]{Definition}
\newtheorem{example}[theorem]{Example}
\newtheorem{lemma}[theorem]{Lemma}
\newtheorem{proposition}[theorem]{Proposition}
\newtheorem{remark}[theorem]{Remark}
\newenvironment{proof}[1][Proof]{\noindent \emph{#1.} }{\hfill \ \rule{0.5em}{0.5em}}
\newcommand{\BIGOP}[1]{\mathop{\mathchoice{\raise-0.22em\hbox{\huge $#1$}} {\raise-0.05em\hbox{\Large $#1$}}{\hbox{\large $#1$}}{#1}}}
\makeatletter\@addtoreset{equation}{section}\makeatother
\makeatletter\@addtoreset{figure}{section}\makeatother
\makeatletter\@addtoreset{table}{section}\makeatother
\begin{document}

\title{Geometric Structures in Tensor Representations}
\author{Antonio Falc\'{o}\footnote{Corresponding author}$^{\, \, \,,1,3},$ Wolfgang Hackbusch$^{2}$ and Anthony Nouy$%
^{3}$ \\
$^{1}$ Departamento de Ciencias F\'{\i}sicas, Matem\'aticas y de la
Computaci\'on\\
Universidad CEU Cardenal Herrera \\
San Bartolom\'e 55 \\
46115 Alfara del Patriarca (Valencia), Spain\\
e-mail: \texttt{afalco@uch.ceu.es}\\
$^{2}$ Max-Planck-Institut \emph{Mathematik in den Naturwissenschaften}\\
Inselstr. 22, D-04103 Leipzig, Germany \\
e-mail: \texttt{{wh@mis.mpg.de} }\\
$^{3}$ Ecole Centrale Nantes, \\
GeM UMR CNRS 6183, LUNAM Universit\'e\\
1 rue de la No\"e, BP 92101,\\
44321 Nantes Cedex 3, France.\\
e-mail: \texttt{anthony.nouy@ec-nantes.fr}}
\date{}
\maketitle

\begin{abstract}
The main goal of this paper is to study the geometric structures associated
with the representation of tensors in subspace based formats. To do this we
use a property of the so-called minimal subspaces which allows us to
describe the tensor representation by means of a rooted tree. By using the
tree structure and the dimensions of the associated minimal subspaces, we
introduce, in the underlying algebraic tensor space, the set of tensors in a
tree-based format with either bounded or fixed tree-based rank. This class
contains the Tucker format and the Hierarchical Tucker format (including the
Tensor Train format). In particular, we show that the set of tensors in the
tree-based format with bounded (respectively, fixed) tree-based rank of an
algebraic tensor product of normed vector spaces is an analytic Banach
manifold. Indeed, the manifold geometry for the set of tensors with fixed
tree-based rank is induced by a fibre bundle structure and the manifold
geometry for the set of tensors with bounded tree-based rank is given by a
finite union of connected components where each of them is a manifold of
tensors in the tree-based format with a fixed tree-based rank. The local
chart representation of these manifolds is often crucial for an algorithmic
treatment of high-dimensional PDEs and minimization problems.
In order to describe the relationship between these manifolds and the
natural ambient space, we introduce the definition of topological tensor
spaces in the tree-based format. We prove under natural conditions that any
tensor of the topological tensor space under consideration admits best
approximations in the manifold of tensors in the tree-based format with
bounded tree-based rank. In this framework, we also show that the tangent
(Banach) space at a given tensor is a complemented subspace in the natural
ambient tensor Banach space and hence the set of tensors in the tree-based
format with bounded (respectively, fixed) tree-based rank is an immersed
submanifold. This fact allows us to extend the Dirac-Frenkel variational
principle in the bodywork of topological tensor spaces.
\end{abstract}

\tableofcontents

\noindent\emph{2010 AMS Subject Classifications:} 15A69, 46B28, 46A32.

\noindent \emph{Key words:} Tensor spaces, Banach manifolds, Tensor formats, Tensor rank.

\section{Introduction}


Tensor approximation methods play a central role in the numerical solution of high dimensional problems arising in a wide range of applications. 
Low-rank tensor formats based on subspaces are widely used for complexity reduction in the representation of high-order tensors.
The construction of these formats are usually based on a hierarchy of tensor product
subspaces spanned by orthonormal bases, because in most cases a hierarchical
representation fits with the structure of the mathematical model and facilitates
its computational implementation. Two of the most popular formats are the
Tucker format and the Hierarchical Tucker format \cite{HaKuehn2009} (HT for
short). It is possible to show that the Tensor Train format \cite{Osedelets1}
(TT for short), introduced originally by Vidal \cite{Vidal}, is a particular
case of the HT format (see e.g. Chapter 12 in \cite{Hackbusch}). An
important feature of these formats, in the framework of topological tensor
spaces, is the existence of a best approximation in each fixed set of
tensors with bounded rank \cite{FALHACK}. In particular, it allows to
construct, on a theoretical level, iterative minimisation methods for
nonlinear convex problems over reflexive tensor Banach spaces \cite%
{FalcoNouy}.

Tucker tensors of fixed rank are also used for the discretisation of differential equations arising in
quantum chemical problems or in the multireference Hartree and Hartree-Fock
methods (MR-HF) in quantum dynamics \cite{Lubish}. In particular, for finite dimensional ambient tensor spaces, 
it can be shown that the set of Tucker tensors of fixed rank forms an immersed
finite-dimensional quotient manifold \cite{KoLu1}. A similar approach in a
complex Hilbert space setting for Tucker tensors of fixed rank is given in 
\cite{BCMT}. Then the numerical treatment of this class of problems follows
the general concepts of differential equations on manifolds \cite{HLW}.
Recently, similar results have been obtained for the TT format \cite{HRS}
and the HT format \cite{Uschmajew2012} (see also \cite{AJ}). The term
"matrix-product state" (MPS) was introduced in quantum physics (see, e.g., 
\cite{VC}). The related tensor representation can be found already in \cite%
{Vidal} without a special naming of the representation. The method has been
reinvented by Oseledets and Tyrtyshnikov (see \cite{Osedelets}, \cite%
{Osedelets1}, and \cite{OT}) and called "TT decomposition". For matrix
product states (MPS), the differential geometry in a finite-dimensional
complex Hilbert space setting is covered in \cite{HMOV}.

As we will show below, the Tucker and the HT formats are completely
characterised by a rooted tree together with a finite sequence of natural
numbers associated to each node on the tree, denominated the tree-based
rank. Each number in the tree-based rank is associated with a class of
subspaces of fixed dimension. Moreover, it can be shown that for a given
tree, every element in the tensor space possesses a unique  tree-based rank.
In consequence, given a tree, a tensor space is a union of sets indexed by
the tree-based ranks. It allows to consider for a given tree two kinds of
sets in a tensor space: the set of tensors of fixed tree-based rank and the
set of tensors of bounded tree-based rank. Two commonly accepted facts are the following.
\begin{enumerate}
\item[(a)] Even if it can be shown in finite dimension that the set of Tucker (respectively, HT)
tensors with bounded tree-based rank is closed, the existence of a manifold structure for this set is
an open question. Thus the existence of minimisers over these
sets can be shown, however, no first order optimality conditions are
available from a geometric point of view.

\item[(b)] Even if either in finite dimension or in a Hilbert space
setting it can be shown that the set of Tucker (respectively, in finite
dimensions HT) tensors with fixed tree-based rank is a quotient manifold,
the construction of an explicit parametrisation in order to provide a
manifold structure is not known.
\end{enumerate}

In our opinion, these two facts are due to the lack of a common mathematical
frame for developing a mathematical analysis of these
abstract objects. The main goal of this paper is to provide this common framework by
means of the theory for algebraic and topological tensor spaces developed
in \cite{FALHACK} by some of the authors of this article.

Our starting point are the following natural questions that arise in 
the mathematical theory of tensor spaces. The first one is: is
it possible to introduce a class of tensors containing Tucker, HT (and hence
TT) tensors with fixed and bounded rank? A second question is: if such a
class exists, is it possible to construct a parametrisation for the set of
tensors of bounded (respectively, fixed) rank in order to show that it is a
true manifold even in the infinite-dimensional case? Finally, if the answers
to the first two questions are positive, we would like to ask: is the set of tensors
of bounded (respectively, fixed) rank an immersed submanifold of the
topological tensor space, as ambient manifold, under consideration?
\\

The paper is organised as follows. 

\begin{itemize}
\item In Sect. \ref{Tensor_TBF}, we introduce the tree-based tensors as a
generalisation, at algebraic level, of the hierarchical tensor format. This
class contains the Tucker tensors (among others). Moreover, we characterise
the minimal subspaces for tree-based tensors extending the previous results
obtained in \cite{FALHACK} and introducing the definition of tree-based
rank. In particular, the main result of this section, Theorem~\ref%
{characterization_FT}, is a characterisation of the set of parameters needed
to provide an explicit geometric representation for the set of tensors with
fixed tree-based rank.

\item In Sect. \ref{sec:banach_manifold_tucker_fixed_rank}, by the help of
Theorem~\ref{characterization_FT}, we show that in an algebraic tensor
product of normed spaces the set of tensors with fixed tree-based rank 
is an analytic Banach manifold. Indeed, we give an explicit atlas
and we prove that this atlas is induced by a fibre bundle structure. This 
result allows us to deduce that the set of tensors with bounded
tree-based rank is also an analytic Banach manifold. An important
fact is that the geometric structure of these manifolds is independent on
the ambient tensor Banach space under consideration.

\item In Sect. \ref{embedded_manifold}, we discuss the choice of a norm in
the ambient tensor Banach space (a) to show the existence of a best approximation for the set of 
tensors with bounded tree-based rank and
(b) to prove that the set of tensors with fixed tree-based rank is an
immersed submanifold of that space (considered as Banach manifold).
To this end we assume the existence of a norm at each node of the tree not
weaker than the injective norm constructed from the Banach spaces associated
with the sons of that node. This assumption generalises the condition used
in \cite{FALHACK} to prove the existence of a best approximation in the
Tucker case. More precisely, under this assumption,

\begin{itemize}
\item we provide a proof of the existence of best approximation in the
manifold of tensors with bounded tree-based rank,

\item we construct a linear isomorphism, at each point in the manifold of
tensors with fixed tree-based rank, from the tangent space at that point to a closed
linear subspace of the ambient tensor Banach space, this subspace being
given explicitly,

\item we show that the set of tensors with fixed tree-based rank is an
immersed submanifold,

\item we also deduce that the set of tensors with bounded tree-based rank is
an immersed submanifold.
\end{itemize}

\item In Sect. \ref{Dirac_Frenkel}, we give a formalisation in this
framework of the multi--configuration time--dependent Hartree MCTDH method
(see \cite{Lubish}) in tensor Banach spaces.
\end{itemize}

\section{Algebraic tensors spaces in the tree-based Format}

\label{Tensor_TBF}

\subsection{Preliminary definitions and notations}

Concerning the definition of the algebraic tensor space $_{a}\bigotimes
_{j=1}^{d}V_{j}$ generated from vector spaces $V_{j}$ $\left( 1\leq j\leq
d\right) $, we refer to Greub \cite{Greub}. As underlying field we choose $%
\mathbb{R},$ but the results hold also for $\mathbb{C}$. The suffix `$a$' in 
$_{a}\bigotimes_{j=1}^{d}V_{j}$ refers to the `algebraic' nature. By
definition, all elements of 
\begin{equation*}
\mathbf{V}:=\left. _{a}\bigotimes_{j=1}^{d}V_{j}\right.
\end{equation*}
are \emph{finite} linear combinations of elementary tensors $\mathbf{v}%
=\bigotimes_{j=1}^{d}v_{j}$ $\left( v_{j}\in V_{j}\right) .$ Let $%
D:=\{1,\ldots ,d\}$ be the index set of the `spatial directions'. In the
sequel, the index sets $D\backslash \{j\}$ will appear. Here, we use the
abbreviations%
\begin{equation*}
\mathbf{V}_{[j]}:=\left. _{a}\bigotimes_{k\neq j}V_{k}\right. \text{,\qquad
where }\bigotimes_{k\neq j}\text{ means}\bigotimes_{k\in D\backslash \{j\}}.
\label{(V[j]}
\end{equation*}%
Similarly, elementary tensors $\bigotimes_{k\neq j}v_{k}$ are denoted by $%
\mathbf{v}_{[j]}$. The following notations and definitions will be useful.

\bigskip

For vector spaces $V_{j}$ and $W_{j}$ over $\mathbb{R},$ let linear mappings 
$A_{j}:V_{j}\rightarrow W_{j}$ $\left( 1\leq j\leq d\right) $ be given. Then
the definition of the elementary tensor%
\begin{equation*}
\mathbf{A}=\bigotimes_{j=1}^{d}A_{j}:\;\mathbf{V}=\left. _{a}\bigotimes
_{j=1}^{d}V_{j}\right. \longrightarrow\mathbf{W}=\left. _{a}\bigotimes
_{j=1}^{d}W_{j}\right.
\end{equation*}
is given by%
\begin{equation}
\mathbf{A}\left( \bigotimes_{j=1}^{d}v_{j}\right)
:=\bigotimes_{j=1}^{d}\left( A_{j}v_{j}\right) .
\label{(A als Tensorprodukt der Aj}
\end{equation}
Note that \eqref{(A als Tensorprodukt der Aj} uniquely defines the linear
mapping $\mathbf{A}:\mathbf{V}\rightarrow\mathbf{W}.$ We recall that $L(V,W)$
is the space of linear maps from $V$ into $W,$ while $V^{\prime}=L(V,\mathbb{%
R})$ is the algebraic dual of $V$. For metric spaces, $\mathcal{L}(V,W)$
denotes the continuous linear maps, while $V^{\ast}=\mathcal{L}(V,\mathbb{R}%
) $ is the topological dual of $V$. Often, mappings $\mathbf{A}%
=\bigotimes_{j=1}^{d}A_{j}$ will appear, where most of the $A_{j}$ are the
identity (and therefore $V_{j}=W_{j}$). If $A_{k}\in L(V_{k},W_{k})$ for one 
$k$ and $A_j=id$ for $j\neq k$, we use the following notation:%
\begin{subequations}
\label{(Notation Aj}%
\begin{equation*}
\mathbf{id}_{[k]}\otimes A_{k}:=\underset{k-1\text{ factors}}{\underbrace {%
id\otimes\ldots\otimes id}}\otimes A_{k}\otimes\underset{d-k\text{ factors}}{%
\underbrace{id\otimes\ldots\otimes id}}\in L(\mathbf{V},\mathbf{V}%
_{[k]}\otimes_{a}W_{k}),  \label{(Notation Aja}
\end{equation*}
provided that it is obvious which component $k$ is meant. By the
multiplication rule $\left( \bigotimes_{j=1}^{d}A_{j}\right) \circ\left(
\bigotimes _{j=1}^{d}B_{j}\right) =\bigotimes_{j=1}^{d}\left( A_{j}\circ
B_{j}\right) $ and since $id\circ A_{j}=A_{j}\circ id,$ the following
identity\footnote{%
Note that the meaning of $\mathbf{id}_{[j]}$ and $\mathbf{id}_{[k]}$ may
differ: in the second line of \eqref{Notation_eq1}, $(\mathbf{id}%
_{[k]}\otimes A_{k})\in L(\mathbf{V},\mathbf{V}_{[k]}\otimes _{a}W_{k})$ and 
$(\mathbf{id}_{[j]}\otimes A_{j})\in L\left( \mathbf{V}_{[k]}%
\otimes_{a}W_{k},\mathbf{V}_{[j,k]}\otimes_{a}W_{j}\otimes_{a}W_{k}\right) ,$
whereas in the third one $(\mathbf{id}_{[j]}\otimes A_{j})\in L(\mathbf{V},%
\mathbf{V}_{[j]}\otimes_{a}W_{j})$ and $(\mathbf{id}_{[k]}\otimes A_{k})\in
L\left( \mathbf{V}_{[j]}\otimes_{a}W_{j},\mathbf{V}_{[j,k]}\otimes_{a}W_{j}%
\otimes_{a}W_{k}\right).$ Here $\mathbf{V}_{[j,k]} = \left.
_{a}\bigotimes_{l \in D \setminus\{j,k\}}V_{l}\right.$.} holds for $j\neq k$:%
\begin{equation*}
\begin{array}{l}
id\otimes\ldots\otimes id\otimes A_{j}\otimes id\otimes\ldots\otimes
id\otimes A_{k}\otimes id\otimes\ldots\otimes id \\ 
=(\mathbf{id}_{[j]}\otimes A_{j})\circ(\mathbf{id}_{[k]}\otimes A_{k}) \\ 
=(\mathbf{id}_{[k]}\otimes A_{k})\circ(\mathbf{id}_{[j]}\otimes A_{j})%
\end{array}
\label{Notation_eq1}
\end{equation*}
(in the first line we assume $j<k$). Proceeding inductively with this
argument over all indices, we obtain%
\begin{equation*}
\mathbf{A}=\bigotimes_{j=1}^{d}A_{j}=(\mathbf{id}_{[1]}\otimes
A_{1})\circ\cdots\circ(\mathbf{id}_{[d]}\otimes A_{d}).
\end{equation*}

If $W_{j}=\mathbb{R},$ i.e., if $A_{j}=\varphi_{j}\in V_{j}^{\prime}$ is a
linear form, then $\mathbf{id}_{[j]}\otimes\varphi_{j}\in L(\mathbf{V},%
\mathbf{V}_{[j]})$ is used as symbol for $id\otimes\ldots\otimes
id\otimes\varphi_{j}\otimes id\otimes\ldots\otimes id$ defined by%
\begin{equation*}
(\mathbf{id}_{[j]}\otimes\varphi_{j})\left( \bigotimes_{k=1}^{d}v_{k}\right)
=\varphi_{j}(v_{j})\cdot\bigotimes_{k\neq j}v_{k}.  \label{(Notation Ajb}
\end{equation*}
Thus, if $\boldsymbol{\varphi}=\otimes_{j=1}^{d}\varphi_{j}\in\bigotimes
_{j=1}^{d}V_{j}^{\prime},$ we can also write%
\begin{equation}
\boldsymbol{\varphi}=\otimes_{j=1}^{d}\varphi_{j}=(\mathbf{id}%
_{[1]}\otimes\varphi_{1})\circ\cdots\circ(\mathbf{id}_{[d]}\otimes\varphi
_{d}).  \label{(Notation Aje}
\end{equation}
Consider again the splitting of $\mathbf{V}=\left.
_{a}\bigotimes_{j=1}^{d}V_{j}\right. $ into $\mathbf{V}=V_{j}\otimes_{a}%
\mathbf{V}_{[j]}$ with $\mathbf{V}_{[j]}:=\left. _{a}\bigotimes_{k\neq
j}V_{k}\right. $. For a linear form $\boldsymbol{\varphi}_{[j]}\in\mathbf{V}%
_{[j]}^{\prime}$, the notation $id_{j}\otimes\boldsymbol{\varphi}_{[j]}\in L(%
\mathbf{V},V_{j})$ is used for the mapping%
\begin{equation}
(id_{j}\otimes\boldsymbol{\varphi}_{[j]})\left(
\bigotimes_{k=1}^{d}v_{k}\right) =\boldsymbol{\varphi}_{[j]}%
\bigg(%
\bigotimes_{k\neq j}v_{k}%
\bigg)%
\cdot v_{j}.  \label{(Notation Ajc}
\end{equation}
If $\boldsymbol{\varphi}_{[j]}=\bigotimes_{k\neq j}\varphi_{k}\in\left.
_{a}\bigotimes_{k\neq j}V_{k}^{\prime}\right. $ is an elementary tensor%
, $\boldsymbol{\varphi}_{[j]}\left(
\bigotimes_{k\neq j} v^{(k)}\right) =\prod_{k\neq j}\varphi_{k}\left(
v^{(k)}\right) $ holds in \eqref{(Notation Ajc}. Finally, we can write (\ref%
{(Notation Aje}) as 
\begin{equation*}
\boldsymbol{\varphi}=\otimes_{j=1}^{d}\varphi_{j}=\varphi_{j}\circ
(id_{j}\otimes\boldsymbol{\varphi}_{[j]})\qquad\text{for }1\leq j\leq d.
\label{(Notation Ajh}
\end{equation*}%
\end{subequations}%

\subsection{Algebraic tensor spaces in the tree-based format}

We introduce the abbreviation TBF for `tree-based format'. For instance, a
TBF tensor is a tensor represented in the tree-based format, etc. The
tree-based rank will be abbreviated by TB\ rank. To introduce the underlying
tree we use the following example.

\begin{example}
Let us consider $D=\{1,2,3,4,5,6\},$ then 
\begin{equation*}
\mathbf{V}_{D}=\left. _{a}\bigotimes_{j=1}^{6}V_{j}\right. =\left( \left.
_{a}\bigotimes_{j=1}^{3}V_{j}\right. \right) \otimes _{a}\left( \left.
_{a}\bigotimes_{j=4}^{5}V_{j}\right. \right) \otimes _{a}V_{6}=\mathbf{V}%
_{123}\otimes _{a}\mathbf{V}_{45}\otimes _{a}V_{6}.
\end{equation*}%
Observe that $\mathbf{V}_{D}=\left. _{a}\bigotimes_{j=1}^{6}V_{j}\right.$
can be represented by the tree given in Figure~\ref{fig2p} and $\mathbf{V}%
_{D} = \mathbf{V}_{123}\otimes _{a}\mathbf{V}_{45}\otimes _{a}V_{6}$ by the
tree given in Figure~\ref{fig1p}. We point out that there are other trees to
describe the tensor representation $\mathbf{V}_{D} = \mathbf{V}_{123}\otimes
_{a}\mathbf{V}_{45}\otimes _{a}V_{6},$ because 
\begin{equation*}
\mathbf{V}_{D}=\left( \left. _{a}\bigotimes_{j=1}^{3}V_{j}\right. \right)
\otimes _{a}\left( \left. _{a}\bigotimes_{j=4}^{5}V_{j}\right. \right)
\otimes _{a}V_{6} = \left(V_1 \otimes_a \left(\left.
_{a}\bigotimes_{j=2}^{3}V_{j}\right. \right) \right) \otimes _{a}\left(
\left. _{a}\bigotimes_{j=4}^{5}V_{j}\right. \right) \otimes _{a}V_{6},
\end{equation*}
that is, $\mathbf{V}_{123} = \left. _{a}\bigotimes_{j=1}^{3}V_{j}\right. =
V_1 \otimes_a \mathbf{V}_{23}$ (see Figure~\ref{fig0p}).
\end{example}

The above example motivates the following definition.

\begin{definition}
\label{partition_tree} The tree $T_{D}$ is called \emph{a dimension
partition tree of $D$} if

\begin{enumerate}
\item[(a)] all vertices $\alpha \in T_D$ are non--empty subsets of $D,$

\item[(b)] $D$ is the root of $T_D,$

\item[(c)] every vertex $\alpha \in T_{D}$ with $\#\alpha \geq 2$ has at
least two sons. Moreover, if $S(\alpha )\subset 2^{D}$ denotes the set of
sons of $\alpha $ then $\alpha =\cup _{\beta \in S(\alpha )}\beta $ where $%
\beta \cap \beta ^{\prime }=\emptyset $ for all $\beta ,\beta ^{\prime }\in
S(\alpha ),$ $\beta \neq \beta ^{\prime },$

\item[(d)] every vertex $\alpha\in T_D$ with $\#\alpha = 1$ has no son.
\end{enumerate}
\end{definition}

If $S(\alpha )=\emptyset ,$ $\alpha $ is called a \emph{leaf}. The set of
leaves is denoted by $\mathcal{L}(T_{D}).$ An easy consequence of Definition~%
\ref{partition_tree} is that the set of leaves $\mathcal{L}(T_{D})$
coincides with the singletons of $D,$ i.e., $\mathcal{L}(T_{D})=\{\{j\}:j\in
D\}.$

\begin{example}
Consider $D=\{1,2,3,4,5,6\}.$ Take 
\begin{equation*}
T_D=\{D,\{1\},\{2\},\{3\},\{4\},\{5\},\{6\}\} \text{ and } S(D) =
\{\{1\},\{2\},\{3\},\{4\},\{5\},\{6\}\}
\end{equation*}
(see Figure~\ref{fig2p}). Then $S(D) = \mathcal{L}(T_D).$
\end{example}

\begin{example}
\label{example_nobinary} In Figure~\ref{fig1p} we have a tree which
corresponds to $\mathbf{V}_{D} = \mathbf{V}_{123}\otimes _{a}\mathbf{V}%
_{45}\otimes _{a}V_{6}.$ Here $D=\{1,2,3,4,5,6\}$ and 
\begin{equation*}
T_D=\{D,\{1,2,3\},\{4,5\},\{1\},\{2\},\{3\},\{4\},\{5\},\{6\}\},
\end{equation*}
\begin{equation*}
S(D)=\{\{1,2,3\},\{4,5\},\{6\}\}, \, S(\{4,5\})=\{\{4\},\{5\}\}, \,
S(\{1,2,3\})=\{\{1\},\{2\},\{3\}\}.
\end{equation*}
Moreover $\mathcal{L}(T_D)=\{\{1\},\{2\},\{3\},\{4\},\{5\},\{6\}\}.$
\end{example}

\begin{figure}[tbp]
\centering
\synttree[$\{1,2,3,4,5,6\}$[$\{1\}$][$\{2\}$][$\{3\}$]
[$\{4\}$][$\{5\}$][$\{6\}$]]
\caption{A dimension partition tree related to $\mathbf{V}_{D}=\left.
_{a}\bigotimes_{j=1}^{6}V_{j}\right..$}
\label{fig2p}
\end{figure}

\begin{figure}[tbp]
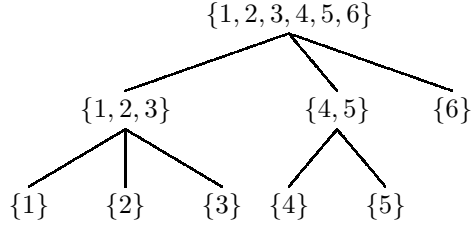

\centering
\synttree[$\{1,2,3,4,5,6\}$[$\{1,2,3\}$[$\{1\}$][$\{2\}$][$\{3\}$]]
[$\{4,5\}$[$\{4\}$][$\{5\}$]][$\{6\}$ ]]
\caption{A dimension partition tree related to $\mathbf{V}_{D} = \mathbf{V}%
_{123}\otimes _{a}\mathbf{V}_{45}\otimes _{a}V_{6}.$}
\label{fig1p}
\end{figure}

\begin{figure}[tbp]
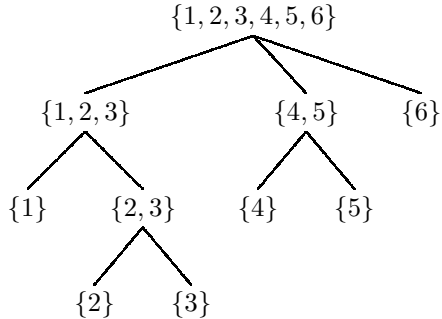

\centering
\synttree[$\{1,2,3,4,5,6\}$[$\{1,2,3\}$[$\{1\}$][$\{2,3\}$[$\{2\}$][$\{3\}$]]]
[$\{4,5\}$[$\{4\}$][$\{5\}$]][$\{6\}$ ]]
\caption{A dimension partition tree related to $\mathbf{V}_{D} = \mathbf{V}%
_{123}\otimes _{a}\mathbf{V}_{45}\otimes _{a}V_{6}$ where $\mathbf{V}_{123}
= V_1 \otimes_a \mathbf{V}_{23}.$}
\label{fig0p}
\end{figure}

Finally we give the definition of a TBF tensor.

\begin{definition}
Let $D$ be a finite index set and $T_{D}$ be a partition tree. Let $V_{j}$
be a vector space for $j\in D,$ and consider for each $\alpha \in
T_{D}\setminus \mathcal{L}(T_{D})$ a tensor space $\mathbf{V}_{\alpha
}:=\left. _{a}\bigotimes_{\beta \in S(\alpha )}\mathbf{V}_{\beta }\right. .$
Then the collection of vector spaces $\{\mathbf{V}_{\alpha }\}_{\alpha \in
T_{D}\setminus \{D\}}$ is called a representation of the tensor space $%
\mathbf{V}_{D}=\left. _{a}\bigotimes_{\alpha \in S(D)}\mathbf{V}_{\alpha
}\right. $ in \emph{\ tree-based format.}
\end{definition}

Observe that we can write $\mathbf{V}_D = \left._a \bigotimes_{\alpha \in
S(D)} \mathbf{V}_{\alpha}\right. = \left._a \bigotimes_{j \in D}
V_{j}\right..$ A first property of TBF tensors is the independence of the
representation of the algebraic tensor space $\mathbf{V}_D$ with respect to
the tree $T_D.$

\begin{lemma}
Let $D$ be a finite index set and $T_{D}$ be a partition tree. Let $V_{j}$
be a vector space for $j\in D.$ Assume that $\{\mathbf{V}_{\alpha
}\}_{\alpha \in T_{D}\setminus \{D\}}$ is a representation of the tensor
space $\mathbf{V}_{D}=\left. _{a}\bigotimes_{\alpha \in S(D)}\mathbf{V}%
_{\alpha }\right. $ in the tree-based format. Then for each $\alpha _{1}\in
T_{D}\setminus \{D\}$ there exist $\alpha _{2},\ldots ,\alpha _{m}\in
T_{D}\setminus \{D,\alpha _{1}\}$ such that $D=\cup _{i=1}^{m}\alpha _{i},$ $%
\alpha _{i}\cap \alpha _{j}=\emptyset $ and $\mathbf{V}_{D}=\left.
_{a}\bigotimes_{i=1}^{m}\mathbf{V}_{\alpha _{i}}\right. .$
\end{lemma}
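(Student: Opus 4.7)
The plan is to exploit the unique root-to-$\alpha_1$ path in $T_D$ and peel off the sibling subtrees along this path, using associativity of $\otimes_a$ at each step.

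First I would single out the sequence $D = \beta_0, \beta_1, \ldots, \beta_k = \alpha_1$, where $\beta_{i+1}$ is the unique son of $\beta_i$ that contains $\alpha_1$ (existence and uniqueness follow from condition (c) of Definition~\ref{partition_tree}, which makes the sons of each non-leaf vertex a partition of that vertex, so exactly one son contains $\alpha_1$). I would then define the candidate family
\[
\mathcal{P} := \{\alpha_1\} \,\cup\, \bigcup_{i=0}^{k-1}\bigl(S(\beta_i)\setminus\{\beta_{i+1}\}\bigr),
\]
list it as $\alpha_1,\alpha_2,\ldots,\alpha_m$, and check the three required properties.

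For pairwise disjointness: within a single $S(\beta_i)$ disjointness is property~(c); for two elements $\gamma\in S(\beta_i)\setminus\{\beta_{i+1}\}$ and $\delta\in S(\beta_j)\setminus\{\beta_{j+1}\}$ with $i<j$, I would use $\delta\subseteq\beta_j\subseteq\beta_{i+1}$ together with $\gamma\cap\beta_{i+1}=\emptyset$ (again from property~(c) applied at $\beta_i$); the same argument with $\delta=\alpha_1\subseteq\beta_{i+1}$ shows $\alpha_1$ is disjoint from each of the sibling sets. For $D=\bigcup_{i=1}^m\alpha_i$ I would telescope: $\beta_0=D$ splits as $\beta_1\cup\bigcup_{\gamma\in S(\beta_0)\setminus\{\beta_1\}}\gamma$ by (c), then $\beta_1$ splits analogously, and so on until $\beta_k=\alpha_1$ is reached.

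For the tensor-space identity I would iterate the tree-based definition $\mathbf{V}_{\beta_i} = {}_a\bigotimes_{\gamma\in S(\beta_i)}\mathbf{V}_\gamma$ (for $i<k$), using associativity and commutativity of $\otimes_a$ to obtain
\[
\mathbf{V}_D = \Bigl(\,{}_a\!\bigotimes_{\gamma\in S(\beta_0)\setminus\{\beta_1\}}\mathbf{V}_\gamma\Bigr)\otimes_a\mathbf{V}_{\beta_1}
= \cdots = \Bigl(\,{}_a\!\bigotimes_{i=0}^{k-1}\,{}_a\!\bigotimes_{\gamma\in S(\beta_i)\setminus\{\beta_{i+1}\}}\mathbf{V}_\gamma\Bigr)\otimes_a\mathbf{V}_{\alpha_1},
\]
which is exactly ${}_a\bigotimes_{i=1}^m\mathbf{V}_{\alpha_i}$; a clean way to formalise this is a downward induction on $k-i$, or equivalently on the depth of $\alpha_1$ in $T_D$.

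No step is genuinely deep; the only mild nuisance is keeping the bookkeeping of disjointness across different levels clean, since it must invoke property~(c) twice (once for siblings, once to push $\gamma$ away from the entire subtree rooted at $\beta_{i+1}$). Everything else is the tree structure plus associativity of the algebraic tensor product.
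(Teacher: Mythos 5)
Your argument is correct, and it is the natural (surely the intended) one: the paper states this lemma without proof, so there is nothing to compare against, but peeling off the sibling subtrees along the unique root-to-$\alpha_1$ path and invoking property (c) of Definition~\ref{partition_tree} twice (once for disjointness of siblings at a fixed level, once to separate a sibling from the whole subtree below $\beta_{i+1}$) together with associativity of $\otimes_a$ is exactly what is needed. The only point worth making explicit in a write-up is that each $\beta_i$ with $i<k$ is automatically a non-leaf (it has $\beta_{i+1}$ as a son), so the expansion $\mathbf{V}_{\beta_i}=\left._a\bigotimes_{\gamma\in S(\beta_i)}\mathbf{V}_\gamma\right.$ is always available, and that the resulting $\alpha_2,\dots,\alpha_m$ indeed lie in $T_D\setminus\{D,\alpha_1\}$, which follows from your disjointness argument.
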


\subsection{Minimal subspaces for TBF tensors}

Let $V_{j}$ be a vector space for $j\in D,$ where $D$ is a finite index set,
and $\alpha_1,\ldots,\alpha_m \subset 2^D \setminus \{D,\emptyset\},$ be
such that $\alpha_i \cap \alpha_j = \emptyset$ for all $i \neq j$ and $D =
\bigcup_{j=1}^m \alpha_j.$ For $\mathbf{v} \in \left._a \bigotimes_{i=1}^m 
\mathbf{V}_{\alpha_i}\right.$ we define the minimal subspace of $\mathbf{v}$
on each $\mathbf{V}_{\alpha_i} := \left._a \bigotimes_{j \in \alpha_i} V_j
\right.$ for $1 \le i \le m,$ as follows.

\begin{definition}
\label{Def Vmin,Wmin} For a tensor $\mathbf{v}\in \left.
_{a}\bigotimes_{j\in D}V_{j}\right. =\left. _{a}\bigotimes_{i=1}^{m}\mathbf{V%
}_{\alpha _{i}}\right. $, the \emph{minimal subspaces} denoted by $U_{\alpha
_{i}}^{\min }(\mathbf{v})\subset \mathbf{V}_{\alpha _{i}},$ for $1\leq i\leq
m,$ are defined by the properties that $\mathbf{v}%
\in \left. _{a}\bigotimes_{i=1}^{m}U_{\alpha _{i}}^{\min }(\mathbf{v}%
)\right. $ and  $\mathbf{v}\in \left.
_{a}\bigotimes_{i=1}^{m}\mathbf{U}_{\alpha _{i}}\right. $ implies $U_{\alpha
_{i}}^{\min }(\mathbf{v})\subset \mathbf{U}_{\alpha _{i}}.$ 
\end{definition}

The minimal subspaces are useful to introduce the following sets of tensor
representations based on subspaces. Fix $\mathbf{r}=(r_1,\ldots,r_d) \in 
\mathbb{N}^d$. Then we define \emph{the set of Tucker tensors with bounded
rank $\mathbf{r}$ in $\mathbf{V} = \left._a \bigotimes_{j=1}^d V_j \right.$}
by 
\begin{equation*}
\mathcal{T}_{\mathbf{r}}(\mathbf{V}):= \left\{ \mathbf{v} \in \mathbf{V}:
\dim U_j^{\min}(\mathbf{v}) \le r_j, \, 1 \le j \le d \right\},
\end{equation*}
and \emph{the set of Tucker tensors with fixed rank $\mathbf{r}$ in $\mathbf{%
V} = \left._a \bigotimes_{j=1}^d V_j \right.$} by 
\begin{equation*}
\mathcal{M}_{\mathbf{r}}(\mathbf{V}):= \left\{ \mathbf{v} \in \mathbf{V}:
\dim U_j^{\min}(\mathbf{v}) = r_j, \, 1 \le j \le d \right\}.
\end{equation*}
Then $\mathcal{M}_{\mathbf{r}}(\mathbf{V}) \subset \mathcal{T}_{\mathbf{r}}(%
\mathbf{V}) \subset \mathbf{V} $ holds.

\bigskip

The next characterisation of $U_{\alpha_j}^{\min }(\mathbf{v})$ for $1 \le j
\le m$ is due to \cite{Hackbusch} (it is included in the proof of Lemma
6.12). Since we assume that $\mathbf{V}_{\alpha_j}$ are vector spaces for $1
\le j \le m$, then we may consider the subspaces 
\begin{equation*}
U_{\alpha_j}^{I}(\mathbf{v}) :=\left\{ (id_{\alpha_j}\otimes \boldsymbol{%
\varphi }_{[\alpha_j]})(\mathbf{v}):\text{\ }\boldsymbol{\varphi }%
_{[\alpha_j]}\in \left. _{a}\bigotimes\nolimits_{k\neq j}\mathbf{V}%
_{\alpha_k}^{\prime }\right. \right\}
\end{equation*}%
and 
\begin{equation*}
U_{\alpha_j}^{II}(\mathbf{v}) :=\left\{ (id_{\alpha_j}\otimes \boldsymbol{%
\varphi }_{[\alpha_j]})(\mathbf{v}):\text{\ }\boldsymbol{\varphi }%
_{[\alpha_j]}\in \left. _{a}\bigotimes\nolimits_{k\neq j}U_{\alpha_k}^{\min}(%
\mathbf{v})^{\prime }\right. \right\},
\end{equation*}%
for $1 \le j \le m.$ Moreover, if $\mathbf{V}_{\alpha_j}$ are normed spaces
for $1 \le j \le m$ we can also consider 
\begin{equation*}
U_{\alpha_j}^{III}(\mathbf{v}) :=\left\{ (id_{\alpha_j}\otimes \boldsymbol{%
\varphi }_{[\alpha_j]})(\mathbf{v}):\text{\ }\boldsymbol{\varphi }%
_{[\alpha_j]}\in \left. _{a}\bigotimes\nolimits_{k \neq j} \mathbf{V}%
_{\alpha_k}^{\ast }\right. \right\},
\end{equation*}%
and 
\begin{equation*}
U_{\alpha_j}^{IV}(\mathbf{v}) :=\left\{ (id_{\alpha_j}\otimes \boldsymbol{%
\varphi }_{[\alpha_j]})(\mathbf{v}):\text{\ }\boldsymbol{\varphi }%
_{[\alpha_j]}\in \left. _{a}\bigotimes\nolimits_{k \neq
j}U_{\alpha_k}^{\min}(\mathbf{v})^{\ast }\right. \right\},
\end{equation*}

\begin{theorem}
\label{Satz Vjmin} Assume that $\mathbf{V}_{\alpha_j}$ are vector spaces for 
$1 \le j \le m.$ Then the following statements hold.

\begin{enumerate}
\item[(a)] For any $\mathbf{v}\in \mathbf{V}=\left. _{a}\bigotimes_{j=1}^m%
\mathbf{V}_{\alpha_j}\right.$, it holds 
\begin{equation*}
U_{\alpha_j}^{\min }(\mathbf{v})=U_{\alpha_j}^{I}(\mathbf{v}) =
U_{\alpha_j}^{II}(\mathbf{v}),
\end{equation*}
for $1 \le j \le m.$

\item[(b)] Assume that $\mathbf{V}_{\alpha_j}$ are normed spaces for $1 \le
j \le m.$ Then for any $\mathbf{v}\in \mathbf{V}=\left.
_{a}\bigotimes_{j=1}^m\mathbf{V}_{\alpha_j}\right.,$ it holds 
\begin{equation*}
U_{\alpha_j}^{\min }(\mathbf{v})=U_{\alpha_j}^{III}(\mathbf{v}) =
U_{\alpha_j}^{IV}(\mathbf{v}),
\end{equation*}
for $1 \le j \le m.$
\end{enumerate}
\end{theorem}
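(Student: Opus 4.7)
The proof will adapt the argument of \cite[Lemma 6.12]{Hackbusch} to the grouped indices $\alpha_1,\ldots,\alpha_m$. The key observation is that once one forgets the internal tensor structure of each $\mathbf{V}_{\alpha_j}$ and treats them simply as vector (resp.\ normed) spaces, the setting reduces to the classical $m$-fold tensor product considered in \cite{FALHACK}. So my plan is to establish part (a) through the chain $U^I_{\alpha_j}(\mathbf{v})\subset U^{\min}_{\alpha_j}(\mathbf{v})\subset U^I_{\alpha_j}(\mathbf{v})$ and $U^I_{\alpha_j}(\mathbf{v})=U^{II}_{\alpha_j}(\mathbf{v})$, and then deduce (b) from (a) using the fact — implicit in (a) — that each $U^{\min}_{\alpha_j}(\mathbf{v})$ is finite-dimensional.

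For the first inclusion in (a) I would use that $\mathbf{v}\in \left._a\bigotimes_{i=1}^m U^{\min}_{\alpha_i}(\mathbf{v})\right.$, so for any $\boldsymbol{\varphi}_{[\alpha_j]}$ the image $(id_{\alpha_j}\otimes\boldsymbol{\varphi}_{[\alpha_j]})(\mathbf{v})$ lies in $U^{\min}_{\alpha_j}(\mathbf{v})$. For the reverse inclusion I would write a finite representation $\mathbf{v}=\sum_{\ell=1}^{N}\bigotimes_{i=1}^m \mathbf{v}^{(\ell)}_{\alpha_i}$ with $\{\mathbf{v}^{(\ell)}_{\alpha_k}\}_\ell$ linearly independent for each $k\neq j$, choose algebraic forms $\varphi^{(\ell)}_k\in\mathbf{V}'_{\alpha_k}$ biorthogonal to this family, and observe that $(id_{\alpha_j}\otimes \bigotimes_{k\neq j}\varphi^{(\ell)}_k)(\mathbf{v})=\mathbf{v}^{(\ell)}_{\alpha_j}$; this exhibits $\mathbf{v}\in \left._a\bigotimes_{i=1}^m U^I_{\alpha_i}(\mathbf{v})\right.$, so minimality forces $U^{\min}_{\alpha_j}(\mathbf{v})\subset U^I_{\alpha_j}(\mathbf{v})$. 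The equality $U^I_{\alpha_j}(\mathbf{v})=U^{II}_{\alpha_j}(\mathbf{v})$ is then a restriction/extension argument: $U^{II}\subset U^I$ is immediate, while for $U^I\subset U^{II}$ one notes that the action of $\boldsymbol{\varphi}_{[\alpha_j]}$ on $\mathbf{v}$ depends only on its restrictions to the finite-dimensional subspaces $U^{\min}_{\alpha_k}(\mathbf{v})$, and any linear form on such a finite-dimensional subspace extends linearly to the whole $\mathbf{V}_{\alpha_k}$ via an algebraic complement.

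For part (b) the inclusions $U^{III}_{\alpha_j}(\mathbf{v})\subset U^I_{\alpha_j}(\mathbf{v})$ and $U^{IV}_{\alpha_j}(\mathbf{v})\subset U^{II}_{\alpha_j}(\mathbf{v})$ are trivial because $\mathbf{V}^*_{\alpha_k}\subset \mathbf{V}'_{\alpha_k}$. The converses constitute the only step where the normed structure is used and form the main obstacle. For an elementary $\boldsymbol{\varphi}_{[\alpha_j]}=\bigotimes_{k\neq j}\varphi_k$ with $\varphi_k$ only algebraic, I would restrict each $\varphi_k$ to the finite-dimensional subspace $U^{\min}_{\alpha_k}(\mathbf{v})$, where continuity is automatic, and then apply Hahn--Banach to extend to some $\widetilde{\varphi}_k\in\mathbf{V}^*_{\alpha_k}$ that agrees with $\varphi_k$ on $U^{\min}_{\alpha_k}(\mathbf{v})$; the crucial observation is that $(id_{\alpha_j}\otimes \bigotimes_{k\neq j}\widetilde{\varphi}_k)(\mathbf{v})=(id_{\alpha_j}\otimes \boldsymbol{\varphi}_{[\alpha_j]})(\mathbf{v})$, since by (a) the evaluation depends only on the restrictions to the minimal subspaces. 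Extending to arbitrary $\boldsymbol{\varphi}_{[\alpha_j]}\in \left._a\bigotimes_{k\neq j}\mathbf{V}'_{\alpha_k}\right.$ is just linearity. Thus the only non-algebraic ingredient is Hahn--Banach applied to finite-dimensional subspaces, which makes the passage from (a) to (b) essentially cost-free.
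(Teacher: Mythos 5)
The paper itself does not prove Theorem~\ref{Satz Vjmin}: it is imported from the proof of Lemma~6.12 in \cite{Hackbusch}, so there is no in-paper argument to compare against. Your architecture (the chain $U^{I}_{\alpha_j}(\mathbf{v})\subset U^{\min}_{\alpha_j}(\mathbf{v})\subset U^{I}_{\alpha_j}(\mathbf{v})$, the restriction/extension argument for $U^{I}=U^{II}$, and the reduction of (b) to (a) by Hahn--Banach on the finite-dimensional minimal subspaces) is the standard one, and the first inclusion, the equality $U^{I}=U^{II}$, and all of part (b) are correct as sketched.

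The gap is in the step $U^{\min}_{\alpha_j}(\mathbf{v})\subset U^{I}_{\alpha_j}(\mathbf{v})$. You assume a representation $\mathbf{v}=\sum_{\ell=1}^{N}\bigotimes_{i=1}^{m}\mathbf{v}^{(\ell)}_{\alpha_i}$ in which $\{\mathbf{v}^{(\ell)}_{\alpha_k}\}_{\ell=1}^{N}$ is linearly independent for every $k\neq j$ simultaneously. This always exists for $m=2$, but fails for $m\geq 3$. Take $m=3$, $j=1$, and $\mathbf{v}=a\otimes b\otimes c+a'\otimes b\otimes c'+a''\otimes b'\otimes c$ with $\{a,a',a''\}$, $\{b,b'\}$, $\{c,c'\}$ linearly independent. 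If $\mathbf{v}=\sum_{\ell}x_\ell\otimes y_\ell\otimes z_\ell$ with $\{y_\ell\}$ and $\{z_\ell\}$ linearly independent, then contracting the second mode against a functional $\psi_\ell$ biorthogonal to $\{y_\ell\}$ must produce the elementary tensor $x_\ell\otimes z_\ell$, whereas the same contraction computed from the displayed representation equals $\psi_\ell(b)\,(a\otimes c+a'\otimes c')+\psi_\ell(b')\,(a''\otimes c)$, which has rank $2$ in $V_1\otimes_a V_3$ unless $\psi_\ell(b)=0$. Forcing $\psi_\ell(b)=0$ for all $\ell$ gives $b=0$, since contracting the first and third modes of both representations against suitable functionals shows $b\in\mathrm{span}\{y_\ell\}$. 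Hence no such representation exists, and the elementary biorthogonal functionals $\bigotimes_{k\neq j}\varphi^{(\ell)}_k$ you rely on cannot be produced. The standard repair is to use the binary splitting $\mathbf{V}=\mathbf{V}_{\alpha_j}\otimes_a\bigl(\bigotimes_{k\neq j}\mathbf{V}_{\alpha_k}\bigr)$: write $\mathbf{v}=\sum_\ell \mathbf{v}^{(\ell)}_{\alpha_j}\otimes\mathbf{w}^{(\ell)}$ with $\{\mathbf{w}^{(\ell)}\}$ linearly independent, observe that the $\mathbf{w}^{(\ell)}$ lie in a finite-dimensional product $\bigotimes_{k\neq j}W_k$ whose full dual is already exhausted by $\bigotimes_{k\neq j}W_k'$, and take biorthogonal functionals there (they are \emph{sums} of elementary functionals, which is all that membership in $U^{I}_{\alpha_j}(\mathbf{v})$ requires). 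Even then, to invoke minimality you need $\mathbf{v}\in\bigotimes_{i}U^{I}_{\alpha_i}(\mathbf{v})$, which requires the intersection property (if $\mathbf{v}\in U_i\otimes_a\bigotimes_{k\neq i}\mathbf{V}_{\alpha_k}$ for every $i$, then $\mathbf{v}\in\bigotimes_i U_i$); your sketch omits this entirely, and it is precisely the nontrivial content of the cited Lemma~6.12 of \cite{Hackbusch}.
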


Let $D=\cup_{i=1}^m \alpha_i$ be a given partition. Assume that $%
\alpha_1=\cup_{j=1}^n \beta_j$ is also a given partition, then we have
minimal subspaces $U_{\beta_j}^{\min}(\mathbf{v}) \subset \mathbf{V}%
_{\beta_j} = \left._a \bigotimes_{k \in \beta_j} V_k\right.$ for $1 \le j
\le n$ and $U_{\alpha_i}^{\min}(\mathbf{v}) \subset \mathbf{V}_{\alpha_i} =
\left._a \bigotimes_{k \in \alpha_i} V_{k}\right.$ for $1 \le i \le m.$
Observe that $\mathbf{V}_{\alpha_1} = \left._a \bigotimes_{j=1}^n \mathbf{V}%
_{\beta_j} \right.,$ where 
\begin{equation*}
\mathbf{v} \in \left._a \bigotimes_{i=1}^m U_{\alpha_i}^{\min}(\mathbf{v}%
)\right. \text{ and } \mathbf{v} \in \left(\left._a \bigotimes_{j=1}^n
U_{\beta_j}^{\min}(\mathbf{v}) \right. \right) \otimes_ a \left(\left._a
\bigotimes_{i=2}^m U_{\alpha_i}^{\min}(\mathbf{v}) \right. \right).
\end{equation*}

\begin{example}
Let us consider $D=\{1,2,3,4,5,6\}$ and the partition tree $T_D$ given in
Figure \ref{fig1p}. Take $\mathbf{v}\in\left. _{a}\bigotimes_{j\in
D}V_{j}\right. =\mathbf{V}_{\alpha_{1}}\otimes_a\mathbf{V}%
_{\alpha_{2}}\otimes_a\mathbf{V}_{\alpha_{3}},$ where $\alpha_{1}=\{1,2,3\},$
$\alpha_{2}=\{4,5\},$ and $\alpha_{3}=\{6\}.$ Then we can conclude that
there are minimal subspaces ${U}_{\alpha_{\nu}}^{\min}(\mathbf{v})$
for $\nu=1,2,3,$ such that $\mathbf{v}\in\left. _{a}\bigotimes_{\nu=1}^{3}%
{U}_{\alpha_{\nu}}^{\min}(\mathbf{v})\right. $ and also minimal
subspaces $U_{j}^{\min}(\mathbf{v})$ for $j \in D,$ such that $\mathbf{v}
\in \left._a \bigotimes_{j\in D} U_{j}^{\min}(\mathbf{v})\right. $
\end{example}

The relation between $U_{j}^{\min}(\mathbf{v})$ and ${U}%
_{\alpha_{\nu}}^{\min}(\mathbf{v})$ is as follows (see Corollary~2.9 of \cite%
{FALHACK}).

\begin{proposition}
\label{inclusin_Umin} Let $V_{j}$ be a vector space for $j\in D,$ where $D$
is a finite index set, and $D=\cup _{i=1}^{m}\alpha _{i}$ be a given
partition. Let $\mathbf{v}\in \left. _{a}\bigotimes_{j\in D}V_{j}\right. .$
For a partition $\alpha _{1}=\cup _{j=1}^{m}\beta _{j}$ it holds 
\begin{equation*}
U_{\alpha _{1}}^{\min }(\mathbf{v})\subset \left.
_{a}\bigotimes_{j=1}^{m}U_{\beta _{j}}^{\min }(\mathbf{v})\right. .
\end{equation*}
\end{proposition}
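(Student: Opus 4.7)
The plan is to combine the contraction characterisation from Theorem~\ref{Satz Vjmin}(a) with the defining property of minimal subspaces applied to the refined partition $D = \beta_1 \cup \cdots \cup \beta_n \cup \alpha_2 \cup \cdots \cup \alpha_m$. The guiding idea is that every element of $U_{\alpha_1}^{\min}(\mathbf{v})$ is obtained by contracting $\mathbf{v}$ against a linear form supported on the complement of $\alpha_1$; such a contraction acts as the identity on each $\mathbf{V}_{\beta_j}$-factor, and so cannot leave the tensor product of the $U_{\beta_j}^{\min}(\mathbf{v})$.

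Concretely, I would proceed in two steps. First, applying Definition~\ref{Def Vmin,Wmin} to the refined partition yields $\mathbf{v} \in \left(\left._a \bigotimes_{j=1}^n U_{\beta_j}^{\min}(\mathbf{v})\right.\right) \otimes_a \left(\left._a \bigotimes_{i=2}^m U_{\alpha_i}^{\min}(\mathbf{v})\right.\right)$, so I can write $\mathbf{v} = \sum_{k=1}^K \mathbf{u}_k \otimes \mathbf{w}_k$ with $\mathbf{u}_k \in \left._a \bigotimes_{j=1}^n U_{\beta_j}^{\min}(\mathbf{v})\right.$ and $\mathbf{w}_k \in \left._a \bigotimes_{i=2}^m U_{\alpha_i}^{\min}(\mathbf{v})\right.$ for some finite $K$. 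Second, by Theorem~\ref{Satz Vjmin}(a) applied to the partition $D = \alpha_1 \cup \alpha_2 \cup \cdots \cup \alpha_m$, every element of $U_{\alpha_1}^{\min}(\mathbf{v})$ has the form $(id_{\alpha_1} \otimes \boldsymbol{\varphi}_{[\alpha_1]})(\mathbf{v})$ for some $\boldsymbol{\varphi}_{[\alpha_1]} \in \left._a \bigotimes_{i=2}^m \mathbf{V}_{\alpha_i}'\right.$. Applying such a form to the above decomposition produces $(id_{\alpha_1} \otimes \boldsymbol{\varphi}_{[\alpha_1]})(\mathbf{v}) = \sum_{k=1}^K \boldsymbol{\varphi}_{[\alpha_1]}(\mathbf{w}_k)\, \mathbf{u}_k$, a finite linear combination of the $\mathbf{u}_k$'s, which by construction lies in $\left._a \bigotimes_{j=1}^n U_{\beta_j}^{\min}(\mathbf{v})\right.$. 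This establishes the claimed inclusion.

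The only subtlety I anticipate is an implicit consistency statement, namely that the minimal subspace at a subset $\gamma \subset D$ depends only on $\gamma$ and $\mathbf{v}$ and not on the specific partition containing $\gamma$ that is used to define it. This is what legitimates treating both the $U_{\beta_j}^{\min}(\mathbf{v})$ and the $U_{\alpha_i}^{\min}(\mathbf{v})$ (for $i \geq 2$) as minimal subspaces relative to one common refined partition, and in particular applying Definition~\ref{Def Vmin,Wmin} at the refined level. Once this point is accepted (or verified directly from the characterisation in Theorem~\ref{Satz Vjmin}(a), which never mentions the partition at the other factors), the rest of the argument is a single contraction computation, so I do not foresee significant technical obstacles.
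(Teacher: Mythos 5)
Your proof is correct: the decomposition $\mathbf{v}\in\bigl(\left._a\bigotimes_{j=1}^{n}U_{\beta_j}^{\min}(\mathbf{v})\right.\bigr)\otimes_a\bigl(\left._a\bigotimes_{i=2}^{m}U_{\alpha_i}^{\min}(\mathbf{v})\right.\bigr)$ followed by contraction with $\boldsymbol{\varphi}_{[\alpha_1]}$, using the characterisation $U_{\alpha_1}^{\min}(\mathbf{v})=U_{\alpha_1}^{I}(\mathbf{v})$ from Theorem~\ref{Satz Vjmin}(a), is exactly the standard argument; the paper itself only cites Corollary~2.9 of \cite{FALHACK} for this statement, but the surrounding text asserts precisely your Step~1 decomposition and the proof of Proposition~\ref{(Ualpha in Tensor Uj coro} uses the same contraction technique. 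Your flagged subtlety (partition-independence of $U_{\gamma}^{\min}(\mathbf{v})$) is real but is settled by Theorem~\ref{Satz Vjmin}(a), so nothing is missing.
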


The following result gives us the relationship between a basis of $%
U_{\alpha_1}^{\min}(\mathbf{v})$ and a basis of $U_{\beta_j}^{\min}(\mathbf{v%
})$ for $1 \le j \le m.$

\begin{proposition}
\label{(Ualpha in Tensor Uj coro} Let $V_{j}$ be a vector space for $j\in D,$
where $D$ is a finite index set. Let $\alpha \subset D$ such that $\alpha =
\bigcup_{i=1}^m \alpha_i$, where $\emptyset \neq \alpha_i$ are pairwise
disjoint for $1\le i\le m.$ 
Let $\mathbf{v}\in \left. _{a}\bigotimes_{j\in D}V_{j}\right..$ 
The following statements hold.

\begin{enumerate}
\item[(a)] For each $1 \le i \le m$, it holds 
\begin{align*}
U_{\alpha_i}^{\min}(\mathbf{v}) & = \mathrm{span}\, \left\{ \left(
id_{\alpha_i} \otimes\boldsymbol{\varphi}^{(\alpha \setminus \alpha_i)}
\right)(\mathbf{v}_{\alpha}): \mathbf{v}_{\alpha} \in U_{\alpha }^{\min }(%
\mathbf{v}) \text{ and } \boldsymbol{\varphi}^{(\alpha \setminus \alpha_i)}
\in \left._{a}\bigotimes_{k \neq i } U_{\alpha_k}^{\min}(\mathbf{v})^{\prime
}\right. \right\} \\
& = \mathrm{span}\, \left\{ \left( id_{\alpha_i} \otimes\boldsymbol{\varphi}%
^{(\alpha \setminus \alpha_i)} \right)(\mathbf{v}_{\alpha}): \mathbf{v}%
_{\alpha} \in U_{\alpha }^{\min }(\mathbf{v}) \text{ and } \boldsymbol{%
\varphi}^{(\alpha \setminus \alpha_i)} \in \left._{a}\bigotimes_{k\neq i} 
\mathbf{V}_{\alpha_k}^{\prime }\right. \right\}.
\end{align*}

\item[(b)] Assume that $\mathbf{V}_{\alpha}:= \left._{a}\bigotimes_{i=1 }^m 
\mathbf{V}_{\alpha_i}\right.$ 
and $\mathbf{V}_{\alpha_i}$, for $1 \le i \le m
$, are normed spaces. For each $1 \le i \le m$ it holds 
\begin{align*}
U_{\alpha_i}^{\min}(\mathbf{v}) & = \mathrm{span}\, \left\{ \left(
id_{\alpha_i} \otimes\boldsymbol{\varphi}^{(\alpha \setminus \alpha_i)}
\right)(\mathbf{v}_{\alpha}): \mathbf{v}_{\alpha} \in U_{\alpha }^{\min }(%
\mathbf{v}) \text{ and } \boldsymbol{\varphi}^{(\alpha \setminus \alpha_i)}
\in \left._{a}\bigotimes_{k \neq i} U_{\alpha_k}^{\min}(\mathbf{v})^*
\right. \right\} \\
& = \mathrm{span}\, \left\{ \left( id_{\alpha_i} \otimes\boldsymbol{\varphi}%
^{(\alpha \setminus \alpha_i)} \right)(\mathbf{v}_{\alpha}): \mathbf{v}%
_{\alpha} \in U_{\alpha }^{\min }(\mathbf{v}) \text{ and } \boldsymbol{%
\varphi}^{(\alpha \setminus \alpha_i)} \in \left._{a}\bigotimes_{k\neq i} 
\mathbf{V}_{\alpha_k}^* \right. \right\}
\end{align*}
\end{enumerate}
\end{proposition}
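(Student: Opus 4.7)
The plan is to reduce the proposition directly to Theorem~\ref{Satz Vjmin} combined with Proposition~\ref{inclusin_Umin}, by performing the partial contraction defining $U_{\alpha_i}^{\min}(\mathbf{v})$ in two stages: first contract the factors outside $\alpha$, then contract the remaining factors inside $\alpha\setminus\alpha_i$. Write $\hat\alpha := D\setminus\alpha$ (possibly empty) and view $\mathbf{v}$ as an element of the $(m+1)$-fold tensor space $\mathbf{V}_{\alpha_1}\otimes_a\cdots\otimes_a\mathbf{V}_{\alpha_m}\otimes_a\mathbf{V}_{\hat\alpha}$. Applying Theorem~\ref{Satz Vjmin}(a) with respect to this partition of $D$ expresses $U_{\alpha_i}^{\min}(\mathbf{v})$ as the span of elements $(id_{\alpha_i}\otimes\boldsymbol{\psi})(\mathbf{v})$ with $\boldsymbol{\psi}$ ranging either over $\left._a\bigotimes_{k\neq i}\mathbf{V}_{\alpha_k}'\right.\otimes_a\mathbf{V}_{\hat\alpha}'$ or over the analogous tensor product of minimal-subspace duals.

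The key manoeuvre is that by linearity every such $\boldsymbol{\psi}$ is a finite sum of elementary tensors $\boldsymbol{\varphi}^{(\alpha\setminus\alpha_i)}\otimes\boldsymbol{\chi}$, where $\boldsymbol{\varphi}^{(\alpha\setminus\alpha_i)}\in \left._a\bigotimes_{k\neq i}\mathbf{V}_{\alpha_k}'\right.$ and $\boldsymbol{\chi}\in\mathbf{V}_{\hat\alpha}'$. The identity
\begin{equation*}
(id_{\alpha_i}\otimes\boldsymbol{\varphi}^{(\alpha\setminus\alpha_i)}\otimes\boldsymbol{\chi})(\mathbf{v})=(id_{\alpha_i}\otimes\boldsymbol{\varphi}^{(\alpha\setminus\alpha_i)})\bigl((id_{\alpha}\otimes\boldsymbol{\chi})(\mathbf{v})\bigr),
\end{equation*}
which follows from the multiplication rule exactly as in \eqref{(Notation Aje}, yields $\mathbf{v}_\alpha:=(id_\alpha\otimes\boldsymbol{\chi})(\mathbf{v})\in U_\alpha^{\min}(\mathbf{v})$ by Theorem~\ref{Satz Vjmin}(a) applied to the two-block partition $\{\alpha,\hat\alpha\}$. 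This shows the inclusion $\subset$ in both spans of part (a). (If $\alpha=D$ the $\boldsymbol{\chi}$-step is vacuous, and the statement becomes trivial.)

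For the reverse inclusion $\supset$, start from an arbitrary $\mathbf{v}_\alpha\in U_\alpha^{\min}(\mathbf{v})$ and $\boldsymbol{\varphi}^{(\alpha\setminus\alpha_i)}\in \left._a\bigotimes_{k\neq i}U_{\alpha_k}^{\min}(\mathbf{v})'\right.$. By Proposition~\ref{inclusin_Umin}, $\mathbf{v}_\alpha$ lies in $\left._a\bigotimes_{j=1}^m U_{\alpha_j}^{\min}(\mathbf{v})\right.$, so the partial contraction is well-defined and lies in $U_{\alpha_i}^{\min}(\mathbf{v})$ by Theorem~\ref{Satz Vjmin}(a) applied inside $\mathbf{V}_\alpha$ to the partition $\{\alpha_1,\ldots,\alpha_m\}$. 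The passage between the two described spans — one using $U_{\alpha_k}^{\min}(\mathbf{v})'$, the other using $\mathbf{V}_{\alpha_k}'$ — is immediate: any $\boldsymbol{\varphi}^{(\alpha\setminus\alpha_i)}\in \left._a\bigotimes_{k\neq i}U_{\alpha_k}^{\min}(\mathbf{v})'\right.$ extends (non-uniquely) to an element of $\left._a\bigotimes_{k\neq i}\mathbf{V}_{\alpha_k}'\right.$, while conversely its values on $\mathbf{v}_\alpha$ depend only on the restrictions to the minimal subspaces, again using Proposition~\ref{inclusin_Umin}.

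Part (b) follows by the same argument, invoking Theorem~\ref{Satz Vjmin}(b) in place of (a) and replacing algebraic duals throughout by topological duals. The only additional ingredient needed here is the Hahn–Banach theorem to ensure that continuous linear functionals on the subspace $U_{\alpha_k}^{\min}(\mathbf{v})\subset\mathbf{V}_{\alpha_k}$ extend to continuous linear functionals on $\mathbf{V}_{\alpha_k}$, so that the equivalence between the two spans involving $\mathbf{V}_{\alpha_k}^*$ and $U_{\alpha_k}^{\min}(\mathbf{v})^*$ carries over verbatim. The main conceptual hurdle is purely bookkeeping: correctly splitting the test functional across the three blocks $\alpha_i$, $\alpha\setminus\alpha_i$, and $\hat\alpha$, and identifying the intermediate object $(id_\alpha\otimes\boldsymbol{\chi})(\mathbf{v})$ as living in $U_\alpha^{\min}(\mathbf{v})$.
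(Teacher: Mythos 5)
Your proof is correct and follows essentially the same route as the paper's: both rest on Theorem~\ref{Satz Vjmin} applied to the partition refining $\alpha$ together with $D\setminus\alpha$, the decomposition of the contracting functional into elementary tensors $\boldsymbol{\varphi}^{(\alpha\setminus\alpha_i)}\otimes\boldsymbol{\chi}$, and the factorisation of the contraction through $(id_\alpha\otimes\boldsymbol{\chi})(\mathbf{v})\in U_\alpha^{\min}(\mathbf{v})$. The only (harmless) variation is that for the reverse inclusion you route through Proposition~\ref{inclusin_Umin} and the minimality of $U_{\alpha_i}^{\min}(\mathbf{v}_\alpha)$, where the paper instead composes the functionals directly on $\mathbf{v}$.
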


\begin{proof}
Statements (a) and (b) are proved in a similar way. Let $\gamma =D\setminus
\alpha $ and write $\gamma =\bigcup_{i=1}^{n}\gamma _{i}$, where $\emptyset
\neq \gamma _{i}\subset D$ are pairwise disjoint for $i=1,2,\ldots ,n.$ In
particular, to prove (b), we observe that 
\begin{equation*}
\mathbf{V}_{D}=\mathbf{V}_{\alpha }\otimes _{a}\mathbf{V}_{\gamma }=\left(
\left. _{a}\bigotimes_{i=1}^{m}\mathbf{V}_{\alpha _{i}}\right. \right)
\otimes _{a}\left( \left. _{a}\bigotimes_{j=1}^{n}\mathbf{V}_{\gamma
_{j}}\right. \right) .
\end{equation*}%
Then, by Theorem~\ref{Satz Vjmin}(b), using $U_{\alpha _{i}}^{IV}(\mathbf{v}%
),$ we have 
\begin{align*}
U_{\alpha }^{\min }(\mathbf{v})& =\left\{ (id_{\alpha }\otimes \boldsymbol{%
\varphi }^{(\gamma )})(\mathbf{v}):\boldsymbol{\varphi }^{(\gamma )}\in
\left. _{a}\bigotimes_{j=1}^{m}U_{\gamma _{j}}^{\min }(\mathbf{v})^{\ast
}\right. \right\} \text{ and } \\
U_{\alpha _{i}}^{\min }(\mathbf{v})& =\left\{ (id_{\alpha _{i}}\otimes 
\boldsymbol{\varphi }^{(D\setminus \alpha _{i})})(\mathbf{v}):\boldsymbol{%
\varphi }^{(D\setminus \alpha _{i})}\in \left( \left. _{a}\bigotimes_{k\neq
i}U_{\alpha _{k}}^{\min }(\mathbf{v})^{\ast }\right. \right) \otimes
_{a}\left( \left. _{a}\bigotimes_{j=1}^{m}U_{\gamma _{j}}^{\min }(\mathbf{v}%
)^{\ast }\right. \right) \right\}
\end{align*}%
for $1\leq i\leq m.$ Take $\mathbf{v}_{\alpha }\in {U}_{\alpha }^{\min }(%
\mathbf{v}).$ Then there exists $\boldsymbol{\varphi }^{(\gamma )}\in \left.
_{a}\bigotimes_{j=1}^{m}U_{\gamma _{j}}^{\min }(\mathbf{v})^{\ast }\right. $
such that $\mathbf{v}_{\alpha }=\left( id_{\alpha }\otimes \boldsymbol{%
\varphi }^{(\gamma )}\right) (\mathbf{v}).$ Now, for $\boldsymbol{\varphi }%
^{(\alpha \setminus \alpha _{i})}\in \left. _{a}\bigotimes_{k\neq
i}U_{\alpha _{k}}^{\min }(\mathbf{v})^{\ast }\right. ,$ we have 
\begin{equation*}
\left( id_{\alpha _{i}}\otimes \boldsymbol{\varphi }^{(\alpha \setminus
\alpha _{i})}\right) (\mathbf{v}_{\alpha })=\left( id_{\alpha _{i}}\otimes 
\boldsymbol{\varphi }^{(\alpha \setminus \alpha _{i})}\otimes \boldsymbol{%
\varphi }^{(D\setminus \alpha )}\right) (\mathbf{v}),
\end{equation*}%
and hence $\left( id_{\alpha _{i}}\otimes \boldsymbol{\varphi }^{(\alpha
\setminus \alpha _{i})}\right) (\mathbf{v}_{\alpha })\in U_{\alpha
_{i}}^{\min }(\mathbf{v}).$ Now, take $\mathbf{v}_{\alpha _{i}}\in U_{\alpha
_{i}}^{\min }(\mathbf{v}),$ then there exists 
\begin{equation*}
\boldsymbol{\varphi }^{(D\setminus \alpha _{i})}\in \left( \left.
_{a}\bigotimes_{k\neq i}U_{\alpha _{k}}^{\min }(\mathbf{v})^{\ast }\right.
\right) \otimes _{a}\left( \left. _{a}\bigotimes_{j=1}^{m}U_{\gamma
_{j}}^{\min }(\mathbf{v})^{\ast }\right. \right)
\end{equation*}%
such that $\mathbf{v}_{\alpha _{i}}=\left( id_{\alpha _{i}}\otimes 
\boldsymbol{\varphi }^{(D\setminus \alpha _{i})}\right) (\mathbf{v}).$ Then $%
\boldsymbol{\varphi }^{(D\setminus \alpha _{i})}=\sum_{l=1}^{r}\boldsymbol{%
\psi }_{l}^{(\alpha \setminus \alpha _{i})}\otimes \boldsymbol{\phi }%
_{l}^{(\gamma )},$ where $\boldsymbol{\phi }_{l}^{(\gamma )}\in \left.
_{a}\bigotimes_{j=1}^{m}U_{\gamma _{j}}^{\min }(\mathbf{v})^{\ast }\right. $
and $\boldsymbol{\psi }_{l}^{(\alpha \setminus \alpha _{i})}\in \left.
_{a}\bigotimes_{k\neq i}U_{\alpha _{k}}^{\min }(\mathbf{v})^{\ast }\right. $
for $1\leq l\leq r.$ Thus, 
\begin{align*}
\mathbf{v}_{\alpha _{i}}& =\left( id_{\alpha _{i}}\otimes \boldsymbol{%
\varphi }^{(D\setminus \alpha _{i})}\right) (\mathbf{v}) \\
& =\sum_{i=1}^{r}\left( id_{\alpha _{i}}\otimes \boldsymbol{\psi }%
_{i}^{(\alpha \setminus \alpha _{i})}\otimes \boldsymbol{\phi }_{i}^{(\gamma
)}\right) (\mathbf{v}) \\
& =\sum_{i=1}^{r}\left( id_{\alpha _{i}}\otimes \boldsymbol{\psi }%
_{i}^{(\alpha \setminus \alpha _{i})}\right) \left( (id_{\alpha }\otimes 
\boldsymbol{\phi }_{i}^{(\gamma )})(\mathbf{v})\right) .
\end{align*}%
Observe that $(id_{\alpha }\otimes \boldsymbol{\phi }_{l}^{(\gamma )})(%
\mathbf{v})\in U_{\alpha }^{\min }(\mathbf{v}).$ Hence the other inclusion
holds and the first equality of  statement (b) is proved. To show the
second inequality of statement (b), we proceed in a similar way by using Theorem~\ref{Satz Vjmin}(b)
and the definition of $U_{\alpha _{j}}^{III}(\mathbf{v}).$
\end{proof}

\bigskip

From now on, given $\emptyset \neq \alpha \subset D,$ we will denote $%
\mathbf{V}_{\alpha }:=\left. _{a}\bigotimes_{j\in \alpha }V_{j}\right. ,$ $%
r_{\alpha }:=\dim U_{\alpha }^{\min }(\mathbf{v})$ and $U_{D}^{\min }(%
\mathbf{v}):=\mathrm{span}\,\{\mathbf{v}\}.$ Observe that for each $\mathbf{v%
} \in \mathbf{V}_D$ we have that $(\dim {U}_{\alpha}^{\min}(\mathbf{v}%
))_{\alpha \in 2^D \setminus \{\emptyset\}}$ is in $\mathbb{N}^{2^{\#D}-1}.$

\begin{definition}
Let $D$ be a finite index set and $T_{D}$ be a partition tree. Let $V_{j}$
be a vector space for $j\in D,$ Assume that $\{\mathbf{V}_{\alpha
}\}_{\alpha \in T_{D}\setminus \{D\}}$ is a representation of the tensor
space $\mathbf{V}_{D}=\left. _{a}\bigotimes_{\alpha \in S(D)}\mathbf{V}%
_{\alpha }\right. $ in the tree-based format. Then for each $\mathbf{v}\in 
\mathbf{V}_{D}=\left. _{a}\bigotimes_{j\in D}V_{j}\right. $ we define its 
\emph{tree-based rank (TB rank)} by the tuple $(\dim {U}_{\alpha }^{\min }(%
\mathbf{v}))_{\alpha \in T_{D}}\in \mathbb{N}^{\#T_{D}}. $
\end{definition}

In order to characterise the tensors $\mathbf{v}\in \mathbf{V}_{D}$
satisfying $(\dim \mathbf{U}_{\alpha }^{\min }(\mathbf{v}))_{\alpha \in
T_{D}}=\mathfrak{r},$ for a fixed $\mathfrak{r}:=(r_{\alpha })_{\alpha \in
T_{D}}\in \mathbb{N}^{\#T_{D}},$ we introduce the following definition.

\begin{definition}
We will say that $\mathfrak{r}:=(r_{\alpha })_{\alpha \in T_{D}}\in \mathbb{N%
}^{\#T_{D}}$ is an \emph{admissible tuple for $T_{D}$,} if there exists $%
\mathbf{v}\in \mathbf{V}_{D}\setminus \{\mathbf{0}\}$ such that $\dim
U_{\alpha }^{\min }(\mathbf{v})=r_{\alpha }$ for all $\alpha \in
T_{D}\setminus \{D\}.$
\end{definition}

Necessary conditions for $\mathfrak{r}\in \mathbb{N}^{\#T_{D}}$ to be
admissible are 
\begin{equation*}
\begin{array}{ll}
r_{D}=1, &  \\ 
r_{\{j\} }\leq \dim V_{j} & \text{ for }\{j\}\in \mathcal{L}(T_{D}), \\ 
r_{\alpha} \le \prod_{\beta \in S(\alpha)}r_{\beta} & \text{ for }\alpha \in
T_D \setminus \mathcal{L}(T_{D}), \\ 
r_{\delta} \le r_{\alpha}\prod_{\substack{ \beta \in S(\alpha)\setminus
\{\delta\}}}r_{\beta} & \text{ for }\alpha \in T_D \setminus \mathcal{L}%
(T_{D}) \text{ and } \delta \in S(\alpha).%
\end{array}%
\end{equation*}

\subsection{The representations of tensors of fixed TB rank}

\label{Sec_Hierar}

Before introducing the representation of a tensor of fixed TB rank we need
to define the set of coefficients of that tensors. To this end, we recall
the definition of the `matricisation' (or `unfolding') of a tensor in a
finite-dimensional setting.

\begin{definition}
\label{Def Malpha}For $\alpha \subset 2^D,$ and $\beta \subset \alpha$ the
map $\mathcal{M}_{\beta}$ is defined as the isomorphism%
\begin{equation*}
\begin{tabular}{llll}
$\mathcal{M}_{\beta}:$ & $\mathbb{R}^{%
\mathop{\mathchoice{\raise-0.22em\hbox{\huge $\times$}} {\raise-0.05em\hbox{\Large $\times$}}{\hbox{\large
$\times$}}{\times}}_{\mu \in \alpha}r_{\mu}} $ & $\rightarrow $ & $\mathbb{R}%
^{\left(\prod_{\mu \in \beta}r_{\mu } \right)\times \left( \prod_{\delta \in
\alpha \setminus \beta}r_{\delta}\right) },$ \\ 
& $C_{(i_{\mu})_{\mu \in \alpha}}$ & $\mapsto $ & $C_{(i_{\mu})_{\mu \in
\beta},(i_{\delta})_{\delta \in \alpha \setminus \beta}}$%
\end{tabular}%
\ \ \ \ \ 
\end{equation*}
\end{definition}

It allows to introduce the following definition.

\begin{definition}
For $\alpha \subset 2^D,$ let $C^{(\alpha)}\in \mathbb{R}^{%
\mathop{\mathchoice{\raise-0.22em\hbox{\huge $\times$}}
{\raise-0.05em\hbox{\Large $\times$}}{\hbox{\large
$\times$}}{\times}}_{\mu \in \alpha}r_{\mu }}.$ We say that $C^{(\alpha)}\in 
\mathbb{R}_*^{%
\mathop{\mathchoice{\raise-0.22em\hbox{\huge $\times$}}
{\raise-0.05em\hbox{\Large $\times$}}{\hbox{\large
$\times$}}{\times}}_{\mu \in \alpha}r_{\mu }}$ if and only if 
\begin{align*}
\prod_{\mu \in \alpha} \left(\det \left(\mathcal{M}_{\mu}(C^{(\alpha)}) 
\mathcal{M}_{\mu}(C^{(\alpha)})^T \right)+\det \left(\mathcal{M}%
_{\mu}(C^{(\alpha)})^T \mathcal{M}_{\mu}(C^{(\alpha)}) \right)\right) > 0,
\end{align*}
where $\mathcal{M}_{\mu}(C^{(\alpha)}) \in \mathbb{R}^{r_{\mu} \times \left(
\prod_{\delta \in \alpha\setminus \{\mu\}}r_{\delta }\right) }$ for each $%
\mu \in \alpha.$ We point out that this condition is equivalent to the condition that all $%
\mathcal{M}_{\mu}(C^{(\alpha)})$ have maximal rank.
\end{definition}

Since the determinant is a continuous function, $\mathbb{R}_*^{%
\mathop{\mathchoice{\raise-0.22em\hbox{\huge $\times$}}
{\raise-0.05em\hbox{\Large $\times$}}{\hbox{\large
$\times$}}{\times}}_{\mu \in \alpha}r_{\mu }}$ is an open set in $\mathbb{R}%
^{%
\mathop{\mathchoice{\raise-0.22em\hbox{\huge $\times$}}
{\raise-0.05em\hbox{\Large $\times$}}{\hbox{\large
$\times$}}{\times}}_{\mu \in \alpha}r_{\mu }}.$

\bigskip

\begin{definition}
Let $T_{D}$ be a given dimension partition tree and fix some tuple $%
\mathfrak{r}\in \mathbb{N}^{T_{D}}$. Then \emph{the set of TBF
tensors of fixed TB rank $\mathfrak{r}$} is defined by 
\begin{equation}
\mathcal{FT}_{\mathfrak{r}}(\mathbf{V}_{D}):=\left\{ \mathbf{v}\in  \mathbf{V}_{D}:\dim {U}_{\alpha }^{\min }(\mathbf{v}%
)=r_{\alpha }\text{ for all }\alpha \in T_{D}\right\}
\end{equation}%
and the \emph{set of TBF tensors of bounded TB rank $\mathfrak{r}$} is
defined by 
\begin{equation}
\mathcal{FT}_{\leq \mathfrak{r}}(\mathbf{V}_{D}):=\left\{ \mathbf{v}\in 
\mathbf{V}_{D}:%
\begin{array}{l}
\dim {U}_{\alpha }^{\min }(\mathbf{v})\leq r_{\alpha }\text{ for all }\alpha
\in T_{D}%
\end{array}%
\right\} .  \label{(Hr}
\end{equation}
\end{definition}

Note that $\mathcal{FT}_{\mathfrak{r}}(\mathbf{V}_{D})=\emptyset $ for an
inadmissible tuple $\mathfrak{r}.$ For $\mathfrak{r},\mathfrak{s}\in \mathbb{%
N}^{T_{D}}$ we write $\mathfrak{s}\leq \mathfrak{r}$ if and only if $%
s_{\alpha }\leq r_{\alpha }$ for all $\alpha \in T_{D}.$ Then we can also
use the following notation 
\begin{equation}  \label{connected_id}
\mathcal{FT}_{\le \mathfrak{r}}(\mathbf{V}_{D}) := \{\mathbf{0}\} \cup
\bigcup_{\mathfrak{s}\leq \mathfrak{r}}\mathcal{FT}_{\mathfrak{s}}(\mathbf{V}%
_{D}).
\end{equation}%
Next we give some useful examples.

\begin{example}[Tucker format]
\label{example_tucker} Consider the dimension partition tree of $D:=\{1,\ldots ,d\},$
where $S(D)=\mathcal{L}(T_{D})=\{\{j\}:1\leq j\leq d\}.$ Let $%
(r_{D},r_{1},\ldots ,r_{d})$ be admissible, then $r_{D}=1$ and $r_{j}\leq
\dim V_{j}$ for $1\leq j\leq d.$ Thus we can write 
\begin{equation*}
\mathcal{FT}_{\le (1,r_{1},\ldots ,r_{d})}(\mathbf{V}_{D})=\mathcal{T}%
_{(r_{1},\ldots ,r_{d})}(\mathbf{V}_{D})
\end{equation*}%
and 
\begin{equation*}
\mathcal{FT}_{(1,r_{1},\ldots ,r_{d})}(\mathbf{V}_{D})=\mathcal{M}%
_{(r_{1},\ldots ,r_{d})}(\mathbf{V}_{D}).
\end{equation*}
\end{example}

\begin{example}[Tensor Train format]
\label{example_TT} Consider a binary partition tree of $D:=\{1,\ldots ,d\}$
given by 
\begin{equation*}
T_{D}=\{D,\{\{j\}:1\leq j\leq d\},\{\{j+1,\ldots ,d\}:1\leq j\leq d-2\}\}.
\end{equation*}%
In particular, $S(\{j,\ldots ,d\})=\{\{j\},\{j+1,\ldots ,d\}\}$ for $1\leq
j\leq d-1.$ This tree-based format is related to the following chain of
inclusions: 
\begin{equation*}
\mathbf{U}_{D}^{\min }(\mathbf{v})\subset \mathbf{U}_{1}^{\min }(\mathbf{v}%
)\otimes _{a}\mathbf{U}_{2\cdots d}^{\min }(\mathbf{v})\subset \mathbf{U}%
_{1}^{\min }(\mathbf{v})\otimes _{a}\mathbf{U}_{2}^{\min }(\mathbf{v}%
)\otimes _{a}\mathbf{U}_{3\cdots d}^{\min }(\mathbf{v})\subset \cdots
\subset \left. _{a}\bigotimes_{j\in D}\mathbf{U}_{j}^{\min }(\mathbf{v}%
)\right. .
\end{equation*}
\end{example}

\bigskip

The next result gives us a characterisation of the tensors in $\mathcal{FT}_{%
\mathfrak{r}}(\mathbf{V}_{D}).$

\bigskip

\begin{theorem}
\label{characterization_FT} Let $V_{j}$ be vector spaces for $j\in D$ and $%
T_{D}$ be a dimension partition tree of $D$. Then the following two statements are equivalent.

\begin{enumerate}
\item[(a)] $\mathbf{v}\in \mathcal{FT}_{\mathfrak{r}}(\mathbf{V}_{D}).$

\item[(b)] There exists $\{u_{i_k}^{(k)}:1 \le i_k \le r_k\}$ a basis of $%
U_k^{\min}(\mathbf{v})$ for $k\in \mathcal{L}(T_D)$ where for each $\mu \in
T_D \setminus \mathcal{L}(T_D)$ there exists a unique ${C}^{(\mu)}
\in \mathbb{R}_{\ast }^{r_{\mu} \times 
\mathop{\mathchoice{\raise-0.22em\hbox{\huge $\times$}}
{\raise-0.05em\hbox{\Large $\times$}}{\hbox{\large
$\times$}}{\times}}_{\beta \in S(\mu)}r_{\beta }}$ such that the set $\{%
\mathbf{u}_{i_{\mu }}^{(\mu )}:1\leq i_{\mu }\leq r_{\alpha }\},$ with
\begin{equation}
\mathbf{u}_{i_{\mu }}^{(\mu )}=\sum_{\substack{ 1\leq i_{\beta }\leq
r_{\beta }  \\ \beta \in S(\mu )}}C_{i_{\mu },(i_{\beta })_{\beta \in S(\mu
)}}^{(\mu )}\bigotimes_{\beta \in S(\mu )}\mathbf{u}_{i_{\beta }}^{(\beta )}
\label{TBRT2}
\end{equation}%
for $1\leq i_{\mu }\leq r_{\mu },$ is a basis of $U_{\mu}^{\min }(\mathbf{v}%
) $ and 
\begin{equation}
\mathbf{v}=\sum_{\substack{ 1\leq i_{\alpha }\leq r_{\alpha }  \\ \alpha \in
S(D)}}C_{(i_{\alpha })_{\alpha \in S(D)}}^{(D)}\bigotimes_{\alpha \in S(D)}%
\mathbf{u}_{i_{\alpha }}^{(\alpha )}.  \label{TBRT1}
\end{equation}
\end{enumerate}

Furthermore, if $\mathbf{v}\in \mathcal{FT}_{\mathfrak{r}}(\mathbf{V}_{D})$
then \eqref{TBRT1} can be written for each $\alpha \in S(D)$ as 
\begin{equation}  \label{binary_representation_level0}
\mathbf{v}=\sum_{\substack{ 1\leq i_{\alpha }\leq r_{\alpha }}}\mathbf{u}%
_{i_{\alpha }}^{(\alpha )}\otimes \mathbf{U}_{i_{\alpha }}^{(\alpha )}.
\end{equation}%
where $U_{S(D)\setminus \{\alpha\}}^{\min}(\mathbf{v}) = \mathrm{span}\, \{%
\mathbf{U}_{i_{\alpha }}^{(\alpha )}:1\leq i_{\alpha }\leq r_{\alpha }\},$
and for each $\mu \in T_D \setminus \mathcal{L}(T_D)$ we have 
\begin{equation*}
\mathbf{u}_{i_{\mu }}^{(\mu )}=\sum_{\substack{ 1\leq i_{\beta }\leq
r_{\beta }}}\mathbf{u}_{i_{\beta }}^{(\beta )}\otimes \mathbf{U}_{i_{\mu
},i_{\beta }}^{(\beta )},
\end{equation*}%
where 
\begin{equation}
\mathbf{U}_{i_{\mu },i_{\beta }}^{(\beta )}:=\sum_{\substack{ 1\leq
i_{\delta }\leq r_{\delta }  \\ \delta \in S(\mu )  \\ \delta \neq \beta }}%
C_{i_{\mu },(i_{\delta })_{\delta \in S(\mu )}}^{(\mu )}\bigotimes 
_{\substack{ \delta \in S(\mu )  \\ \delta \neq \beta}}\mathbf{u}_{i_{\delta
}}^{(\delta )},  \label{Ualphas1}
\end{equation}%
and $U_{S(\mu) \setminus \{\beta\} }^{\min }(\mathbf{v}) = U_{S(\mu)
\setminus \{\beta\} }^{\min }(\mathbf{u}_{i_{\mu}}^{(\mu)})=\mathrm{span}%
\,\left\{ \mathbf{U}_{i_{\mu },i_{\beta }}^{(\beta )}: 1 \le i_{\beta} \le
r_{\beta} \right\}$ for $1\leq i_{\mu }\leq r_{\mu }.$
\end{theorem}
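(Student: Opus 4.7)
The plan is to prove both implications by induction on the dimension partition tree $T_D$, working from the leaves upward to the root, with the two central ingredients being Proposition \ref{inclusin_Umin} (which gives $U_\mu^{\min}(\mathbf{v})\subset \left._{a}\bigotimes_{\beta\in S(\mu)}U_\beta^{\min}(\mathbf{v})\right.$) and Proposition \ref{(Ualpha in Tensor Uj coro} (which expresses each $U_\beta^{\min}(\mathbf{v})$ as the span of partial contractions of vectors in $U_\mu^{\min}(\mathbf{v})$ when $\beta\in S(\mu)$). The tuple $\mathfrak{r}$ fixes $\dim U_\mu^{\min}(\mathbf{v}) = r_\mu$ for every $\mu\in T_D$, and the maximal-rank conditions defining $\mathbb{R}_{\ast}$ are chosen precisely so as to enforce those dimensions on the coefficient tensors $C^{(\mu)}$.

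For (a)$\Rightarrow$(b), I first fix arbitrary bases $\{u^{(k)}_{i_k}\}$ of $U^{\min}_k(\mathbf{v})$ at each leaf $k$, and process internal nodes in order of non-decreasing height. Once bases $\{\mathbf{u}^{(\beta)}_{i_\beta}\}$ of $U^{\min}_\beta(\mathbf{v})$ are available for every $\beta\in S(\mu)$, Proposition \ref{inclusin_Umin} allows each chosen basis vector of $U^{\min}_\mu(\mathbf{v})$ to be expanded uniquely as \eqref{TBRT2} with a well-defined coefficient tensor $C^{(\mu)}$, uniqueness following from the linear independence of the product family $\bigotimes_{\beta\in S(\mu)} \mathbf{u}^{(\beta)}_{i_\beta}$. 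To verify $C^{(\mu)}\in \mathbb{R}_{\ast}$, the matricisation $\mathcal{M}_\mu(C^{(\mu)})$ has row rank $r_\mu$ because its rows are the coordinates of the linearly independent $\mathbf{u}^{(\mu)}_{i_\mu}$; and for each $\beta\in S(\mu)$, Proposition \ref{(Ualpha in Tensor Uj coro}(a) applied with $\alpha = \mu$ identifies $U^{\min}_\beta(\mathbf{v})$ with the span of the contractions $(id_\beta\otimes \boldsymbol{\varphi})(\mathbf{u}^{(\mu)}_{i_\mu})$ for $\boldsymbol{\varphi}\in \left._{a}\bigotimes_{\gamma\in S(\mu)\setminus\{\beta\}}U^{\min}_\gamma(\mathbf{v})'\right.$. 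Pairing with the dual bases at the sibling nodes turns these contractions into exactly the columns of $\mathcal{M}_\beta(C^{(\mu)})$ read in $\{\mathbf{u}^{(\beta)}_{i_\beta}\}$, so that matricisation must have column rank $r_\beta$. The root case is handled analogously using \eqref{TBRT1} as the defining expansion for $C^{(D)}$, where $r_D=1$.

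For (b)$\Rightarrow$(a) I run the same induction in reverse. The maximal row rank of $\mathcal{M}_\mu(C^{(\mu)})$ together with the inductively established linear independence of $\bigotimes_{\beta\in S(\mu)} \mathbf{u}^{(\beta)}_{i_\beta}$ forces $\{\mathbf{u}^{(\mu)}_{i_\mu}\}$ to be linearly independent; meanwhile, \eqref{TBRT1} exhibits $\mathbf{v}$ as an element of $\left._{a}\bigotimes_{\alpha\in S(D)}\mathrm{span}\{\mathbf{u}^{(\alpha)}_{i_\alpha}\}\right.$, so by minimality of $U^{\min}_\mu(\mathbf{v})$ (and iterated use of the inclusion) one obtains $U^{\min}_\mu(\mathbf{v})\subset \mathrm{span}\{\mathbf{u}^{(\mu)}_{i_\mu}\}$, giving the upper bound $\dim U^{\min}_\mu(\mathbf{v})\le r_\mu$. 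The reverse inequality follows from the full column-rank condition on $\mathcal{M}_\beta(C^{(\nu)})$ at the parent $\nu$ of $\mu$ (or $\nu=D$), via Proposition \ref{(Ualpha in Tensor Uj coro} read in the opposite direction. The ``furthermore'' statement is then a direct rewriting of \eqref{TBRT1} and \eqref{TBRT2}: for a distinguished $\alpha\in S(D)$ (respectively a distinguished son $\beta$ of an internal node $\mu$) one factors that tensor leg out and collects the remaining factors together with the coefficients into $\mathbf{U}^{(\alpha)}_{i_\alpha}$ (respectively $\mathbf{U}^{(\beta)}_{i_\mu, i_\beta}$ as in \eqref{Ualphas1}); a further appeal to Proposition \ref{(Ualpha in Tensor Uj coro} identifies their span with the required minimal subspace $U^{\min}_{S(\mu)\setminus\{\beta\}}(\mathbf{u}^{(\mu)}_{i_\mu}) = U^{\min}_{S(\mu)\setminus\{\beta\}}(\mathbf{v})$, the last equality using that these minimal subspaces are stable under the partial contractions producing the basis vectors $\mathbf{u}^{(\mu)}_{i_\mu}$ from $\mathbf{v}$.

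The hard part will be the bookkeeping that matches each maximal-rank condition encoded in $\mathbb{R}_{\ast}$ with the correct dimensional statement on a specific minimal subspace, and in particular the symmetric use of Proposition \ref{(Ualpha in Tensor Uj coro} at every internal node in both directions (to construct the coefficients $C^{(\mu)}$ from $\mathbf{v}$ and to recover $\dim U^{\min}_\beta(\mathbf{v})$ from the column rank of $\mathcal{M}_\beta(C^{(\mu)})$). A secondary subtlety is that the leaf case (no $C$-tensor) and the root case (with $r_D=1$, so $\mathcal{M}_D(C^{(D)})$ is trivial) must be isolated from the generic internal-node argument.
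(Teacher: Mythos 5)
Your proposal is correct and follows essentially the same route as the paper: choosing bases at the leaves, expanding the basis vectors of each $U_{\mu }^{\min }(\mathbf{v})$ via Proposition~\ref{inclusin_Umin}, reading off the row rank of $\mathcal{M}_{\mu }(C^{(\mu )})$ from the linear independence of the $\mathbf{u}_{i_{\mu }}^{(\mu )}$, and obtaining the column ranks $r_{\beta }$ by contracting with dual bases at the sibling nodes through Proposition~\ref{(Ualpha in Tensor Uj coro}. The only noticeable difference is that you give a genuine inductive argument for (b)$\Rightarrow$(a), whereas the paper dismisses it as immediate from the definition (which is legitimate, since statement (b) as phrased already asserts that the constructed sets are bases of the minimal subspaces); your version proves the slightly stronger and more useful fact that the maximal-rank conditions on the coefficient tensors alone force the correct TB rank.
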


\begin{proof}
Assuming first that (b) is true, (a) follows by the definition of $\mathcal{%
FT}_{\mathfrak{r}}(\mathbf{V}_{D})$. Now, assume that (a) holds. Since $%
\mathbf{v}\in \left. _{a}\bigotimes_{\alpha \in S(D)}U_{\alpha }^{\min }(%
\mathbf{v})\right. ,$ there exists a unique $C^{(D)}\in \mathbb{R}^{%
\mathop{\mathchoice{\raise-0.22em\hbox{\huge $\times$}}
{\raise-0.05em\hbox{\Large $\times$}}{\hbox{\large
$\times$}}{\times}}_{\alpha \in S(D)}r_{\alpha }}$ such that 
\begin{equation*}
\mathbf{v}=\sum_{\substack{ 1\leq i_{\alpha }\leq r_{\alpha }  \\ \alpha \in
S(D)}}C_{(i_{\alpha })_{\alpha \in S(D)}}^{(D)}\bigotimes_{\alpha \in S(D)}%
\mathbf{u}_{i_{\alpha }}^{(\alpha )},
\end{equation*}%
where $\{\mathbf{u}_{i_{\alpha }}^{(\alpha )}:1\leq i_{\alpha }\leq
r_{\alpha }\}$ is a basis\footnote{There are a small issue with the bold notation when $\alpha \in S(\mu)$ and $\alpha$ is a leaf, then $u_{i_\alpha}^{(\alpha)}$ should not be bold.} of $U_{\alpha }^{\min }(\mathbf{v}).$ For each $%
\alpha \in S(D)$ we set 
\begin{equation}
\mathbf{U}_{i_{\alpha }}^{(\alpha )}:=\sum_{\substack{ 1\leq i_{\beta }\leq
r_{\beta }  \\ \beta \in S(D)  \\ \beta \neq \alpha }}C_{(i_{\beta })_{\beta
\in S(D)}}^{(D)}\bigotimes_{\substack{ \beta \in S(D)  \\ \beta \neq \alpha 
}}u_{i_{\beta }}^{(\beta )},  \label{Ualphas}
\end{equation}%
then \eqref{TBRT1} can be written as \eqref{binary_representation_level0}.
From the definition of minimal subspaces we can write 
\begin{equation*}
U_{S(D)\setminus \{\alpha \}}^{\min }(\mathbf{v})=\{(\mathbf{id}_{[\alpha
]}\otimes \varphi _{\alpha })(\mathbf{v}):\varphi _{\alpha }\in U_{\alpha
}^{\min }(\mathbf{v})^{\ast }\}.
\end{equation*}%
We claim that $\{\mathbf{U}_{i_{\alpha }}^{(\alpha )}:1\leq i_{\alpha }\leq
r_{\alpha }\}$ is a basis of $U_{S(D)\setminus \{\alpha \}}^{\min }(\mathbf{v%
}).$ To prove the claim assume that $\mathbf{U}_{1}^{(\alpha )}$ is a linear
combination of $\{\mathbf{U}_{i_{\alpha }}^{(\alpha )}:2\leq i_{\alpha }\leq
r_{\alpha }\},$ then $\mathbf{U}_{1}^{(\alpha )}=\sum_{2\leq i_{\alpha }\leq
r_{\alpha }}\lambda _{i_{\alpha }}\mathbf{U}_{i_{\alpha }}^{(\alpha )}$
where $\lambda _{i_{\alpha }}\neq 0$ for some $2\leq i_{\alpha }\leq
r_{\alpha }.$ Thus, 
\begin{equation*}
\mathbf{v}=\sum_{\substack{ 2\leq i_{\alpha }\leq r_{\alpha }}}(\mathbf{u}%
_{i_{\alpha }}^{(\alpha )}+\lambda _{i_{\alpha }}\mathbf{u}_{1}^{(\alpha
)})\otimes \mathbf{U}_{i_{\alpha }}^{(\alpha )},
\end{equation*}%
since $\{\mathbf{u}_{i_{\alpha }}^{(\alpha )}+\lambda _{i_{\alpha }}\mathbf{u%
}_{1}^{(\alpha )}:2\leq i_{\alpha }\leq r_{\alpha }\}$ are linearly
independent we have $\dim U_{\alpha }^{\min }(\mathbf{v})<r_{\alpha },$ a
contradiction. Since $\{\mathbf{U}_{i_{\alpha }}^{(\alpha )}:1\leq i_{\alpha
}\leq r_{\alpha }\}$ are linearly independent for each $\alpha \in S(D),$
from \eqref{binary_representation_level0} we have that 
\begin{equation*}
U_{S(D)\setminus \{\alpha \}}^{\min }(\mathbf{v})=\mathrm{span}\,\{\mathbf{U}%
_{i_{\alpha }}^{(\alpha )}:1\leq i_{\alpha }\leq r_{\alpha }\},
\end{equation*}%
and from \eqref{Ualphas}, we deduce that $\mathcal{M}_{\alpha }(C^{(D)})$
maps a basis into another one for each $\alpha \in S(D)$ and hence $%
C^{(D)}\in \mathbb{R}_{\ast }^{%
\mathop{\mathchoice{\raise-0.22em\hbox{\huge $\times$}}
{\raise-0.05em\hbox{\Large $\times$}}{\hbox{\large
$\times$}}{\times}}_{\beta \in S(D)}r_{\beta }}.$ In consequence, when $S(D)=%
\mathcal{L}(T_{D})$ statement (a) holds and then \eqref{TBRT1} gives us the
classical Tucker representation.

Next, assume $S(D)\neq \mathcal{L}(T_{D}).$ Then, for each $\mu \in
T_{D}\setminus \{D\}$ such that $S(\mu )\neq \emptyset ,$ thanks to
Proposition~\ref{inclusin_Umin}, we have 
\begin{equation*}
U_{\mu }^{\min }(\mathbf{v})\subset \left. _{a}\bigotimes_{\beta \in S(\mu
)}U_{\beta }^{\min }(\mathbf{v})\right. .
\end{equation*}%
Consider $\{\mathbf{u}_{i_{\mu }}^{(\mu )}:1\leq i_{\mu }\leq r_{\mu }\}$ a
basis of $U_{\mu }^{\min }(\mathbf{v})$ and $\{\mathbf{u}_{i_{\beta
}}^{(\beta )}:1\leq i_{\beta }\leq r_{\beta }\}$ a basis of $U_{\beta
}^{\min }(\mathbf{v})$ for $\beta \in S(\mu )$ and $1\leq i_{\mu }\leq
r_{\mu }.$ Then, there exists a unique $C^{(\mu )}\in \mathbb{R}^{r_{\mu
}\times \left( 
\mathop{\mathchoice{\raise-0.22em\hbox{\huge $\times$}}
{\raise-0.05em\hbox{\Large $\times$}}{\hbox{\large
$\times$}}{\times}}_{\beta \in S(\alpha )}r_{\beta }\right) }$ such that 
\begin{equation*}
\mathbf{u}_{i_{\mu }}^{(\mu )}=\sum_{\substack{ 1\leq i_{\beta }\leq
r_{\beta }  \\ \beta \in S(\mu )}}C_{i_{\mu },(i_{\beta })_{\beta \in S(\mu
)}}^{(\mu )}\bigotimes_{\beta \in S(\mu )}\mathbf{u}_{i_{\beta }}^{(\beta )},
\end{equation*}%
for $1\leq i_{\mu }\leq r_{\mu }.$ Since $\{\mathbf{u}_{i_{\mu }}^{(\mu
)}:1\leq i_{\mu }\leq r_{\mu }\}$ is a basis, we can identify $C^{(\mu )}$
with the matrix $\mathcal{M}_{\mu }(C^{(\mu )}),$ in the non-compact Stiefel
manifold $\mathbb{R}_{\ast }^{r_{\mu }\times \left( \prod_{\beta \in S(\mu
)}r_{\beta }\right) },$ which is the set of matrices in $\mathbb{R}^{r_{\mu
}\times \left( \prod_{\beta \in S(\alpha )}r_{\beta }\right) }$ whose rows
are linearly independent (see 3.1.5 in \cite{AMS}). From \eqref{TBRT1} and %
\eqref{TBRT2} we obtain the Tucker representation of $\mathbf{v},$ when $%
S(D)\neq \mathcal{L}(T_{D}),$ as 
\begin{equation}
\mathbf{v}=\sum_{\substack{ 1\leq i_{k}\leq r_{k}  \\ k\in \mathcal{L}%
(T_{D}) }}\left( \sum_{\substack{ 1\leq i_{\alpha }\leq r_{\alpha }  \\ %
\alpha \in T_{D}\setminus \{D\}  \\ \alpha \notin \mathcal{L}(T_{D})}}%
C_{(i_{\alpha })_{\alpha \in S(D)}}^{(D)}\prod_{\substack{ \mu \in
T_{D}\setminus \{D\}  \\ S(\mu )\neq \emptyset }}C_{i_{\mu },(i_{\beta
})_{\beta \in S(\mu )}}^{(\mu )}\right) \bigotimes_{k\in \mathcal{L}%
(T_{D})}u_{i_{k}}^{(k)},  \label{Tucker_tree_representation}
\end{equation}%
here $\{u_{i_{k}}^{(k)}:1\leq i_{k}\leq r_{k}\}$ is a basis of $U_{k}^{\min
}(\mathbf{v})$ for each $k\in \mathcal{L}(T_{D}).$ To conclude, we claim
that $C^{(\mu )}\in \mathbb{R}_{\ast }^{r_{\mu }\times \left( 
\mathop{\mathchoice{\raise-0.22em\hbox{\huge $\times$}}
{\raise-0.05em\hbox{\Large $\times$}}{\hbox{\large
$\times$}}{\times}}_{\beta \in S(\alpha )}r_{\beta }\right) }$ for all $\mu
\in T_{D}\setminus \mathcal{L}(T_{D}).$ To prove the claim we proceed in a
similar way as in the root case, for each fixed $1\leq i_{\mu }\leq r_{\mu }$
and $\beta \in S(\mu ),$ we introduce \eqref{Ualphas1}. Hence, we can write (%
\ref{TBRT2}) as 
\begin{equation*}
\mathbf{u}_{i_{\mu }}^{(\mu )}=\sum_{\substack{ 1\leq i_{\beta }\leq
r_{\beta }}}\mathbf{u}_{i_{\beta }}^{(\beta )}\otimes \mathbf{U}_{i_{\mu
},i_{\beta }}^{(\beta )},
\end{equation*}%
where $1\leq i_{\mu }\leq r_{\mu }$ and $\beta \in S(\mu ).$ From
Proposition~\ref{(Ualpha in Tensor Uj coro}(a), we have 
\begin{align*}
U_{\beta }^{\min }(\mathbf{v})& =\mathrm{span}\,\left\{ (id_{\beta }\otimes
\varphi ^{(\mu \setminus \beta )})(\mathbf{u}_{i_{\mu }}^{(\mu )}):1\leq
i_{\mu }\leq r_{\mu }\text{ and }\varphi ^{(\mu \setminus \beta )}\in \left.
_{a}\bigotimes_{\delta \in S(\mu )\setminus \{\beta \}}U_{\delta }^{\min }(%
\mathbf{v})^{\prime }\right. \right\} \\
& =\mathrm{span}\,\left\{ (id_{\beta }\otimes \varphi ^{(\mu \setminus \beta
)})(\mathbf{u}_{i_{\mu }}^{(\mu )}):1\leq i_{\mu }\leq r_{\mu }\text{ and }%
\varphi ^{(\mu \setminus \beta )}\in \left. _{a}\bigotimes_{\delta \in S(\mu
)\setminus \{\beta \}}\mathbf{V}_{\delta }^{\prime }\right. \right\} ,
\end{align*}%
and hence $U_{\beta }^{\min }(\mathbf{u}_{i_{\mu }}^{(\mu )})\subset
U_{\beta }^{\min }(\mathbf{v})$ for $1\leq i_{\mu }\leq r_{\mu }.$ Let us
consider $\{\varphi _{i_{\beta }}^{(\beta )}:1\leq i_{\beta }\leq r_{\beta
}\}\subset U_{\beta }^{\min }(\mathbf{v})^{\prime }$ a dual basis of the
finite-dimensional space $\{\mathbf{u}_{i_{\beta }}^{(\beta )}:1\leq
i_{\beta }\leq r_{\beta }\},$ that is, $\varphi _{i_{\beta }}^{(\beta )}(%
\mathbf{u}_{j_{\beta }}^{(\beta )})=\delta _{i_{\beta },j_{\beta }}$ for all 
$1\leq i_{\beta },j_{\beta }\leq r_{\beta },$ and $\beta \in S(\mu ).$ Thus,
we have 
\begin{equation*}
\left( id_{\beta }\otimes \bigotimes_{\substack{ \delta \in S(\mu )  \\ %
\delta \neq \beta }}\varphi _{j_{\delta }}^{(\delta )}\right) (\mathbf{u}%
_{i_{\mu }}^{(\mu )})=\sum_{\substack{ 1\leq j_{\beta }\leq r_{\beta }}}%
C_{i_{\mu },(j_{\delta })_{\delta \in S(\mu )}}^{(\mu )}\mathbf{u}_{j_{\beta
}}^{(\beta )}\in U_{\beta }^{\min }(\mathbf{v})
\end{equation*}%
for each multi-index $(j_{\delta })_{\delta \in S(\mu )\setminus \beta }\in 
\mathop{\mathchoice{\raise-0.22em\hbox{\huge $\times$}}
{\raise-0.05em\hbox{\Large $\times$}}{\hbox{\large
$\times$}}{\times}}_{\substack{ \delta \in S(\mu )  \\ \delta \neq \beta }}%
\{1,\ldots ,r_{\delta }\}.$ Then, for $\beta \in S(\mu )$, 
\begin{equation*}
U_{\beta }^{\min }(\mathbf{v})=\mathrm{span}\,\left\{ \left( id_{\beta
}\otimes \bigotimes_{\substack{ \delta \in S(\mu )  \\ \delta \neq \beta }}%
\varphi _{j_{\delta }}^{(\delta )}\right) (\mathbf{u}_{i_{\mu }}^{(\mu
)}):(j_{\delta })_{\delta \in S(\mu )\setminus \beta }\in 
\mathop{\mathchoice{\raise-0.22em\hbox{\huge $\times$}} {\raise-0.05em\hbox{\Large $\times$}}{\hbox{\large
$\times$}}{\times}}_{\substack{ \delta \in S(\mu )  \\ \delta \neq \beta }}%
\{1,\ldots ,r_{\delta }\},\,1\leq i_{\mu }\leq r_{\mu }\right\}
\end{equation*}%
with $\dim U_{\beta }^{\min }(\mathbf{v})=r_{\beta }$ if and only if $%
\mathrm{rank}\,\mathcal{M}_{\beta }(C^{(\mu )})=r_{\beta }$ for $\beta \in
S(\mu ).$ Finally, we have $C^{(\mu )}\in \mathbb{R}_{\ast }^{r_{\mu }\times
\left( 
\mathop{\mathchoice{\raise-0.22em\hbox{\huge $\times$}}
{\raise-0.05em\hbox{\Large $\times$}}{\hbox{\large
$\times$}}{\times}}_{\delta \in S(\mu )}r_{\delta }\right) }$ for all $\mu
\in T_{D}\setminus \mathcal{L}(T_{D})$ and the claim follows. Thus,
statement (b) holds.

\bigskip

To end the proof of the theorem, observe that in a similar way as above and
by using $id_{S(\mu) \setminus \beta} \otimes \varphi _{j_{\beta}}^{(\beta)}$
for $1 \le j_{\beta} \le r_{\beta},$ over $\mathbf{u}_{i_{\mu}}^{(\mu)}$ it
can be proved that 
\begin{equation*}
U_{S(\mu) \setminus \{\beta\} }^{\min }(\mathbf{u}_{i_{\mu}}^{(\mu)})=%
\mathrm{span}\,\left\{ \mathbf{U}_{i_{\mu },i_{\beta }}^{(\beta )}: 1 \le
i_{\beta} \le r_{\beta} \right\}
\end{equation*}
for $1\leq i_{\mu }\leq r_{\mu }$ and also 
\begin{equation*}
U_{S(\mu) \setminus \{\beta\} }^{\min }(\mathbf{v})=\mathrm{span}\,\left\{ 
\mathbf{U}_{i_{\mu },i_{\beta }}^{(\beta )}: 1 \le i_{\beta} \le r_{\beta},
\, 1\leq i_{\mu }\leq r_{\mu }. \right\}.
\end{equation*}
Now, we claim that $\left\{ \mathbf{U}_{i_{\mu },i_{\beta }}^{(\beta )}: 1
\le i_{\beta} \le r_{\beta} \right\}$ are linearly independent in $\left._a
\bigotimes_{\delta \neq \beta } \mathbf{V}_{\delta} \right.$ for $1 \le
i_{\mu} \le r_{\mu}$ and $\beta \in S(\mu).$ Otherwise, there exist $%
\lambda_{i_{\beta}}$ for $1 \le i_{\beta} \le r_{\beta}$ not all identically
zero such that $\sum_{1 \le i_{\beta} \le r_{\beta}}\lambda_{i_{\beta}} 
\mathbf{U}_{i_{\mu },i_{\beta }}^{(\beta )} = \mathbf{0}.$ Take $\mathbf{w}%
_{\beta} \in \mathbf{V}_{\beta}\setminus \{\mathbf{0}\}$ and then 
\begin{equation*}
\mathbf{w}_{\beta} \otimes \left(\sum_{1 \le i_{\beta} \le
r_{\beta}}\lambda_{i_{\beta}} \mathbf{U}_{i_{\mu },i_{\beta }}^{(\beta
)}\right) = \sum_{1 \le i_{\beta} \le r_{\beta}}\lambda_{i_{\beta}} \mathbf{w%
}_{\beta} \otimes \mathbf{U}_{i_{\mu },i_{\beta }}^{(\beta )} = \mathbf{0}.
\end{equation*}
Observe that 
\begin{equation*}
\sum_{\substack{ 1\leq i_{\beta }\leq r_{\beta }}} \left(
\lambda_{i_{\beta}} \mathbf{w}_{\beta} \otimes \mathbf{U}_{i_{\mu},i_{\beta
}}^{(\beta )}\right) = \sum_{\substack{ 1\leq i_{\delta }\leq r_{\delta } 
\\ \delta \in S(\mu ) }} C_{i_{\mu },(i_{\delta })_{\delta \in S(\mu
)}}^{(\mu )} \lambda_{i_{\beta}} \mathbf{w}_{\beta} \otimes \left(\bigotimes 
_{\substack{ \delta \neq \beta  \\ \delta \in S(\mu )}}\mathbf{u}_{i_{\delta
}}^{(\delta )}\right) = \mathbf{0},
\end{equation*}
for $1 \le i_{\mu} \le r_{\mu}$ and $\beta \in S(\mu ),$ take a dual basis
of $\{\varphi_{i_{\delta }}^{(\delta )}: 1 \le i_{\delta} \le r_{\delta}\}
\subset \mathbf{V}_{\delta}^*$ of $\{\mathbf{u}_{i_{\delta }}^{(\delta )}: 1
\le i_{\delta} \le r_{\delta}\} \subset \mathbf{V}_{\delta} $ where $%
\varphi_{i_{\delta }}^{(\delta )}(\mathbf{u}_{j_{\delta }}^{(\delta )}) =
\delta_{i_{\delta},j_{\delta}}$ for all $1 \le i_{\delta},j_{\delta} \le
r_{\delta}.$ Then we obtain 
\begin{align*}
id_{\beta} \otimes \left(\bigotimes_{\substack{ \delta \in S(\mu ) \setminus
\{\beta\}}}\varphi_{i_{\delta }}^{(\delta )}\right) \left(\sum _{\substack{ %
1\leq i_{\beta }\leq r_{\beta }}} \left( \lambda_{i_{\beta}} \mathbf{w}%
_{\beta} \otimes \mathbf{U}_{i_{\mu},i_{\beta }}^{(\beta )}\right)\right) =
\sum_{1\le i_\beta\le r_\beta } C_{i_{\mu },(i_{\delta })_{\delta \in S(\mu
)}}^{(\mu )} \lambda_{i_{\beta}} \mathbf{w}_{\beta} = \mathbf{0},
\end{align*}
that is, $\mathcal{M}_{\beta}(C^{(\mu)})^T \mathbf{z}_{\beta} = \mathbf{0},$
where $\mathbf{z}_{\beta}:=(\lambda_{i_{\beta}} \mathbf{w}%
_{\beta})_{i_{\beta}=1}^{r_{\beta}}.$ Since $\mathrm{rank}\, \mathcal{M}%
_{\beta}(C^{(\mu)}) = r_{\beta},$ then $\dim \mathrm{Ker}\, \mathcal{M}%
_{\beta}(C^{(\mu)})^T = 0,$ and hence $\mathbf{z}_{\beta}
=(\lambda_{i_{\beta}} \mathbf{w}_{\beta} )_{i_{\beta}=1}^{r_{\beta}} = (%
\mathbf{0})_{i_{\beta}=1}^{r_{\beta}}$ for $\beta \in S(\gamma),$ a
contradiction. In consequence, 
\begin{equation*}
\dim U_{S(\mu) \setminus \{\beta\} }^{\min }(\mathbf{u}_{i_{\mu}}^{(\mu)}) =
\dim U_{\beta }^{\min }(\mathbf{u}_{i_{\mu}}^{(\mu)}) = r_{\beta}
\end{equation*}
for $1 \le i_{\mu} \le r_{\mu}$ and $\beta \in S(\mu ).$ Hence $U_{\beta
}^{\min }(\mathbf{v}) = U_{\beta }^{\min }(\mathbf{u}_{i_{\mu}}^{(\mu)}) $
holds for $1 \le i_{\mu} \le r_{\mu}$ and $\beta \in S(\mu ).$
\end{proof}

\section{Geometric structures for TBF tensors}
\label{sec:banach_manifold_tucker_fixed_rank}
Before characterising the "local coordinates" of a tensor $\mathbf{v}\in 
\mathcal{FT}_{\mathfrak{r}}(\mathbf{V}_{D})$ we need to introduce the
Banach-Grassmann manifold and its relatives.

\subsection{The Grassmann-Banach manifold and its relatives}

\label{preliminary}

In the following, $X$ is a Banach space with norm $\left\Vert
\cdot\right\Vert .$ The dual norm $\left\Vert \cdot\right\Vert _{X^{\ast}}$
of $X^{\ast}$ is%
\begin{equation}
\left\Vert \varphi\right\Vert _{X^{\ast}}=\sup\left\{ \left\vert
\varphi(x)\right\vert :x\in X\text{ with }\left\Vert x\right\Vert
_{X}\leq1\right\} =\sup\left\{ \left\vert \varphi(x)\right\vert /\left\Vert
x\right\Vert _{X}:0\neq x\in X\right\} .  \label{(Dualnorm}
\end{equation}

By $\mathcal{L}(X,Y)$ we denote the space of continuous linear mappings from 
$X$ into $Y.$ The corresponding operator norm is written as $\left\Vert
\cdot\right\Vert _{Y\leftarrow X}.$

\begin{definition}
Let $X$ be a Banach space. We say that $P \in \mathcal{L}(X,X)$ is a
projection if $P\circ P = P.$ In this situation we also say that $P$ is a
projection from $X$ onto $P(X):=\mathrm{Im}\,P$ parallel to $\mathrm{Ker}\, P.$
\end{definition}

From now on, we will denote $P \circ P = P^2.$ Observe that if $P$ is a
projection then $I_X-P$ is also a projection. Moreover, $I_X-P$ is parallel
to $\mathrm{Im}\,P.$

Observe that each projection gives rise to a pair of closed subspaces,
namely $U=\mathrm{Im}\,P$ and $V=\mathrm{Ker}\,P$ such that $X=U\oplus V.$
It allows us to introduce the following two definitions.

\begin{definition}
We will say that a subspace $U$ of a Banach space $X$ is a complemented
subspace if $U$ is closed and there exists $V$ in $X$ such that $X=U\oplus V$
and $V$ is also a closed subspace of $X.$ This subspace $V$ is called a
(topological) complement of $U$ and $(U,V)$ is a pair of complementary
subspaces.
\end{definition}

Corresponding to each pair $(U,V)$ of complementary subspaces, there is a
projection $P$ mapping $X$ onto $U$ along $V,$ defined as follows. Since for
each $x$ there exists a unique decomposition $x=u+v,$ where $u\in U$ and $%
v\in V,$ we can define a linear map $P(u+v):=u,$ where $\mathrm{Im}\,P=U$
and $\mathrm{Ker}\,P=V.$ Moreover, $P^{2}=P.$

\begin{definition}
The \emph{Grassmann manifold} of a Banach space $X,$ denoted by $\mathbb{G}%
(X),$ is the set of all complemented subspaces of $X.$
\end{definition}

$U\in \mathbb{G}(X)$ holds if and only if $U$ is a closed subspace and there
exists a closed subspace $V$ in $X$ such that $X=U\oplus V.$ Observe that $X$
and $\{0\}$ are in $\mathbb{G}(X).$ Moreover, by the proof of Proposition
4.2 of \cite{FHHM}, the following result can be shown.

\begin{proposition}
\label{characterize_P} Let $X$ be a Banach space. The following conditions
are equivalent:

\begin{enumerate}
\item[(a)] $U \in \mathbb{G}(X).$

\item[(b)] There exists $P \in \mathcal{L}(X,X)$ such that $P^2=P$ and $%
\mathrm{Im}\, P = U.$

\item[(c)] There exists $Q \in \mathcal{L}(X,X)$ such that $Q^2=Q$ and $%
\mathrm{Ker}\, Q = U.$
\end{enumerate}
\end{proposition}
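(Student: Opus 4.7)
The plan is to prove the chain (a) $\Rightarrow$ (b) $\Rightarrow$ (a) and then the easy equivalence (b) $\Leftrightarrow$ (c) via the substitution $Q = I_X - P$. Since the discussion immediately preceding the proposition already constructs the algebraic projection $P(u+v) := u$ associated with a pair of complementary subspaces and verifies $\mathrm{Im}\,P = U$ and $P^2 = P$, only the topological content needs attention.

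For (a) $\Rightarrow$ (b), I would show that the algebraically defined $P$ is continuous by invoking the closed graph theorem. Suppose $x_n \to x$ in $X$ and $P x_n \to y$. Since $Px_n \in U$ and $U$ is closed, $y \in U$. Moreover $(I_X - P)x_n = x_n - Px_n \to x - y$, and since $(I_X - P)x_n \in V$ and $V$ is closed, $x - y \in V$. Thus $x = y + (x-y)$ is the unique decomposition of $x$ with respect to $U \oplus V$, so $Px = y$, and the graph of $P$ is closed. This is precisely the step where the Banach space hypothesis is essential; it is also the main (and only real) obstacle in the whole proof.

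For (b) $\Rightarrow$ (a), let $V := \mathrm{Ker}\,P$, which is closed because $P$ is continuous. The identity $U = \mathrm{Im}\,P = \mathrm{Ker}(I_X - P)$, which uses $P^2 = P$ (namely, $x \in \mathrm{Im}\,P$ iff $x = Px$), shows that $U$ is closed as well. The decomposition $x = Px + (I_X - P)x$ gives $X = U + V$, and if $u + v = 0$ with $u \in U, v \in V$, then applying $P$ and using $Pu = u$, $Pv = 0$ yields $u = 0$ and hence $v = 0$; so the sum is direct. Therefore $V$ is a closed complement of $U$ and $U \in \mathbb{G}(X)$.

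For (b) $\Leftrightarrow$ (c), define $Q := I_X - P$. A direct computation gives $Q^2 = I_X - 2P + P^2 = I_X - P = Q$, and $\mathrm{Ker}\,Q = \{x \in X : Px = x\} = \mathrm{Im}\,P = U$, again using $P^2 = P$. The converse is symmetric via $P := I_X - Q$. This closes the three-way equivalence.
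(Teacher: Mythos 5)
Your proof is correct and complete: the closed graph theorem argument for the continuity of the algebraic projection is exactly the standard route, and the remaining implications are handled properly (in particular, you correctly use $P^2=P$ to identify $\mathrm{Im}\,P$ with $\mathrm{Ker}(I_X-P)$ so that $U$ is closed in (b) $\Rightarrow$ (a)). The paper does not write out a proof but simply cites Proposition 4.2 of \cite{FHHM}, whose argument is essentially the one you give.
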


Moreover, from Theorem 4.5 in \cite{FHHM}, the following result can be shown.

\begin{proposition}
\label{finite_dim_subspaces} Let $X$ be a Banach space. Then every
finite-dimensional subspace $U$ belongs to $\mathbb{G}(X).$
\end{proposition}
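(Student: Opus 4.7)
The plan is to construct an explicit continuous projection $P \in \mathcal{L}(X,X)$ with $\mathrm{Im}\,P = U$, and then invoke Proposition~\ref{characterize_P}(b). Note first that $U$ is automatically closed, since every finite-dimensional subspace of a normed space is closed, so the only remaining point is to produce a continuous complementary projection.

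I would fix a basis $\{u_1,\ldots,u_n\}$ of $U$ (possible because $\dim U < \infty$) and define on $U$ the coordinate functionals $u_i^{\ast}$ determined by $u_i^{\ast}(u_j) = \delta_{ij}$ for $1 \le i,j \le n$. Because $U$ is a finite-dimensional normed space, every linear functional on $U$ is automatically continuous, so each $u_i^{\ast}$ belongs to $U^{\ast}$. Next, I would apply the Hahn--Banach extension theorem to each $u_i^{\ast}$ to obtain $\varphi_i \in X^{\ast}$ with $\varphi_i|_U = u_i^{\ast}$. Setting
\begin{equation*}
P(x) := \sum_{i=1}^{n}\varphi_i(x)\, u_i,
\end{equation*}
the mapping $P \colon X \to X$ is linear, and its continuity follows from that of each $\varphi_i$, together with the estimate $\|P\|_{X\leftarrow X} \le \sum_{i=1}^{n}\|\varphi_i\|_{X^{\ast}}\|u_i\|_X$.

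It remains only to verify that $P$ is a projection onto $U$. Clearly $\mathrm{Im}\,P \subseteq U$, and the biorthogonality $\varphi_i(u_j) = \delta_{ij}$ gives $P(u_j) = u_j$, so $P$ acts as the identity on $U$. This immediately yields both $\mathrm{Im}\,P = U$ and $P^2 = P$. By Proposition~\ref{characterize_P}(b), one then concludes $U \in \mathbb{G}(X)$.

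The only non-routine ingredient in this argument is the Hahn--Banach extension, which lifts the finitely many continuous coordinate functionals on $U$ to continuous functionals on all of $X$; the remaining verifications are mechanical. No obstacle of substance is expected, since the construction is entirely classical once the Grassmann-manifold characterisation of Proposition~\ref{characterize_P} is available.
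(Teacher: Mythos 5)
Your proof is correct and complete: extending the coordinate functionals of a basis of $U$ by Hahn--Banach and forming $P(x)=\sum_{i=1}^{n}\varphi_i(x)u_i$ yields a bounded projection onto $U$, and Proposition~\ref{characterize_P}(b) then gives $U\in\mathbb{G}(X)$. The paper does not write out a proof but simply cites Theorem~4.5 of \cite{FHHM}, and your argument is precisely the classical one underlying that reference, so there is nothing to add.
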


Let $V$ and $U$ be closed subspaces of a Banach space $X$ such that $%
X=U\oplus V.$ From now on, we will denote by $P_{_{U\oplus V}}$ the
projection onto $U$ along $V.$ 
Then we have $P_{_{V\oplus U}}=I_{X}-P_{_{U\oplus V}}.$ Let $U,U^{\prime
}\in \mathbb{G}(X).$ We say that $U$ and $U^{\prime }$ have a common
complementary subspace in $X,$ if $X=U\oplus W=U^{\prime }\oplus W$ for some 
$W\in \mathbb{G}(X).$ The following result will be useful (see Lemma 2.1 in 
\cite{DRW}).

\begin{lemma}
\label{Char_Projections} Let $X$ be a Banach space and assume that $W$, $U$,
and $U^{\prime }$ are in $\mathbb{G}(X).$ Then the following statements are
equivalent:

\begin{enumerate}
\item[(a)] $X=U\oplus W=U^{\prime }\oplus W,$ i.e., $U$ and $U^{\prime }$
have a common complement in $X$.

\item[(b)] $P_{_{U\oplus W}}|_{U^{\prime }}:U^{\prime }\rightarrow U$ has an
inverse.
\end{enumerate}

Furthermore, if $Q = \left(P_{_{U \oplus W}}|_{_{U^{\prime }}}\right)^{-1},$
then $Q$ is bounded and $Q=P_{_{U^{\prime }\oplus W}}|_{_{U}}.$
\end{lemma}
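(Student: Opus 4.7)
The plan is to prove the two implications (a)$\Leftrightarrow$(b) and then verify the furthermore identification $Q = P_{_{U'\oplus W}}|_{_U}$ by a direct decomposition argument. The key observation throughout is that $P:=P_{_{U\oplus W}}$ has $\operatorname{Im} P = U$ and $\ker P = W$, so the behaviour of $P|_{_{U'}}$ is entirely controlled by how $U'$ sits relative to $U$ and $W$.

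For (a)$\Rightarrow$(b), I would first check that $P|_{_{U'}}\colon U'\to U$ is an algebraic bijection. Injectivity is immediate: if $u'\in U'$ satisfies $P(u')=0$, then $u'\in U'\cap W=\{0\}$ since $X=U'\oplus W$. For surjectivity, given $u\in U$ I decompose $u=u'+w$ using $X=U'\oplus W$, and applying $P$ yields $u=P(u)=P(u')+P(w)=P(u')$. Once bijectivity is established, continuity of the inverse is handed to me by the bounded inverse theorem: $U$ and $U'$ are closed subspaces of a Banach space, hence Banach spaces in their own right, and $P|_{_{U'}}$ is continuous as the restriction of a bounded operator. The open mapping theorem then gives that $Q:=(P|_{_{U'}})^{-1}\in\mathcal{L}(U,U')$ is bounded.

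For (b)$\Rightarrow$(a), I need to show $X=U'\oplus W$. The intersection $U'\cap W=\{0\}$ again follows from injectivity of $P|_{_{U'}}$. For the sum, given $x\in X$ I use the existing decomposition $X=U\oplus W$ to write $x=u+w_1$ with $u\in U$, $w_1\in W$. Setting $u':=Q(u)\in U'$, I have $P(u')=u$, so $u'-u\in\ker P=W$, i.e.\ $u=u'+w_2$ for some $w_2\in W$. Substituting gives $x=u'+(w_1+w_2)\in U'+W$, as required. Closedness of $U'$ is already assumed, and the decomposition shows $U'$ and $W$ are complementary.

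For the furthermore, I fix $u\in U$ and consider its decomposition along $X=U'\oplus W$: write $u=u''+w''$ with $u''\in U'$, $w''\in W$. Applying $P$ gives $u=P(u)=P(u'')+P(w'')=P(u'')$, so $u''$ is the unique element of $U'$ mapped to $u$ by $P|_{_{U'}}$, i.e.\ $u''=Q(u)$. By definition of the projection, $P_{_{U'\oplus W}}(u)=u''=Q(u)$, which is the claimed identity. I do not anticipate a genuine obstacle here; the only non-trivial ingredient is invoking the open mapping theorem to upgrade the algebraic inverse to a bounded one, and this is standard once I have verified that the spaces $U$, $U'$ inherit the Banach structure from $X$.
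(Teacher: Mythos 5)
Your proof is correct and complete; note that the paper itself gives no proof of this lemma but simply cites Lemma~2.1 of \cite{DRW}, and your argument is the standard one found there. All three steps — the kernel/image bookkeeping for $P_{_{U\oplus W}}$, the open mapping theorem to upgrade the algebraic inverse to a bounded operator between the closed (hence Banach) subspaces $U'$ and $U$, and the uniqueness of the decomposition along $X=U'\oplus W$ for the identification $Q=P_{_{U'\oplus W}}|_{_U}$ — are exactly what is needed, and the closedness hypotheses $U,U',W\in\mathbb{G}(X)$ are used where they must be.
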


Next, we recall the definition of a Banach manifold.

\begin{definition}
\label{banach_manifold_definition} Let $\mathbb{M}$ be a set. An atlas of
class $C^p\, (p \ge 0)$ on $\mathbb{M} $ is a family of charts with some
indexing set $A,$ namely $\{(M_{\alpha},u_{\alpha}): \alpha \in A\},$ having
the following properties:

\begin{enumerate}
\item[AT1] $\{M_{\alpha}\}_{\alpha \in A}$ is a covering\footnote{%
The condition of an \emph{open} covering is not needed, see \cite{Lang}.} of 
$\mathbb{M},$ that is, $M_{\alpha} \subset \mathbb{M}$ for all $\alpha \in A$
and $\cup_{\alpha \in A}M_{\alpha}=\mathbb{M}.$

\item[AT2] For each $\alpha \in A, (M_{\alpha},u_{\alpha})$ stands for a
bijection $u_{\alpha}:M_{\alpha} \rightarrow U_{\alpha}$ of $M_{\alpha}$
onto an open set $U_{\alpha}$ of a Banach space $X_{\alpha},$ and for any $%
\alpha$ and $\beta$ the set $u_{\alpha}(M_{\alpha} \cap M_{\beta})$ is open
in $X_{\alpha}.$

\item[AT3] Finally, if we let $M_{\alpha} \cap M_{\beta} = M_{\alpha\beta}$
and $u_{\alpha}(M_{\alpha\beta} )= U_{\alpha\beta},$ the transition mapping $%
u_{\beta}\circ u_{\alpha}^{-1}:U_{\alpha\beta} \rightarrow U_{\beta \alpha}$
is a $C^p$-diffeomorphism.
\end{enumerate}
\end{definition}

Since different atlases can give the same manifold, we say that two atlases
are \emph{compatible} if each chart of one atlas is compatible with the
charts of the other atlas in the sense of AT3. One verifies that the
relation of compatibility between atlases is an equivalence relation.

\begin{definition}
An equivalence class of atlases of class $C^{p}$ on $\mathbb{M}$ is said to
define a structure of a $C^{p}$-Banach manifold on $\mathbb{M},$ and hence
we say that $\mathbb{M}$ is a Banach manifold. In a similar way, if an
equivalence class of atlases is given by analytic maps, then we say that $%
\mathbb{M}$ is an analytic Banach manifold. If $X_{\alpha }$ is a Hilbert
space for all $\alpha \in A,$ then we say that $\mathbb{M}$ is a Hilbert
manifold.
\end{definition}

In condition AT2 we do not require that the Banach spaces are the same for
all indices $\alpha,$ or even that they are isomorphic. If $X_{\alpha}$ is
linearly isomorphic to some Banach space $X$ for all $\alpha,$ we have the
following definition.

\begin{definition}
Let $\mathbb{M}$ be a set and $X$ be a Banach space. We say that $\mathbb{M}$
is a $C^{p}$ Banach manifold modelled on $X$ if there exists an atlas of
class $C^{p}$ over $\mathbb{M}$ with $X_{\alpha }$ linearly isomorphic to $X$
for all $\alpha \in A.$
\end{definition}

\begin{example}
Every Banach space is a Banach manifold modelled on itself (for a Banach
space $Y$, simply take $(Y,I_{Y})$ as atlas, where $I_{Y}$ is the identity
map on $Y$). In particular, the set of all bounded linear maps $\mathcal{L}%
(X,X)$ of a Banach space $X$ is also a Banach manifold modelled on itself.
\end{example}

If $X$ is a Banach space, then the set of all bounded linear automorphisms
of $X$ will be denoted by 
\begin{equation*}
\mathrm{GL}(X):=\left\{ A\in \mathcal{L}(X,X):A\text{ invertible }\right\} .
\end{equation*}

\begin{example}
If $X$ is a Banach space, then $\mathrm{GL}(X)$ is a Banach manifold
modelled on $\mathcal{L}(X,X),$ because it is an open set in $\mathcal{L}%
(X,X).$ Moreover, the map $A\mapsto A^{-1}$ is analytic (see 2.7 in \cite%
{Upmeier}).
\end{example}

The next example is a Banach manifold not modelled on a particular Banach
space.

\begin{example}[Grassmann--Banach manifold]
\label{Grassmann_Example} Let $X$ be a Banach space. Then, following \cite%
{Douady66} (see also \cite{Upmeier} and \cite{MRA}), it is possible to
construct an atlas for $\mathbb{G}(X).$ To do this, denote one of the
complements of $U\in \mathbb{G}(X)$ by $W, $ i.e., $X=U\oplus W$. Then we
define the \emph{Banach Grassmannian of $U$ relative to $W$} by 
\begin{equation*}
\mathbb{G}(W,X):=\left\{ V\in \mathbb{G}(X):X=V\oplus W\right\} .
\end{equation*}%
By using Lemma~\ref{Char_Projections} it is possible to introduce a
bijection 
\begin{equation*}
\Psi _{U\oplus W}:\mathbb{G}(W,X)\longrightarrow \mathcal{L}(U,W)
\end{equation*}%
defined by 
\begin{equation*}
\Psi _{U\oplus W}(U^{\prime }) = P_{W \oplus U}|_{U^{\prime }} \circ
P_{U^{\prime }\oplus W}|_{U} = P_{W \oplus U}|_{U^{\prime }} \circ
(P_{U\oplus W}|_{U^{\prime }})^{-1} .
\end{equation*}
It can be shown that the inverse 
\begin{equation*}
\Psi _{U\oplus W}^{-1}:\mathcal{L}(U,W)\longrightarrow \mathbb{G}(W,X),
\end{equation*}%
is given by 
\begin{equation*}
\Psi _{U\oplus W}^{-1}(L)=G(L):=\left\{ u+L(u):u\in U\right\} .
\end{equation*}%
Observe that $G(0)=U$ and $G(L)\oplus W=X$ for all $L\in \mathcal{L}(U,W).$
Now, to prove that this manifold is analytic we need to describe the overlap
maps. To explain the behaviour of one overlap map, assume that $X=U\oplus W
= U^{\prime }\oplus W^{\prime }$ and the existence of $U^{\prime \prime }\in 
\mathbb{G}(W,X) \cap \mathbb{G}(W^{\prime },X).$ Let $L \in \mathcal{L}(U,W)$
be such that 
\begin{equation*}
U^{\prime \prime }=G(L)=\Psi_{U \oplus W}^{-1}(L).
\end{equation*}
and then 
\begin{equation*}
X = U\oplus W = U^{\prime }\oplus W^{\prime }= G(L) \oplus W = G(L) \oplus
W^{\prime }.
\end{equation*}
Since $(id+L)$ is a linear isomorphism from $U$ to $U^{\prime \prime }=G(L)$
then $T:=P_{U^{\prime }\oplus W^{\prime }} \circ (id+L)$ is a linear
isomorphism from $U$ to $U^{\prime }.$ It follows that the map $%
(\Psi_{U^{\prime }\oplus W^{\prime }} \circ \Psi_{U \oplus W}^{-1}): 
\mathcal{L}(U,W) \rightarrow \mathcal{L}(U^{\prime },W^{\prime })$ given by 
\begin{align*}
(\Psi_{U^{\prime }\oplus W^{\prime }} \circ \Psi_{U \oplus W}^{-1})(L) & =
\Psi_{U^{\prime }\oplus W^{\prime }}(G(L)) = P_{W^{\prime }\oplus U^{\prime
}}|_{G(L)} \circ (P_{U^{\prime }\oplus W^{\prime }}|_{G(L)})^{-1} \\
& = \Psi_{U^{\prime }\oplus W^{\prime }}(G(L)) = P_{W^{\prime }\oplus
U^{\prime }}|_{G(L)} \circ P_{G(L)\oplus W^{\prime }}|_{U^{\prime }} \circ T
\circ T^{-1} \\
& = P_{W^{\prime }\oplus U^{\prime }}|_{G(L)} \circ P_{G(L)\oplus W^{\prime
}}|_{U^{\prime }} \circ P_{U^{\prime }\oplus W^{\prime }} \circ (id+L) \circ
T^{-1} \\
& = P_{W^{\prime }\oplus U^{\prime }} \circ (id+L) \circ (P_{U^{\prime
}\oplus W^{\prime }} \circ (id+L))^{-1}.
\end{align*}
is analytic. Then we say that the collection $\{\Psi _{U\oplus W},\mathbb{G}%
(W,X)\}_{U\in \mathbb{G}(X)}$ is an analytic atlas, and therefore, $\mathbb{G%
}(X)$ is an analytic Banach manifold. In particular, for each $U \in \mathbb{%
G}(X)$ the set $\mathbb{G}(W,X)\overset{\Psi _{U\oplus W}}{\cong }\mathcal{L}%
(U,W)$ is a Banach manifold modelled on $\mathcal{L}(U,W).$ Observe that if $%
U$ and $U^{\prime }$ are finite-dimensional subspaces of $X$ such that $\dim
U \neq \dim U^{\prime }$ and $X=U \oplus W = U^{\prime }\oplus W^{\prime },$
then $\mathcal{L}(U,W)$ is not linearly isomorphic to $\mathcal{L}(U^{\prime
},W^{\prime }).$
\end{example}

\begin{example}
\label{fixed_rank_grassmann} Let $X$ be a Banach space. From Proposition~\ref%
{finite_dim_subspaces}, every finite-dimensional subspace belongs to $%
\mathbb{G}(X).$ It allows to introduce $\mathbb{G}_{n}(X),$ the space of all 
$n$-dimensional subspaces of $X$ $(n\geq 0).$ It can be shown (see \cite{MRA}%
) that $\mathbb{G}_{n}(X)$ is a connected component of $\mathbb{G}(X),$ and
hence it is also a Banach manifold modelled on $\mathcal{L}(U,W),$ here $%
U\in \mathbb{G}_{n}(X)$ and $X=U\oplus W.$ Moreover, 
\begin{equation*}
\mathbb{G}_{\leq r}(X):=\bigcup_{n\leq r}\mathbb{G}_{n}(X)
\end{equation*}%
is also a Banach manifold for each fixed $r<\infty .$
\end{example}

The next example introduce the Banach-Grassmannian manifold for a normed
(non-Banach) space. To the authors knowledge there are not references in the
literature about this (nontrivial) Banach manifold structure. We need the
following lemma.

\begin{lemma}
\label{normed_G} Assume that $(X,\Vert \cdot \Vert )$ is a normed space and
let $\overline{X}$ be the Banach space obtained as the completion of $X.$
Let $U \in \mathbb{G}_n(\overline{X})$ be such that $U \subset X$ and $%
\overline{X}=U\oplus W$ for some $W \in \mathbb{G}(\overline{X}).$ Then
every subspace $U^{\prime }\in \mathbb{G}(W,\overline{X})$ is a subspace of $%
X,$ that is, $U^{\prime }\subset X.$
\end{lemma}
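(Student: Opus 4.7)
The plan is to pass through the Grassmann--Banach chart constructed in Example~\ref{Grassmann_Example}. Applied to the Banach space $\overline{X}$, every $U' \in \mathbb{G}(W, \overline{X})$ admits a unique graph representation
\[
U' \;=\; G(L) \;=\; \{u + L(u) : u \in U\}
\]
for some $L \in \mathcal{L}(U, W)$, where $L = \Psi_{U \oplus W}(U')$. Because $U \subset X$ by hypothesis, the inclusion $U' \subset X$ is equivalent to $L(u) \in X$ for every $u \in U$; and since $\dim U = n$, it is enough to check this on a basis $u_1, \ldots, u_n$ of $U$, i.e.\ to show $L(U) \subset W \cap X$.

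Next I would record a structural decomposition of $X$ itself coming from the hypotheses. Since $U \subset X$ and $P_{U \oplus W}$ is a continuous projection on $\overline{X}$, its restriction to $X$ lands in $U \subset X$; writing every $x \in X$ as $x = P_{U \oplus W}(x) + (I - P_{U \oplus W})(x)$ then yields the topological direct sum
\[
X \;=\; U \oplus (W \cap X),
\]
so in particular $W \cap X$ is a dense subspace of $W$. This is the normed-space shadow of the Banach situation of Lemma~\ref{Char_Projections}.

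Now I would use that $P_{U \oplus W}|_{U'} : U' \to U$ is a topological isomorphism with inverse $T := P_{U' \oplus W}|_U$ (Lemma~\ref{Char_Projections}), so that $L = (T - I)|_U$. The task becomes: for each basis element $u_i$, show that $T(u_i) \in X$. I would approximate $T(u_i) = u_i + L(u_i)$ by a sequence $x^{(i)}_k \in X$, decompose each $x^{(i)}_k = a^{(i)}_k + b^{(i)}_k$ with $a^{(i)}_k \in U$ and $b^{(i)}_k \in W \cap X$ using the decomposition of $X$ above, and deduce by continuity of $P_{U \oplus W}$ that $a^{(i)}_k \to u_i$ and $b^{(i)}_k \to L(u_i)$ in $\overline{X}$.

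The main obstacle is the final step: density of $W \cap X$ in $W$ only forces the limit $L(u_i)$ into $W = \overline{W \cap X}$, not a priori into $X$. To close, the argument must exploit the full rigidity available --- namely that $L$ is determined by the finitely many values $L(u_1), \ldots, L(u_n)$ and that the graph map $\mathcal{L}(U, W \cap X) \to \mathbb{G}(W, \overline{X})$, $M \mapsto G(M)$, already produces $n$-dimensional subspaces of $\overline{X}$ which are complements of $W$. Using finite-dimensional uniqueness of the chart representation (the bijectivity of $\Psi_{U \oplus W}$ from Example~\ref{Grassmann_Example}) together with Proposition~\ref{finite_dim_subspaces}, I would then identify the unique $L$ representing $U'$ as lying in $\mathcal{L}(U, W \cap X)$, which gives $L(U) \subset W \cap X$ and hence $U' \subset X$ as required.
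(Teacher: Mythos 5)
Your reduction to the chart of Example~\ref{Grassmann_Example} is fine: $U'=G(L)$ for a unique $L\in\mathcal{L}(U,W)$, the decomposition $X=U\oplus(W\cap X)$ does hold, and $U'\subset X$ is indeed equivalent to $L(u_i)\in W\cap X$ for a basis $u_1,\dots,u_n$ of $U$. You also correctly locate the obstruction: density of $W\cap X$ in $W$ only places the limits $L(u_i)$ in $\overline{W\cap X}=W$, not in $X$. The problem is that your proposed closure does not close this gap. The bijectivity of $\Psi_{U\oplus W}$ says only that each $U'\in\mathbb{G}(W,\overline{X})$ is the graph of exactly one $L\in\mathcal{L}(U,W)$; the restricted graph map $\mathcal{L}(U,W\cap X)\to\mathbb{G}(W,\overline{X})$, $M\mapsto G(M)$, is injective but nothing forces it to be surjective, and ``finite-dimensional uniqueness'' places no constraint on where the values $L(u_i)$ live. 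Concretely, for any $w\in W$ the subspace $\mathrm{span}\{u_1+w,u_2,\dots,u_n\}$ belongs to $\mathbb{G}(W,\overline{X})$, and it is contained in $X$ precisely when $w\in X$; since $W\cap X$ is in general a proper dense subspace of $W$ (take $X=c_{00}$, $\overline{X}=\ell^2$, $U=\mathrm{span}\{e_1\}$, $W=\{x:x_1=0\}$, $w=(0,1,\tfrac12,\tfrac14,\dots)$), the missing step is not merely unproved but cannot be supplied without additional hypotheses on $U'$.

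For comparison, the paper's own proof argues by contradiction: it asserts $X=(U'\cap X)\oplus(W\cap X)$ and deduces $\overline{X}=(U'\cap X)\oplus W$, contradicting $U'\cap X\subsetneq U'$. That assertion is the exact analogue of your missing step: it presupposes that the projection onto $U'$ along $W$ maps $X$ into $X$, which is what is to be shown. So both arguments founder at the same place, and your write-up is at least more candid about where the difficulty sits. A repairable version of the statement would restrict attention to those $U'\in\mathbb{G}(W,\overline{X})$ of the form $G(L)$ with $L\in\mathcal{L}(U,W\cap X)$, i.e.\ to the image of the restricted graph map, rather than to all of $\mathbb{G}(W,\overline{X})$.
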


\begin{proof}
First at all observe that $X=U\oplus (W\cap X)$ where $W\cap X$ is a linear
subspace dense in $W=W\cap \overline{X}.$ Assume that the lemma is not true.
Then there exists $U^{\prime }\in \mathbb{G}(W,\overline{X})$ such that $%
U^{\prime }\oplus W=\overline{X}$ and $U^{\prime }\cap X\neq U^{\prime }.$
Clearly $U^{\prime }\cap X\neq \{0\},$ otherwise $W\cap X=X$ a
contradiction. We have $X=(U^{\prime }\cap X)\oplus (W\cap X),$ which
implies $\overline{X}=(U^{\prime }\cap X)\oplus W,$ a contradiction and the
lemma follows.
\end{proof}

\begin{example}
\label{normed_grassmann} Assume that $(X,\Vert \cdot \Vert )$ is a normed
space and let $\overline{X}$ be the Banach space obtained as the completion
of $X.$ We define the set $\mathbb{G}_n(X)$ as follows. We say that $U\in 
\mathbb{G}_{n}(X)$ if and only if $U\in \mathbb{G}_{n}(\overline{X})$ and $%
U\subset X.$ Then $\mathbb{G}_{n}(X)$ is also a Banach manifold. To see this
observe that, by Lemma~\ref{normed_G}, for each $U \in \mathbb{G}_{n}(X)$ 
such that $\overline{X}=U\oplus W$ for some $W \in \mathbb{G}(\overline{X}),
$ we have $\mathbb{G}(W,\overline{X}) \subset \mathbb{G}_n(X).$ Then the
collection $\{\Psi _{U\oplus W},\mathbb{G}(W,\overline{X})\}_{U\in \mathbb{G}%
_{n}(X)}$ is an analytic atlas on $\mathbb{G}_{n}(X),$ and therefore, $%
\mathbb{G}_{n}(X)$ is an analytic Banach manifold modelled on $\mathcal{L}%
(U,W),$ here $U\in \mathbb{G}_{n}(X)$ and $\overline{X}=U\oplus W.$
Moreover, as in Example~\ref{fixed_rank_grassmann}, we can define a Banach
manifold $\mathbb{G}_{\leq r}(X)$ for each fixed $r<\infty .$
\end{example}

Let $\mathbb{M}$ be a Banach manifold of class $\mathcal{C}^{p},$ $p\geq 1.$
Let $m$ be a point of $\mathbb{M}$. We consider triples $(U,\varphi ,v)$
where $(U,\varphi )$ is a chart at $m$ and $v$ is an element of the vector
space in which $\varphi (U)$ lies. We say that two of such triples $%
(U,\varphi ,v)$ and $(V,\psi ,w)$ are \emph{equivalent} if the derivative of 
$\psi \varphi ^{-1}$ at $\varphi (m)$ maps $v$ on $w.$ Thanks to the chain
rule it is an equivalence relation. An equivalence class of such triples is
called a \emph{tangent vector of $\mathbb{M}$ at $m$.}

\begin{definition}
\label{Def T}The set of such tangent vectors is called the \emph{tangent space
of $\mathbb{M}$ at $m$} and it is denoted by $\mathbb{T}_{m}(\mathbb{M}).$
\end{definition}

Each chart $(U,\varphi )$ determines a bijection of $\mathbb{T}_{m}(\mathbb{M%
})$ on a Banach space, namely the equivalence class of $(U,\varphi ,v)$
corresponds to the vector $v.$ By means of such a bijection it is possible
to equip $\mathbb{T}_{m}(\mathbb{M})$ with the structure of a topological
vector space given by the chart, and it is immediate that this structure is
independent of the selected chart.

\begin{example}
If $X$ is a Banach space, then $\mathbb{T}_{x}(X)=X$ for all $x\in X.$
\end{example}

\begin{example}
Let $X$ be a Banach space and take $A\in \mathrm{GL}(X).$ Then $\mathbb{T}%
_{A}(\mathrm{GL}(X))=\mathcal{L}(X,X).$
\end{example}

\begin{example}
For $U\in \mathbb{G}(X)$ such that $X=U\oplus W$ for some $W \in \mathbb{G}%
(X)$, we have $\mathbb{T}_{U}(\mathbb{G}(X))=\mathcal{L}(U,W).$
\end{example}

\begin{example}
\label{unit_sphere} We point out that for a Hilbert space $X$ with
associated inner product $\langle \cdot ,\cdot \rangle $ and norm $\Vert
\cdot \Vert ,$ its unit sphere denoted by 
\begin{equation*}
\mathbb{S}_{X}:=\{x\in X:\Vert x\Vert =1\},
\end{equation*}%
is a Hilbert manifold of codimension one. Moreover, for each $x\in \mathbb{S}%
_{X},$ its tangent space is 
\begin{equation*}
\mathbb{T}_{x}(\mathbb{S}_{X})=\mathrm{span}\,\{x\}^{\bot }=\{x^{\prime }\in
X:\langle x,x^{\prime }\rangle =0\}.
\end{equation*}
\end{example}

\subsection{The manifold of TBF tensors of fixed TB rank}

Assume that $\{\mathbf{V}_{\alpha }\}_{\alpha \in T_{D}\setminus \{D\}}$ is
a representation of the tensor space $\mathbf{V}_{D}=\left.
_{a}\bigotimes_{\alpha \in S(D)}\mathbf{V}_{\alpha }\right. $ in the 
tree-based format  where for each $k\in \mathcal{L}(T_{D})$ the vector
space $V_{k}$ is a normed space with a norm $\Vert \cdot \Vert _{k}.$ As
usual $V_{k_{\Vert \cdot \Vert _{k}}}$ denotes the corresponding Banach
space obtained from $V_{k}$ for $k\in \mathcal{L}(T_{D}).$ From now on, to
simplify the notation, we introduce for an admissible $\mathfrak{r}\in 
\mathbb{N}^{T_{D}}$ the product vector space 
\begin{equation*}
\mathbb{R}^{\mathfrak{r}}:=%
\mathop{\mathchoice{\raise-0.22em\hbox{\huge
$\times$}} {\raise-0.05em\hbox{\Large $\times$}}{\hbox{\large
$\times$}}{\times}}_{\alpha \in T_{D}\setminus \mathcal{L}(T_{D})}\mathbb{R}%
^{r_{\alpha }\times \left( 
\mathop{\mathchoice{\raise-0.22em\hbox{\huge
$\times$}} {\raise-0.05em\hbox{\Large $\times$}}{\hbox{\large
$\times$}}{\times}}_{\beta \in S(\alpha )}r_{\beta }\right) },
\end{equation*}%
with $r_{D}=1$. It allows us to introduce its open subset $\mathbb{R}_{\ast
}^{\mathfrak{r}},$ and hence a manifold, defined as 
\begin{equation*}
\mathbb{R}_{\ast }^{\mathfrak{r}}:=\left\{ \mathfrak{C}\in \mathbb{R}^{%
\mathfrak{r}}:%
\begin{array}{l}
C^{(D)}\in \mathbb{R}_{\ast }^{%
\mathop{\mathchoice{\raise-0.22em\hbox{\huge $\times$}} {\raise-0.05em\hbox{\Large $\times$}}{\hbox{\large
$\times$}}{\times}}_{\alpha \in S(D)}r_{\alpha }}\text{and $C^{(\mu )}\in 
\mathbb{R}_{\ast }^{r_{\mu }\times \left( 
\mathop{\mathchoice{\raise-0.22em\hbox{\huge $\times$}} {\raise-0.05em\hbox{\Large $\times$}}{\hbox{\large
$\times$}}{\times}}_{\beta \in S(\mu )}r_{\beta }\right) }$} \\ 
\text{for each $\mu \in T_{D}\setminus \{D\}$ such that $S(\mu )\neq
\emptyset .$}%
\end{array}%
\right\} .
\end{equation*}%
\bigskip

From Theorem~\ref{characterization_FT} we know that each $\mathbf{v}\in 
\mathcal{FT}_{\mathfrak{r}}(\mathbf{V}_{D})$ is totally characterised by $%
\mathfrak{C}=\mathfrak{C}(\mathbf{v})\in \mathbb{R}_{\ast }^{\mathfrak{r}}$
and a basis $\{u_{i_{k}}^{(k)}:1\leq i_{k}\leq r_{k}\}$ of $U_{k}^{\min }(%
\mathbf{v})$ for $k\in \mathcal{L}(T_{D}).$ Recall that in Example~\ref%
{normed_grassmann}, the finite-dimensional subspace $U_{k}^{\min }(\mathbf{v}%
)\subset V_{k}\subset V_{k_{\Vert \cdot \Vert _{k}}}$ belongs to the Banach
manifold $\mathbb{G}_{r_{k}}(V_{k})$ for $k\in \mathcal{L}(T_{D})$ (see also
Example \ref{fixed_rank_grassmann}) and for each $k\in \mathcal{L}(T_{D}),$
there exists a bijection (local chart) 
\begin{equation*}
\Psi _{U_{k}^{\min }(\mathbf{v})\oplus W_{k}^{\min }(\mathbf{v})}:\mathbb{G}%
(W_{k}^{\min }(\mathbf{v}),\mathbf{V}_{k_{\Vert \cdot \Vert
_{k}}})\rightarrow \mathcal{L}(U_{k}^{\min }(\mathbf{v}),W_{k}^{\min }(%
\mathbf{v}))
\end{equation*}%
given by 
\begin{equation*}
\Psi _{U_{k}^{\min }(\mathbf{v})\oplus W_{k}^{\min }(\mathbf{v}%
)}(U_{k})=L_{k}:=P_{W_{k}^{\min }(\mathbf{v})\oplus U_{k}^{\min }(\mathbf{v}%
)}|_{U_{k}}\circ (P_{U_{k}^{\min }(\mathbf{v})\oplus W_{k}^{\min }(\mathbf{v}%
)}|_{U_{k}})^{-1}.
\end{equation*}%
Moreover, $U_{k}=G(L_{k})=\mathrm{span}\{u_{k}+L_{k}(u_{k}):u_{k}\in
U_{k}^{\min }(\mathbf{v})\}.$ Clearly, the map 
\begin{equation*}
\boldsymbol{\Psi }_{\mathbf{v}}:%
\mathop{\mathchoice{\raise-0.22em\hbox{\huge
$\times$}} {\raise-0.05em\hbox{\Large $\times$}}{\hbox{\large
$\times$}}{\times}}_{k\in \mathcal{L}(T_{D})}\mathbb{G}(W_{k}^{\min }(%
\mathbf{v}),\mathbf{V}_{k_{\Vert \cdot \Vert _{k}}})\rightarrow %
\mathop{\mathchoice{\raise-0.22em\hbox{\huge $\times$}}
{\raise-0.05em\hbox{\Large $\times$}}{\hbox{\large $\times$}}{\times}}_{k\in 
\mathcal{L}(T_{D})}\mathcal{L}(U_{k}^{\min }(\mathbf{v}),W_{k}^{\min }(%
\mathbf{v})),
\end{equation*}%
defined as $\boldsymbol{\Psi }_{\mathbf{v}}:=%
\mathop{\mathchoice{\raise-0.22em\hbox{\huge $\times$}} {\raise-0.05em\hbox{\Large
$\times$}}{\hbox{\large $\times$}}{\times}}_{k\in \mathcal{L}(T_{D})}\Psi
_{U_{k}^{\min }(\mathbf{v})\oplus W_{k}^{\min }(\mathbf{v})}$ is also
bijective. Furthermore, it is a local chart for an element $\mathfrak{U}(%
\mathbf{v})=\{U_{k}^{\min }(\mathbf{v})\}_{k\in \mathcal{L}(T_{D})}$ in the
product manifold such that $\boldsymbol{\Psi }_{\mathbf{v}}(\mathfrak{U}(%
\mathbf{v}))=\mathfrak{0}:=(0)_{k\in \mathcal{L}(T_{D})}.$ It allows us to
introduce the surjective map 
\begin{equation*}
\varrho _{\mathfrak{r}}:\mathcal{FT}_{\mathfrak{r}}(\mathbf{V}%
_{D})\rightarrow 
\mathop{\mathchoice{\raise-0.22em\hbox{\huge
$\times$}} {\raise-0.05em\hbox{\Large $\times$}}{\hbox{\large
$\times$}}{\times}}_{j\in \mathcal{L}(T_{D})}\mathbb{G}_{r_{j}}(V_{j})
\end{equation*}%
defined by $\varrho _{\mathfrak{r}}(\mathbf{v})=\mathfrak{U}(\mathbf{v}%
):=(U_{k}^{\min }(\mathbf{v}))_{k\in \mathcal{L}(T_{D})}.$ Now, for each $%
\mathbf{v}\in \mathcal{FT}_{\mathfrak{r}}(\mathbf{V}_{D})$ introduce the set 
\begin{equation*}
\mathcal{U}(\mathbf{v}):=\varrho _{\mathfrak{r}}^{-1}\left( 
\mathop{\mathchoice{\raise-0.22em\hbox{\huge
$\times$}} {\raise-0.05em\hbox{\Large $\times$}}{\hbox{\large
$\times$}}{\times}}_{j\in \mathcal{L}(T_{D})}\mathbb{G}(W_{j}^{\min }(%
\mathbf{v}),V_{j})\right) =\left\{ \mathbf{w}\in \mathcal{FT}_{\mathfrak{r}}(%
\mathbf{V}_{D}):U_{k}^{\min }(\mathbf{w})\in \mathbb{G}(W_{k}^{\min }(%
\mathbf{v}),V_{k}),\,1\leq k\leq d\right\} .
\end{equation*}%
Our next step is to construct the following natural bijection. Let 
\begin{equation*}
\chi _{\mathfrak{r}}(\mathbf{v}):\mathcal{U}(\mathbf{v})\rightarrow \left( 
\mathop{\mathchoice{\raise-0.22em\hbox{\huge
$\times$}} {\raise-0.05em\hbox{\Large $\times$}}{\hbox{\large
$\times$}}{\times}}_{j\in \mathcal{L}(T_{D})}\mathbb{G}(W_{j}^{\min }(%
\mathbf{v}),V_{j})\right) \times \mathbb{R}_{\ast }^{\mathfrak{r}},\quad 
\mathbf{w}\mapsto (\chi _{1}(\mathbf{v})(\mathbf{w}),\chi _{2}(\mathbf{v})(%
\mathbf{w}))
\end{equation*}%
defined as follows. Let $\mathbf{w}\in \mathcal{U}(\mathbf{v}).$ From
Theorem~\ref{characterization_FT} we have the following.

\begin{enumerate}
\item[(a)] There exists a basis of $U_k^{\min}(\mathbf{w}) \in \mathbb{G}%
(W_k^{\min}(\mathbf{v}),V_{k_{\|\cdot\|_k}}),$ for each $k \in \mathcal{L}%
(T_D)$ and hence a unique 
\begin{equation*}
\mathfrak{L}=(L_{k})_{k \in \mathcal{L}(T_D)} \in 
\mathop{\mathchoice{\raise-0.22em\hbox{\huge
$\times$}} {\raise-0.05em\hbox{\Large $\times$}}{\hbox{\large
$\times$}}{\times}}_{k \in \mathcal{L}(T_D)}\mathcal{L}(U_k^{\min}(\mathbf{v}%
),W_k^{\min}(\mathbf{v}))
\end{equation*}
such that $\boldsymbol{\Psi }_{\mathbf{v}}(\varrho_{\mathfrak{r}}(\mathbf{w}%
)) = \mathfrak{L},$ that is, $U_{k}^{\min}(\mathbf{w})=G(L_k)$ for all $k
\in \mathcal{L}(T_D).$ Then $\chi_1(\mathbf{v})(\mathbf{w}):= \boldsymbol{%
\Psi }_{\mathbf{v}}^{-1}(\mathfrak{L})$ and $U_k^{\min}(\mathbf{w})=G(L_k) = 
\mathrm{span}\,\{(id_{k}+L_{k})(u_{i_{k}}^{(k)}): 1 \le i_k \le r_k \}$
where $U_k^{\min}(\mathbf{v})=\mathrm{span}\,\{u_{i_{k}}^{(k)}: 1 \le i_k
\le r_k \}$ is a fixed basis for $k \in \mathcal{L}(T_D)$ and hence $%
\boldsymbol{\Psi }_{\mathbf{v}}(\varrho_{\mathfrak{r}}(\mathbf{v})) =(0)_{k
\in \mathcal{L}(T_D)}.$

\item[(b)] There exists a unique $\chi_2(\mathbf{v})(\mathbf{w}):= \mathfrak{%
C} = (C^{(\alpha)})_{\alpha \in T_D \setminus \mathcal{L}(T_D)} \in \mathbb{R%
}_*^{\mathfrak{r}}$ such that 
\begin{equation}  \label{w_equation}
\mathbf{w}=\sum_{\substack{ 1\leq i_{\alpha }\leq r_{\alpha }  \\ \alpha \in
S(D)}}C_{(i_{\alpha })_{\alpha \in S(D)}}^{(D)}\bigotimes_{\alpha \in S(D)}%
\mathbf{w}_{i_{\alpha }}^{(\alpha )},.
\end{equation}
and where for each $\beta \in T_D \setminus (\{D\} \cup \mathcal{L}(T_D))$
we have 
\begin{equation*}
U_{\beta}^{\min}(\mathbf{w})=\mathrm{span}\,\{\mathbf{w}_{i_{\beta}}:1 \le
i_{\beta} \le r_{\beta}\}
\end{equation*}
with 
\begin{equation*}
\mathbf{w}_{i_{\beta }}^{(\beta )} = \left\{ 
\begin{array}{ll}
(id_{\beta}+L_{\beta})(u_{i_{\beta}}^{(\beta)}) & \text{ if } \beta \in 
\mathcal{L}(T_D) \\ 
&  \\ 
\sum_{\substack{ 1\leq i_{\delta }\leq r_{\delta }  \\ \delta \in S(\beta )}}%
C_{i_{\beta },(i_{\delta })_{\delta \in S(\beta )}}^{(\beta
)}\bigotimes_{\delta \in S(\beta )}\mathbf{w}_{i_{\delta }}^{(\delta )} & 
\text{ otherwise. }%
\end{array}
\right.
\end{equation*}
\end{enumerate}

Finally, let 
\begin{equation*}
p_{\mathbf{v}}:\left( 
\mathop{\mathchoice{\raise-0.22em\hbox{\huge
$\times$}} {\raise-0.05em\hbox{\Large $\times$}}{\hbox{\large
$\times$}}{\times}}_{j\in \mathcal{L}(T_{D})}\mathbb{G}(W_{j}^{\min }(%
\mathbf{v}),V_{j})\right) \times \mathbb{R}_{\ast }^{\mathfrak{r}%
}\rightarrow 
\mathop{\mathchoice{\raise-0.22em\hbox{\huge
$\times$}} {\raise-0.05em\hbox{\Large $\times$}}{\hbox{\large
$\times$}}{\times}}_{j\in \mathcal{L}(T_{D})}\mathbb{G}(W_{j}^{\min }(%
\mathbf{v}),V_{j})
\end{equation*}%
be the projection $p_{\mathbf{v}}(\mathfrak{U},\mathfrak{C})=\mathfrak{U}$
then $p_{\mathbf{v}}\circ \chi _{\mathfrak{r}}(\mathbf{v})=\varrho _{%
\mathfrak{r}}.$

\bigskip

A very useful remark is the following. Recall that $(id_{k}+L_{k})$ is a
linear isomorphism from $U_{k}^{\min }(\mathbf{v})$ to $U_{k}^{\min }(%
\mathbf{w})=G(L_{k})$ for all $k\in \mathcal{L}(T_{D}).$ From Proposition
3.49 of \cite{Hackbusch} we have 
\begin{equation*}
\left. _{a}\bigotimes_{k\in \mathcal{L}(T_{D})}\mathcal{L}(U_{k}^{\min }(%
\mathbf{v}),U_{k}^{\min }(\mathbf{w}))\right. =\mathcal{L}\left( \left.
_{a}\bigotimes_{k\in \mathcal{L}(T_{D})}U_{k}^{\min }(\mathbf{v})\right.
,\left. _{a}\bigotimes_{k\in \mathcal{L}(T_{D})}U_{k}^{\min }(\mathbf{w}%
)\right. \right)
\end{equation*}%
and denote by $\mathrm{GL}\left( \left. _{a}\bigotimes_{k\in \mathcal{L}%
(T_{D})}U_{k}^{\min }(\mathbf{v})\right. ,\left. _{a}\bigotimes_{k\in 
\mathcal{L}(T_{D})}U_{k}^{\min }(\mathbf{w})\right. \right) $ the set of
linear isomorphisms of 
\begin{equation*}
\mathcal{L}\left( \left. _{a}\bigotimes_{k\in \mathcal{L}(T_{D})}U_{k}^{\min
}(\mathbf{v})\right. ,\left. _{a}\bigotimes_{k\in \mathcal{L}%
(T_{D})}U_{k}^{\min }(\mathbf{w})\right. \right) .
\end{equation*}%
Let us define 
\begin{align*}
\mathrm{GL}_{\mathbf{1}}\left( \left. _{a}\bigotimes_{k\in \mathcal{L}%
(T_{D})}U_{k}^{\min }(\mathbf{v})\right. ,\left. _{a}\bigotimes_{k\in 
\mathcal{L}(T_{D})}U_{k}^{\min }(\mathbf{w})\right. \right) := \\
\mathrm{GL}\left( \left. _{a}\bigotimes_{k\in \mathcal{L}(T_{D})}U_{k}^{\min
}(\mathbf{v})\right. ,\left. _{a}\bigotimes_{k\in \mathcal{L}%
(T_{D})}U_{k}^{\min }(\mathbf{w})\right. \right) \cap \mathcal{M}_{\leq 
\mathbf{1}}\left( \left. _{a}\bigotimes_{k\in \mathcal{L}(T_{D})}\mathcal{L}%
(U_{k}^{\min }(\mathbf{v}),U_{k}^{\min }(\mathbf{w}))\right. \right) .
\end{align*}%
Then 
\begin{equation*}
\bigotimes_{k\in \mathcal{L}(T_{D})}(id_{k}+L_{k})\in \mathrm{GL}_{\mathbf{1}%
}\left( \left. _{a}\bigotimes_{k\in \mathcal{L}(T_{D})}U_{k}^{\min }(\mathbf{%
v})\right. ,\left. _{a}\bigotimes_{k\in \mathcal{L}(T_{D})}U_{k}^{\min }(%
\mathbf{w})\right. \right) .
\end{equation*}%
Observe that for each given $\mathbf{v}\in \mathcal{FT}_{\mathfrak{r}}(%
\mathbf{V}_{D})$ the map 
\begin{equation*}
\Theta _{\mathbf{v}}:\left( 
\mathop{\mathchoice{\raise-0.22em\hbox{\huge
$\times$}} {\raise-0.05em\hbox{\Large $\times$}}{\hbox{\large
$\times$}}{\times}}_{j\in \mathcal{L}(T_{D})}\mathbb{G}(W_{j}^{\min }(%
\mathbf{v}),V_{j})\right) \times \mathbb{R}_{\ast }^{\mathfrak{r}%
}\rightarrow \left( 
\mathop{\mathchoice{\raise-0.22em\hbox{\huge
$\times$}} {\raise-0.05em\hbox{\Large $\times$}}{\hbox{\large
$\times$}}{\times}}_{j\in \mathcal{L}(T_{D})}\mathcal{L}(U_{j}^{\min }(%
\mathbf{v}),W_{j}^{\min }(\mathbf{v}))\right) \times \mathbb{R}_{\ast }^{%
\mathfrak{r}}
\end{equation*}%
where $\Theta _{\mathbf{v}}:=\boldsymbol{\Psi }_{\mathbf{v}}\times id$ is a
bijection. Then 
\begin{equation*}
\Theta _{\mathbf{v}}\circ \chi _{\mathfrak{r}}(\mathbf{v}):\mathcal{U}(%
\mathbf{v})\rightarrow \left( 
\mathop{\mathchoice{\raise-0.22em\hbox{\huge
$\times$}} {\raise-0.05em\hbox{\Large $\times$}}{\hbox{\large
$\times$}}{\times}}_{j\in \mathcal{L}(T_{D})}\mathcal{L}(U_{j}^{\min }(%
\mathbf{v}),W_{j}^{\min }(\mathbf{v}))\right) \times \mathbb{R}_{\ast }^{%
\mathfrak{r}}
\end{equation*}%
is also a bijection where 
\begin{equation*}
(\Theta _{\mathbf{v}}\circ \chi _{\mathfrak{r}}(\mathbf{v}))^{-1}(\mathfrak{L%
},\mathfrak{C})=\mathbf{w}=\left( \bigotimes_{k\in \mathcal{L}%
(T_{D})}(id_{k}+L_{k})\right) (\mathbf{u})=\left( \bigotimes_{k\in \mathcal{L%
}(T_{D})}(id_{k}+L_{k})\right) (\Theta _{\mathbf{v}}\circ \chi _{\mathfrak{r}%
}(\mathbf{v}))^{-1}(\mathfrak{0},\mathfrak{C}).
\end{equation*}%
We can interpret this last equality as follows. $\mathbf{w}\in \mathcal{U}(%
\mathbf{v})$ holds if and only if 
\begin{equation*}
\mathbf{w}\in \mathcal{FT}_{\mathfrak{r}}\left( \left( \bigotimes_{k\in 
\mathcal{L}(T_{D})}(id_{k}+L_{k})\right) \left( \left. _{a}\bigotimes_{k\in 
\mathcal{L}(T_{D})}U_{k}^{\min }(\mathbf{v})\right. \right) \right)
\end{equation*}%
for some $\mathfrak{L}\in 
\mathop{\mathchoice{\raise-0.22em\hbox{\huge
$\times$}} {\raise-0.05em\hbox{\Large $\times$}}{\hbox{\large
$\times$}}{\times}}_{k\in \mathcal{L}(T_{D})}\mathcal{L}(U_{k}^{\min }(%
\mathbf{v}),W_{k}^{\min }(\mathbf{v})).$ In consequence, each neighbourhood
of $\mathbf{v}$ in $\mathcal{FT}_{\mathfrak{r}}(\mathbf{V}_{D})$ can be
written as 
\begin{equation*}
\mathcal{U}(\mathbf{v})=\bigcup_{\mathfrak{L}\in 
\mathop{\mathchoice{\raise-0.22em\hbox{\huge
$\times$}} {\raise-0.05em\hbox{\Large $\times$}}{\hbox{\large
$\times$}}{\times}}_{k\in \mathcal{L}(T_{D})}\mathcal{L}(U_{k}^{\min }(%
\mathbf{v}),W_{k}^{\min }(\mathbf{v}))}\mathcal{FT}_{\mathfrak{r}}\left(
\left( \bigotimes_{k\in \mathcal{L}(T_{D})}(id_{k}+L_{k})\right) \left(
\left. _{a}\bigotimes_{k\in \mathcal{L}(T_{D})}U_{k}^{\min }(\mathbf{v}%
)\right. \right) \right) ,
\end{equation*}%
that is, a union of copies of $\mathcal{FT}_{\mathfrak{r}}\left( \left._a
\bigotimes_{k\in \mathcal{L}(T_{D})}\mathbb{R}^{r_{k}}\right.\right) $
indexed by a Banach manifold. Before stating the next result, we introduce
the following definition.

\begin{definition}
Let $X$ and $Y$ be two Banach manifolds. Let $F:X \rightarrow Y$ be a map.
We shall say that $F$ is a $\mathcal{C}^r$ (respectively, analytic) \emph{%
morphism} if given $x \in X$ there exists a chart $(U,\varphi)$ at $x$ and a
chart $(W,\psi)$ at $F(x)$ such that $F(U) \subset W,$ and the map 
\begin{equation*}
\psi \circ F \circ \varphi^{-1}:\varphi(U) \rightarrow \psi(W)
\end{equation*}
is a $\mathcal{C}^r$-Fr\'echet differentiable (respectively, analytic) map.
\end{definition}

\begin{lemma}
\label{premanifold} Let $\mathbf{v},\mathbf{v}^{\prime }\in \mathcal{FT}_{%
\mathfrak{r}}(\mathbf{V}_{D})$ be such that $\mathcal{U}(\mathbf{v}) \cap 
\mathcal{U}(\mathbf{v}^{\prime }) \neq \emptyset.$ Then the bijective map 
\begin{equation*}
\chi_{\mathfrak{r}}(\mathbf{v}^{\prime }) \circ \chi_{\mathfrak{r}}(\mathbf{v%
})^{-1}: \left(%
\mathop{\mathchoice{\raise-0.22em\hbox{\huge
$\times$}} {\raise-0.05em\hbox{\Large $\times$}}{\hbox{\large
$\times$}}{\times}}_{j \in \mathcal{L}(T_D)}\mathbb{G}(W_j^{\min}(\mathbf{v}%
),V_{j})\right) \times \mathbb{R}_{*}^{\mathfrak{r}} \rightarrow \left(%
\mathop{\mathchoice{\raise-0.22em\hbox{\huge
$\times$}} {\raise-0.05em\hbox{\Large $\times$}}{\hbox{\large
$\times$}}{\times}}_{j \in \mathcal{L}(T_D)}\mathbb{G}(W_j^{\min}(\mathbf{%
v^{\prime }}),V_{j})\right) \times \mathbb{R}_{*}^{\mathfrak{r}}
\end{equation*}
is an analytic morphism. Furthermore, it is an analytic diffeomorphism.
\end{lemma}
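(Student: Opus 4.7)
The plan is to split the transition map into its Grassmann factor and its coefficient factor, show analyticity of each separately, and then appeal to symmetry between $\mathbf{v}$ and $\mathbf{v}'$ for the diffeomorphism property.

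For the Grassmann factor I would exploit the commuting identities $p_{\mathbf{v}} \circ \chi_{\mathfrak{r}}(\mathbf{v}) = \varrho_{\mathfrak{r}}$ and $p_{\mathbf{v}'} \circ \chi_{\mathfrak{r}}(\mathbf{v}') = \varrho_{\mathfrak{r}}$ stated just above the lemma. These imply that the first block of the transition, in the local Banach-space coordinates $\boldsymbol{\Psi}_{\mathbf{v}}$ and $\boldsymbol{\Psi}_{\mathbf{v}'}$, is simply the product over the leaves $k \in \mathcal{L}(T_D)$ of the Grassmann-overlap maps
\[
\Psi_{U_k^{\min}(\mathbf{v}') \oplus W_k^{\min}(\mathbf{v}')} \circ \Psi_{U_k^{\min}(\mathbf{v}) \oplus W_k^{\min}(\mathbf{v})}^{-1},
\]
and in particular is independent of $\mathfrak{C}$. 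Each such factor is analytic by the explicit rational formula derived in Example~\ref{Grassmann_Example}, so this block is analytic.

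For the coefficient factor I would use the factorization observed immediately before the lemma, $\chi_{\mathfrak{r}}(\mathbf{v})^{-1}(\mathfrak{L}, \mathfrak{C}) = \left( \bigotimes_{k \in \mathcal{L}(T_D)} (id_k + L_k) \right) \Phi_{\mathbf{v}}(\mathfrak{C})$, where $\Phi_{\mathbf{v}}(\mathfrak{C}) := \chi_{\mathfrak{r}}(\mathbf{v})^{-1}(\mathfrak{0},\mathfrak{C}) \in \left._{a}\bigotimes_{k \in \mathcal{L}(T_D)} U_k^{\min}(\mathbf{v})\right.$ is constructed polynomially from $\mathfrak{C}$ by iterating the tree recursion \eqref{TBRT2}--\eqref{TBRT1} on the fixed reference leaf bases $\{u_{i_k}^{(k)}\}$. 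Writing the analogous factorization through $\chi_{\mathfrak{r}}(\mathbf{v}')$ and equating the two expressions for the same $\mathbf{w} \in \mathcal{U}(\mathbf{v}) \cap \mathcal{U}(\mathbf{v}')$, I introduce
\[
B_k(L_k) := (id_k + L_k')^{-1} \circ (id_k + L_k) : U_k^{\min}(\mathbf{v}) \longrightarrow U_k^{\min}(\mathbf{v}'),
\]
which is well defined since $G(L_k) = U_k^{\min}(\mathbf{w}) = G(L_k')$, linearly invertible, and analytic in $L_k$ (using that $L_k'$ is analytic in $L_k$ from the first step and that inversion is analytic on $\mathrm{GL}$). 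Cancelling the common leaf-embedding $\bigotimes_k (id_k + L_k')$ yields the identity $\Phi_{\mathbf{v}'}(\mathfrak{C}') = \left( \bigotimes_k B_k(L_k) \right) \Phi_{\mathbf{v}}(\mathfrak{C})$, so the whole question reduces to analyticity of $\Phi_{\mathbf{v}'}^{-1}$ on its image.

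To prove the latter I would proceed inductively from the root of $T_D$ down to the leaves, recovering $C^{'(\mu)}$ at each internal node $\mu$ through the matricisation $\mathcal{M}_\mu$ of Definition~\ref{Def Malpha}. On the open stratum $\mathbb{R}_{\ast}^{\mathfrak{r}}$ every matricisation $\mathcal{M}_\mu(C^{'(\mu)})$ has maximal rank, so the recursion can be inverted by rational (hence analytic) formulas involving one-sided inverses of the form $(\mathcal{M}_\mu \mathcal{M}_\mu^T)^{-1} \mathcal{M}_\mu$, which stay analytic as long as these Gram matrices remain invertible (again an open condition). Composing with the analytic map $(\bigotimes_k B_k(L_k)) \circ \Phi_{\mathbf{v}}$ gives $\mathfrak{C}'$ as an analytic function of $(\mathfrak{L}, \mathfrak{C})$. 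Swapping the roles of $\mathbf{v}$ and $\mathbf{v}'$ produces the same argument for the inverse transition, so the whole map is an analytic diffeomorphism.

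The main obstacle is handling the gauge redundancy inherent to the TB format at internal nodes of $T_D$: a representation $\mathfrak{C}$ is defined only up to the action $C^{(\mu)} \mapsto M_\mu C^{(\mu)}$, $C^{(\mathrm{parent}(\mu))} \mapsto C^{(\mathrm{parent}(\mu))}(I \otimes \cdots \otimes M_\mu^{-1} \otimes \cdots \otimes I)$ for $M_\mu \in \mathrm{GL}(r_\mu)$, and the charts $\chi_{\mathfrak{r}}(\mathbf{v})$ and $\chi_{\mathfrak{r}}(\mathbf{v}')$ tacitly fix representatives through the internal bases they propagate from the reference leaf bases. The delicate point is to verify that the matricisation-based inversion above produces a representative that genuinely lies in $\mathbb{R}_{\ast}^{\mathfrak{r}}$ and matches the convention implicit in $\chi_{\mathfrak{r}}(\mathbf{v}')$; once this compatibility is verified, analyticity follows from the rational nature of all the ingredients.
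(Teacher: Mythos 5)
Your proposal follows essentially the same route as the paper's proof: the transition map is split into the Grassmann overlap maps $\Psi_{\mathbf{v}'}\circ\Psi_{\mathbf{v}}^{-1}$ on the first block and, on the coefficient block, the composition $\bigl(\Theta_{\mathbf{v}'}\circ\chi_{\mathfrak{r}}(\mathbf{v}')\bigr)\circ\bigl(\bigotimes_{k}(id_{k}+L_{k}')^{-1}\circ(id_{k}+L_{k})\bigr)\circ\bigl(\Theta_{\mathbf{v}}\circ\chi_{\mathfrak{r}}(\mathbf{v})\bigr)^{-1}(\mathfrak{0},\cdot)$, which is exactly the map $f$ the paper writes down. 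You in fact go further than the paper, which merely asserts that $f$ is analytic: your node-by-node recovery of the coefficients via the full-rank matricisations $(\mathcal{M}_{\mu}\mathcal{M}_{\mu}^{T})^{-1}\mathcal{M}_{\mu}$, and your explicit flagging of the gauge freedom at internal nodes, supply detail for precisely the step the paper leaves implicit.
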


\begin{proof}
Let $\mathbf{v},\mathbf{v}^{\prime }\in \mathcal{FT}_{\mathfrak{r}}(\mathbf{V%
}_{D})$ be given. To prove the lemma we need to check that the map 
\begin{equation*}
\Theta _{\mathbf{v}^{\prime }}\circ \chi _{\mathfrak{r}}(\mathbf{v}^{\prime
})\circ \chi _{\mathfrak{r}}(\mathbf{v})^{-1}\circ \Theta _{\mathbf{v}}^{-1}:%
\mathop{\mathchoice{\raise-0.22em\hbox{\huge $\times$}}
{\raise-0.05em\hbox{\Large $\times$}}{\hbox{\large $\times$}}{\times}}_{k\in 
\mathcal{L}(T_{D})}\mathcal{L}(U_{k}^{\min }(\mathbf{v}),W_{k}^{\min }(%
\mathbf{v}))\times \mathbb{R}_{\ast }^{\mathfrak{r}}\rightarrow %
\mathop{\mathchoice{\raise-0.22em\hbox{\huge $\times$}}
{\raise-0.05em\hbox{\Large $\times$}}{\hbox{\large $\times$}}{\times}}_{k\in 
\mathcal{L}(T_{D})}\mathcal{L}(U_{k}^{\min }(\mathbf{v}^{\prime
}),W_{k}^{\min }(\mathbf{v}^{\prime }))\times \mathbb{R}_{\ast }^{\mathfrak{r%
}}
\end{equation*}%
is analytic whenever $\mathcal{U}(\mathbf{v})\cap \mathcal{U}(\mathbf{v}%
^{\prime })\neq \emptyset .$ Let $\mathbf{w}\in \mathcal{U}(\mathbf{v})\cap 
\mathcal{U}(\mathbf{v}^{\prime })$ be such that $(\chi _{\mathfrak{r}}(%
\mathbf{v})\circ \Theta _{\mathbf{v}})(\mathbf{w})=(\mathfrak{L},\mathfrak{C}%
)$ and $(\chi _{\mathfrak{r}}(\mathbf{v}^{\prime })\circ \Theta _{\mathbf{v}^{\prime }})(\mathbf{w})=(\mathfrak{L}^{\prime },\mathfrak{D}),$ that is,
$$
(\Theta _{\mathbf{v}^{\prime }}\circ \chi _{\mathfrak{r}}(\mathbf{v}^{\prime
})\circ \chi _{\mathfrak{r}}(\mathbf{v})^{-1}\circ \Theta _{\mathbf{v}}^{-1})(\mathfrak{L},\mathfrak{C}) = (\mathfrak{L}^{\prime },\mathfrak{D}).
$$
Since $%
\mathbf{w}\in \mathcal{U}(\mathbf{v})\cap \mathcal{U}(\mathbf{v}^{\prime })$
then 
\begin{equation*}
\varrho _{\mathfrak{r}}(\mathbf{w})=(U_{k}^{\min }(\mathbf{w}))_{k\in 
\mathcal{L}(T_{D})}\in \left( 
\mathop{\mathchoice{\raise-0.22em\hbox{\huge
$\times$}} {\raise-0.05em\hbox{\Large $\times$}}{\hbox{\large
$\times$}}{\times}}_{j\in \mathcal{L}(T_{D})}\mathbb{G}(W_{j}^{\min }(%
\mathbf{v}),V_{j})\right) \cap \left( 
\mathop{\mathchoice{\raise-0.22em\hbox{\huge
$\times$}} {\raise-0.05em\hbox{\Large $\times$}}{\hbox{\large
$\times$}}{\times}}_{j\in \mathcal{L}(T_{D})}\mathbb{G}(W_{j}^{\min }(%
\mathbf{v}^{\prime }),V_{j})\right)
\end{equation*}
and 
\begin{equation*}
(\Psi _{\mathbf{v}^{\prime }}\circ \Psi _{\mathbf{v}}^{-1})(\Psi _{\mathbf{v}%
}((U_{k}^{\min }(\mathbf{w}))_{k\in \mathcal{L}(T_{D})}))=\Psi _{\mathbf{v}%
^{\prime }}(U_{k}^{\min }(\mathbf{w}))_{k\in \mathcal{L}(T_{D})}),
\end{equation*}%
that is, 
\begin{equation*}
(\Psi _{\mathbf{v}^{\prime }}\circ \Psi _{\mathbf{v}}^{-1})(\mathfrak{L})=%
\mathfrak{L}^{\prime }.
\end{equation*}%
Hence 
\begin{equation*}
\left( \Theta _{\mathbf{v}^{\prime }}\circ \chi _{\mathfrak{r}}(\mathbf{v}%
^{\prime })\right) (\mathbf{w})=((\Psi _{\mathbf{v}^{\prime }}\circ \Psi _{%
\mathbf{v}}^{-1})(\mathfrak{L}),\mathfrak{D}),
\end{equation*}%
where $\Psi _{\mathbf{v}^{\prime }}\circ \Psi _{\mathbf{v}}^{-1}$ is an
analytic map. Let $\mathbf{u}=(\chi _{\mathfrak{r}}(\mathbf{v})^{-1}\circ
\Theta _{\mathbf{v}}^{-1})(\mathfrak{0},\mathfrak{C})$ and $\mathbf{u}%
^{\prime }=(\chi _{\mathfrak{r}}(\mathbf{v}^{\prime -1}\circ \Theta _{%
\mathbf{v}^{\prime }}^{-1})(\mathfrak{0},\mathfrak{D}).$ Then $\mathbf{u}\in
\left. _{a}\bigotimes_{k\in \mathcal{L}(T_{D})}U_{k}^{\min }(\mathbf{v}%
)\right. ,$ $\mathbf{u}^{\prime }\in \left. _{a}\bigotimes_{k\in \mathcal{L}%
(T_{D})}U_{k}^{\min }(\mathbf{v}^{\prime })\right. $ and 
\begin{align*}
\mathbf{w} & = \left( \Theta _{\mathbf{v}}\circ \chi _{\mathfrak{r}}(\mathbf{v}%
)\right)^{-1}(\mathfrak{L},\mathfrak{C}) =\left( \bigotimes_{k\in \mathcal{L}(T_{D})}(id_{k}+L_{k})\right) (%
\mathbf{u}) = \left( \bigotimes_{k\in \mathcal{L}(T_{D})}(id_{k}+L_{k})\right) \circ 
\left( \Theta _{\mathbf{v}}\circ \chi _{\mathfrak{r}}(\mathbf{v}%
)\right)^{-1}(\mathfrak{0},\mathfrak{C}) \\
& = \left( \Theta _{\mathbf{v}^{\prime }}\circ \chi _{\mathfrak{r}}(\mathbf{v}%
^{\prime })\right)^{-1}(\mathfrak{L}',\mathfrak{D}) = \left( \bigotimes_{k\in \mathcal{L}(T_{D})}(id_{k}+L_{k}^{\prime
})\right) (\mathbf{u}^{\prime }) = \left( \bigotimes_{k\in \mathcal{L}(T_{D})}(id_{k}+L_{k}^{\prime
})\right) \circ \left( \Theta _{\mathbf{v}^{\prime }}\circ \chi _{\mathfrak{r}}(\mathbf{v}%
^{\prime })\right)^{-1}(\mathfrak{0},\mathfrak{D}).
\end{align*}%
Hence, 
\begin{equation*}
(\mathfrak{0},\mathfrak{D}) = \left( \Theta _{\mathbf{v}^{\prime }}\circ \chi _{\mathfrak{r}}(\mathbf{v}%
^{\prime })\right)\circ
\left( \bigotimes_{k\in \mathcal{L}(T_{D})}(id_{k}+L_{k}')^{-1}\circ (id_{k}+L_{k})\right) \circ 
\left( \Theta _{\mathbf{v}}\circ \chi _{\mathfrak{r}}(\mathbf{v}%
)\right)^{-1}(\mathfrak{0},\mathfrak{C}).
\end{equation*}%
In consequence, we can write 
\begin{equation*}
(\mathfrak{0},\mathfrak{D})=f(\mathfrak{L},\mathfrak{C}):=\left( \Theta _{\mathbf{v}^{\prime }}\circ \chi _{\mathfrak{r}}(\mathbf{v}%
^{\prime })\right)\circ
\left( \bigotimes_{k\in \mathcal{L}(T_{D})}(id_{k}+L_{k}')^{-1}\circ (id_{k}+L_{k})\right) \circ 
\left( \Theta _{\mathbf{v}}\circ \chi _{\mathfrak{r}}(\mathbf{v}%
)\right)^{-1}(\mathfrak{0},\mathfrak{C}).
\end{equation*}%
and the map 
\begin{equation*}
f:\mathop{\mathchoice{\raise-0.22em\hbox{\huge $\times$}}
{\raise-0.05em\hbox{\Large $\times$}}{\hbox{\large $\times$}}{\times}}_{k\in 
\mathcal{L}(T_{D})}\mathcal{L}(U_{k}^{\min }(\mathbf{v}),W_{k}^{\min }(%
\mathbf{v}))\times \mathbb{R}_{\ast }^{\mathfrak{r}}\rightarrow \{\mathfrak{0}\}\times \mathbb{R}%
_{\ast }^{\mathfrak{r}}
\end{equation*}
is an analytic morphism.  Thus the lemma is proved.
\end{proof}

\bigskip

The next result will help us to show that the collection $\{\Theta _{\mathbf{%
v}}\circ \chi _{\mathfrak{r}}(\mathbf{v}),\mathcal{U}(\mathbf{v})\}_{\mathbf{%
v}\in \mathcal{FT}_{\mathfrak{r}}(\mathbf{V}_{D})}$ is an atlas for $%
\mathcal{FT}_{\mathfrak{r}}(\mathbf{V}_{D}).$ Indeed, it is the unique
manifold structure for which $\varrho _{\mathfrak{r}}:\mathcal{FT}_{%
\mathfrak{r}}(\mathbf{V}_{D})\rightarrow 
\mathop{\mathchoice{\raise-0.22em\hbox{\huge
$\times$}} {\raise-0.05em\hbox{\Large $\times$}}{\hbox{\large
$\times$}}{\times}}_{j\in \mathcal{L}(T_{D})}\mathbb{G}_{r_{j}}(V_{j})$
defines a locally trivial fibre bundle with typical fibre $\mathbb{R}_{\ast
}^{\mathfrak{r}}.$ To this end we will use Lemma~\ref{premanifold} and the
following classical result (see Proposition~3.4.28 in \cite{MRA}).

\begin{theorem}
\label{bundle_manifold} Let $E$ be a set, $B$ and $F$ be $\mathcal{C}^k$
manifolds, and let $\pi : E \rightarrow B$ be a surjective map. Assume that

\begin{enumerate}
\item[(a)] there is a $\mathcal{C}^k$ atlas $\{(U_{\alpha},
\varphi_{\alpha}):\alpha \in I\}$ of $B$ and a family of bijective maps $%
\chi_{\alpha}: \pi^{-1}(U_{\alpha}) \rightarrow U_{\alpha} \times F$
satisfying $p_{\alpha} \circ \chi_{\alpha} = \pi,$ where $p_{\alpha}:
U_{\alpha} \times F \rightarrow U_{\alpha}$ is the projection, and that

\item[(b)] the maps $\chi_{\alpha^{\prime }}\circ
\chi_{\alpha}^{-1}:U_{\alpha} \times F \rightarrow U_{\alpha^{\prime }}
\times F$ are $\mathcal{C}^k$ diffeomorphisms whenever $U_{\alpha} \cap
U_{\alpha^{\prime }} \neq \emptyset.$
\end{enumerate}

Then there is a $\mathcal{C}^{k}$ atlas $\{(V_{\beta },\psi _{\alpha
}):\beta \in J\}$ of $F$ and a unique $\mathcal{C}^{k}$ manifold structure
on $E$ given by 
\begin{equation*}
\{(\chi _{\alpha }^{-1}(U_{\alpha }\times V_{\beta }),(\varphi _{\alpha
}\times \psi _{\beta }))\circ \chi _{\alpha }:\alpha \in I,\,\beta \in J\}
\end{equation*}%
for which $\pi :E\rightarrow B$ is a $\mathcal{C}^{k}$ locally trivial fibre
bundle with typical fibre $F.$
\end{theorem}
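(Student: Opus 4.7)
The plan is to promote the given bijections $\chi_{\alpha}$ to an explicit atlas on $E$ by combining them with the atlases on the base $B$ and the fibre $F$. Concretely, I would fix any $\mathcal{C}^{k}$ atlas $\{(V_{\beta},\psi_{\beta}):\beta\in J\}$ of $F$, where $\psi_{\beta}:V_{\beta}\to Y_{\beta}$ lands in a Banach space $Y_{\beta}$, and then define, for every pair $(\alpha,\beta)\in I\times J$, the candidate chart
\[
O_{\alpha\beta}:=\chi_{\alpha}^{-1}(U_{\alpha}\times V_{\beta})\subset E,\qquad \Phi_{\alpha\beta}:=(\varphi_{\alpha}\times\psi_{\beta})\circ\chi_{\alpha}\big|_{O_{\alpha\beta}}.
\]
Each $\Phi_{\alpha\beta}$ is bijective (because $\chi_{\alpha}$, $\varphi_{\alpha}$, and $\psi_{\beta}$ are), and its image $\varphi_{\alpha}(U_{\alpha})\times\psi_{\beta}(V_{\beta})$ is open in the Banach space $X_{\alpha}\times Y_{\beta}$.

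Next, I would verify the three atlas axioms of Definition~\ref{banach_manifold_definition}. AT1 is immediate from surjectivity of $\pi$ and the covering of $B$ and $F$. AT2 follows from the observation above once one checks that $\Phi_{\alpha\beta}(O_{\alpha\beta}\cap O_{\alpha'\beta'})$ is open in $X_{\alpha}\times Y_{\beta}$; this holds because on its domain
\[
\Phi_{\alpha'\beta'}\circ\Phi_{\alpha\beta}^{-1}=(\varphi_{\alpha'}\times\psi_{\beta'})\circ\bigl(\chi_{\alpha'}\circ\chi_{\alpha}^{-1}\bigr)\circ(\varphi_{\alpha}\times\psi_{\beta})^{-1},
\]
and each factor is a $\mathcal{C}^{k}$ diffeomorphism between open subsets of product Banach spaces by assumption (b) and the atlas compatibilities of $B$ and $F$. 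The same formula then yields AT3, so the collection $\{(O_{\alpha\beta},\Phi_{\alpha\beta})\}$ is a genuine $\mathcal{C}^{k}$ atlas, endowing $E$ with a $\mathcal{C}^{k}$ manifold structure.

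With this structure in place, the projection $\pi$ is locally represented by $\varphi_{\alpha}\circ\pi\circ\Phi_{\alpha\beta}^{-1}=p_{\alpha}^{X_{\alpha}\times Y_{\beta}}$ (projection onto the first factor), hence is a $\mathcal{C}^{k}$ morphism; and the trivialisations $\chi_{\alpha}:\pi^{-1}(U_{\alpha})\to U_{\alpha}\times F$ are $\mathcal{C}^{k}$ diffeomorphisms because in the charts $\Phi_{\alpha\beta}$ and $\varphi_{\alpha}\times\psi_{\beta}$ they are the identity. Uniqueness is then a tautology: any $\mathcal{C}^{k}$ structure on $E$ for which every $\chi_{\alpha}$ is a $\mathcal{C}^{k}$ diffeomorphism must admit each $\Phi_{\alpha\beta}$ as a $\mathcal{C}^{k}$ chart (as a composition of $\chi_{\alpha}$ with a product chart on $U_{\alpha}\times F$), so its maximal atlas contains the one constructed above and, by maximality on both sides, must agree with it.

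The main obstacle I anticipate is purely bookkeeping: keeping straight the domains and codomains so that the composition displayed for $\Phi_{\alpha'\beta'}\circ\Phi_{\alpha\beta}^{-1}$ is well-defined precisely on $\Phi_{\alpha\beta}(O_{\alpha\beta}\cap O_{\alpha'\beta'})$, and checking that the identity $p_{\alpha}\circ\chi_{\alpha}=\pi$ propagates through the transition formulas so that the fibre $F$ is indeed preserved. Once one is disciplined about this, every verification reduces to already-available diffeomorphism statements, with no further analytic content required.
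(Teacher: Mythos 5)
The paper does not prove this statement at all: it is quoted verbatim as a classical result, with a citation to Proposition~3.4.28 of \cite{MRA}, and is then only applied. Your construction --- product charts $(\varphi_{\alpha}\times\psi_{\beta})\circ\chi_{\alpha}$ on $\chi_{\alpha}^{-1}(U_{\alpha}\times V_{\beta})$, the factorisation of the transition maps through $\chi_{\alpha'}\circ\chi_{\alpha}^{-1}$, and the maximal-atlas argument for uniqueness --- is exactly the standard proof of that reference, and it is correct; the only point to keep explicit is that assumption (b) really concerns the restrictions to $(U_{\alpha}\cap U_{\alpha'})\times F$, which is the bookkeeping you already flag.
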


Let us mention the following two mathematical objects related to the above
theorem. Let $B$ and $F$ be $\mathcal{C}^{k}$ manifolds, and let $\pi
:E\rightarrow B$ be a surjective map satisfying the conditions (a)-(b) of
Theorem~\ref{bundle_manifold}. Then $(E,B,\pi )$ is called a \textit{fibre
bundle} with typical fibre $F,$ and if $F$ is also a Banach space, then it
is called a \textit{vector bundle} (see Chapters 6 and 7 in \cite{B}). In consequence,
we can state the following result.

\begin{theorem}
\label{Tucker_Banach_Manifold} Assume that $\{\mathbf{V}_{\alpha }\}_{\alpha
\in T_{D}\setminus \{D\}}$ is a representation of the tensor space $\mathbf{V%
}_{D}=\left. _{a}\bigotimes_{\alpha \in S(D)}\mathbf{V}_{\alpha }\right. $
in the tree-based format  where for each $k\in \mathcal{L}(T_{D})$ the
vector space $V_{k}$ is a normed space with a norm $\Vert \cdot \Vert _{k}.$
Then the collection $\{\Theta _{\mathbf{v}}\circ \chi _{\mathfrak{r}}(%
\mathbf{v}),\mathcal{U}(\mathbf{v})\}_{\mathbf{v}\in \mathcal{FT}_{\mathfrak{%
r}}(\mathbf{V}_{D})}$ is an analytic atlas for $\mathcal{FT}_{\mathfrak{r}}(%
\mathbf{V}_{D}).$ Furthermore, the set $\mathcal{FT}_{\mathfrak{r}}(\mathbf{V%
}_{D})$ of TBF tensors with fixed TB rank is an analytic Banach manifold and 
\begin{equation*}
\left( \mathcal{FT}_{\mathfrak{r}}(\mathbf{V}_{D}),%
\mathop{\mathchoice{\raise-0.22em\hbox{\huge
$\times$}} {\raise-0.05em\hbox{\Large $\times$}}{\hbox{\large
$\times$}}{\times}}_{j\in \mathcal{L}(T_{D})}\mathbb{G}_{r_{j}}(V_{j}),%
\varrho _{\mathfrak{r}}\right)
\end{equation*}%
is a fibre bundle with typical fibre $\mathbb{R}_{\ast }^{\mathfrak{r}}.$
\end{theorem}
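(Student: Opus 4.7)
The plan is to invoke Theorem~\ref{bundle_manifold} directly, with $E = \mathcal{FT}_{\mathfrak{r}}(\mathbf{V}_{D})$, base $B = \mathop{\mathchoice{\raise-0.22em\hbox{\huge $\times$}}{\raise-0.05em\hbox{\Large $\times$}}{\hbox{\large $\times$}}{\times}}_{j\in \mathcal{L}(T_{D})}\mathbb{G}_{r_{j}}(V_{j})$, typical fibre $F = \mathbb{R}_{\ast}^{\mathfrak{r}}$, surjection $\pi = \varrho_{\mathfrak{r}}$, and the bijections $\chi_{\mathfrak{r}}(\mathbf{v})$ as trivialisations. All the machinery has been laid out: the only remaining task is to check the two hypotheses of that theorem and then read off the atlas it produces.

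\textbf{Step 1: Structure of the base.} By Example~\ref{normed_grassmann}, each factor $\mathbb{G}_{r_{j}}(V_{j})$ is an analytic Banach manifold whose charts are the maps $\Psi_{U_{j}^{\min}(\mathbf{v})\oplus W_{j}^{\min}(\mathbf{v})}$ defined on the open sets $\mathbb{G}(W_{j}^{\min}(\mathbf{v}), V_{j})$. Taking products, the atlas $\{\boldsymbol{\Psi}_{\mathbf{v}}, \mathop{\mathchoice{\raise-0.22em\hbox{\huge $\times$}}{\raise-0.05em\hbox{\Large $\times$}}{\hbox{\large $\times$}}{\times}}_{j\in \mathcal{L}(T_{D})}\mathbb{G}(W_{j}^{\min}(\mathbf{v}), V_{j})\}_{\mathbf{v}\in \mathcal{FT}_{\mathfrak{r}}(\mathbf{V}_{D})}$ makes $B$ an analytic Banach manifold. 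Note that this atlas is indexed by $\mathbf{v}$ only through the choice of complements $W_{k}^{\min}(\mathbf{v})$; different $\mathbf{v}$ giving the same complements produce the same chart, which is harmless.

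\textbf{Step 2: Covering and projection identity.} Every $\mathbf{w}\in \mathcal{FT}_{\mathfrak{r}}(\mathbf{V}_{D})$ lies in $\mathcal{U}(\mathbf{w})$, since choosing complements of $U_{k}^{\min}(\mathbf{w})$ for $k\in \mathcal{L}(T_{D})$ (which exist by Proposition~\ref{finite_dim_subspaces}) yields $U_{k}^{\min}(\mathbf{w})\in \mathbb{G}(W_{k}^{\min}(\mathbf{w}), V_{k})$, so $\{\mathcal{U}(\mathbf{v})\}$ covers $E$. By the construction of $\chi_{\mathfrak{r}}(\mathbf{v})$ through parts (a) and (b) of Theorem~\ref{characterization_FT}, the map is bijective and satisfies $p_{\mathbf{v}}\circ \chi_{\mathfrak{r}}(\mathbf{v}) = \varrho_{\mathfrak{r}}$, as was recorded just before Lemma~\ref{premanifold}. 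This verifies hypothesis (a) of Theorem~\ref{bundle_manifold}.

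\textbf{Step 3: Transition maps.} Hypothesis (b) of Theorem~\ref{bundle_manifold} is precisely the content of Lemma~\ref{premanifold}: whenever $\mathcal{U}(\mathbf{v})\cap \mathcal{U}(\mathbf{v}^{\prime})\neq \emptyset$, the map $\chi_{\mathfrak{r}}(\mathbf{v}^{\prime})\circ \chi_{\mathfrak{r}}(\mathbf{v})^{-1}$ is an analytic diffeomorphism between the corresponding product open sets.

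\textbf{Step 4: Conclusion.} Theorem~\ref{bundle_manifold} then yields a unique analytic manifold structure on $E$ turning $\varrho_{\mathfrak{r}}$ into a locally trivial fibre bundle with typical fibre $\mathbb{R}_{\ast}^{\mathfrak{r}}$. The atlas furnished by that theorem is obtained by post-composing $\chi_{\mathfrak{r}}(\mathbf{v})$ with charts on the base and on the fibre; since $\mathbb{R}_{\ast}^{\mathfrak{r}}$ is an open subset of the Banach space $\mathbb{R}^{\mathfrak{r}}$, its tautological atlas is the identity, while on the base we use $\boldsymbol{\Psi}_{\mathbf{v}}$. Composition yields precisely $\Theta_{\mathbf{v}}\circ \chi_{\mathfrak{r}}(\mathbf{v}) = (\boldsymbol{\Psi}_{\mathbf{v}}\times \mathrm{id})\circ \chi_{\mathfrak{r}}(\mathbf{v})$ on $\mathcal{U}(\mathbf{v})$, which is the claimed atlas. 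In particular, $\mathcal{FT}_{\mathfrak{r}}(\mathbf{V}_{D})$ is an analytic Banach manifold.

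There is no serious obstacle: the geometric heart of the proof has been absorbed into Lemma~\ref{premanifold} (which itself relies on the analyticity of the Grassmannian overlap maps from Example~\ref{Grassmann_Example} and the multilinear structure of $(id_{k}+L_{k})\mapsto (id_{k}+L_{k}^{\prime})^{-1}\circ (id_{k}+L_{k})$). The only mild care point is that the Grassmannian atlas is not indexed by points of the base but by the choice of a complement, so one must be explicit that the cover $\{\mathcal{U}(\mathbf{v})\}_{\mathbf{v}\in E}$ pulls back a \emph{chart} cover of $B$ rather than merely a topological cover; this is exactly what is verified in Step 2.
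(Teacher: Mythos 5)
Your proposal is correct and follows essentially the same route as the paper: the paper's proof also applies Theorem~\ref{bundle_manifold} with $E=\mathcal{FT}_{\mathfrak{r}}(\mathbf{V}_{D})$, $B=\mathop{\mathchoice{\raise-0.22em\hbox{\huge $\times$}}{\raise-0.05em\hbox{\Large $\times$}}{\hbox{\large $\times$}}{\times}}_{j\in \mathcal{L}(T_{D})}\mathbb{G}_{r_{j}}(V_{j})$ and $F=\mathbb{R}_{\ast}^{\mathfrak{r}}$, verifying hypothesis (a) from the definition of $\chi_{\mathfrak{r}}(\mathbf{v})$ and hypothesis (b) from Lemma~\ref{premanifold}. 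Your Steps 1, 2 and 4 merely spell out details the paper leaves implicit.
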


\begin{proof}
Take the set $E = \mathcal{FT}_{\mathfrak{r}}(\mathbf{V}_{D})$ and the
analytic Banach manifolds $B=%
\mathop{\mathchoice{\raise-0.22em\hbox{\huge
$\times$}} {\raise-0.05em\hbox{\Large $\times$}}{\hbox{\large
$\times$}}{\times}}_{j \in \mathcal{L}(T_D)}\mathbb{G}_{r_j}(V_{j})$ and $F
= \mathbb{R}_*^{\mathfrak{r}}.$ Let us consider the surjective map $\varrho_{%
\mathfrak{r}}:\mathcal{FT}_{\mathfrak{r}}(\mathbf{V}_{D}) \rightarrow 
\mathop{\mathchoice{\raise-0.22em\hbox{\huge
$\times$}} {\raise-0.05em\hbox{\Large $\times$}}{\hbox{\large
$\times$}}{\times}}_{j \in \mathcal{L}(T_D)}\mathbb{G}_{r_j}(V_{j}).$ The
theorem follows from Theorem~\ref{bundle_manifold} because Theorem~\ref%
{bundle_manifold}(a) is true by the definition of $\chi_{\mathfrak{r}}(%
\mathbf{v})$ and Theorem~\ref{bundle_manifold}(b) is a consequence of Lemma~%
\ref{premanifold}.
\end{proof}

\begin{remark}
We observe that the geometric structure of manifold is independent of the
choice of the norm $\|\cdot\|_D$ over the tensor space $\mathbf{V}_D.$
\end{remark}


\begin{corollary}
\label{hilbert_manifold_tensor} Assume that $V_{k_{\Vert \cdot \Vert _{k}}}$
is a Hilbert space with norm $\Vert \cdot \Vert _{k}$ for $k\in \mathcal{L}%
(T_{D}).$ Then $\mathcal{FT}_{\mathfrak{r}}(\mathbf{V}_{D})$ is an analytic
Hilbert manifold.
\end{corollary}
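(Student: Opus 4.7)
The plan is to exhibit the atlas of Theorem~\ref{Tucker_Banach_Manifold} in a form whose model spaces are themselves Hilbert spaces; then being Hilbert is a property read off from the charts and nothing else needs to be reproved. The entire content of the corollary is therefore: the particular Banach spaces appearing in the local charts can be chosen to be Hilbert spaces, and analyticity of the transition maps is already handled by Lemma~\ref{premanifold}.

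First I would fix, for every $\mathbf{v}\in \mathcal{FT}_{\mathfrak{r}}(\mathbf{V}_{D})$ and every leaf $k\in \mathcal{L}(T_{D})$, the topological complement
$$W_{k}^{\min}(\mathbf{v}) := U_{k}^{\min}(\mathbf{v})^{\perp}\subset V_{k_{\Vert \cdot \Vert _{k}}},$$
taken with respect to the Hilbert inner product on $V_{k_{\Vert \cdot \Vert _{k}}}$. Since $U_{k}^{\min}(\mathbf{v})$ is finite-dimensional (of dimension $r_{k}$), it is closed, so this orthogonal complement is a closed subspace of the Hilbert space $V_{k_{\Vert \cdot \Vert _{k}}}$ and therefore itself a Hilbert space. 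In particular $V_{k_{\Vert \cdot \Vert _{k}}}=U_{k}^{\min}(\mathbf{v})\oplus W_{k}^{\min}(\mathbf{v})$ is a valid decomposition for the construction of the chart $\Theta_{\mathbf{v}}\circ \chi_{\mathfrak{r}}(\mathbf{v})$ in Theorem~\ref{Tucker_Banach_Manifold}.

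Next I would check that each factor in the codomain of the chart is a Hilbert space. The space $\mathcal{L}(U_{k}^{\min}(\mathbf{v}),W_{k}^{\min}(\mathbf{v}))$ has finite-dimensional source of dimension $r_{k}$, hence after fixing any basis of $U_{k}^{\min}(\mathbf{v})$ it is linearly isomorphic to $(W_{k}^{\min}(\mathbf{v}))^{r_{k}}$, which carries a natural Hilbert structure as a finite direct sum of Hilbert spaces. Moreover $\mathbb{R}^{\mathfrak{r}}$ is a finite-dimensional Euclidean space and $\mathbb{R}_{\ast}^{\mathfrak{r}}$ is an open subset of it. Since $\mathcal{L}(T_{D})$ is finite, the finite product
$$\left(\mathop{\times}_{k\in \mathcal{L}(T_{D})}\mathcal{L}(U_{k}^{\min}(\mathbf{v}),W_{k}^{\min}(\mathbf{v}))\right)\times \mathbb{R}^{\mathfrak{r}}$$
inherits a Hilbert space structure, and the chart image of $\mathcal{U}(\mathbf{v})$ is an open subset of this Hilbert space.

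Finally I would invoke Theorem~\ref{Tucker_Banach_Manifold} directly: with the specific choice of orthogonal complements above, $\{\Theta _{\mathbf{v}}\circ \chi _{\mathfrak{r}}(\mathbf{v}),\mathcal{U}(\mathbf{v})\}_{\mathbf{v}\in \mathcal{FT}_{\mathfrak{r}}(\mathbf{V}_{D})}$ is still an analytic atlas (the construction and the analyticity of the transition maps from Lemma~\ref{premanifold} hold for \emph{any} choice of closed complements), and all its model spaces are Hilbert spaces. By the definition of an analytic Hilbert manifold this shows that $\mathcal{FT}_{\mathfrak{r}}(\mathbf{V}_{D})$ is an analytic Hilbert manifold. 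There is no real obstacle here beyond keeping track of the availability of orthogonal complements at leaves; no deeper argument is required because the algebraic and differentiable content of the atlas is already established in the previous subsection.
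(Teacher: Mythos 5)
Your proposal is correct and takes essentially the same route as the paper: the paper's proof likewise identifies each $\mathcal{L}(U_{k}^{\min}(\mathbf{v}),W_{k}^{\min}(\mathbf{v}))$ with $W_{k}^{\min}(\mathbf{v})^{r_{k}}$ via a fixed basis and observes that the chart codomain is an open subset of the Hilbert space $\mathop{\times}_{k\in\mathcal{L}(T_{D})}W_{k}^{\min}(\mathbf{v})^{r_{k}}\times\mathbb{R}^{\mathfrak{r}}$. Your explicit choice of the orthogonal complement is a harmless specialisation (any closed complement in the Hilbert space $V_{k_{\Vert\cdot\Vert_{k}}}$ is already a Hilbert space), and the rest matches the paper's argument.
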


\begin{proof}
We can identify each $L_{k }\in \mathcal{L}\left( U_{k}^{\min }(\mathbf{v}%
),W_{k }^{\min }(\mathbf{v})\right) $ with a $(\mathbf{w}_{s_{k }}^{(k
)})_{s_{k }=1}^{s_{k }=r_{k }}\in W_{k }^{\min }(\mathbf{v})^{r_{k }},$
where $\mathbf{w}_{s_{k }}^{(k )}=L_{k }(\mathbf{u}_{(s_{k })}^{k })$ and $%
U_{k }^{\min }(\mathbf{v})=\mathrm{span}\,\{\mathbf{u}_{(1)}^{k },\ldots ,%
\mathbf{u}_{(r_{k })}^{k }\}$ for $k \in \mathcal{L}(T_D).$ Thus we can
identify each $\left(\mathfrak{L},\mathfrak{C}\right) \in \mathcal{U}(%
\mathbf{v})$ with a pair%
\begin{equation*}
\left(\mathfrak{W},\mathfrak{C}\right) \in 
\mathop{\mathchoice{\raise-0.22em\hbox{\huge $\times$}} {\raise-0.05em\hbox{\Large $\times$}}{\hbox{\large
$\times$}}{\times}}_{k \in \mathcal{L}(T_D)}W_{k }^{\min }(\mathbf{v})^{r_{k
}} \times \mathbb{R}_{\ast }^{\mathfrak{r}},
\end{equation*}%
where $\mathfrak{W}:=((\mathbf{w}_{s_{k }}^{(k )})_{s_{k }=1}^{s_{k }=r_{k
}})_{k \in \mathcal{L}(T_D)}.$ Take $%
\mathop{\mathchoice{\raise-0.22em\hbox{\huge $\times$}}
{\raise-0.05em\hbox{\Large $\times$}}{\hbox{\large
$\times$}}{\times}}_{k \in \mathcal{L}(T_D)}W_{k }^{\min }(\mathbf{v})^{r_{k
}} \times \mathbb{R}_{\ast }^{\mathfrak{r}}$ an open subset of the
Hilbert space $%
\mathop{\mathchoice{\raise-0.22em\hbox{\huge $\times$}}
{\raise-0.05em\hbox{\Large $\times$}}{\hbox{\large
$\times$}}{\times}}_{k \in \mathcal{L}(T_D)}W_{\alpha }^{\min }(\mathbf{v}%
)^{r_{k }} \times \mathbb{R}^{\mathfrak{r}}$ endowed with the product norm 
\begin{equation*}
\Vert \left(\mathfrak{W},\mathfrak{C}\right) \Vert _{\times }:=\sum_{\alpha
\in T_{D}\setminus \mathcal{L}(T_{D})}\Vert C^{\alpha }\Vert _{F}+\sum_{k
\in \mathcal{L}(T_D)}\sum_{s_{k }=1}^{r_{k }}\Vert \mathbf{w}_{s_{k }}^{(k
)}\Vert _{k},
\end{equation*}%
with $\Vert \cdot \Vert _{F}$ the Frobenius norm. 
It allows us to define local charts, also denoted by $\Theta _{\mathbf{v}}
\circ \chi_{\mathfrak{r}}(\mathbf{v}),$ by 
\begin{equation*}
\chi_{\mathfrak{r}}^{-1}(\mathbf{v}) \circ \Theta _{\mathbf{v}}^{-1}: 
\mathop{\mathchoice{\raise-0.22em\hbox{\huge $\times$}} {\raise-0.05em\hbox{\Large $\times$}}{\hbox{\large
$\times$}}{\times}}_{k \in \mathcal{L}(T_D)}W_{\alpha }^{\min }(\mathbf{v}%
)^{r_{k }} \times \mathbb{R}_{\ast}^{\mathfrak{r}} \longrightarrow \mathcal{U%
}(\mathbf{v}),
\end{equation*}%
where $(\chi_{\mathfrak{r}}^{-1}(\mathbf{v}) \circ \Theta _{\mathbf{v}%
}^{-1})\left( \mathfrak{W},\mathfrak{C}\right) =\mathbf{w}$ putting $L_{k }(%
\mathbf{u}_{i_{k }}^{(k )})=\mathbf{w}_{i_{k }}^{(k )},$ $1\leq i_{k }\leq
r_{k } $ and $k \in \mathcal{L}(T_D).$ Since each local chart is defined
over an open subset of the Hilbert space $%
\mathop{\mathchoice{\raise-0.22em\hbox{\huge $\times$}}
{\raise-0.05em\hbox{\Large $\times$}}{\hbox{\large
$\times$}}{\times}}_{k \in \mathcal{L}(T_D)}W_{k }^{\min }(\mathbf{v})^{r_{k
}} \times \mathbb{R}^{\mathfrak{r}},$ the corollary follows.
\end{proof}

\bigskip

Using the geometric structure of local charts for the manifold $\mathcal{FT}%
_{\mathfrak{r}}(\mathbf{V}_{D}),$ we can identify its tangent space at $%
\mathbf{v}$ with $\mathbb{T}_{\mathbf{v}}(\mathcal{FT}_{\mathfrak{r}}(%
\mathbf{V}_{D})):=%
\mathop{\mathchoice{\raise-0.22em\hbox{\huge $\times$}}
{\raise-0.05em\hbox{\Large $\times$}}{\hbox{\large $\times$}}{\times}}_{k\in 
\mathcal{L}(T_{D})}\mathcal{L}(U_{k}^{\min }(\mathbf{v}),W_{k}^{\min }(%
\mathbf{v}))\times \mathbb{R}^{\mathfrak{r}}.$ We will consider $\mathbb{T}_{%
\mathbf{v}}(\mathcal{FT}_{\mathfrak{r}}(\mathbf{V}_{D}))$ endowed with the
product norm 
\begin{equation*}
\Vert \Vert (\mathfrak{L},\mathfrak{C})\Vert \Vert :=\sum_{\alpha \in
T_{D}\setminus \mathcal{L}(T_{D})}\Vert C^{(\alpha )}\Vert _{F}+\sum_{k\in 
\mathcal{L}(T_{D})}\Vert L_{k}\Vert _{W_{k}^{\min }(\mathbf{v})\leftarrow
U_{k}^{\min }(\mathbf{v})}.
\end{equation*}%
Moreover, the map $\varrho
_{\mathfrak{r}}$ is an analytic morphism and 
\begin{equation*}
\mathrm{T}_{\mathbf{v}}\varrho _{\mathfrak{r}}:%
\mathop{\mathchoice{\raise-0.22em\hbox{\huge $\times$}} {\raise-0.05em\hbox{\Large
$\times$}}{\hbox{\large $\times$}}{\times}}_{k\in \mathcal{L}(T_{D})}%
\mathcal{L}(U_{k}^{\min }(\mathbf{v}),W_{k}^{\min }(\mathbf{v}))\times 
\mathbb{R}^{\mathfrak{r}}\rightarrow 
\mathop{\mathchoice{\raise-0.22em\hbox{\huge $\times$}} {\raise-0.05em\hbox{\Large $\times$}}{\hbox{\large
$\times$}}{\times}}_{k\in \mathcal{L}(T_{D})}\mathcal{L}(U_{k}^{\min }(%
\mathbf{v}),W_{k}^{\min }(\mathbf{v})),\quad (\mathfrak{L},\mathfrak{C}%
)\mapsto \mathfrak{L}.
\end{equation*}

\bigskip

Finally, the same argument used to provide a Banach manifold structure to
the set $\mathbb{G}_{\leq n}(X)$ used with $\mathcal{FT}_{\leq \mathfrak{r}}(%
\mathbf{V}_{D})$ and \eqref{connected_id}, allows us to state the following.

\begin{theorem}
\label{Bounded_Banach_Manifold} Assume that $\{\mathbf{V}_{\alpha
}\}_{\alpha \in T_{D}\setminus \{D\}}$ is a representation of the tensor
space $\mathbf{V}_{D}=\left. _{a}\bigotimes_{\alpha \in S(D)}\mathbf{V}%
_{\alpha }\right. $ in the tree-based format  where for each $k\in 
\mathcal{L}(T_{D})$ the vector space $V_{k}$ is a normed space with a norm $%
\Vert \cdot \Vert _{k}.$ Then the set $\mathcal{FT}_{\leq \mathfrak{r}}(%
\mathbf{V}_{D})$ of TBF tensors with bounded TB rank is an analytic Banach
(Hilbert) manifold.
\end{theorem}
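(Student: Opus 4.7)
The plan is to exploit the decomposition~\eqref{connected_id}, which writes $\mathcal{FT}_{\leq \mathfrak{r}}(\mathbf{V}_D) = \{\mathbf{0}\} \cup \bigcup_{\mathfrak{s}\leq \mathfrak{r}} \mathcal{FT}_{\mathfrak{s}}(\mathbf{V}_D)$ as a \emph{finite} union (bounded by $\prod_{\alpha \in T_D}(r_\alpha + 1)$ strata, with inadmissible tuples contributing the empty set). Since every nonzero $\mathbf{v} \in \mathbf{V}_D$ has a uniquely determined TB rank $(\dim U_\alpha^{\min}(\mathbf{v}))_{\alpha \in T_D}$, the strata $\mathcal{FT}_{\mathfrak{s}}(\mathbf{V}_D)$ are pairwise disjoint, and $\{\mathbf{0}\}$ is disjoint from each of them. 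This is directly analogous to the decomposition $\mathbb{G}_{\leq r}(X) = \bigcup_{n \leq r} \mathbb{G}_n(X)$ of Example~\ref{fixed_rank_grassmann}.

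I would then invoke Theorem~\ref{Tucker_Banach_Manifold} to equip each admissible stratum $\mathcal{FT}_{\mathfrak{s}}(\mathbf{V}_D)$ with its analytic atlas $\{(\mathcal{U}(\mathbf{v}), \Theta_{\mathbf{v}} \circ \chi_{\mathfrak{s}}(\mathbf{v}))\}_{\mathbf{v} \in \mathcal{FT}_{\mathfrak{s}}(\mathbf{V}_D)}$, and regard $\{\mathbf{0}\}$ as a trivial zero-dimensional chart. The proposed atlas for $\mathcal{FT}_{\leq \mathfrak{r}}(\mathbf{V}_D)$ is the union of these strata-wise atlases. Conditions AT1 and AT2 of Definition~\ref{banach_manifold_definition} follow immediately: the charts cover $\mathcal{FT}_{\leq \mathfrak{r}}(\mathbf{V}_D)$, and each $\mathcal{U}(\mathbf{v})$ is mapped bijectively onto an open subset of its product Banach model space (with the understanding that the model space may differ from stratum to stratum, which is permitted by Definition~\ref{banach_manifold_definition}).

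The only nontrivial point is the compatibility condition AT3, and here the key observation is that charts from different strata are \emph{automatically} compatible because their domains are disjoint. Indeed, by construction $\mathcal{U}(\mathbf{v}) = \varrho_{\mathfrak{s}}^{-1}(\cdots) \subset \mathcal{FT}_{\mathfrak{s}}(\mathbf{V}_D)$, so if $\mathbf{v} \in \mathcal{FT}_{\mathfrak{s}}(\mathbf{V}_D)$ and $\mathbf{v}' \in \mathcal{FT}_{\mathfrak{s}'}(\mathbf{V}_D)$ with $\mathfrak{s} \neq \mathfrak{s}'$, then $\mathcal{U}(\mathbf{v}) \cap \mathcal{U}(\mathbf{v}') = \emptyset$ and the transition map is vacuously a diffeomorphism; similarly for the chart at $\mathbf{0}$. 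For charts within a single stratum, the analyticity of the transition maps is exactly Lemma~\ref{premanifold}. Hence the assembled family is an analytic atlas and endows $\mathcal{FT}_{\leq \mathfrak{r}}(\mathbf{V}_D)$ with an analytic Banach manifold structure; each stratum is then a (cl)open analytic Banach submanifold, playing the role of a connected component of the ambient manifold.

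The main (and only mild) obstacle is the presence of $\{\mathbf{0}\}$, which corresponds to the inadmissible zero tuple and so sits outside the family of strata indexed by admissible $\mathfrak{s} \leq \mathfrak{r}$; the remedy is simply to append it as an isolated trivial chart, which is consistent with the fact that it is an isolated point of $\mathcal{FT}_{\leq \mathfrak{r}}(\mathbf{V}_D)$ under the subspace topology inherited from any reasonable norm on $\mathbf{V}_D$. For the Hilbert case, Corollary~\ref{hilbert_manifold_tensor} replaces Theorem~\ref{Tucker_Banach_Manifold} throughout, giving Hilbert model spaces on each stratum and establishing the analytic Hilbert manifold structure.
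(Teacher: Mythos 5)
Your proposal follows essentially the same route as the paper: the paper's own proof is just a pointer to the decomposition \eqref{connected_id} and the argument used for $\mathbb{G}_{\leq r}(X)=\bigcup_{n\leq r}\mathbb{G}_{n}(X)$, i.e.\ take the disjoint union of the stratum-wise atlases from Theorem~\ref{Tucker_Banach_Manifold}, observe that cross-stratum overlaps are empty because the TB rank is uniquely determined, and invoke Lemma~\ref{premanifold} within each stratum; you have simply written out the details. One side remark is incorrect, though it does not affect the construction: $\mathbf{0}$ is \emph{not} an isolated point of $\mathcal{FT}_{\leq\mathfrak{r}}(\mathbf{V}_{D})$ in the norm topology of $\mathbf{V}_{D}$ (scale any fixed $\mathbf{u}\in\mathcal{FT}_{\mathfrak{s}}(\mathbf{V}_{D})$ by $1/n$; the TB rank is unchanged and the sequence converges to $\mathbf{0}$); the chart $\{\mathbf{0}\}$ is admissible only because the manifold topology here is the atlas topology, for which AT2 and AT3 are vacuously satisfied on a domain disjoint from all others, not because of any compatibility with the ambient norm topology.
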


\section{The TBF tensors and its natural ambient tensor Banach space}

\label{embedded_manifold}

Assume that $\{\mathbf{V}_{\alpha }\}_{\alpha \in T_{D}\setminus \{D\}}$ is
a representation of the tensor space $\mathbf{V}_{D}=\left.
_{a}\bigotimes_{\alpha \in S(D)}\mathbf{V}_{\alpha }\right. $ in the 
tree-based format and that for each $k\in \mathcal{L}(T_{D})$ the vector
space $V_{k}$ is a normed space with a norm $\Vert \cdot \Vert _{k}.$ We
start with a brief discussion about the choice of the ambient manifold for $%
\mathcal{FT}_{\mathfrak{r}}(\mathbf{V}_{D}).$ To this end assume the
existence of two norms $\Vert \cdot \Vert _{D,1}$ and $\Vert \cdot \Vert
_{D,2}$ on $\mathbf{V}_{D}.$ Then we have $\mathbf{V}_{D}\subset \overline{%
\mathbf{V}_{D}}^{\Vert \cdot \Vert _{D,1}}$ and $\mathbf{V}_{D}\subset 
\overline{\mathbf{V}_{D}}^{\Vert \cdot \Vert _{D,2}}.$ The next example
illustrates this situation.

\begin{example}
\label{example_BM1} Let $V_{1_{\|\cdot\|_1}} := H^{1,p}(I_1)$ and $%
V_{2_{\|\cdot\|_2}}= H^{1,p}(I_2).$ Take $\mathbf{V}_D:= H^{1,p}(I_1)
\otimes_a H^{1,p}(I_2).$ From Theorem~\ref{Tucker_Banach_Manifold} we obtain
that $\mathcal{FT}_{\mathfrak{r}}(\mathbf{V}_D)$ is a Banach manifold.
However, we can consider as ambient manifold either $\overline{\mathbf{V}_{D}%
}^{\|\cdot\|_{D,1}} := H^{1,p}(I_1 \times I_2)$ or $\overline{\mathbf{V}_{D}}%
^{\|\cdot\|_{D,2}} = H^{1,p}(I_1) \otimes_{\|\cdot\|_{(0,1),p}}
H^{1,p}(I_2), $ where $\|\cdot\|_{(0,1),p}$ is the norm given by 
\begin{equation*}
\|f\|_{(0,1),p}:= \left(\|f\|_p^p + \left\|\frac{\partial f}{\partial x_2}%
\right\|_p^p\right)^{1/p}
\end{equation*}
for $1 \le p < \infty.$
\end{example}

In this context two questions about the choice of a norm $\|\cdot\|_{\alpha}$
for each algebraic tensor space $\mathbf{V}_{\alpha} = \left._a
\bigotimes_{\beta \in S(\alpha)} \mathbf{V}_{\beta}\right.,$ where $\alpha
\in T_D \setminus \mathcal{L}(T_D)$ appears:

\begin{enumerate}
\item What is the good choice for these norms to show that $\mathcal{FT}_{\le 
\mathfrak{r}}(\mathbf{V}_D)$ is proximinal?

\item What is the good choice for these norms to show that $\mathcal{FT}_{%
\mathfrak{r}}(\mathbf{V}_D)$ is an immersed submanifold?
\end{enumerate}

To see this we need to introduce the topological tensor spaces in the 
tree-based format.

\subsection{Topological tensor spaces in the tree-based Format}

First, we recall the definition of some topological tensor spaces and we
will give some examples.

\begin{definition}
\label{Banach tensor product space}We say that $\mathbf{V}_{\left\Vert
\cdot\right\Vert }$ is a \emph{Banach tensor space} if there exists an
algebraic tensor space $\mathbf{V}$ and a norm $\left\Vert \cdot\right\Vert $
on $\mathbf{V}$ such that $\mathbf{V}_{\left\Vert \cdot\right\Vert }$ is the
completion of $\mathbf{V}$ with respect to the norm $\left\Vert
\cdot\right\Vert $, i.e.
\begin{equation*}
\mathbf{V}_{\left\Vert \cdot\right\Vert }:=\left. _{\left\Vert \cdot
\right\Vert }\bigotimes_{j=1}^{d}V_{j}\right. =\overline{\left.
_{a}\bigotimes\nolimits_{j=1}^{d}V_{j}\right. }^{\left\Vert \cdot\right\Vert
}.
\end{equation*}
If $\mathbf{V}_{\left\Vert \cdot\right\Vert }$ is a Hilbert space, we say
that $\mathbf{V}_{\left\Vert \cdot\right\Vert }$ is a \emph{Hilbert tensor
space}.
\end{definition}

Next, we give some examples of Banach and Hilbert tensor spaces.

\begin{example}
\label{Bsp HNp}For $I_{j}\subset \mathbb{R}$ $\left( 1\leq j\leq d\right) $
and $1\leq p<\infty ,$ the Sobolev space $H^{N,p}(I_{j})$ consists of all
univariate functions $f$ from $L^{p}(I_{j})$ with bounded norm\footnote{%
It suffices to have in \eqref{(SobolevNormp a} the terms $n=0$ and $n=N.$
The derivatives are to be understood as weak derivatives.}%
\begin{subequations}
\label{(SobolevNormp}%
\begin{equation*}
\left\Vert f\right\Vert _{N,p;I_{j}}:=\bigg(\sum_{n=0}^{N}\int_{I_{j}}\left%
\vert \partial ^{n}f\right\vert ^{p}\mathrm{d}x\bigg)^{1/p},
\label{(SobolevNormp a}
\end{equation*}%
whereas the space $H^{N,p}(\mathbf{I})$ of $d$-variate functions on $\mathbf{%
I}=I_{1}\times I_{2}\times \ldots \times I_{d}\subset \mathbb{R}^{d}$ is
endowed with the norm%
\begin{equation*}
\left\Vert f\right\Vert _{N,p}:=\Big(\sum_{0\leq \left\vert \mathbf{n}%
\right\vert \leq N}\int_{\mathbf{I}}\left\vert \partial ^{\mathbf{n}%
}f\right\vert ^{p}\mathrm{d}\mathbf{x}\Big)^{1/p}  \label{(SobolevNormp b}
\end{equation*}%
\end{subequations}%
with $\mathbf{n}\in \mathbb{N}_{0}^{d}$ being a multi-index of length $%
\left\vert \mathbf{n}\right\vert :=\sum_{j=1}^{d}n_{j}$. For $p>1$ it is
well known that $H^{N,p}(I_{j})$ and $H^{N,p}(\mathbf{I})$ are reflexive and
separable Banach spaces. Moreover, for $p=2,$ the Sobolev spaces $%
H^{N}(I_{j}):=H^{N,2}(I_{j})$ and $H^{N}(\mathbf{I}):=H^{N,2}(\mathbf{I})$
are Hilbert spaces. As a first example,%
\begin{equation*}
H^{N,p}(\mathbf{I})=\left. _{\left\Vert \cdot \right\Vert
_{N,p}}\bigotimes_{j=1}^{d}H^{N,p}(I_{j})\right.
\end{equation*}%
is a Banach tensor space. Examples of Hilbert tensor spaces are%
\begin{equation*}
L^{2}(\mathbf{I})=\left. _{\left\Vert \cdot \right\Vert
_{0,2}}\bigotimes_{j=1}^{d}L^{2}(I_{j})\right. \text{\quad and\quad }H^{N}(%
\mathbf{I})=\left. _{\left\Vert \cdot \right\Vert
_{N,2}}\bigotimes_{j=1}^{d}H^{N}(I_{j})\right. \text{ for }N\in \mathbb{N}.
\end{equation*}
\end{example}

In the definition of a tensor Banach space $\left._{\|\cdot\|} \bigotimes_{j
\in D} V_j \right.$ we have not fixed, whether $V_j,$ for $j \in D,$ are
complete or not. This leads us to introduce the following definition.

\begin{definition}
Let $D$ be a finite index set and $T_{D}$ be a dimension partition tree. Let 
$(V_{j},\Vert \cdot \Vert _{j})$ be a normed space such that $V_{j_{\Vert
\cdot \Vert _{j}}}$ is a Banach space obtained as the completion of $V_{j},$
for $j\in D,$ and consider a representation $\{\mathbf{V}_{\alpha
}\}_{\alpha \in T_{D}\setminus \{D\}}$ of the tensor space $\mathbf{V}%
_{D}=\left. _{a}\bigotimes_{j\in D}V_{j}\right. $ where for each $\alpha \in
T_{D}\setminus \mathcal{L}(T_{D})$ we have a tensor space $\mathbf{V}%
_{\alpha }=\left. _{a}\bigotimes_{\beta \in S(\alpha )}\mathbf{V}_{\beta
}\right. .$ If for each $\alpha \in T_{D}\setminus \mathcal{L}(T_{D})$ there
exists a norm $\Vert \cdot \Vert _{\alpha }$ defined on $\mathbf{V}_{\alpha
} $ such that $\mathbf{V}_{\alpha _{\Vert \cdot \Vert _{\alpha }}}=\left.
_{\Vert \cdot \Vert _{\alpha }}\bigotimes_{\beta \in S(\alpha )}V_{\beta
}\right. $ is a tensor Banach space, we say that $\{\mathbf{V}_{\alpha
_{\Vert \cdot \Vert _{\alpha }}}\}_{\alpha \in T_{D}\setminus \{D\}}$ is a
representation of the tensor Banach space $\mathbf{V}_{D_{\Vert \cdot \Vert
_{D}}}=\left. _{\Vert \cdot \Vert _{D}}\bigotimes_{j\in D}V_{j}\right. $ in
the \emph{topological tree-based format}.
\end{definition}

Since $\mathbf{V}_{\alpha} = \left._a \bigotimes_{j\in \alpha} V_{j}\right.$%
, 
\begin{equation*}
\mathbf{V}_{\alpha_{\|\cdot\|_{\alpha}}} = \left._{\|\cdot\|_{\alpha}}
\bigotimes_{\alpha \in S(D)} \mathbf{V}_{\alpha}\right. =
\left._{\|\cdot\|_{\alpha}} \bigotimes_{j\in \alpha} V_{j}\right.
\end{equation*}
holds for all $\alpha \in T_D \setminus \mathcal{L}(T_D).$

\begin{example}
Figure \ref{figZp} gives an example of a representation in the topological
tree-based format for an anisotropic Sobolev space.
\end{example}

\begin{figure}[tbp]
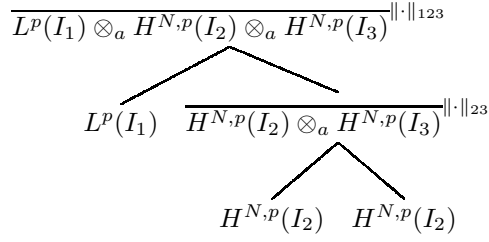

\centering
\synttree[$\overline{L^p(I_1) \otimes _a H^{N,p}(I_2) \otimes_{a} H^{N,p}(I_3)}^{\|\cdot\|_{123}}$
[$L^p(I_1)$]
[$\overline{H^{N,p}(I_2) \otimes_{a} H^{N,p}(I_3)}^{\|\cdot \|_{23}}$[$H^{N,p}(I_2)$][$H^{N,p}(I_2)$]
]]
\caption{A representation in the topological tree-based format for the
tensor Banach space $\overline{L^p(I_1) \otimes _a H^{N,p}(I_2) \otimes_{a}
H^{N,p}(I_3)}^{\|\cdot\|_{123}}.$ Here $\|\cdot\|_{23}$ and $\|\cdot\|_{123}$
are given norms.}
\label{figZp}
\end{figure}

\begin{figure}[tbp]
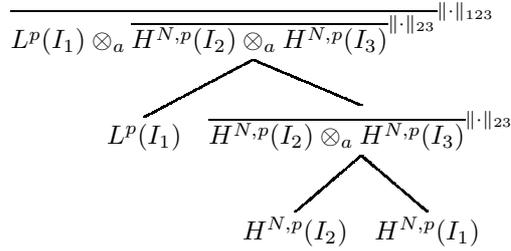

\centering
\synttree[$\overline{L^p(I_1) \otimes _a \overline{H^{N,p}(I_2) \otimes_a H^{N,p}(I_3)}^{\|\cdot \|_{23}}}^{\|\cdot\|_{123}}$
[$L^p(I_1)$]
[$\overline{H^{N,p}(I_2) \otimes_a H^{N,p}(I_3)}^{\|\cdot \|_{23}}$[$H^{N,p}(I_2)$][$H^{N,p}(I_1)$]
]]
\caption{A representation for the tensor Banach space $\overline{%
L^{p}(I_{1})\otimes _{a}\overline{H^{N,p}(I_{2})\otimes _{a}H^{N,p}(I_{3})}%
^{\Vert \cdot \Vert _{23}}}^{\Vert \cdot \Vert _{123}},$ using a tree. Here $%
\Vert \cdot \Vert _{23}$ and $\Vert \cdot \Vert _{123}$ are given norms.}
\label{figZp1}
\end{figure}

\begin{remark}
Observe that a tree as given in Figure \ref{figZp1} is not included in the
definition of the topological tree-based format. Moreover, for a tensor $%
\mathbf{v}\in L^{p}(I_{1})\otimes _{a}(H^{N,p}(I_{2})\otimes _{\Vert \cdot
\Vert _{23}}H^{N,p}(I_{3})),$ we have $U_{23}^{\min }(\mathbf{v})\subset
H^{N,p}(I_{2})\otimes _{\Vert \cdot \Vert _{23}}H^{N,p}(I_{3}).$ However, in
the topological tree-based representation of Figure \ref{figZp}, for a given 
$\mathbf{v}\in L^{p}(I_{1})\otimes _{a}H^{N,p}(I_{2})\otimes
_{a}H^{N,p}(I_{3})$ we have $U_{23}^{\min }(\mathbf{v})\subset
H^{N,p}(I_{2})\otimes _{a}H^{N,p}(I_{3}),$ and hence $U_{23}^{\min }(\mathbf{%
v})\subset U_{2}^{\min }(\mathbf{v})\otimes _{a}U_{3}^{\min }(\mathbf{v}).$
\end{remark}

The difference between the tensor spaces involved in Figure \ref{figZp} and
Figure \ref{figZp1} is the following. For all $\beta \in T_D \setminus 
\mathcal{L}(T_D)$, if $\|\cdot\|_{\beta}$ is also a norm on the tensor space 
$\left._{a} \bigotimes_{\eta \in S(\beta)} \mathbf{V}_{\eta_{\|\cdot\|_{%
\eta}}}\right.,$ we have 
\begin{equation*}
\left._{\|\cdot\|_{\beta}} \bigotimes_{\eta \in S(\beta)} \mathbf{V}%
_{\eta_{\|\cdot\|_{\eta}}}\right. \supset \mathbf{V}_{\beta_{\|\cdot\|_{%
\beta}}} = \left._{\|\cdot\|_{\beta}} \bigotimes_{\eta \in S(\beta)} \mathbf{%
V}_{\eta}\right. = \left._{\|\cdot\|_{\beta}} \bigotimes_{j\in \beta}
V_{j}\right..
\end{equation*}
A desirable property for the tensor product is that if $\|\cdot\|_{\alpha}$
is also a norm on the tensor space $\left._{a} \bigotimes_{\beta \in
S(\alpha)} \mathbf{V}_{\beta_{\|\cdot\|_{\beta}}}\right.,$ then 
\begin{align}  \label{desirable_property}
\left._{\|\cdot\|_{\alpha}} \bigotimes_{\beta \in S(\alpha)} \mathbf{V}%
_{\beta_{\|\cdot\|_{\beta}}}\right. = \left._{\|\cdot\|_{\alpha}}
\bigotimes_{\beta \in S(\alpha)} \mathbf{V}_{\beta}\right. =
\left._{\|\cdot\|_{\alpha}} \bigotimes_{j\in \alpha} V_{j}\right.
\end{align}
must be true for all $\alpha \in T_D \setminus \mathcal{L}(T_D).$ To precise
these ideas, we introduce the following definitions and results.

\bigskip

Let $\left\Vert \cdot\right\Vert _{j},$ $1\leq j\leq d,$ be the norms of the
vector spaces $V_{j}$ appearing in $\mathbf{V}=\left. _{a}\bigotimes
\nolimits_{j=1}^{d}V_{j}\right. .$ By $\left\Vert \cdot\right\Vert $ we
denote the norm on the tensor space $\mathbf{V}$. Note that $%
\left\Vert\cdot\right\Vert $ is not determined by $\left\Vert
\cdot\right\Vert _{j},$ for $j \in D,$ but there are relations which are
`reasonable'. Any norm $\left\Vert \cdot\right\Vert $ on $\left.
_{a}\bigotimes_{j=1}^{d}V_{j}\right. $ satisfying%
\begin{equation}
\Big{\|}%
\bigotimes\nolimits_{j=1}^{d}v_j%
\Big{\|}%
=\prod\nolimits_{j=1}^{d}\Vert v_j\Vert_{j}\qquad\text{for all }v_j\in V_{j}%
\text{ }\left( 1\leq j\leq d\right)  \label{(rcn a}
\end{equation}
is called a \emph{crossnorm}. As usual, the dual norm of $\left\Vert
\cdot\right\Vert $ is denoted by $\left\Vert \cdot\right\Vert ^{\ast}$. If $%
\left\Vert \cdot\right\Vert $ is a crossnorm and also $\left\Vert
\cdot\right\Vert ^{\ast}$ is a crossnorm on $\left.
_{a}\bigotimes_{j=1}^{d}V_{j}^{\ast}\right. $, i.e.,%
\begin{equation}
\Big{\|}%
\bigotimes\nolimits_{j=1}^{d}\varphi^{(j)}%
\Big{\|}%
^{\ast}=\prod\nolimits_{j=1}^{d}\Vert\varphi^{(j)}\Vert_{j}^{\ast}\qquad%
\text{for all }\varphi^{(j)}\in V_{j}^{\ast}\text{ }\left( 1\leq j\leq
d\right) ,  \label{(rcn b}
\end{equation}
then $\left\Vert \cdot\right\Vert $ is called a \emph{reasonable crossnorm}.

\begin{remark}
\label{tensor product continuity}Eq. \eqref{(rcn a} implies the inequality $%
\Vert\bigotimes\nolimits_{j=1}^{d}v_j\Vert\lesssim\prod\nolimits_{j=1}^{d}%
\Vert v_j\Vert_{j}$ which is equivalent to the continuity of the multilinear
tensor product mapping\footnote{%
Recall that a multilinear map $T$ from $%
\mathop{\mathchoice{\raise-0.22em\hbox{\huge $\times$}} {\raise-0.05em\hbox{\Large $\times$}}{\hbox{\large
$\times$}}{\times}}_{j=1}^{d} (V_{j},\|\cdot\|_j)$ equipped with the product
topology to a normed space $(W,\Vert\cdot\Vert)$ is continuous if and only
if $\Vert T\Vert <\infty$, with 
\begin{align*}
\| T \| := \sup_{\substack{ (v_1,\hdots,v_d)  \\ \|(v_1,\hdots,v_d)\|\le 1}}
\|T(v_{1},\ldots,v_{d})\| &= \sup_{\substack{ (v_1,\hdots,v_d)  \\ %
\|v_1\|_1\le 1,\hdots, \|v_d\|_d\le 1}} \|T(v_{1},\ldots,v_{d})\| = \sup_{{%
(v_1,\hdots,v_d)}} \frac{\|T (v_{1},\ldots,v_{d})\|}{\|v_1\|_1\hdots %
\|v_d\|_d}.
\end{align*}%
} between normed spaces: 
\begin{equation}
\bigotimes :\mathop{\mathchoice{\raise-0.22em\hbox{\huge $\times$}}
{\raise-0.05em\hbox{\Large $\times$}}{\hbox{\large $\times$}}{\times}}%
_{j=1}^{d}\left( V_{j},\left\Vert \cdot\right\Vert _{j}\right)
\longrightarrow%
\bigg(%
\left. _{a}\bigotimes_{j=1}^{d}V_{j}\right. ,\left\Vert \cdot\right\Vert 
\bigg)%
,  \label{(Tensorproduktabbildung}
\end{equation}
defined by $\bigotimes\left( (v_{1},\ldots,v_{d})\right)
=\bigotimes_{j=1}^{d}v_{j} $, the product space being equipped with the
product topology induced by the maximum norm $\|(v_1,\hdots,v_d)\| =
\max_{1\le j\le d} \|v_j\|_j$.\smallskip
\end{remark}

The following result is a consequence of Lemma~4.34 of \cite{Hackbusch}.

\begin{lemma}
\label{lemma434} Let $(V_j,\|\cdot\|_j)$ be normed spaces for $1 \le j \le
d. $ Assume that $\|\cdot\|$ is a norm on the tensor space $\left.
_{a}\bigotimes_{j=1}^{d}V_{j_{\|\cdot\|_j}}\right.$ such that the tensor
product map 
\begin{equation}
\bigotimes :\mathop{\mathchoice{\raise-0.22em\hbox{\huge $\times$}}
{\raise-0.05em\hbox{\Large $\times$}}{\hbox{\large $\times$}}{\times}}
_{j=1}^{d}\left( V_{j_{\|\cdot\|_j}},\left\Vert \cdot\right\Vert _{j}\right)
\longrightarrow 
\bigg(
\left. _{a}\bigotimes_{j=1}^{d}V_{j_{\|\cdot\|_j}}\right. ,\left\Vert
\cdot\right\Vert 
\bigg)
\label{(Tensorproduktabbildung1}
\end{equation}
is continuous. Then \eqref{(Tensorproduktabbildung} is also continuous and 
\begin{align*}
\left. _{\|\cdot\|}\bigotimes_{j=1}^d V_{j_{\|\cdot\|_j}}\right. = \left.
_{\|\cdot\|}\bigotimes_{j=1}^d V_{j}\right.
\end{align*}
holds.
\end{lemma}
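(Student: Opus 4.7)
The plan is to reduce the equality of completions to showing that $\left._a \bigotimes_{j=1}^d V_j \right.$ is $\|\cdot\|$-dense in $\left._a \bigotimes_{j=1}^d V_{j_{\|\cdot\|_j}} \right.$, and then deduce the continuity of the tensor product map on the smaller product space by restriction. Continuity of \eqref{(Tensorproduktabbildung} is essentially free: since by hypothesis the map \eqref{(Tensorproduktabbildung1} has finite operator norm $M$, and since $V_j \subset V_{j_{\|\cdot\|_j}}$ isometrically, the restricted multilinear map satisfies the same bound, so it is continuous.

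For the equality of completions, first I would observe that $\left._a \bigotimes_{j=1}^d V_j \right. \subset \left._a \bigotimes_{j=1}^d V_{j_{\|\cdot\|_j}} \right.$ is a subspace, both carrying the same norm $\|\cdot\|$; hence their completions coincide if and only if the smaller one is dense in the larger. So the real content is the density claim. Take an arbitrary $\mathbf{v} \in \left._a \bigotimes_{j=1}^d V_{j_{\|\cdot\|_j}} \right.$ and write it as a finite sum $\mathbf{v} = \sum_{i=1}^r \bigotimes_{j=1}^d v_i^{(j)}$ with $v_i^{(j)} \in V_{j_{\|\cdot\|_j}}$. Because $V_j$ is $\|\cdot\|_j$-dense in $V_{j_{\|\cdot\|_j}}$, for each pair $(i,j)$ one picks a sequence $(v_{i,n}^{(j)})_{n\in\mathbb{N}} \subset V_j$ with $\|v_{i,n}^{(j)} - v_i^{(j)}\|_j \to 0$ as $n \to \infty$. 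The continuity of \eqref{(Tensorproduktabbildung1} then yields
\begin{equation*}
\bigotimes_{j=1}^d v_{i,n}^{(j)} \longrightarrow \bigotimes_{j=1}^d v_i^{(j)} \quad \text{in } \|\cdot\|, \qquad \text{for each } 1 \le i \le r,
\end{equation*}
because the tensor product of the approximating elementary tuples converges to the tensor product of the limit tuple in the product topology, and by hypothesis the map is continuous on $\times_{j=1}^d V_{j_{\|\cdot\|_j}}$.

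Summing the $r$ resulting sequences and applying the triangle inequality one obtains $\mathbf{v}_n := \sum_{i=1}^r \bigotimes_{j=1}^d v_{i,n}^{(j)} \in \left._a \bigotimes_{j=1}^d V_j \right.$ with $\mathbf{v}_n \to \mathbf{v}$ in $\|\cdot\|$. This establishes the density, and therefore the equality $\left._{\|\cdot\|}\bigotimes_{j=1}^d V_{j_{\|\cdot\|_j}}\right. = \left._{\|\cdot\|}\bigotimes_{j=1}^d V_{j}\right.$. No step presents a genuine obstacle; the only subtle point is just to keep track that the norm $\|\cdot\|$ appearing on both sides is literally the same object (namely the given norm restricted from the ambient algebraic tensor product), which is automatic here since $\left._a \bigotimes V_j \right.$ is a subspace of $\left._a \bigotimes V_{j_{\|\cdot\|_j}} \right.$ and $\|\cdot\|$ is defined on the latter.
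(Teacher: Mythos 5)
Your proof is correct: the paper itself does not write out an argument but simply cites Lemma~4.34 of Hackbusch's book, and the standard proof given there is exactly your density argument (approximate each factor $v_i^{(j)}$ of a finite sum of elementary tensors by elements of $V_j$, use the continuity of the multilinear tensor product map to pass to the limit termwise, and conclude that $\left._a\bigotimes_{j=1}^d V_j\right.$ is $\|\cdot\|$-dense in $\left._a\bigotimes_{j=1}^d V_{j_{\|\cdot\|_j}}\right.$, so the two completions coincide). Your observation that the continuity of \eqref{(Tensorproduktabbildung} is just the restriction of the bounded multilinear map \eqref{(Tensorproduktabbildung1} to the product of the dense subspaces is also the intended argument, so there is nothing to add.
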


\begin{definition}
Assume that for each $\alpha \in T_D \setminus \mathcal{L}(T_D)$ there
exists a norm $\|\cdot\|_{\alpha}$ defined on $\left._{a} \bigotimes_{\beta
\in S(\alpha)} V_{\beta_{\|\cdot\|_{\beta}}} \right..$ We will say that the
tensor product map $\bigotimes$ is $T_D$-continuous if the map 
\begin{equation*}
\bigotimes: 
\mathop{\mathchoice{\raise-0.22em\hbox{\huge
$\times$}} {\raise-0.05em\hbox{\Large $\times$}}{\hbox{\large
$\times$}}{\times}}_{\beta \in
S(\alpha)}(V_{\beta_{\|\cdot\|_{\beta}}},\|\cdot\|_{\beta}) \rightarrow
\left(\left._a \bigotimes_{\beta \in S(\alpha)}
V_{\beta_{\|\cdot\|_{\beta}}} \right., \|\cdot\|_{\alpha} \right)
\end{equation*}
is continuous for each $\alpha \in T_D \setminus \mathcal{L}(T_D).$
\end{definition}

The next result gives the conditions to have \eqref{desirable_property}.

\begin{theorem}
\label{ext_Banach} Assume that we have a representation $\{\mathbf{V}%
_{\alpha _{\Vert \cdot \Vert _{\alpha }}}\}_{\alpha \in T_{D}\setminus
\{D\}} $ in the topological tree-based format of the tensor Banach space $%
\mathbf{V}_{D_{\Vert \cdot \Vert _{D}}}=\left. _{\Vert \cdot \Vert
_{D}}\bigotimes_{\alpha \in S(D)}\mathbf{V}_{\alpha }\right. ,$ such that
for each $\alpha \in T_{D}\setminus \mathcal{L}(T_{D})$, the norm $\Vert
\cdot \Vert _{\alpha }$ is also defined on $\left. _{a}\bigotimes_{\beta \in
S(\alpha )}V_{\beta _{\Vert \cdot \Vert _{\beta }}}\right. $ and the tensor
product map $\bigotimes $ is $T_{D}$-continuous. Then 
\begin{equation*}
\left. _{\Vert \cdot \Vert _{\alpha }}\bigotimes_{\beta \in S(\alpha )}%
\mathbf{V}_{\beta _{\Vert \cdot \Vert _{\beta }}}\right. =\left. _{\Vert
\cdot \Vert _{\alpha }}\bigotimes_{\beta \in S(\alpha )}\mathbf{V}_{\beta
}\right. =\left. _{\Vert \cdot \Vert _{\alpha }}\bigotimes_{j\in \alpha
}V_{j}\right. ,
\end{equation*}%
holds for all $\alpha \in T_{D}\setminus \mathcal{L}(T_{D}).$
\end{theorem}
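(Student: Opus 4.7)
The plan is to derive the theorem as a direct application of Lemma~\ref{lemma434} combined with associativity of the algebraic tensor product, applied separately for each $\alpha \in T_D \setminus \mathcal{L}(T_D)$.

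For the first equality I would view each factor $\mathbf{V}_\beta$ ($\beta \in S(\alpha)$) as a normed vector space with norm $\|\cdot\|_\beta$ whose Banach completion is precisely $\mathbf{V}_{\beta_{\|\cdot\|_\beta}}$. By hypothesis $\|\cdot\|_\alpha$ is already a norm on the larger algebraic tensor space $\left._a\bigotimes_{\beta\in S(\alpha)}\mathbf{V}_{\beta_{\|\cdot\|_\beta}}\right.$, and the $T_D$-continuity of $\bigotimes$ at the node $\alpha$ is exactly the statement that the map
$$\bigotimes:\mathop{\mathchoice{\raise-0.22em\hbox{\huge $\times$}}{\raise-0.05em\hbox{\Large $\times$}}{\hbox{\large $\times$}}{\times}}_{\beta\in S(\alpha)}\bigl(\mathbf{V}_{\beta_{\|\cdot\|_\beta}},\|\cdot\|_\beta\bigr)\longrightarrow\left(\left._a\bigotimes_{\beta\in S(\alpha)}\mathbf{V}_{\beta_{\|\cdot\|_\beta}}\right.,\|\cdot\|_\alpha\right)$$
is continuous. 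Applying Lemma~\ref{lemma434} verbatim, after the notational relabelling $j\leftrightarrow\beta$, $V_j\leftrightarrow\mathbf{V}_\beta$, $V_{j_{\|\cdot\|_j}}\leftrightarrow\mathbf{V}_{\beta_{\|\cdot\|_\beta}}$, $\|\cdot\|\leftrightarrow\|\cdot\|_\alpha$, then yields
$$\left._{\|\cdot\|_\alpha}\bigotimes_{\beta\in S(\alpha)}\mathbf{V}_{\beta_{\|\cdot\|_\beta}}\right.=\left._{\|\cdot\|_\alpha}\bigotimes_{\beta\in S(\alpha)}\mathbf{V}_\beta\right..$$

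For the second equality I would invoke associativity of the algebraic tensor product (Greub~\cite{Greub}). Iteratively unfolding the identity $\mathbf{V}_\gamma=\left._a\bigotimes_{\delta\in S(\gamma)}\mathbf{V}_\delta\right.$ down the subtree rooted at each $\beta\in S(\alpha)$ produces a canonical isomorphism $\mathbf{V}_\beta\cong\left._a\bigotimes_{j\in\beta}V_j\right.$; since the children of $\alpha$ partition $\alpha$, one further associativity step gives the identification
$$\left._a\bigotimes_{\beta\in S(\alpha)}\mathbf{V}_\beta\right.\;\cong\;\left._a\bigotimes_{j\in\alpha}V_j\right.$$
of algebraic tensor spaces. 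Under this identification $\|\cdot\|_\alpha$ is the \emph{same} norm on both sides, so their completions coincide and the second equality follows.

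The plan is essentially bookkeeping: the substantive work is already done by Lemma~\ref{lemma434}, which handles the passage from the completion of the bigger algebraic tensor product (over the Banach factors $\mathbf{V}_{\beta_{\|\cdot\|_\beta}}$) to that of the smaller one (over the original factors $\mathbf{V}_\beta$). The only mild points I anticipate having to spell out are the reindexing of Lemma~\ref{lemma434} from $\{1,\dots,d\}$ to the index set $S(\alpha)$, and the verification that the associativity isomorphism is an isometry for $\|\cdot\|_\alpha$, which is immediate because this is one and the same norm on the two identified algebraic tensor spaces by hypothesis.
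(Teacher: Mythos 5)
Your proposal is correct and follows essentially the same route as the paper: the paper's proof likewise applies Lemma~\ref{lemma434} (with the $T_D$-continuity hypothesis supplying the required continuity of the tensor product map at the node $\alpha$) to obtain the first equality, and then concludes the second from the algebraic identification $\mathbf{V}_{\alpha }=\left. _{a}\bigotimes_{\beta \in S(\alpha )}\mathbf{V}_{\beta }\right. =\left. _{a}\bigotimes_{j\in \alpha }V_{j}\right.$ under the single norm $\Vert \cdot \Vert _{\alpha }$.
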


\begin{proof}
From Lemma \ref{lemma434}, if the tensor product map 
\begin{equation*}
\bigotimes: 
\mathop{\mathchoice{\raise-0.22em\hbox{\huge $\times$}} {\raise-0.05em\hbox{\Large $\times$}}{\hbox{\large
$\times$}}{\times}}_{\beta \in S(\alpha)} (\mathbf{V}_{\beta_{\|\cdot\|_{%
\beta}}}, \|\cdot\|_{\beta}) \longrightarrow (\left._{a} \bigotimes_{\beta
\in S(\alpha)} \mathbf{V}_{\beta_{\|\cdot\|_{\beta}}}\right.,\|\cdot\|_{%
\alpha})
\end{equation*}
is continuous, then 
\begin{equation*}
\left._{\|\cdot\|_{\alpha}} \bigotimes_{\beta \in S(\alpha)} \mathbf{V}%
_{\beta_{\|\cdot\|_{\beta}}}\right.= \left._{\|\cdot\|_{\alpha}}
\bigotimes_{\beta \in S(\alpha)} V_{\beta}\right.,
\end{equation*}
holds. Since $\mathbf{V}_{\alpha} = \left._{a} \bigotimes_{\beta \in
S(\alpha)} \mathbf{V}_{\beta}\right. = \left._{a} \bigotimes_{j \in \alpha}
V_{j}\right.,$ the theorem follows.
\end{proof}

\begin{example}
Assume that the tensor product maps 
\begin{equation*}
\bigotimes:(L^p(I_1),\|\cdot\|_{0,p;I_1}) \times (H^{N,p}(I_2)
\otimes_{\|\cdot\|_{23}} H^{N,p}(I_3), \|\cdot\|_{23}) \rightarrow (L^p(I_1)
\otimes_a (H^{N,p}(I_2) \otimes_{\|\cdot\|_{23}} H^{N,p}(I_3)),
\|\cdot\|_{123} )
\end{equation*}
and 
\begin{equation*}
\bigotimes:(H^{N,p}(I_2), \|\cdot\|_{N,p;I_2}) \times (H^{N,p}(I_3),
\|\cdot\|_{N,p;I_3}) \rightarrow (H^{N,p}(I_2) \otimes_{a} H^{N,p}(I_3),
\|\cdot\|_{23} )
\end{equation*}
are continuous. Then the trees of Figure \ref{figZp} and Figure \ref{figZp1}
are the same.
\end{example}

The next result is a consequence of the well-known fact that every
continuous multilinear map between normed spaces is also Fr\'{e}chet
differentiable (see (2.1.22) in \cite{Berger}).

\begin{proposition}
\label{Diff_otimes} Let $(V_j,\|\cdot\|_j)$ be normed spaces for $1 \le j
\le d.$ Assume that $\|\cdot\|$ is a norm onto the tensor space $\left.
_{a}\bigotimes_{j=1}^{d}V_{j_{\|\cdot\|_j}}\right.$ such that the tensor
product map \eqref{(Tensorproduktabbildung1} is continuous. Then it is also
Fr\'{e}chet differentiable and its differential is given by 
\begin{equation*}
D\left( \bigotimes (v_{1},\ldots ,v_{d})\right) (w_{1},\ldots
,w_{d})=\sum_{j=1}^{d}v_{1}\otimes \ldots \otimes v_{j-1}\otimes
w_{j}\otimes v_{j+1}\otimes \cdots v_{d}.
\end{equation*}
\end{proposition}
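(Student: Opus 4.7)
The plan is to invoke the standard fact that a continuous multilinear map between normed spaces is automatically Fréchet differentiable at every point, with derivative given by the natural product-rule expansion, and then verify by direct computation that the proposed formula is indeed the Fréchet differential.

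First I would recall that continuity of the tensor product map in \eqref{(Tensorproduktabbildung1}, with the domain endowed with the maximum norm as in Remark \ref{tensor product continuity}, is equivalent to the existence of a constant $C > 0$ such that
\begin{equation*}
\left\| \bigotimes_{j=1}^{d} x_{j} \right\| \;\le\; C\,\prod_{j=1}^{d}\|x_{j}\|_{j} \qquad \text{for all } x_{j}\in V_{j_{\|\cdot\|_{j}}}.
\end{equation*}
This is the only analytic input needed; everything else is algebraic bookkeeping on multilinearity.

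Next, I would fix $(v_{1},\ldots,v_{d})$, take an increment $(w_{1},\ldots,w_{d})$, and expand $\bigotimes(v_{1}+w_{1},\ldots,v_{d}+w_{d})$ using multilinearity as a sum indexed by subsets $A\subseteq\{1,\ldots,d\}$, where the term for $A$ is the tensor whose $j$-th factor is $w_{j}$ if $j\in A$ and $v_{j}$ otherwise. The empty-set term reproduces $\bigotimes(v_{1},\ldots,v_{d})$, the $d$ singleton terms $A=\{j\}$ sum to exactly the linear map
\begin{equation*}
L(w_{1},\ldots,w_{d}) := \sum_{j=1}^{d} v_{1}\otimes\cdots\otimes v_{j-1}\otimes w_{j}\otimes v_{j+1}\otimes\cdots\otimes v_{d},
\end{equation*}
and every remaining term involves at least two factors drawn from the $w_{j}$'s.

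Finally, I would estimate the remainder $R(w_{1},\ldots,w_{d})$ (the sum over $|A|\ge 2$) using the crossnorm-type bound above: each summand in $R$ is controlled by $C\prod_{j\in A}\|w_{j}\|_{j}\prod_{j\notin A}\|v_{j}\|_{j}$, which is $O\!\bigl(\|(w_{1},\ldots,w_{d})\|^{2}\bigr)$ with respect to the maximum norm since $|A|\ge 2$. Hence $\|R(w_{1},\ldots,w_{d})\| = o(\|(w_{1},\ldots,w_{d})\|)$ as $(w_{1},\ldots,w_{d})\to 0$, so $L$ is the Fréchet differential, which also shows continuity of $L$ as a $d$-linear map. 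There is essentially no obstacle here: the only point requiring care is simply confirming that the continuity of $\bigotimes$ on the product space (with the max norm) produces a crossnorm-type inequality strong enough to bound all higher-order terms simultaneously, which is exactly the content of Remark \ref{tensor product continuity}.
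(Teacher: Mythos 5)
Your proof is correct and is essentially the paper's argument spelled out: the paper simply cites the classical fact that a continuous multilinear map between normed spaces is Fr\'echet differentiable with the product-rule derivative (Berger, (2.1.22)), and your multilinear expansion over subsets $A$ plus the crossnorm bound $\bigl\|\bigotimes_{j}x_{j}\bigr\|\le C\prod_{j}\|x_{j}\|_{j}$ from Remark \ref{tensor product continuity} is precisely the standard proof of that fact, with the terms $|A|\ge 2$ giving an $O(\|w\|^{2})$ remainder. (Only a terminological slip: the candidate differential $L$ is a bounded \emph{linear} map on the product space, not a $d$-linear one.)
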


\subsubsection{On the best 
approximation in $\mathcal{FT}_{\le \mathfrak{r}}(\mathbf{V}_D)$}

Now we discuss about the best approximation problem in  $\mathcal{FT}_{\le \mathfrak{r%
}}(\mathbf{V}_D)$. For this, we need a stronger
condition than the $T_{D}$-continuity of the tensor product. Grothendieck 
\cite{Grothendiek1953} named the following norm $\left\Vert \cdot
\right\Vert _{\vee }$ the \emph{injective norm}.

\begin{definition}
Let $V_{i}$ be a Banach space with norm $\left\Vert \cdot\right\Vert _{i}$
for $1\leq i\leq d.$ Then for $\mathbf{v}\in\mathbf{V}=\left.
_{a}\bigotimes_{j=1}^{d}V_{j}\right. $ define $\left\Vert \cdot\right\Vert
_{\vee(V_1,\ldots,V_d)}$ by%
\begin{equation}
\left\Vert \mathbf{v}\right\Vert _{\vee(V_1,\ldots,V_d)}:=\sup\left\{ \frac{%
\left\vert \left(
\varphi_{1}\otimes\varphi_{2}\otimes\ldots\otimes\varphi_{d}\right) (\mathbf{%
v})\right\vert }{\prod_{j=1}^{d}\Vert\varphi_{j}\Vert_{j}^{\ast}}%
:0\neq\varphi_{j}\in V_{j}^{\ast},1\leq j\leq d\right\} .
\label{(Norm ind*(V1,...,Vd)}
\end{equation}
\end{definition}

It is well known that the injective norm is a reasonable crossnorm (see
Lemma 1.6 in \cite{Light} and \eqref{(rcn a}-\eqref{(rcn b}). Further
properties are given by the next proposition (see Lemma 4.96 and 4.2.4 in 
\cite{Hackbusch}).

\begin{proposition}
\label{injective bounded below} Let $V_{i}$ be a Banach space with norm $%
\left\Vert \cdot\right\Vert _{i}$ for $1\leq i\leq d,$ and $\|\cdot\|$ be a
norm on $\mathbf{V}:= \left._a \bigotimes_{j=1}^d V_j\right..$ The following
statements hold.

\begin{itemize}
\item[(a)] For each $1 \le j \le d$ introduce the tensor Banach space $%
\mathbf{X}_j:=\left._{\|\cdot\|_{\vee(V_1,\ldots,V_{j-1},V_{j+1},%
\ldots,V_d)}} \bigotimes_{k \neq j}V_k \right..$ Then 
\begin{equation}
\|\cdot\|_{\vee(V_1,\ldots,V_d)} = \|\cdot\|_{\vee\left(V_j, \mathbf{X}_j
\right)}
\end{equation}
holds for $1 \le j \le d.$

\item[(b)] The injective norm is the weakest reasonable crossnorm on $%
\mathbf{V},$ i.e., if $\left\Vert \cdot\right\Vert $ is a reasonable
crossnorm on $\mathbf{V},$ then 
\begin{equation}
\left\Vert \cdot\right\Vert \;\gtrsim\ \left\Vert \cdot\right\Vert
_{\vee(V_1,\ldots,V_d)}.  \label{(Norm staerker als ind*}
\end{equation}

\item[(c)] For any norm $\left\Vert \cdot\right\Vert $ on $\mathbf{V}$
satisfying $\left\Vert \cdot\right\Vert
_{\vee(V_1,\ldots,V_d)}\lesssim\left\Vert \cdot\right\Vert ,$ the map (\ref%
{(Tensorproduktabbildung}) is continuous, and hence Fr\'echet differentiable.
\end{itemize}
\end{proposition}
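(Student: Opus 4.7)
The plan is to derive the multilinear boundedness needed for continuity of \eqref{(Tensorproduktabbildung} and then invoke the classical equivalence between continuity and Fr\'echet differentiability for multilinear maps between normed spaces. The main ingredients are the reasonable-crossnorm identities for the injective norm on both the primal and dual sides: $\|\bigotimes_j v_j\|_\vee = \prod_j \|v_j\|_j$ and $\|\bigotimes_j \varphi_j\|_\vee^* = \prod_j \|\varphi_j\|_j^*$ for $\varphi_j \in V_j^*$ (Lemma~1.6 of \cite{Light} together with \eqref{(rcn a} and \eqref{(rcn b}).

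A direct upper-bound estimate on $\|\bigotimes_j v_j\|$ is \emph{not} available from the hypothesis $\|\mathbf{v}\|_\vee \le C \|\mathbf{v}\|$ alone; combining with the primal crossnorm identity only yields the lower bound $\|\bigotimes_j v_j\| \ge C^{-1}\prod_j \|v_j\|_j$. I would therefore route the argument through the dual pairing, where the comparison has the correct sign. For every $\mathbf{v} \in \mathbf{V}$ and every elementary $\bigotimes_j \varphi_j \in \bigotimes_j V_j^*$, the definition \eqref{(Norm ind*(V1,...,Vd)} together with the hypothesis gives
\[
\Big|\Big(\bigotimes_j \varphi_j\Big)(\mathbf{v})\Big| \,\le\, \|\mathbf{v}\|_\vee \prod_j \|\varphi_j\|_j^* \,\le\, C\, \|\mathbf{v}\| \prod_j \|\varphi_j\|_j^*,
\]
so that $\|\bigotimes_j \varphi_j\|_{(\mathbf{V},\|\cdot\|)^*} \le C \prod_j \|\varphi_j\|_j^*$. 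By Remark~\ref{tensor product continuity} this is exactly the multilinear continuity of the dual tensor-product map $V_1^* \times \cdots \times V_d^* \to (\mathbf{V},\|\cdot\|)^*$.

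To deduce the continuity of the primal map \eqref{(Tensorproduktabbildung} from this dual-side estimate, I would exploit the fact that elementary dual tensors separate points of $\mathbf{V}$ (by the very definition of $\|\cdot\|_\vee$) and apply the Hahn--Banach theorem together with the canonical embedding of $\mathbf{V}$ into its bidual with respect to $\|\cdot\|$: the boundedness of the multilinear map on the dual side transfers, via this pairing and the Banach isomorphism $\|\cdot\|_\vee^* = \|\cdot\|_\wedge$, into the required multilinear bound $\|\bigotimes_j v_j\| \lesssim \prod_j \|v_j\|_j$. Fr\'echet differentiability then follows from (2.1.22) in \cite{Berger}, which states that every continuous multilinear map between normed spaces is Fr\'echet differentiable, and the explicit formula for the derivative is the one recorded in Proposition~\ref{Diff_otimes}.

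The main obstacle is the apparent mismatch in direction between the hypothesis and the upper bound required for continuity of the primal map: the hypothesis gives a \emph{lower} bound on $\|\bigotimes_j v_j\|$, while continuity of \eqref{(Tensorproduktabbildung} requires an \emph{upper} bound. The proposal bypasses this by first establishing continuity on the dual side, where the comparison $\|\mathbf{v}\|_\vee \le C\|\mathbf{v}\|$ naturally provides a boundedness estimate, and then transferring back via duality. The delicate point is this last transfer step, which must be justified carefully without inadvertently flipping the inequality; equivalently, one may restrict attention to norms that are themselves reasonable crossnorms (i.e.\ sandwiched between $\|\cdot\|_\vee$ and the projective norm $\|\cdot\|_\wedge$), in which case the crossnorm identity $\|\bigotimes_j v_j\| = \prod_j \|v_j\|_j$ holds on elementary tensors and continuity is automatic.
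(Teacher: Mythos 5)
You only address part (c); parts (a) and (b) receive no argument at all. Your dual-side computation is, incidentally, the germ of the standard proof of (b): for a reasonable crossnorm $\left\Vert\cdot\right\Vert$ one has $\Vert\bigotimes_{j}\varphi_{j}\Vert^{\ast}=\prod_{j}\Vert\varphi_{j}\Vert_{j}^{\ast}$ by \eqref{(rcn b}, whence $\vert(\bigotimes_{j}\varphi_{j})(\mathbf{v})\vert\leq\prod_{j}\Vert\varphi_{j}\Vert_{j}^{\ast}\,\Vert\mathbf{v}\Vert$ and, taking the supremum in the definition of the injective norm, $\Vert\mathbf{v}\Vert_{\vee}\leq\Vert\mathbf{v}\Vert$. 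But you put that computation to a different use, and for (a) nothing is offered (the usual argument is a Hahn--Banach extension/restriction argument identifying the supremum over elementary functionals from $\left._{a}\bigotimes_{k\neq j}V_{k}^{\ast}\right.$ with the supremum over $\mathbf{X}_{j}^{\ast}$).

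The genuine gap is the ``transfer back'' step in (c), and you half-acknowledge it yourself. From continuity of the dual tensor-product map, pairing against $\mathbf{v}$ only returns $\Vert\mathbf{v}\Vert_{\vee}\leq C\Vert\mathbf{v}\Vert$ --- the hypothesis you started from. Hahn--Banach and the embedding into the bidual express $\Vert\mathbf{v}\Vert$ as a supremum over \emph{all} functionals of dual norm at most one; restricting to elementary functionals can only produce lower bounds for $\Vert\mathbf{v}\Vert$, never the upper bound $\Vert\bigotimes_{j}v_{j}\Vert\lesssim\prod_{j}\Vert v_{j}\Vert_{j}$ that continuity of (\ref{(Tensorproduktabbildung}) requires; and the identity $\Vert\cdot\Vert_{\vee}^{\ast}=\Vert\cdot\Vert_{\wedge}$ concerns the dual of $(\mathbf{V},\Vert\cdot\Vert_{\vee})$, not of $(\mathbf{V},\Vert\cdot\Vert)$, so it does not bridge the gap. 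In fact the gap cannot be closed from the stated hypothesis alone: if $\varphi$ is a linear functional on the (infinite-dimensional) space $\mathbf{V}$ that is unbounded with respect to the projective norm, then $\Vert\cdot\Vert:=\Vert\cdot\Vert_{\vee}+\vert\varphi(\cdot)\vert$ is a norm with $\Vert\cdot\Vert\gtrsim\Vert\cdot\Vert_{\vee}$ for which $\sup\{\Vert\bigotimes_{j}v_{j}\Vert:\prod_{j}\Vert v_{j}\Vert_{j}\leq1\}=\infty$. So part (c) is usable only under the additional hypothesis --- satisfied by every norm actually employed in this paper --- that $\Vert\cdot\Vert$ obeys the elementary-tensor upper bound of \eqref{(rcn a}, in which case continuity is exactly Remark~\ref{tensor product continuity} and differentiability is (2.1.22) of \cite{Berger}, with no duality needed. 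Your closing sentence proposes precisely this restriction; that is the honest route, but it proves a narrower statement than the one printed. Note also that the paper supplies no proof of this proposition, deferring to Lemma~4.96 and \S 4.2.4 of \cite{Hackbusch}, so there is no in-paper argument against which to match your strategy.
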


In Corollary 4.4 in \cite{FALHACK} the following result, which is re-stated
here using the notations of the present paper, is proved as a consequence of
a similar result showed for tensors in Tucker format with bounded rank.

\begin{theorem}
\label{classical_best_approx} Let $\mathbf{V}_{D}=\left.
_{a}\bigotimes_{j\in D}V_{j}\right. $ and let $\{\mathbf{V}_{\alpha
_{j}\left. _{\Vert \cdot \Vert _{\alpha _{j}}}\right. }:2\leq j\leq d\}\cup
\{V_{j_{\Vert \cdot \Vert _{j}}}:1\leq j\leq d\}$ for $d\geq 3,$ be a
representation of a reflexive Banach tensor space $\mathbf{V}_{D_{\Vert
\cdot \Vert _{D}}}=\left. _{\Vert \cdot \Vert _{D}}\bigotimes_{j\in
D}V_{j}\right. ,$ in topological tree-based format such that

\begin{enumerate}
\item[(a)] $\|\cdot\|_D \gtrsim
\|\cdot\|_{\vee(V_{1_{\|\cdot\|_j}},\ldots,V_{d_{\|\cdot\|_d}})},$

\item[(b)] $\mathbf{V}_{\alpha_d} = V_{d-1} \otimes_a V_d,$ and $\mathbf{V}%
_{\alpha_{j}} = V_{j-1} \otimes_a \mathbf{V}_{\alpha_{j+1}},$ for $2 \le j
\le d-1,$ and

\item[(c)] $\|\cdot\|_{\alpha_j} :=
\|\cdot\|_{\vee(V_{\left.j-1\right._{\|\cdot\|_{j-1}}},\ldots,V_{d_{\|\cdot%
\|_d}})}$ for $2 \le j \le d.$
\end{enumerate}

Then for each $\mathbf{v} \in \mathbf{V}_{D_{\|\cdot\|_D}}$ there exists $%
\mathbf{u}_{best} \in \mathcal{FT}_{\le \mathfrak{r}}(\mathbf{V}_D)$ such
that 
\begin{align*}
\|\mathbf{v}-\mathbf{u}_{best}\|_D = \min_{\mathbf{u} \in \mathcal{FT}_{\le 
\mathfrak{r}}(\mathbf{V}_D)}\|\mathbf{v}-\mathbf{u}\|_D.
\end{align*}
\end{theorem}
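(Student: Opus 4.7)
The plan is to apply the direct method of the calculus of variations in the reflexive Banach space $\mathbf{V}_{D_{\Vert \cdot \Vert _{D}}}$. Given a minimising sequence $(\mathbf{u}_n)_{n \ge 1} \subset \mathcal{FT}_{\le \mathfrak{r}}(\mathbf{V}_D)$ for $\mathbf{u} \mapsto \Vert \mathbf{v} - \mathbf{u} \Vert_D$, the triangle inequality bounds it in norm, so reflexivity provides a weakly convergent subsequence $\mathbf{u}_n \rightharpoonup \mathbf{u}_{best}$; weak lower semicontinuity of the norm then gives $\Vert \mathbf{v} - \mathbf{u}_{best} \Vert_D \le \liminf_n \Vert \mathbf{v} - \mathbf{u}_n \Vert_D$. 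The problem thus reduces to showing that $\mathcal{FT}_{\le \mathfrak{r}}(\mathbf{V}_D)$ is weakly sequentially closed, i.e., that $\dim U_\alpha^{\min}(\mathbf{u}_{best}) \le r_\alpha$ for every $\alpha \in T_D$.

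The key technical ingredient I would establish is that every contraction is weak-to-weak continuous. Hypothesis (a) together with Proposition~\ref{injective bounded below} says $\Vert \cdot \Vert_D$ dominates the injective norm $\Vert \cdot \Vert_{\vee(V_{1_{\Vert \cdot \Vert_1}},\ldots,V_{d_{\Vert \cdot \Vert_d}})}$; combined with the recursive injective structure from hypotheses (b)--(c), this should imply that for every $\alpha \in T_D$ and every $\boldsymbol{\varphi}_{[\alpha]} \in \left. _{a}\bigotimes_{k \notin \alpha} V_{k_{\Vert \cdot \Vert_k}}^{\ast} \right.$, the map $\mathrm{id}_\alpha \otimes \boldsymbol{\varphi}_{[\alpha]}$ is continuous from $(\mathbf{V}_{D_{\Vert \cdot \Vert _{D}}}, \Vert \cdot \Vert_D)$ into the target tensor Banach space equipped with its own injective norm. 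Continuous linear maps between Banach spaces are weak-to-weak continuous, so $(\mathrm{id}_\alpha \otimes \boldsymbol{\varphi}_{[\alpha]})(\mathbf{u}_n) \rightharpoonup (\mathrm{id}_\alpha \otimes \boldsymbol{\varphi}_{[\alpha]})(\mathbf{u}_{best})$ for every admissible $\boldsymbol{\varphi}_{[\alpha]}$, and by Theorem~\ref{Satz Vjmin}(b) $U_\alpha^{\min}(\mathbf{u}_{best})$ is spanned by such weak limits.

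To finish, I would fix $\alpha$ and pick any $r_\alpha + 1$ vectors $\mathbf{w}_i = (\mathrm{id}_\alpha \otimes \boldsymbol{\varphi}^{(i)})(\mathbf{u}_{best})$ in $U_\alpha^{\min}(\mathbf{u}_{best})$; the corresponding $\mathbf{w}_{i,n} := (\mathrm{id}_\alpha \otimes \boldsymbol{\varphi}^{(i)})(\mathbf{u}_n)$ lie in $U_\alpha^{\min}(\mathbf{u}_n)$, which has dimension at most $r_\alpha$, so there exist coefficients $\lambda^{(n)}$ on the unit sphere of $\mathbb{R}^{r_\alpha + 1}$ with $\sum_i \lambda_i^{(n)} \mathbf{w}_{i,n} = 0$. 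A Bolzano--Weierstrass extraction produces a nontrivial limit $\lambda^{(\infty)}$, and the weak-to-weak continuity established above passes the linear relation to the limit, yielding $\sum_i \lambda_i^{(\infty)} \mathbf{w}_i = 0$ and hence $\dim U_\alpha^{\min}(\mathbf{u}_{best}) \le r_\alpha$. The hard part is really the second paragraph: ensuring that the contraction continuities hold simultaneously and coherently at every node of $T_D$. Hypotheses (b)--(c), which recursively install the injective norm at every internal node, are precisely what make this possible, and are the reason for the specific tree and norm family in the statement.
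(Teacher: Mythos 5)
Your proposal is correct and takes essentially the same route as the paper: the paper defers this statement to Corollary~4.4 of the cited Falc\'o--Hackbusch reference and then reproves the general case (Theorem~\ref{best_approximation_FT}) by exactly your argument --- the direct method in a reflexive space reduces everything to weak closedness of $\mathcal{FT}_{\le\mathfrak{r}}(\mathbf{V}_D)$, which follows from the weak-to-weak continuity of the contractions $\overline{id_{\alpha}\otimes\boldsymbol{\varphi}_{[\alpha]}}$ (guaranteed by the injective-norm hypotheses (a)--(c) via Lemma~\ref{extension_Umin}) and the resulting weak lower semicontinuity of $\dim U_{\alpha}^{\min}(\cdot)$, proved by the same Bolzano--Weierstrass extraction you describe. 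No gaps.
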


It seems clear that tensor Banach spaces as we show in Example \ref{figZp1}
are not included in this framework. So a natural question is if for a
representation in the topological tree-based format of a reflexive Banach
space the statement of Theorem \ref%
{classical_best_approx} is also true. To prove this, we will reformulate
some of the results given in \cite{FALHACK}. In the aforementioned paper,
the milestone to prove the existence of a best approximation is the
extension of the definition of minimal subspace for tensors $\mathbf{v}\in 
\mathbf{V}_{D_{\Vert \cdot \Vert _{D}}}\setminus \mathbf{V}_{D}.$ To do this
the authors use a similar result to the following lemma (see Lemma 3.8 in 
\cite{FALHACK}).

\begin{lemma}
\label{extension_Umin} Let $V_{j_{\|\cdot\|_j}}$ be a Banach space for $j\in
D,$ where $D$ is a finite index set, and $\alpha_1,\ldots,\alpha_m \subset
2^D \setminus \{D,\emptyset\},$ be such that $\alpha_i \cap \alpha_j =
\emptyset$ for all $i \neq j$ and $D = \bigcup_{i=1}^m \alpha_i.$ Assume
that if $\#\alpha_i \ge 2$ for some $1 \le i \le m,$ then $\mathbf{V}%
_{\left.\alpha_i\right._{\|\cdot\|_{\alpha_i}}}$ is a tensor Banach space.
Consider the tensor space 
\begin{equation*}
\mathbf{V}_D:= \left._a \bigotimes_{i=1}^m \mathbf{V}_{\left.\alpha_i%
\right._{\|\cdot\|_{\alpha_i}}} \right.
\end{equation*}
endowed with the injective norm $\|\cdot\|_{\vee(\mathbf{V}%
_{\left.\alpha_1\right._{\|\cdot\|_{\alpha_1}}},\ldots,\mathbf{V}%
_{\left.\alpha_m\right._{\|\cdot\|_{\alpha_m}}})}.$ Fix $1 \le k \le m,$
then given $\boldsymbol{\varphi}_{[\alpha_k]}\in \left._a \bigotimes_{i \neq
k} \mathbf{V}_{\left.\alpha_i\right._{\|\cdot\|_{\alpha_i}}}^*\right.$ the
map $id_{\alpha_k} \otimes \boldsymbol{\varphi}_{[\alpha_k]}$ belongs to $%
\mathcal{L}\left(\mathbf{V}_D, \mathbf{V}_{\left.\alpha_k\right._{\|\cdot%
\|_{\alpha_k}}} \right).$ Moreover, $\overline{id_{\alpha_k} \otimes 
\boldsymbol{\varphi}_{[\alpha_k]}} \in \mathcal{L}(\overline{\mathbf{V}_D}%
^{\|\cdot\|,},\mathbf{V}_{\left.\alpha_k\right._{\|\cdot\|_{\alpha_k}}})$
for any norm satisfying 
\begin{equation*}
\|\cdot\| \gtrsim \|\cdot\|_{\vee(\mathbf{V}_{\left.\alpha_1\right._{\|\cdot%
\|_{\alpha_1}}},\ldots,\mathbf{V}_{\left.\alpha_m\right._{\|\cdot\|_{%
\alpha_m}}})}.
\end{equation*}
\end{lemma}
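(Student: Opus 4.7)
The plan is to establish continuity of $id_{\alpha _{k}}\otimes \boldsymbol{\varphi }_{[\alpha _{k}]}$ first for elementary tensors of linear forms, then extend to general $\boldsymbol{\varphi }_{[\alpha _{k}]}$ by linearity, and finally extend to the completion by the bounded linear transformation theorem. The key input is the variational characterisation of the injective norm together with Hahn--Banach applied in the factor $\mathbf{V}_{\alpha _{k}\left. _{\Vert \cdot \Vert _{\alpha _{k}}}\right. }$.

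First, assume that $\boldsymbol{\varphi }_{[\alpha _{k}]}=\bigotimes _{i\neq k}\varphi ^{(\alpha _{i})}$ is an elementary tensor with $\varphi ^{(\alpha _{i})}\in \mathbf{V}_{\alpha _{i}\left. _{\Vert \cdot \Vert _{\alpha _{i}}}\right. }^{\ast }$. Given $\mathbf{v}\in \mathbf{V}_{D}$, since $\mathbf{V}_{\alpha _{k}\left. _{\Vert \cdot \Vert _{\alpha _{k}}}\right. }$ is a Banach space, by Hahn--Banach we have
\begin{equation*}
\Vert (id_{\alpha _{k}}\otimes \boldsymbol{\varphi }_{[\alpha _{k}]})(\mathbf{v})\Vert _{\alpha _{k}}=\sup _{\substack{ \psi \in \mathbf{V}_{\alpha _{k}}^{\ast } \\ \Vert \psi \Vert _{\alpha _{k}}^{\ast }\leq 1}}\bigl|\psi \bigl((id_{\alpha _{k}}\otimes \boldsymbol{\varphi }_{[\alpha _{k}]})(\mathbf{v})\bigr)\bigr|=\sup _{\Vert \psi \Vert _{\alpha _{k}}^{\ast }\leq 1}\bigl|\bigl(\psi \otimes \bigotimes _{i\neq k}\varphi ^{(\alpha _{i})}\bigr)(\mathbf{v})\bigr|.
\end{equation*}
The defining supremum of the injective norm $\Vert \cdot \Vert _{\vee }$ on $\mathbf{V}_{D}$ then yields directly
\begin{equation*}
\Vert (id_{\alpha _{k}}\otimes \boldsymbol{\varphi }_{[\alpha _{k}]})(\mathbf{v})\Vert _{\alpha _{k}}\leq \Vert \mathbf{v}\Vert _{\vee }\prod _{i\neq k}\Vert \varphi ^{(\alpha _{i})}\Vert _{\alpha _{i}}^{\ast },
\end{equation*}
which shows that $id_{\alpha _{k}}\otimes \boldsymbol{\varphi }_{[\alpha _{k}]}\in \mathcal{L}(\mathbf{V}_{D},\mathbf{V}_{\alpha _{k}\left. _{\Vert \cdot \Vert _{\alpha _{k}}}\right. })$ in the elementary case, with operator norm bounded by the product $\prod _{i\neq k}\Vert \varphi ^{(\alpha _{i})}\Vert _{\alpha _{i}}^{\ast }$.

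For a general $\boldsymbol{\varphi }_{[\alpha _{k}]}=\sum _{\ell =1}^{r}\bigotimes _{i\neq k}\varphi _{\ell }^{(\alpha _{i})}$ the conclusion follows by linearity and the triangle inequality, giving $\Vert id_{\alpha _{k}}\otimes \boldsymbol{\varphi }_{[\alpha _{k}]}\Vert _{\mathbf{V}_{\alpha _{k}}\leftarrow \mathbf{V}_{D}}\leq \sum _{\ell =1}^{r}\prod _{i\neq k}\Vert \varphi _{\ell }^{(\alpha _{i})}\Vert _{\alpha _{i}}^{\ast }<\infty $; thus the first assertion is proved.

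For the second assertion, given any norm $\Vert \cdot \Vert $ on $\mathbf{V}_{D}$ with $\Vert \cdot \Vert \gtrsim \Vert \cdot \Vert _{\vee }$, there exists $C>0$ with $\Vert \mathbf{v}\Vert _{\vee }\leq C\Vert \mathbf{v}\Vert $ for all $\mathbf{v}\in \mathbf{V}_{D}$. Combined with the bound above, $id_{\alpha _{k}}\otimes \boldsymbol{\varphi }_{[\alpha _{k}]}$ is continuous as a map from $(\mathbf{V}_{D},\Vert \cdot \Vert )$ into the Banach space $\mathbf{V}_{\alpha _{k}\left. _{\Vert \cdot \Vert _{\alpha _{k}}}\right. }$. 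The bounded linear transformation theorem then provides a unique continuous extension $\overline{id_{\alpha _{k}}\otimes \boldsymbol{\varphi }_{[\alpha _{k}]}}\in \mathcal{L}(\overline{\mathbf{V}_{D}}^{\Vert \cdot \Vert },\mathbf{V}_{\alpha _{k}\left. _{\Vert \cdot \Vert _{\alpha _{k}}}\right. })$, concluding the proof. The only mildly delicate step is the elementary-tensor estimate, which hinges on the observation that taking sup over $\Vert \psi \Vert _{\alpha _{k}}^{\ast }\leq 1$ recovers the $\Vert \cdot \Vert _{\alpha _{k}}$-norm of the partial contraction; everything else is routine.
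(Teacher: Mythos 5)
Your proof is correct, and it is essentially the standard argument: the paper itself gives no proof of this lemma but defers to Lemma 3.8 of \cite{FALHACK}, where the same reasoning (norming functionals via Hahn--Banach in $\mathbf{V}_{\alpha_k{}_{\Vert\cdot\Vert_{\alpha_k}}}$, the defining supremum of the injective norm for elementary $\boldsymbol{\varphi}_{[\alpha_k]}$, linearity for general $\boldsymbol{\varphi}_{[\alpha_k]}$, and continuous extension to the completion) is used.
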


Let $\{\mathbf{V}_{\alpha _{\Vert \cdot \Vert _{\alpha }}}\}_{\alpha \in
T_{D}\setminus \{D\}}$ be a representation of the Banach tensor space $%
\mathbf{V}_{D_{\Vert \cdot \Vert _{D}}}=\left. _{\Vert \cdot \Vert
_{D}}\bigotimes_{j\in D}V_{j}\right. ,$ in the topological tree-based format
and assume that the tensor product map $\bigotimes$ is $T_D$-continuous.
From Theorem \ref{ext_Banach}, we may assume that we have a tensor Banach
space 
\begin{equation*}
\mathbf{V}_{\alpha_{\|\cdot\|_{\alpha}}} = \left._{\|\cdot\|_{\alpha}}
\bigotimes_{\beta\in S(\alpha)} V_{\beta_{\|\cdot\|_{\beta}}} \right.
\end{equation*}
for each $\alpha \in T_D\setminus \mathcal{L}(T_D),$ and a Banach space $%
V_{j_{\|\cdot\|_j}}$ for $j \in \mathcal{L}(T_D).$ Let $\alpha \in
T_D\setminus \mathcal{L}(T_D).$ To simplify the notation we write for $A,B
\subset S(\alpha) $ 
\begin{equation*}
\|\cdot\|_{\vee(A)}:= \|\cdot\|_{\vee(\{\mathbf{V}_{\delta_{\|\cdot\|_{%
\delta}}}:\delta \in A\})},
\end{equation*}
and 
\begin{equation*}
\|\cdot\|_{\vee(A,\vee(B))}:= \|\cdot\|_{\vee(\{\mathbf{V}%
_{\delta_{\|\cdot\|_{\delta}}}:\delta \in A\}, \mathbf{X}_B)}
\end{equation*}
where 
\begin{equation*}
\mathbf{X}_B:= \left._{\|\cdot\|_{\vee(B)}} \bigotimes_{\beta \in B} \mathbf{%
V}_{\beta_{\|\cdot\|_{\beta}}}\right..
\end{equation*}
From Proposition~\ref{injective bounded below}(a), we can write 
\begin{align*}
\|\cdot\|_{\vee(S(\alpha))} = \|\cdot\|_{\vee(\beta,\vee(S(\alpha)\setminus
\beta))}  \label{tree_injective_norm}
\end{align*}
for each $\beta \in S(\alpha).$ From now on, we assume that 
\begin{align}
\|\cdot\|_{\alpha} \gtrsim \|\cdot\|_{\vee(S(\alpha))} \text{ for each }
\alpha \in T_D\setminus \mathcal{L}(T_D),
\end{align}
holds. Recall that Proposition~\ref{injective bounded below}(c) implies that
the tensor product map $\bigotimes$ is $T_D$-continuous. Since $%
\|\cdot\|_{\alpha} \gtrsim \|\cdot\|_{\vee(\beta,\vee(S(\alpha)\setminus
\beta))}$ holds for each $\beta\in S(\alpha)$, the tensor product map 
\begin{align*}
\bigotimes:(\mathbf{V}_{\beta_{\|\cdot\|_{\beta}}},\|\cdot\|_{\beta}) \times
\left( \left._{\|\cdot\|_{\vee(S(\alpha)\setminus \beta)}}
\bigotimes_{\delta \in S(\alpha)\setminus \{\beta\}}\mathbf{V}_{\delta_{\|\cdot%
\|_{\delta}}} \right. , \|\cdot\|_{\vee(S(\alpha)\setminus \beta)}\right)
\rightarrow (\mathbf{V}_{\alpha_{\|\cdot\|_{\alpha}}},\|\cdot\|_{\alpha})
\end{align*}
is also continuous for each $\beta \in S(\alpha).$ Moreover, by Theorem \ref%
{ext_Banach}, 
\begin{equation*}
\mathbf{V}_{\alpha _{\Vert \cdot \Vert _{\alpha }}}=\left. _{\Vert \cdot
\Vert _{\alpha }}\bigotimes_{\beta \in S(\alpha )}V_{\beta _{\Vert \cdot
\Vert _{\beta }}}\right. =\left. _{\Vert \cdot \Vert _{\alpha
}}\bigotimes_{\beta \in S(\alpha )}V_{\beta }\right. =\left. _{\Vert \cdot
\Vert _{\alpha }}\bigotimes_{j\in \alpha }V_{j}\right. ,
\end{equation*}%
holds for each $\alpha \in T_{D}\setminus \mathcal{L}(T_{D}).$ Observe, that 
$\mathbf{V}_{\alpha _{\Vert \cdot \Vert _{\alpha }}}^{\ast }\subset \mathbf{V%
}_{\alpha }^{\ast }$ for all $\alpha \in S(D).$ Take $\mathbf{V}_{D}=\left.
_{a}\bigotimes_{j\in D}V_{j}\right. .$ Since $\Vert \cdot \Vert _{D}\gtrsim
\Vert \cdot \Vert _{\vee (S(D ))},$ from Lemma \ref{extension_Umin} and
Proposition \ref{(Ualpha in Tensor Uj coro}(b), we can extend for $\mathbf{v}%
\in \mathbf{V}_{D_{\Vert \cdot \Vert _{D}}}\setminus \mathbf{V}_{D},$ the
definition of minimal subspace for each $\alpha \in S(D)$ as 
\begin{equation*}
U_{\alpha }^{\min }(\mathbf{v}):=\left\{ \overline{(id_{\alpha }\otimes 
\boldsymbol{\varphi }_{[\alpha ]})}(\mathbf{v}):\boldsymbol{\varphi }%
_{[\alpha ]}\in \left. _{a}\bigotimes_{\substack{ \beta \in S(D) \setminus
\{\alpha \}}}\mathbf{V}_{\beta }^{\ast }\right. \right\} .
\end{equation*}%
Observe that $\overline{(id_{\alpha }\otimes \boldsymbol{\varphi }_{[\alpha
]})}\in \mathcal{L}(\mathbf{V}_{D_{\Vert \cdot \Vert _{D}}},\mathbf{V}%
_{\alpha _{\Vert \cdot \Vert _{\alpha }}}).$ Recall that if $\mathbf{v}\in 
\mathbf{V}_{D}$ and $\alpha \notin \mathcal{L}(T_{D}),$ from Proposition \ref%
{inclusin_Umin}, we have $U_{\alpha }^{\min }(\mathbf{v})\subset \left.
_{a}\bigotimes_{\beta \in S(\alpha )}U_{\beta }^{\min }(\mathbf{v})\right.
\subset \left. _{a}\bigotimes_{\beta \in S(\alpha )}\mathbf{V}_{\beta
}\right. .$ Moreover, by Proposition \ref{(Ualpha in Tensor Uj coro}(b), for 
$\beta \in S(\alpha )$ we have 
\begin{align*}
U_{\beta }^{\min }(\mathbf{v})& = \mathrm{span}\, \left\{ (id_{\beta
}\otimes \boldsymbol{\varphi }_{[\beta ]})(\mathbf{v}_{\alpha }): \mathbf{v}%
_{\alpha}\in U_{\alpha }^{\min }(\mathbf{v}) \text{ and } \boldsymbol{%
\varphi }_{[\beta ]}\in \left._{a}\bigotimes_{\substack{ \delta\in
S(\alpha)\setminus \{\beta\}}} \mathbf{V}_{\delta }^{\ast } \right. \right\}
\\
& =\mathrm{span}\, \left\{ (id_{\beta }\otimes \boldsymbol{\varphi }%
_{[\beta]})\circ (id_{\alpha }\otimes \boldsymbol{\varphi }_{[\alpha ]})(%
\mathbf{v}):\boldsymbol{\varphi }_{[\alpha ]}\in \left. _{a}\bigotimes 
_{\substack{ \mu \in S(D) \setminus \{\alpha\} }}\mathbf{V}_{\mu }^{\ast
}\right. \text{ and }\boldsymbol{\varphi }_{[\beta ]}\in \left.
_{a}\bigotimes_{\substack{ \delta\in S(\alpha) \setminus \{\beta\}}} \mathbf{%
V}_{\delta }^{\ast }\right. \right\} .
\end{align*}%
Thus, $(id_{\alpha }\otimes \boldsymbol{\varphi }_{[\alpha ]})(\mathbf{v}%
)\in U_{\alpha }^{\min }(\mathbf{v})\subset \mathbf{V}_{\alpha }\subset 
\mathbf{V}_{\alpha _{\Vert \cdot \Vert _{\alpha }}},$ and hence 
\begin{equation*}
(id_{\beta }\otimes \boldsymbol{\varphi }_{[\beta ]})\circ (id_{\alpha
}\otimes \boldsymbol{\varphi }_{[\alpha ]})(\mathbf{v})\in U_{\beta }^{\min
}(\mathbf{v})\subset \mathbf{V}_{\beta }\subset \mathbf{V}_{\beta _{\Vert
\cdot \Vert _{\beta }}},
\end{equation*}%
when $\#\beta \geq 2.$ However, if $\mathbf{v}\in \mathbf{V}_{D_{\Vert \cdot
\Vert _{D}}}\setminus \mathbf{V}_{D}$ then $\overline{(id_{\alpha }\otimes 
\boldsymbol{\varphi }_{[\alpha ]})}(\mathbf{v})\in U_{\alpha }^{\min }(%
\mathbf{v})\subset \mathbf{V}_{\alpha _{\Vert \cdot \Vert _{\alpha }}}.$
Since $\Vert \cdot \Vert _{\alpha }\gtrsim \Vert \cdot \Vert _{\vee
(S(\alpha ))}$ also by Lemma \ref{extension_Umin} we have $\overline{%
id_{\beta }\otimes \boldsymbol{\varphi }_{[\beta ]}}\in \mathcal{L}(\mathbf{V%
}_{\alpha _{\Vert \cdot \Vert _{\alpha }}},\mathbf{V}_{\beta _{\Vert \cdot
\Vert _{\beta }}}).$ In consequence, a natural extension of the definition
of minimal subspace $U_{\beta }^{\min }(\mathbf{v}),$ for $\mathbf{v}\in 
\mathbf{V}_{D_{\Vert \cdot \Vert _{D}}}\setminus \mathbf{V}_{D},$ is given
by 
\begin{equation*}
U_{\beta }^{\min }(\mathbf{v}):=\mathrm{span}\, \left\{ \overline{%
(id_{\beta}\otimes \boldsymbol{\varphi }_{[\beta ]})}\circ \overline{%
(id_{\alpha }\otimes \boldsymbol{\varphi }_{[\alpha ]})}(\mathbf{v}):%
\boldsymbol{\varphi }_{[\alpha ]}\in \left. _{a}\bigotimes_{\substack{ \mu
\in S(D) \setminus \{\alpha\}}}\mathbf{V}_{\mu }^{\ast }\right. \text{ and } 
\boldsymbol{\varphi }_{[\beta ]}\in \left._{a}\bigotimes_{\substack{ %
\delta\in S(\alpha) \setminus \{\beta\}}} \mathbf{V}_{\delta }^{\ast
}\right. \right\}.
\end{equation*}%
To simplify the notation, we can write 
\begin{equation*}
\overline{(id_{\beta }\otimes \boldsymbol{\varphi }_{[\beta ,\alpha ]})}(%
\mathbf{v}):=\overline{(id_{\beta }\otimes \boldsymbol{\varphi }_{[\beta ]})}%
\circ \overline{(id_{\alpha }\otimes \boldsymbol{\varphi }_{[\alpha ]})}(%
\mathbf{v})
\end{equation*}%
where $\boldsymbol{\varphi }_{[\beta ,\alpha ]}:=\boldsymbol{\varphi }%
_{[\alpha ]}\otimes \boldsymbol{\varphi }_{[\beta ]}\in \left( \left.
_{a}\bigotimes_{\substack{ \mu \in S(D) \setminus \{\alpha\} }}\mathbf{V}%
_{\mu }^{\ast }\right. \right) \otimes _{a}\left( \left. _{a}\bigotimes 
_{\substack{ \delta \in S(\alpha ) \setminus \{\beta\} }}\mathbf{V}_{\delta
}^{\ast }\right. \right) $ and $\overline{(id_{\beta }\otimes \boldsymbol{%
\varphi }_{[\beta ,\alpha ]})}\in \mathcal{L}(\mathbf{V}_{D_{\Vert \cdot
\Vert _{D}}},\mathbf{V}_{\beta _{\Vert \cdot \Vert _{\beta }}}).$ Proceeding
inductively, from the root to the leaves, we define the minimal subspace $%
U_{j}^{\min }(\mathbf{v})$ for each $j\in \mathcal{L}(T_{D})$ such that
there exists $\eta \in T_{D}\setminus \{D\}$ with $j\in S(\eta )$ as 
\begin{equation*}
U_{j}^{\min }(\mathbf{v}):=\mathrm{span}\,\left\{ \overline{(id_{j}\otimes 
\boldsymbol{\varphi }_{[j,\eta ,\ldots ,\beta ,\alpha ]})}(\mathbf{v}):%
\boldsymbol{\varphi }_{[j,\eta,\ldots ,\beta ,\alpha ]}\in \mathbf{W}%
_{j}\right\} ,
\end{equation*}%
where 
\begin{equation*}
\mathbf{W}_{j}:=\left( \left. _{a}\bigotimes_{\substack{ \mu \in S(D)
\setminus \{\alpha\} }} \mathbf{V}_{\mu }^{\ast }\right. \right) \otimes
_{a}\left( \left. _{a}\bigotimes_{\substack{ \delta \in S(\alpha ) \setminus
\{\beta\} }}\mathbf{V}_{\delta }^{\ast }\right. \right) \otimes _{a}\cdots
\otimes _{a}\left( \left. _{a}\bigotimes_{\substack{ k\in S(\eta )\setminus
\{j\} }}V_{k}^{\ast }\right. \right) .
\end{equation*}%
With this extension the following result  can be shown (see Lemma 3.13 in 
\cite{FALHACK}).

\begin{lemma}
Let $\{\mathbf{V}_{\alpha _{\Vert \cdot \Vert _{\alpha }}}\}_{\alpha \in
T_{D}\setminus \{D\}}$ be a representation of the Banach tensor space $%
\mathbf{V}_{D_{\Vert \cdot \Vert _{D}}}=\left. _{\Vert \cdot \Vert
_{D}}\bigotimes_{j\in D}V_{j}\right. ,$ in the topological tree-based format
and assume that \eqref{tree_injective_norm} holds. Let $\{\mathbf{v}%
_{n}\}_{n\geq 0}\subset \mathbf{V}_{D_{\Vert \cdot \Vert _{D}}}$ with $%
\mathbf{v}_{n}\rightharpoonup \mathbf{v},$ and $\mu \in T_{D}\setminus
(\{D\}\cup \mathcal{L}(T_{D}))$. Then for each $\gamma \in S(\mu )$ we have 
\begin{equation*}
\overline{(id_{\gamma }\otimes \boldsymbol{\varphi }_{[\gamma ,\mu ,\cdots
,\beta ,\alpha ]})}(\mathbf{v}_{n})\rightharpoonup \overline{(id_{\gamma
}\otimes \boldsymbol{\varphi }_{[\gamma ,\mu ,\cdots ,\beta ,\alpha ]})}(%
\mathbf{v})\text{ in }\mathbf{V}_{\gamma _{\Vert \cdot \Vert _{\gamma }}},
\end{equation*}%
for all $\boldsymbol{\varphi }_{[\gamma ,\mu ,\cdots ,\beta ,\alpha ]}\in
\left( \left. _{a}\bigotimes_{\substack{ \mu \in S(D)\setminus \{\alpha \}}}%
\mathbf{V}_{\mu }^{\ast }\right. \right) \otimes _{a}\left( \left.
_{a}\bigotimes_{\substack{ \delta \in S(\alpha )\setminus \{\beta \}}}%
\mathbf{V}_{\delta }^{\ast }\right. \right) \otimes _{a}\cdots \otimes
_{a}\left( \left. _{a}\bigotimes_{\substack{ \eta \in S(\mu )\setminus
\{\gamma \}}}V_{\eta }^{\ast }\right. \right) .$
\end{lemma}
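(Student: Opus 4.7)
The plan is to reduce the claim to the elementary fact that bounded linear operators are weakly sequentially continuous. Specifically, my target is to establish that
$$
\overline{(id_\gamma \otimes \boldsymbol{\varphi}_{[\gamma,\mu,\ldots,\beta,\alpha]})} \in \mathcal{L}\bigl(\mathbf{V}_{D_{\Vert \cdot \Vert _{D}}},\, \mathbf{V}_{\gamma_{\Vert \cdot \Vert _{\gamma}}}\bigr).
$$
Once this is in hand, the conclusion is immediate: if $T\in\mathcal{L}(X,Y)$ between Banach spaces and $\mathbf{v}_n \rightharpoonup \mathbf{v}$ in $X$, then for every $\psi\in Y^{\ast}$ one has $\psi\circ T\in X^{\ast}$, so $\psi(T\mathbf{v}_n)\to \psi(T\mathbf{v})$, which is exactly $T\mathbf{v}_n\rightharpoonup T\mathbf{v}$ in $Y$.

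The heart of the proof is therefore the continuity statement, which I would establish by iterating Lemma~\ref{extension_Umin} along the unique chain $D\supset \alpha \supset \beta \supset \cdots \supset \mu \supset \gamma$ in $T_D$ running from the root down to $\gamma$. The standing hypothesis $\Vert \cdot\Vert_{\eta} \gtrsim \Vert \cdot\Vert_{\vee(S(\eta))}$ holds at every intermediate node $\eta$, so at each such level Lemma~\ref{extension_Umin}, applied to the partition $S(\eta)$ of $\eta$, delivers
$$
\overline{id_{\eta'} \otimes \boldsymbol{\varphi}_{[\eta']}} \in \mathcal{L}\bigl(\mathbf{V}_{\eta_{\Vert \cdot \Vert _{\eta}}},\, \mathbf{V}_{\eta'_{\Vert \cdot \Vert _{\eta'}}}\bigr),
$$
where $\eta'\in S(\eta)$ is the next node in the chain. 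Composing these continuous linear maps from the root down to $\gamma$ produces a bounded operator from $\mathbf{V}_{D_{\Vert \cdot \Vert _{D}}}$ to $\mathbf{V}_{\gamma_{\Vert \cdot \Vert _{\gamma}}}$, and by the abbreviation introduced just before the statement this composition is exactly the operator $\overline{id_\gamma \otimes \boldsymbol{\varphi}_{[\gamma,\mu,\ldots,\beta,\alpha]}}$.

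The main obstacle is not analytic but purely bookkeeping: one has to fix the order of composition (root to leaf), match the domain and codomain at each step, and verify that the natural decomposition $\boldsymbol{\varphi}_{[\gamma,\mu,\ldots,\beta,\alpha]} = \boldsymbol{\varphi}_{[\alpha]}\otimes \boldsymbol{\varphi}_{[\beta]} \otimes \cdots \otimes \boldsymbol{\varphi}_{[\gamma]}$ assigns, at every level $\eta$ traversed, a linear form in the algebraic tensor product $\left._{a}\bigotimes_{\delta\in S(\eta)\setminus\{\eta'\}} \mathbf{V}_{\delta}^{\ast}\right.$ required by Lemma~\ref{extension_Umin}. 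This is clear from the definition of the multi-index $[\gamma,\mu,\ldots,\beta,\alpha]$ given in the paragraphs preceding the lemma. Once the boundedness of the composition is checked, weak sequential continuity is automatic and the lemma follows.
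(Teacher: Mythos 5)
Your proposal is correct and follows the same route the paper intends: the preceding paragraphs of the paper construct $\overline{(id_{\gamma }\otimes \boldsymbol{\varphi }_{[\gamma ,\mu ,\cdots,\beta ,\alpha ]})}$ precisely as a composition of the maps supplied by Lemma~\ref{extension_Umin} at each node of the root-to-$\gamma$ chain (each continuous because $\Vert\cdot\Vert_{\eta}\gtrsim\Vert\cdot\Vert_{\vee(S(\eta))}$), and the conclusion then follows from the standard fact that bounded linear operators are weak-to-weak sequentially continuous, exactly as you argue. The only cosmetic remark is that a general $\boldsymbol{\varphi }_{[\gamma ,\mu ,\cdots ,\beta ,\alpha ]}$ is a finite sum of elementary tensors of the factors $\boldsymbol{\varphi }_{[\alpha ]},\ldots,\boldsymbol{\varphi }_{[\gamma ]}$, so one extends your composition argument by linearity.
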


Then in a similar way as Theorem 3.15 in \cite{FALHACK} the following
theorem can be shown.

\begin{theorem}
Let $\{\mathbf{V}_{\alpha _{\Vert \cdot \Vert _{\alpha }}}\}_{\alpha \in
T_{D}\setminus \{D\}}$ be a representation of the Banach tensor space $%
\mathbf{V}_{D_{\Vert \cdot \Vert _{D}}}=\left. _{\Vert \cdot \Vert
_{D}}\bigotimes_{j\in D}V_{j}\right. ,$ in the topological tree-based format
and assume that \eqref{tree_injective_norm} holds. Let $\{\mathbf{v}%
_{n}\}_{n\geq 0}\subset \mathbf{V}_{D_{\Vert \cdot \Vert _{D}}}$ with $%
\mathbf{v}_{n}\rightharpoonup \mathbf{v},$ then 
\begin{equation*}
\dim \overline{U_{\alpha }^{\min }(\mathbf{v})}^{\Vert \cdot \Vert _{\alpha
}}=\dim U_{\alpha }^{\min }(\mathbf{v})\leq \liminf_{n\rightarrow \infty
}\dim U_{\alpha }^{\min }(\mathbf{v}_{n}),
\end{equation*}%
for all $\alpha \in T_{D}\setminus \{D\}.$
\end{theorem}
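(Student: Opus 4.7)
The plan is to establish the two parts of the chain separately. The substantive step is the weak lower semicontinuity inequality $\dim U_\alpha^{\min}(\mathbf{v}) \leq \liminf_{n\to\infty}\dim U_\alpha^{\min}(\mathbf{v}_n)$, which I would prove by contradiction by combining the weak-convergence lemma stated immediately before the theorem with a Hahn--Banach biorthogonality argument. The first equality will then follow from finite-dimensionality in the case of practical interest.

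For the inequality I would first reduce: if $\liminf_{n\to\infty}\dim U_\alpha^{\min}(\mathbf{v}_n)=\infty$ there is nothing to show, so set $r$ equal to this liminf and, after passing to a subsequence (still denoted $(\mathbf{v}_n)$), assume $\dim U_\alpha^{\min}(\mathbf{v}_n)=r$ for all $n$ large enough. Suppose, for contradiction, that $\dim U_\alpha^{\min}(\mathbf{v})\geq r+1$ and fix linearly independent elements $\mathbf{u}_1,\ldots,\mathbf{u}_{r+1}\in U_\alpha^{\min}(\mathbf{v})$. By the extended definition of the minimal subspace and the fact that it is defined as a span, each $\mathbf{u}_i$ is a finite linear combination
$$
\mathbf{u}_i=\sum_{l=1}^{m_i}c_{il}\,\overline{(id_\alpha\otimes\boldsymbol{\varphi}_{il})}(\mathbf{v}),
$$
where the tuples of dual functionals $\boldsymbol{\varphi}_{il}$, indexed by the path from the root $D$ down to $\alpha$, are fixed independently of $n$. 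Setting $\mathbf{u}_i^{(n)}:=\sum_{l=1}^{m_i}c_{il}\,\overline{(id_\alpha\otimes\boldsymbol{\varphi}_{il})}(\mathbf{v}_n)\in U_\alpha^{\min}(\mathbf{v}_n)$, iteration of the preceding lemma down that same path, combined with linearity, delivers $\mathbf{u}_i^{(n)}\rightharpoonup\mathbf{u}_i$ weakly in $\mathbf{V}_{\alpha_{\Vert\cdot\Vert_\alpha}}$ for every $i$.

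Next I invoke Hahn--Banach in the Banach space $\mathbf{V}_{\alpha_{\Vert\cdot\Vert_\alpha}}$ to find continuous functionals $\Psi_1,\ldots,\Psi_{r+1}\in\mathbf{V}_{\alpha_{\Vert\cdot\Vert_\alpha}}^{\ast}$ biorthogonal to the linearly independent family $\{\mathbf{u}_j\}_{j=1}^{r+1}$, so that $\Psi_i(\mathbf{u}_j)=\delta_{ij}$. Weak convergence then gives $\Psi_i(\mathbf{u}_j^{(n)})\to\delta_{ij}$, so for sufficiently large $n$ the matrix $(\Psi_i(\mathbf{u}_j^{(n)}))_{i,j=1}^{r+1}$ is invertible; this forces the vectors $\mathbf{u}_1^{(n)},\ldots,\mathbf{u}_{r+1}^{(n)}$ to be linearly independent inside $U_\alpha^{\min}(\mathbf{v}_n)$ and hence $\dim U_\alpha^{\min}(\mathbf{v}_n)\geq r+1$, contradicting $\dim U_\alpha^{\min}(\mathbf{v}_n)=r$. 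The first equality is then immediate: if $\dim U_\alpha^{\min}(\mathbf{v})<\infty$, the subspace is automatically closed in the Banach space $\mathbf{V}_{\alpha_{\Vert\cdot\Vert_\alpha}}$ and therefore equals its closure; otherwise both quantities are recorded as $\infty$.

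The main obstacle I expect lies in the careful bookkeeping around the extended tree-contraction definition of $U_\alpha^{\min}(\mathbf{v})$. One has to verify that every $\mathbf{u}_i$ can be written as a finite linear combination with a common tuple of dual functionals that is applicable to every $\mathbf{v}_n$, so that the preceding weak-convergence lemma can be applied uniformly in $i$ and along the full path from $D$ to $\alpha$. The $T_D$-continuity of the tensor product, together with the assumption $\Vert\cdot\Vert_\alpha\gtrsim\Vert\cdot\Vert_{\vee(S(\alpha))}$, is what makes the iterated contractions $\overline{(id_\alpha\otimes\boldsymbol{\varphi})}$ descend to continuous maps into $\mathbf{V}_{\alpha_{\Vert\cdot\Vert_\alpha}}$; once this is in place the Hahn--Banach biorthogonality step is standard.
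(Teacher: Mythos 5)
Your proposal is correct and follows essentially the route the paper intends: the paper gives no proof of its own but defers to Theorem 3.15 of \cite{FALHACK}, whose argument is exactly this combination of the preceding weak-convergence lemma for the iterated contractions $\overline{(id_{\alpha}\otimes\boldsymbol{\varphi})}$ with a transfer of linear independence from the weak limits $\mathbf{u}_i$ back to the $\mathbf{u}_i^{(n)}$ for large $n$. The only (immaterial) variation is that you realise this last step via Hahn--Banach biorthogonal functionals, whereas the cited proof normalises the coefficients of a putative dependence relation and passes to a convergent subsequence in $\mathbb{R}^{r+1}$; both are standard and equivalent here.
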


Now, following the proof of Theorem 4.1 in \cite{FALHACK} we obtain the
final theorem.

\begin{theorem}
\label{best_approximation_FT} Let $\mathbf{V}_{D}=\left.
_{a}\bigotimes_{j\in D}V_{j}\right. $ and let $\{\mathbf{V}_{\alpha _{\Vert
\cdot \Vert _{\alpha }}}\}_{\alpha \in T_{D}\setminus \{D\}}$ be a
representation of a reflexive Banach tensor space $\mathbf{V}_{D_{\Vert
\cdot \Vert _{D}}}=\left. _{\Vert \cdot \Vert _{D}}\bigotimes_{j\in
D}V_{j}\right. ,$ in the topological tree-based format and assume that \eqref%
{tree_injective_norm} holds. Then the set $\mathcal{FT}_{\leq \mathfrak{r}}(%
\mathbf{V}_{D})$ is weakly closed in $\mathbf{V}_{D_{\Vert \cdot \Vert _{D}}}
$ and hence for each $\mathbf{v}\in \mathbf{V}_{D_{\Vert \cdot \Vert _{D}}}$
there exists $\mathbf{u}_{best}\in \mathcal{FT}_{\leq \mathfrak{r}}(\mathbf{V%
}_{D})$ such that 
\begin{equation*}
\Vert \mathbf{v}-\mathbf{u}_{best}\Vert _{D}=\min_{\mathbf{u}\in \mathcal{FT}%
_{\leq \mathfrak{r}}(\mathbf{V}_{D})}\Vert \mathbf{v}-\mathbf{u}\Vert _{D}.
\end{equation*}
\end{theorem}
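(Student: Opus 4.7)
The plan is to reduce the existence of best approximations to the weak closedness of $\mathcal{FT}_{\leq \mathfrak{r}}(\mathbf{V}_D)$ in $\mathbf{V}_{D_{\|\cdot\|_D}}$, following the strategy already used in \cite{FALHACK} for the Tucker case. Given $\mathbf{v} \in \mathbf{V}_{D_{\|\cdot\|_D}}$, I would start with a minimizing sequence $\{\mathbf{u}_n\}_{n\geq 0} \subset \mathcal{FT}_{\leq \mathfrak{r}}(\mathbf{V}_D)$ for $\inf_{\mathbf{u} \in \mathcal{FT}_{\leq \mathfrak{r}}(\mathbf{V}_D)} \|\mathbf{v}-\mathbf{u}\|_D$. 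The triangle inequality $\|\mathbf{u}_n\|_D \leq \|\mathbf{v}\|_D + \|\mathbf{v}-\mathbf{u}_n\|_D$ gives a bounded sequence, and reflexivity of $\mathbf{V}_{D_{\|\cdot\|_D}}$ yields a subsequence (still denoted $\{\mathbf{u}_n\}$) and a limit $\mathbf{u}_{best} \in \mathbf{V}_{D_{\|\cdot\|_D}}$ with $\mathbf{u}_n \rightharpoonup \mathbf{u}_{best}$.

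The central step is to show $\mathbf{u}_{best} \in \mathcal{FT}_{\leq \mathfrak{r}}(\mathbf{V}_D)$, i.e., weak sequential closedness. Applying the preceding dimension semicontinuity theorem at every node $\alpha \in T_D \setminus \{D\}$, I would obtain
\begin{equation*}
\dim U_\alpha^{\min}(\mathbf{u}_{best}) \leq \liminf_{n \to \infty} \dim U_\alpha^{\min}(\mathbf{u}_n) \leq r_\alpha.
\end{equation*}
Specializing to the leaves, $\dim U_j^{\min}(\mathbf{u}_{best}) \leq r_j < \infty$ for each $j \in \mathcal{L}(T_D)$, so the leaf minimal subspaces are finite-dimensional. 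Since $\mathbf{u}_{best}$ lies in $\left._a \bigotimes_{j \in D} U_j^{\min}(\mathbf{u}_{best})\right.$, it belongs to a finite-dimensional algebraic tensor subspace, hence to $\mathbf{V}_D$, and the tuple $(\dim U_\alpha^{\min}(\mathbf{u}_{best}))_{\alpha \in T_D}$ is componentwise dominated by $\mathfrak{r}$, placing $\mathbf{u}_{best}$ in $\mathcal{FT}_{\leq \mathfrak{r}}(\mathbf{V}_D)$.

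To finish, I would invoke the weak lower semicontinuity of the norm $\|\cdot\|_D$ to conclude
\begin{equation*}
\|\mathbf{v}-\mathbf{u}_{best}\|_D \leq \liminf_{n \to \infty} \|\mathbf{v}-\mathbf{u}_n\|_D = \inf_{\mathbf{u} \in \mathcal{FT}_{\leq \mathfrak{r}}(\mathbf{V}_D)} \|\mathbf{v}-\mathbf{u}\|_D,
\end{equation*}
and the reverse inequality is immediate from $\mathbf{u}_{best} \in \mathcal{FT}_{\leq \mathfrak{r}}(\mathbf{V}_D)$. The main obstacle, and the reason the assumption $\|\cdot\|_\alpha \gtrsim \|\cdot\|_{\vee(S(\alpha))}$ (i.e.\ \eqref{tree_injective_norm}) is indispensable, lies in legitimising the dimension semicontinuity at non-leaf nodes: one needs the iterated partial contractions $\overline{(id_\gamma \otimes \boldsymbol{\varphi}_{[\gamma,\mu,\ldots,\beta,\alpha]})}$ to extend continuously from $\mathbf{V}_{D_{\|\cdot\|_D}}$ into $\mathbf{V}_{\gamma_{\|\cdot\|_\gamma}}$ and to pass weak convergence through, which is exactly what Lemma~\ref{extension_Umin} delivers once each node norm dominates the corresponding injective norm.
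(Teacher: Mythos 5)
Your argument is correct and is essentially the paper's own proof: the paper simply says "following the proof of Theorem 4.1 in \cite{FALHACK}", which is exactly the direct method you describe (bounded minimizing sequence, weak subsequential limit by reflexivity, weak closedness via the dimension lower-semicontinuity theorem proved just before, and weak lower semicontinuity of the norm). Your remark on why \eqref{tree_injective_norm} is what makes the semicontinuity at non-leaf nodes legitimate also matches the paper's preparatory lemmas.
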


\subsection{Is $\mathcal{FT}_{\mathfrak{r}}(\mathbf{V}_D)$ an immersed
submanifold?}

Assume that the tensor product map $\bigotimes$ is $T_D$-continuous and that
we have a natural ambient space for $\mathcal{FT}_{\mathfrak{r}}(\mathbf{V}%
_{D})$ given by a Banach tensor space $\overline{\mathbf{V}_D}^{\|\cdot\|_D}
= \mathbf{V}_{D_{\|\cdot\|_D}}.$ Since the natural inclusion 
\begin{equation*}
\mathfrak{i}:\mathcal{FT}_{\mathfrak{r}}(\mathbf{V}_{D})\longrightarrow 
\mathbf{V}_{D_{\Vert \cdot \Vert _{D}}},
\end{equation*}
given by $\mathfrak{i}(\mathbf{v})=\mathbf{v},$ is an injective map we will
study $\mathfrak{i}$ as a function between Banach manifolds. To this end we
recall the definition of an immersion between manifolds.

\begin{definition}
Let $F:X\rightarrow Y$ be a morphism between Banach manifolds and let $x\in
X.$ We shall say that $F$ is an \emph{immersion at $x,$} if there exists an
open neighbourhood $X_{x}$ of $x$ in $X$ such that the restriction of $F$ to 
$X_{x}$ induces an isomorphism from $X_{x}$ onto a submanifold of $Y.$ We
say that $F$ is an \emph{immersion} if it is an immersion at each point of $%
X.$
\end{definition}

Our next step is to recall the definition of the differential as a morphism
which gives a linear map between the tangent spaces of the manifolds
involved with the morphism.

\begin{definition}
Let $X$ and $Y$ be two Banach manifolds. Let $F:X\rightarrow Y$ be a $%
\mathcal{C}^{r}$ morphism, i.e., 
\begin{equation*}
\psi \circ F\circ \varphi ^{-1}:\varphi (U)\rightarrow \psi (W)
\end{equation*}%
is a $\mathcal{C}^{r}$-Fr\'{e}chet differentiable map, where $(U,\varphi )$
is a chart in $X$ at $x$ and $(W,\psi )$ is a chart in $Y$ at $F(x)$. For $%
x\in X,$ we define 
\begin{equation*}
\mathrm{T}_{x}F:\mathbb{T}_{x}(X)\longrightarrow \mathbb{T}_{F(x)}(Y),\quad
v\mapsto \lbrack (\psi \circ F\circ \varphi ^{-1})^{\prime }(\varphi (x))]v.
\end{equation*}
\end{definition}

For Banach manifolds we have the following criterion for immersions (see
Theorem 3.5.7 in \cite{MRA}).

\begin{proposition}
\label{prop_inmersion} Let $X,Y$ be Banach manifolds of class $\mathcal{C}%
^{p}$ $(p\geq 1).$ Let $F:X\rightarrow Y$ be a $\mathcal{C}^{p}$ morphism
and $x\in X.$ Then $F$ is an immersion at $x$ if and only if $\mathrm{T}%
_{x}F $ is injective and $\mathrm{T}_{x}F(\mathbb{T}_{x}(X)) \in \mathbb{G}(%
\mathbb{T}_{F(x)}(Y)).$
\end{proposition}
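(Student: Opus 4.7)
The plan is to establish the equivalence via the classical local immersion theorem for Banach manifolds, which is itself an application of the Inverse Function Theorem in Banach spaces. Working in local charts $(U,\varphi)$ at $x$ and $(W,\psi)$ at $F(x)$, I would reduce everything to studying the $\mathcal{C}^{p}$-map $f := \psi \circ F \circ \varphi^{-1}: \varphi(U) \subset X_0 \to Y_0$ between open subsets of Banach spaces, with $\mathrm{T}_{x}F$ identified with $Df(\varphi(x)) \in \mathcal{L}(X_0,Y_0)$ and $\mathbb{T}_{F(x)}(Y) \cong Y_0$.

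For the forward implication, suppose $F$ is an immersion at $x$. By definition there is an open neighbourhood $X_{x}$ of $x$ such that $F(X_{x})$ is a submanifold of $Y$ and $F|_{X_{x}}:X_{x}\to F(X_{x})$ is a diffeomorphism. The definition of submanifold in a Banach manifold (in the sense of \cite{Lang} or \cite{MRA}) provides a chart $(W,\psi)$ of $Y$ at $F(x)$ with $\psi(F(X_{x})\cap W)=\psi(W)\cap E$ for a closed complemented subspace $E\subset Y_{0}$. Consequently $\mathbb{T}_{F(x)}(F(X_{x}))$ is identified with $E$ and is complemented in $\mathbb{T}_{F(x)}(Y)$. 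Since $F|_{X_{x}}$ is a diffeomorphism, $\mathrm{T}_{x}F$ is a linear isomorphism from $\mathbb{T}_{x}(X)$ onto this tangent space; in particular it is injective and $\mathrm{T}_{x}F(\mathbb{T}_{x}(X)) \in \mathbb{G}(\mathbb{T}_{F(x)}(Y))$.

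For the reverse implication, set $E:=\mathrm{T}_{x}F(\mathbb{T}_{x}(X))$ and pick $Z\in\mathbb{G}(\mathbb{T}_{F(x)}(Y))$ with $\mathbb{T}_{F(x)}(Y)=E\oplus Z$. Define $g:\varphi(U)\times Z\to Y_{0}$ by $g(u,z):=f(u)+z$. Its Fr\'echet differential at $(\varphi(x),0)$ is the continuous linear map $(h,z)\mapsto Df(\varphi(x))h+z$. Injectivity of $\mathrm{T}_{x}F$ combined with the Open Mapping Theorem applied to $Df(\varphi(x)):X_{0}\to E$ (viewed as a continuous linear bijection onto the Banach space $E$) shows that this differential is a topological linear isomorphism from $X_{0}\oplus Z$ onto $E\oplus Z=Y_{0}$. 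The Banach Inverse Function Theorem then implies that $g$ is a local $\mathcal{C}^{p}$-diffeomorphism around $(\varphi(x),0)$; composing $\psi$ with $g^{-1}$ yields a new chart of $Y$ at $F(x)$ in which $F$ is represented by the inclusion $u\mapsto(u,0)$. This exhibits $F(X_{x})$ as a submanifold of $Y$ and $F|_{X_{x}}:X_{x}\to F(X_{x})$ as an isomorphism, so $F$ is an immersion at $x$.

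The main obstacle is producing the bounded linear inverse $Df(\varphi(x))^{-1}:E\to X_{0}$ required to apply the Inverse Function Theorem to $g$. This rests on two ingredients: injectivity of $\mathrm{T}_{x}F$, which makes $Df(\varphi(x)):X_{0}\to E$ bijective; and the condition $E\in\mathbb{G}(\mathbb{T}_{F(x)}(Y))$, which guarantees that $E$ is closed and hence itself a Banach space so that the Open Mapping Theorem is available. Both hypotheses of the proposition thus play an essential role, and together they provide precisely the splitting data needed to put $F$ in local normal form.
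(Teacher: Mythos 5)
Your proof is correct; note that the paper itself offers no proof of this proposition, citing it directly as Theorem~3.5.7 of \cite{MRA}. Your argument — the forward direction from the definition of a split submanifold, and the reverse direction putting $F$ in local normal form via the map $g(u,z)=f(u)+z$, the Open Mapping Theorem applied to $Df(\varphi(x)):X_0\to E$ (using that $E\in\mathbb{G}(\mathbb{T}_{F(x)}(Y))$ is closed), and the Banach Inverse Function Theorem — is precisely the standard proof given in that reference.
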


A concept related to an immersion between Banach manifolds is the
following definition.

\begin{definition}
Assume that $X$ and $Y$ are Banach manifolds and let $f:X\longrightarrow Y$
be a $\mathcal{C}^{r}$ morphism. If $f$ is an injective immersion, then $%
f(X) $ is called an \emph{immersed submanifold of $Y$.}
\end{definition}

Recall that there exists injective immersions which are not isomorphisms
onto manifolds. It allows us to introduce the following definition.

\begin{definition}
An injective immersion $f:X\longrightarrow Y$ which is a homeomorphism onto $%
f(X)$ with the relative topology induced from $Y$ is called an \emph{%
embedding}. Moreover, if $f:X\longrightarrow Y$ is an embedding, then $f(X)$
is called an \emph{embedded submanifold of $Y.$}
\end{definition}

A classical example of an immersed submanifold which is not an embedded
submanifold is given by the map $f:(3\pi /4,7\pi /4)\longrightarrow \mathbb{R%
}^{2},$ written in polar coordinates by $r=\cos 2\theta .$ It can be see
that $f$ is an injective immersion however $f(3\pi /4,7\pi /4)$ is not an
open set in $\mathbb{R}^{2},$ because any neighbourhood of $0$ in $\mathbb{R}%
^{2}$ intersects $f(3\pi /4,7\pi /4)$ in a set with "corners" which is not
homeomorphic to an open interval (see Figure~\ref{eight}). Before to give an
example with tensors we need the following lemma.

\begin{lemma}
\label{projection_manifold} For each $\alpha \in T_D \setminus \{D\},$ the
set $\mathcal{L}(U_{\alpha}^{\min}(\mathbf{v}),W_{\alpha}^{\min}(\mathbf{v}%
)) $ is a complemented subspace of $\mathcal{L}( \mathbf{V}_{\alpha_{\Vert
\cdot \Vert _{\alpha }}} ,\mathbf{V}_{\alpha_{\Vert \cdot \Vert _{\alpha
}}}).$ Hence for each $\mathbf{v} \in \mathbf{V}_D$ and $\beta \notin 
\mathcal{L}(T_D)$ the set $%
\mathop{\mathchoice{\raise-0.22em\hbox{\huge $\times$}} {\raise-0.05em\hbox{\Large $\times$}}{\hbox{\large
$\times$}}{\times}}_{\alpha \in S(\beta)}\mathcal{L}(U_{\alpha }^{\min }(%
\mathbf{v}),W_{\alpha }^{\min }(\mathbf{v})))$ is a complemented subspace of
the Banach space $%
\mathop{\mathchoice{\raise-0.22em\hbox{\huge $\times$}} {\raise-0.05em\hbox{\Large $\times$}}{\hbox{\large
$\times$}}{\times}}_{\alpha \in S(\beta)}\mathcal{L}(\mathbf{V}%
_{\alpha_{\|\cdot\|_{\alpha}}},\mathbf{V}_{\alpha_{\|\cdot\|_{\alpha}}}).$
\end{lemma}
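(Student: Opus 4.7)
The plan is to exhibit an explicit continuous projection in $\mathcal{L}(\mathbf{V}_{\alpha_{\Vert \cdot\Vert_\alpha}},\mathbf{V}_{\alpha_{\Vert\cdot\Vert_\alpha}})$ whose image is (a natural realization of) $\mathcal{L}(U_{\alpha}^{\min}(\mathbf{v}),W_{\alpha}^{\min}(\mathbf{v}))$, and then to invoke Proposition~\ref{characterize_P} to conclude that this subspace is complemented.

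First, I would observe that since $\mathbf{v}\in \mathbf{V}_D$ is an algebraic tensor, each minimal subspace $U_{\alpha}^{\min}(\mathbf{v})\subset \mathbf{V}_{\alpha}\subset \mathbf{V}_{\alpha_{\Vert\cdot\Vert_\alpha}}$ is finite-dimensional. By Proposition~\ref{finite_dim_subspaces} it belongs to $\mathbb{G}(\mathbf{V}_{\alpha_{\Vert\cdot\Vert_\alpha}})$, so there is a closed complement $W_{\alpha}^{\min}(\mathbf{v})$ with $\mathbf{V}_{\alpha_{\Vert\cdot\Vert_\alpha}}=U_{\alpha}^{\min}(\mathbf{v})\oplus W_{\alpha}^{\min}(\mathbf{v})$ and continuous projections $P_{U}:=P_{U_{\alpha}^{\min}(\mathbf{v})\oplus W_{\alpha}^{\min}(\mathbf{v})}$ and $P_{W}=I-P_{U}$. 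Write $i_{U}$ and $i_{W}$ for the (continuous) inclusions of these subspaces into $\mathbf{V}_{\alpha_{\Vert\cdot\Vert_\alpha}}$, which satisfy $P_{U}\circ i_{U}=\mathrm{id}_{U_{\alpha}^{\min}(\mathbf{v})}$ and $P_{W}\circ i_{W}=\mathrm{id}_{W_{\alpha}^{\min}(\mathbf{v})}$.

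Second, I would introduce the two continuous linear maps
\[
\iota:\mathcal{L}(U_{\alpha}^{\min}(\mathbf{v}),W_{\alpha}^{\min}(\mathbf{v}))\longrightarrow \mathcal{L}(\mathbf{V}_{\alpha_{\Vert\cdot\Vert_\alpha}},\mathbf{V}_{\alpha_{\Vert\cdot\Vert_\alpha}}),\quad L\mapsto i_{W}\circ L\circ P_{U},
\]
\[
\pi:\mathcal{L}(\mathbf{V}_{\alpha_{\Vert\cdot\Vert_\alpha}},\mathbf{V}_{\alpha_{\Vert\cdot\Vert_\alpha}})\longrightarrow \mathcal{L}(U_{\alpha}^{\min}(\mathbf{v}),W_{\alpha}^{\min}(\mathbf{v})),\quad A\mapsto P_{W}\circ A\circ i_{U}.
\]
Continuity of $\iota$ and $\pi$ follows from the operator-norm estimates $\Vert \iota(L)\Vert\le \Vert P_{U}\Vert\,\Vert L\Vert$ and $\Vert \pi(A)\Vert\le \Vert P_{W}\Vert\,\Vert A\Vert$, while a direct computation gives $\pi\circ \iota=\mathrm{id}$ on $\mathcal{L}(U_{\alpha}^{\min}(\mathbf{v}),W_{\alpha}^{\min}(\mathbf{v}))$. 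Consequently $\Pi:=\iota\circ \pi$ lies in $\mathcal{L}(\mathbf{V}_{\alpha_{\Vert\cdot\Vert_\alpha}},\mathbf{V}_{\alpha_{\Vert\cdot\Vert_\alpha}})$, satisfies $\Pi^{2}=\iota\circ(\pi\circ\iota)\circ\pi=\iota\circ\pi=\Pi$, and has $\mathrm{Im}\,\Pi=\iota\bigl(\mathcal{L}(U_{\alpha}^{\min}(\mathbf{v}),W_{\alpha}^{\min}(\mathbf{v}))\bigr)$. Proposition~\ref{characterize_P} then identifies this image as a complemented subspace, and since $\pi$ restricted to $\mathrm{Im}\,\Pi$ is a topological linear isomorphism onto $\mathcal{L}(U_{\alpha}^{\min}(\mathbf{v}),W_{\alpha}^{\min}(\mathbf{v}))$, the identification $\iota$ lets us regard $\mathcal{L}(U_{\alpha}^{\min}(\mathbf{v}),W_{\alpha}^{\min}(\mathbf{v}))$ itself as a complemented subspace, with complement $\mathrm{Ker}\,\Pi$.

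For the ``hence'' part I would argue that complementability passes to finite Cartesian products. Given $\beta\notin \mathcal{L}(T_{D})$ and any $\alpha\in S(\beta)$, necessarily $\alpha\in T_{D}\setminus\{D\}$, so the previous step yields continuous projections $\Pi_{\alpha}\in \mathcal{L}(\mathcal{L}(\mathbf{V}_{\alpha_{\Vert\cdot\Vert_\alpha}},\mathbf{V}_{\alpha_{\Vert\cdot\Vert_\alpha}}),\mathcal{L}(\mathbf{V}_{\alpha_{\Vert\cdot\Vert_\alpha}},\mathbf{V}_{\alpha_{\Vert\cdot\Vert_\alpha}}))$ realizing each factor as a complemented subspace. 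The product map $\mathop{\times}_{\alpha\in S(\beta)}\Pi_{\alpha}$ is then a continuous projection on the product Banach space $\mathop{\times}_{\alpha\in S(\beta)}\mathcal{L}(\mathbf{V}_{\alpha_{\Vert\cdot\Vert_\alpha}},\mathbf{V}_{\alpha_{\Vert\cdot\Vert_\alpha}})$, whose image is $\mathop{\times}_{\alpha\in S(\beta)}\mathcal{L}(U_{\alpha}^{\min}(\mathbf{v}),W_{\alpha}^{\min}(\mathbf{v}))$; a second application of Proposition~\ref{characterize_P} yields the conclusion. There is no serious obstacle here: the argument is routine functional analysis, the only mild subtlety being the need to choose a coherent identification of $\mathcal{L}(U,W)$ with a genuine linear subspace of $\mathcal{L}(X,X)$ via the pair $(P_{U},i_{W})$ so that the computation $\pi\circ \iota=\mathrm{id}$ makes sense.
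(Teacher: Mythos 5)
Your proof is correct and coincides with the paper's own argument: the projection $\Pi=\iota\circ\pi$ you construct is exactly the map $L_{\alpha}\mapsto P_{W_{\alpha}^{\min}(\mathbf{v})\oplus U_{\alpha}^{\min}(\mathbf{v})}\circ L_{\alpha}\circ P_{U_{\alpha}^{\min}(\mathbf{v})\oplus W_{\alpha}^{\min}(\mathbf{v})}$ used in the paper, with the identification of $\mathcal{L}(U_{\alpha}^{\min}(\mathbf{v}),W_{\alpha}^{\min}(\mathbf{v}))$ with a subspace of $\mathcal{L}(\mathbf{V}_{\alpha_{\Vert\cdot\Vert_{\alpha}}},\mathbf{V}_{\alpha_{\Vert\cdot\Vert_{\alpha}}})$ spelled out explicitly. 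The product step is likewise the routine argument the paper leaves implicit.
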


\begin{proof}
Observe that the map 
\begin{equation*}
\Pi_{\alpha}: \mathcal{L}\left(\mathbf{V}_{\alpha_{\Vert \cdot \Vert
_{\alpha }}} ,\mathbf{V}_{\alpha_{\Vert \cdot \Vert _{\alpha }}}\right)
\rightarrow \mathcal{L}\left( \mathbf{V}_{\alpha_{\Vert \cdot \Vert _{\alpha
}}} ,\mathbf{V}_{\alpha_{\Vert \cdot \Vert _{\alpha }}} \right)
\end{equation*}
defined by 
\begin{equation*}
\Pi_{\alpha}(L_{\alpha})=P_{W_{\alpha}^{\min}(\mathbf{v})\oplus
U_{\alpha}^{\min}(\mathbf{v})}L_{\alpha}P_{U_{\alpha}^{\min}(\mathbf{v})
\oplus W_{\alpha}^{\min}(\mathbf{v})}
\end{equation*}
is a projection onto $\mathcal{L}(U_{\alpha}^{\min}(\mathbf{v}%
),W_{\alpha}^{\min}(\mathbf{v})).$
\end{proof}

\bigskip

\begin{example}
Consider the morphism 
\begin{equation*}
f :\mathcal{U}(\mathbf{v}) \subset \mathcal{FT}_{\mathfrak{r}}(\mathbf{V}%
_{D}) \longrightarrow 
\mathop{\mathchoice{\raise-0.22em\hbox{\huge $\times$}} {\raise-0.05em\hbox{\Large $\times$}}{\hbox{\large
$\times$}}{\times}}_{\alpha \in \mathcal{L}(T_D)}\mathcal{L}%
(V_{\alpha_{\|\cdot\|_{\alpha}}},V_{\alpha_{\|\cdot\|_{\alpha}}}) \times 
\mathbb{R}^{\mathfrak{r}}
\end{equation*}
defined locally for each $\mathbf{v} \in \mathcal{FT}_{\mathfrak{r}}(\mathbf{%
V}_{D})$ by $f(\mathbf{w})= (\Theta_{\mathbf{v}} \circ \chi_{\mathfrak{r}}(%
\mathbf{v}))(\mathbf{w}) = (\mathfrak{L},\mathfrak{C}).$ Then in local
coordinates we have that $f$ is the identity map. Clearly, $f$ is injective
and 
\begin{equation*}
\mathrm{T}_{\mathbf{v}}f(%
\mathop{\mathchoice{\raise-0.22em\hbox{\huge $\times$}} {\raise-0.05em\hbox{\Large $\times$}}{\hbox{\large
 $\times$}}{\times}}_{\alpha \in \mathcal{L}(T_D)}\mathcal{L}(U_{\alpha
}^{\min }( \mathbf{v}),W_{\alpha }^{\min }( \mathbf{v})) \times \mathbb{R}^{%
\mathfrak{r}}) = 
\mathop{\mathchoice{\raise-0.22em\hbox{\huge $\times$}} {\raise-0.05em\hbox{\Large $\times$}}{\hbox{\large
 $\times$}}{\times}}_{\alpha \in \mathcal{L}(T_D)}\mathcal{L}(U_{\alpha
}^{\min }( \mathbf{v}),W_{\alpha }^{\min }( \mathbf{v})) \times \mathbb{R}^{%
\mathfrak{r}}.
\end{equation*}
From Lemma~\ref{projection_manifold} we have that 
\begin{equation*}
\mathop{\mathchoice{\raise-0.22em\hbox{\huge $\times$}} {\raise-0.05em\hbox{\Large $\times$}}{\hbox{\large
$\times$}}{\times}}_{\alpha \in \mathcal{L}(T_D)}\mathcal{L}(U_{\alpha
}^{\min }(\mathbf{v}),W_{\alpha }^{\min }(\mathbf{v})) \in \mathbb{G}(%
\mathop{\mathchoice{\raise-0.22em\hbox{\huge $\times$}} {\raise-0.05em\hbox{\Large $\times$}}{\hbox{\large
$\times$}}{\times}}_{\alpha \in \mathcal{L}(T_D)}\mathcal{L}%
(V_{\alpha_{\|\cdot\|_{\alpha}}},V_{\alpha_{\|\cdot\|_{\alpha}}}))
\end{equation*}
and hence 
\begin{equation*}
\mathop{\mathchoice{\raise-0.22em\hbox{\huge $\times$}} {\raise-0.05em\hbox{\Large $\times$}}{\hbox{\large
$\times$}}{\times}}_{\alpha \in \mathcal{L}(T_D)}\mathcal{L}(U_{\alpha
}^{\min }(\mathbf{v}),W_{\alpha }^{\min }(\mathbf{v})) \times \mathbb{R}^{%
\mathfrak{r}} \in \mathbb{G}(%
\mathop{\mathchoice{\raise-0.22em\hbox{\huge $\times$}} {\raise-0.05em\hbox{\Large $\times$}}{\hbox{\large
$\times$}}{\times}}_{\alpha \in \mathcal{L}(T_D)}\mathcal{L}%
(V_{\alpha_{\|\cdot\|_{\alpha}}},V_{\alpha_{\|\cdot\|_{\alpha}}}) \times 
\mathbb{R}^{\mathfrak{r}}).
\end{equation*}
Then by Proposition~\ref{prop_inmersion} $f$ is an immersion. Moreover, $f(%
\mathcal{U}(\mathbf{v}))$ with the topology induced by 
\begin{equation*}
\mathop{\mathchoice{\raise-0.22em\hbox{\huge $\times$}} {\raise-0.05em\hbox{\Large $\times$}}{\hbox{\large
 $\times$}}{\times}}_{\alpha \in \mathcal{L}(T_D)}\mathcal{L}%
(V_{\alpha_{\|\cdot\|_{\alpha}}},V_{\alpha_{\|\cdot\|_{\alpha}}}) \times 
\mathbb{R}^{\mathfrak{r}}
\end{equation*}
is homeomorphic to $\mathcal{U}(\mathbf{v})$ when we consider in $\mathcal{U}%
(\mathbf{v})$ the initial topology induced by $f.$ We point out that we can
consider $\{\mathcal{U}(\mathbf{v}): \mathbf{\mathbf{v}} \in \mathcal{FT}_{%
\mathfrak{r}}(\mathbf{V}_{D})\}$ as a basis for a topology in $\mathcal{FT}_{%
\mathfrak{r}}(\mathbf{V}_{D}).$ Then, $f$ is an embedding and $f(\mathcal{FT}%
_{\mathfrak{r}}(\mathbf{V}_{D}))$ is an embedded submanifold of $%
\mathop{\mathchoice{\raise-0.22em\hbox{\huge $\times$}} {\raise-0.05em\hbox{\Large $\times$}}{\hbox{\large
 $\times$}}{\times}}_{\alpha \in \mathcal{L}(T_D)}\mathcal{L}(\mathbf{V}%
_{\alpha_{\|\cdot\|_{\alpha}}},\mathbf{V}_{\alpha_{\|\cdot\|_{\alpha}}})
\times \mathbb{R}^{\mathfrak{r}}.$
\end{example}

From the above example we have that even the manifold $\mathcal{FT}_{%
\mathfrak{r}}(\mathbf{V}_{D})$ is a subset of $\mathbf{V}_{D_{\Vert \cdot
\Vert _{D}}}$ its geometric structure is fully compatible with the topology
of the Banach space $%
\mathop{\mathchoice{\raise-0.22em\hbox{\huge $\times$}} {\raise-0.05em\hbox{\Large $\times$}}{\hbox{\large
 $\times$}}{\times}}_{\alpha \in S(\beta )}\mathcal{L}(\mathbf{V}_{\alpha
_{\Vert \cdot \Vert _{\alpha }}},\mathbf{V}_{\alpha _{\Vert \cdot \Vert
_{\alpha }}})\times \mathbb{R}^{\mathfrak{r}}.$ Moreover, it is not
difficult to see that the same argument runs for the manifold $\mathcal{FT}%
_{\leq \mathfrak{r}}(\mathbf{V}_{D}).$

\bigskip

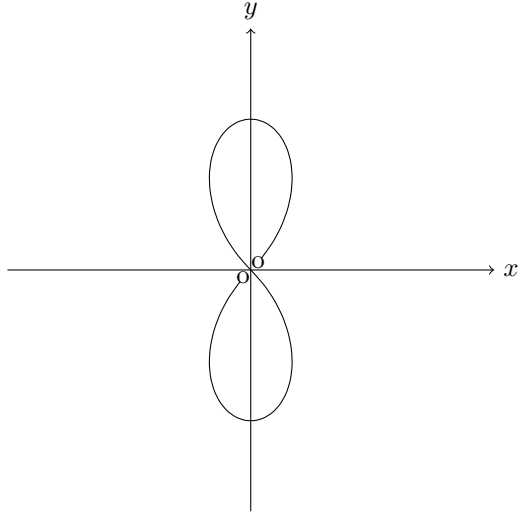
\begin{figure}[tbp]
\begin{center}
\begin{tikzpicture}
\draw[->] (-3.2,0) -- (3.2,0) node[right] {$x$};
\draw[->] (0,-3.2) -- (0,3.2) node[above] {$y$};
\node (a) at (-0.1,-0.1) {o};
\node (a) at (0.1,0.1) {o};
\draw[scale=2,domain=2.356:3.875,smooth,variable=\t]
plot ({cos(2*\t r)*sin(\t r)},{cos(2*\t r)*cos(\t r)})
plot ({-cos(2*\t r)*sin(\t r)},{-cos(2*\t r)*cos(\t r)});
\end{tikzpicture}
\end{center}
\caption{The set $f(3\protect\pi/4,7\protect\pi/4)$ in $\mathbb{R}^2.$ The
"o" means that the lines approach without touch.}\label{eight}
\end{figure}

In consequence, to prove that the standard inclusion map $\mathfrak{i}$ is
an immersion we shall prove, under the appropriate conditions, that if $%
\mathfrak{i}$ is a differentiable morphism then for each $\mathbf{v} \in 
\mathcal{FT}_{\mathfrak{r}}(\mathbf{V}_{D})$ the linear map $\mathrm{T}_{%
\mathbf{v}}\mathfrak{i}$ is injective and $\mathrm{T}_{\mathbf{v}}\mathfrak{i%
}(\mathbb{T}_{\mathbf{v}}(\mathcal{FT}_{\mathfrak{r}}(\mathbf{V}_{D})))$
belongs to $\mathbb{G}(\mathbf{V}_{D_{\|\cdot\|_D}}).$

\subsubsection{The derivative as a morphism of the standard inclusion map}

To describe $\mathfrak{i}$ as a morphism, we proceed as follows. Given $%
\mathbf{v}\in \mathcal{FT}_{\mathfrak{r}}(\mathbf{V}_{D}),$ we consider $%
\mathcal{U}(\mathbf{v}),$ a local neighbourhood of $\mathbf{v},$ and then 
\begin{equation*}
(\mathfrak{i}\circ \Theta _{\mathbf{v}}^{-1}\circ \chi _{\mathfrak{r}}^{-1}(%
\mathbf{v})):%
\mathop{\mathchoice{\raise-0.22em\hbox{\huge $\times$}} {\raise-0.05em\hbox{\Large $\times$}}{\hbox{\large
$\times$}}{\times}}_{\alpha \in \mathcal{L}(T_{D})}\mathcal{L}(U_{\alpha
}^{\min }(\mathbf{v}),W_{\alpha }^{\min }(\mathbf{v}))\times \mathbb{R}%
_{\ast }^{\mathfrak{r}}\rightarrow \mathbf{V}_{\Vert \cdot \Vert _{D}}
\end{equation*}%
is given by 
\begin{equation*}
\left( \mathfrak{L},\mathfrak{C}\right) \mapsto \sum_{\substack{ 1\leq
i_{k}\leq r_{k}  \\ k\in \mathcal{L}(T_{D})}}\left( \sum_{\substack{ 1\leq
i_{\alpha }\leq r_{\alpha }  \\ \alpha \in T_{D}\setminus \{D\}  \\ \alpha
\notin \mathcal{L}(T_{D})}}C_{(i_{\alpha })_{\alpha \in S(D)}}^{(D)}\prod 
_{\substack{ \mu \in T_{D}\setminus \{D\}  \\ S(\mu )\neq \emptyset }}%
C_{i_{\mu },(i_{\beta })_{\beta \in S(\mu )}}^{(\mu )}\right)
\bigotimes_{k\in \mathcal{L}(T_{D})}(u_{i_{k}}^{(k)}+L_{k}(u_{i_{k}}^{(k)})),
\end{equation*}%
that is, 
\begin{equation*}
(\mathfrak{i}\circ \Theta _{\mathbf{v}}^{-1}\circ \chi _{\mathfrak{r}}^{-1}(%
\mathbf{v}))\left( \mathfrak{L},\mathfrak{C}\right) =\mathbf{w}=\sum 
_{\substack{ 1\leq i_{\alpha }\leq r_{\alpha }  \\ \alpha \in S(D)}}%
C_{(i_{\alpha })_{\alpha \in S(D)}}^{(D)}\bigotimes_{\alpha \in S(D)}\mathbf{%
w}_{i_{\alpha }}^{(\alpha )}
\end{equation*}%
where for each $\mu \in T_{D}\setminus \{D\}$ we write 
\begin{equation*}
\mathbf{w}_{i_{\mu }}^{(\mu )}=\left\{ 
\begin{array}{lc}
(id+L_{\mu })(\mathbf{u}_{i_{\mu }}^{(\mu }) & \text{ if }\mu \in \mathcal{L}%
(T_{D}) \\ 
&  \\ 
\sum_{\substack{ 1\leq i_{\beta }\leq r_{\beta }  \\ \beta \in S(\mu )}}%
C_{i_{\mu },(i_{\beta })_{\beta \in S(\mu )}}^{(\mu )}\bigotimes_{\beta \in
S(\mu )}\mathbf{w}_{i_{\beta }}^{(\beta )} & \text{ otherwise, }%
\end{array}%
\right.
\end{equation*}%
for $1\leq i_{\mu }\leq r_{\mu }.$

\bigskip

Assume that $(\mathfrak{i}\circ \Theta _{\mathbf{v}}^{-1} \circ \chi_{%
\mathfrak{r}}^{-1}(\mathbf{v}))$ is Fr\'{e}chet differentiable, then 
\begin{equation*}
\mathrm{T}_{\mathbf{v}}\mathfrak{i}:%
\mathop{\mathchoice{\raise-0.22em\hbox{\huge $\times$}} {\raise-0.05em\hbox{\Large $\times$}}{\hbox{\large
$\times$}}{\times}}_{\alpha \in \mathcal{L}(T_D)}\mathcal{L}(U_{\alpha
}^{\min }(\mathbf{v}),W_{\alpha }^{\min }(\mathbf{v})) \times \mathbb{R}%
_{\ast }^{\mathfrak{r}}\longrightarrow \mathbf{V}_{\Vert \cdot \Vert _{D}}
\end{equation*}
is given by 
\begin{equation*}
\mathrm{T}_{\mathbf{v}}\mathfrak{i}(\dot{\mathfrak{L}},\dot{\mathfrak{C}})=[(%
\mathfrak{i}\circ \Theta _{\mathbf{v}}^{-1}\circ \chi_{\mathfrak{r}}^{-1}(%
\mathbf{v}))^{\prime }((\Theta _{\mathbf{v}} \circ \chi_{\mathfrak{r}}(%
\mathbf{v}))(\mathbf{v}))](\dot{\mathfrak{L}},\dot{\mathfrak{C}}) = [(%
\mathfrak{i}\circ \Theta _{\mathbf{v}}^{-1}\circ \chi_{\mathfrak{r}}^{-1}(%
\mathbf{v}))^{\prime }(\mathfrak{0},\mathfrak{C})](\dot{\mathfrak{L}},\dot{%
\mathfrak{C}}),
\end{equation*}%
where $(\Theta _{\mathbf{v}} \circ \chi_{\mathfrak{r}}(\mathbf{v}))(\mathbf{v%
}) = (\mathfrak{0},\mathfrak{C}),$ because $\Psi_{\mathbf{v}}((U_k^{\min}(%
\mathbf{v}))_{k \in \mathcal{L}(T_D)}) = (0)_{k \in \mathcal{L}(T_D)} = 
\mathfrak{0}.$

\bigskip

The next lemma describes the tangent map $\mathrm{T}_{\mathbf{v}}\mathfrak{i}%
.$

\begin{proposition}
\label{characterization_tangent_map} Assume that the tensor product map $%
\bigotimes$ is $T_D$-continuous. Let $\mathbf{v}\in \mathcal{FT}_{\mathfrak{r%
}}(\mathbf{V}_{D})$ be such that $\Theta _{\mathbf{v}}(\mathbf{v})=(%
\mathfrak{0},\mathfrak{C}(\mathbf{v})),$ where $\mathfrak{C}(\mathbf{v}%
)=(C^{(\alpha )})_{\alpha \in T_{D}\setminus \mathcal{L}(T_{D})}\in \mathbb{R%
}^{\mathfrak{r}},$ $\mathfrak{0}=(0)_{\alpha \in \mathcal{L}(T_D)} \in 
\mathop{\mathchoice{\raise-0.22em\hbox{\huge $\times$}} {\raise-0.05em\hbox{\Large $\times$}}{\hbox{\large
$\times$}}{\times}}_{\alpha \in \mathcal{L}(T_D)}\mathcal{L}(U_{\alpha
}^{\min }(\mathbf{v}),W_{\alpha }^{\min }(\mathbf{v}))$ and 
\begin{equation*}
U_{\alpha }^{\min }(\mathbf{v})=\mathrm{span}\,\{\mathbf{u}_{i_{\alpha
}}^{(\alpha )}:1\leq i_{\alpha }\leq r_{\alpha }\}
\end{equation*}%
for $\alpha \in T_{D}\setminus \{D\}.$ Then the following statements hold.

\begin{itemize}
\item[(a)] The map $(\mathfrak{i} \circ \Theta_{\mathbf{v}}^{-1} \circ \chi_{%
\mathfrak{r}}(\mathbf{v}))$ from $%
\mathop{\mathchoice{\raise-0.22em\hbox{\huge $\times$}} {\raise-0.05em\hbox{\Large $\times$}}{\hbox{\large
$\times$}}{\times}}_{\alpha \in \mathcal{L}(T_D)}\mathcal{L}(U_{\alpha
}^{\min }(\mathbf{v}),W_{\alpha }^{\min }(\mathbf{v})) \times \mathbb{R}^{%
\mathfrak{r}}_*$ to $\mathbf{V}_{D_{\|\cdot\|_D}}$ is Fr\'echet
differentiable, and hence 
\begin{equation*}
\mathrm{T}_{\mathbf{v}}\mathfrak{i} \in \mathcal{L}\left(\mathbb{T}_{\mathbf{%
v}}(\mathcal{FT}_{\mathfrak{r}}(\mathbf{V}_{D})), \mathbf{V}%
_{D_{\|\cdot\|_D}} \right).
\end{equation*}

\item[(b)] Assume $(\dot{\mathfrak{L}},\dot{\mathfrak{C}})\in \mathbb{T}_{%
\mathbf{v}}(\mathcal{FT}_{\mathfrak{r}}(\mathbf{V}_{D})),$ where $\dot{%
\mathfrak{C}}=(\dot{C}^{(\alpha )})_{\alpha \in T_{D}\setminus \mathcal{L}%
(T_{D})}\in \mathbb{R}^{\mathfrak{r}}$ and $\dot{\mathfrak{L}}=(\dot{L}%
_{\alpha })_{\alpha \in \mathcal{L}(T_{D})}$ is in $ 
\mathop{\mathchoice{\raise-0.22em\hbox{\huge $\times$}} {\raise-0.05em\hbox{\Large $\times$}}{\hbox{\large
$\times$}}{\times}}_{\alpha \in \mathcal{L}(T_{D})}\mathcal{L}(U_{\alpha
}^{\min }(\mathbf{v}),W_{\alpha }^{\min }(\mathbf{v})).$ Then $\dot{\mathbf{w%
}}=\mathrm{T}_{\mathbf{v}}\mathfrak{i}(\dot{\mathfrak{C}},\dot{\mathfrak{L}}%
) $ if and only 
\begin{equation}
\dot{\mathbf{w}}=\sum_{\substack{ 1\leq i_{\alpha }\leq r_{\alpha }  \\ %
\alpha \in S(D)}}\dot{C}_{(i_{\alpha })_{\alpha \in
S(D)}}^{(D)}\bigotimes_{\alpha \in S(D)}\mathbf{u}_{i_{\alpha }}^{(\alpha
)}+\sum_{\alpha \in S(D)}\sum_{\substack{ 1\leq i_{\alpha }\leq r_{\alpha }}}%
\left( \dot{\mathbf{u}}_{i_{\alpha }}^{(\alpha )}\otimes \mathbf{U}%
_{i_{\alpha }}^{(\alpha )}\right) ,  \label{kinematic1}
\end{equation}%
where 
\begin{equation}
\mathbf{U}_{i_{\alpha }}^{(\alpha )}=\sum_{\substack{ 1\leq i_{\beta }\leq
r_{\beta }  \\ \beta \in S(D)  \\ \beta \neq \alpha }}C_{(i_{\beta })_{\beta
\in S(D)}}^{(D)}\bigotimes_{\beta \in S(D)}\mathbf{u}_{i_{\beta }}^{(\beta
)},  \label{Ualpha ialpha}
\end{equation}%
and for each $\gamma \in T_{D}\setminus \{D\}$ we have 
\begin{equation*}
\dot{\mathbf{u}}_{i_{\gamma }}^{(\gamma )}=\left\{ 
\begin{array}{lcc}
\dot{L}_{\mu }(\mathbf{u}_{i_{\gamma }}^{(\gamma )}) & \text{ if } & \gamma
\in \mathcal{L}(T_{D}) \\ 
&  &  \\ 
\sum_{\substack{ 1\leq i_{\beta }\leq r_{\beta }  \\ \beta \in S(\gamma )}}%
\dot{C}_{i_{\gamma },(i_{\beta })_{\beta \in S(\gamma )}}^{(\gamma
)}\bigotimes_{\beta \in S(\gamma )}\mathbf{u}_{i_{\beta }}^{(\beta
)}+\sum_{\beta \in S(\gamma )}\sum_{\substack{ 1\leq i_{\beta }\leq r_{\beta
}}}\left( \dot{\mathbf{u}}_{i_{\beta }}^{(\beta )}\otimes \mathbf{U}%
_{i_{\gamma },i_{\beta }}^{(\beta )}\right) &  & \text{ otherwise,}%
\end{array}%
\right.
\end{equation*}
where 
\begin{equation}
\mathbf{U}_{i_{\gamma },i_{\beta }}^{(\beta )}=\sum_{\substack{ 1\leq
i_{\delta }\leq r_{\delta }  \\ \delta \in S(\gamma )  \\ \delta \neq \beta }}%
C_{i_{\mu },(i_{\delta })_{\delta \in S(\gamma )}}^{(\gamma )}\bigotimes 
_{\substack{ \delta \neq \beta  \\ \delta \in S(\gamma )}}\mathbf{u}%
_{i_{\delta }}^{(\delta )},  \label{Ubeta igamma ibeta}
\end{equation}%
for $1\leq i_{\gamma }\leq r_{\gamma }$ and $1\leq i_{\beta }\leq r_{\beta
}. $
\end{itemize}
\end{proposition}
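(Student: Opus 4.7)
The plan is to treat the claim as two separate tasks: first verify that the chart representation of $\mathfrak{i}$ is Fréchet differentiable, then compute the differential by a top-down application of the product rule on the tree.

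For statement (a), I would observe that, written in the coordinates $(\mathfrak{L},\mathfrak{C})$, the map $\mathfrak{i}\circ\Theta_{\mathbf v}^{-1}\circ\chi_{\mathfrak r}^{-1}(\mathbf v)$ is built recursively by operations of only two kinds: at each leaf $k$ one forms the affine map $L_k\mapsto u_{i_k}^{(k)}+L_k(u_{i_k}^{(k)})$, and at each internal node $\mu$ one forms a finite linear combination of tensor products $\bigotimes_{\beta\in S(\mu)}\mathbf w_{i_\beta}^{(\beta)}$ with scalar coefficients $C^{(\mu)}_{i_\mu,(i_\beta)}$. The coefficient dependence is linear, and the tensor product entering at node $\mu$ is the multilinear map $\bigotimes:\mathop{\mathchoice{\raise-0.22em\hbox{\huge $\times$}}{\raise-0.05em\hbox{\Large $\times$}}{\hbox{\large $\times$}}{\times}}_{\beta\in S(\mu)}(\mathbf V_{\beta_{\Vert\cdot\Vert_\beta}},\Vert\cdot\Vert_\beta)\to(\mathbf V_{\mu_{\Vert\cdot\Vert_\mu}},\Vert\cdot\Vert_\mu)$, which by $T_D$-continuity together with Proposition~\ref{Diff_otimes} is continuous multilinear and hence Fréchet differentiable. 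Since composition and finite linear combinations preserve Fréchet differentiability, the chart representation of $\mathfrak{i}$ is Fréchet differentiable on $\mathop{\mathchoice{\raise-0.22em\hbox{\huge $\times$}}{\raise-0.05em\hbox{\Large $\times$}}{\hbox{\large $\times$}}{\times}}_{k\in\mathcal L(T_D)}\mathcal L(U_k^{\min}(\mathbf v),W_k^{\min}(\mathbf v))\times\mathbb R_\ast^{\mathfrak r}$, so $\mathrm T_{\mathbf v}\mathfrak{i}$ is a bounded linear map into $\mathbf V_{D_{\Vert\cdot\Vert_D}}$.

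For statement (b), I would evaluate the derivative at the distinguished point $(\mathfrak{0},\mathfrak{C})$ by induction on the depth of the subtree, working from leaves to root. The base case is trivial: for $k\in\mathcal L(T_D)$ the map $L_k\mapsto\mathbf w_{i_k}^{(k)}$ is affine, so at $L_k=0$ its directional derivative in direction $\dot L_k$ equals $\dot L_k(u_{i_k}^{(k)})=\dot{\mathbf u}_{i_k}^{(k)}$. For the inductive step at an internal node $\gamma$, the formula $\mathbf w_{i_\gamma}^{(\gamma)}=\sum_{i_\beta}C^{(\gamma)}_{i_\gamma,(i_\beta)}\bigotimes_{\beta\in S(\gamma)}\mathbf w_{i_\beta}^{(\beta)}$ is linear in $C^{(\gamma)}$ and multilinear in the factors $\mathbf w_{i_\beta}^{(\beta)}$. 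Applying the product rule for multilinear maps supplied by Proposition~\ref{Diff_otimes}, and evaluating at the base point where $\mathbf w_{i_\beta}^{(\beta)}=\mathbf u_{i_\beta}^{(\beta)}$, yields
\begin{equation*}
\dot{\mathbf u}_{i_\gamma}^{(\gamma)}=\sum_{\substack{1\le i_\beta\le r_\beta\\ \beta\in S(\gamma)}}\dot C^{(\gamma)}_{i_\gamma,(i_\beta)_{\beta\in S(\gamma)}}\bigotimes_{\beta\in S(\gamma)}\mathbf u_{i_\beta}^{(\beta)}+\sum_{\beta\in S(\gamma)}\sum_{\substack{1\le i_\beta\le r_\beta\\ (i_\delta)_{\delta\neq\beta}}}C^{(\gamma)}_{i_\gamma,(i_\beta,(i_\delta))}\,\dot{\mathbf u}_{i_\beta}^{(\beta)}\otimes\bigotimes_{\delta\neq\beta}\mathbf u_{i_\delta}^{(\delta)},
\end{equation*}
and regrouping the inner sum in the second term over $(i_\delta)_{\delta\neq\beta}$ produces exactly $\mathbf U^{(\beta)}_{i_\gamma,i_\beta}$ as defined in \eqref{Ubeta igamma ibeta}. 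The same argument applied at the root $D$ with $r_D=1$ gives \eqref{kinematic1}, where $\mathbf U^{(\alpha)}_{i_\alpha}$ is recognised from \eqref{Ualpha ialpha}. The converse direction (that any $\dot{\mathbf w}$ of the prescribed form arises as $\mathrm T_{\mathbf v}\mathfrak{i}(\dot{\mathfrak L},\dot{\mathfrak C})$) is immediate from the explicit formula.

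The only genuine obstacle is notational rather than conceptual: keeping the multi-index bookkeeping straight across the recursive application of the product rule, and verifying that at each step the rearrangement of the tensor product factors really does produce the symbols $\mathbf U^{(\alpha)}_{i_\alpha}$ and $\mathbf U^{(\beta)}_{i_\gamma,i_\beta}$ exactly as defined in \eqref{Ualpha ialpha} and \eqref{Ubeta igamma ibeta}. Everything else is an application of two elementary ingredients: the affine dependence on the leaf parameters $L_k$ and the Fréchet differentiability of the multilinear tensor product at the internal nodes, which is precisely what the $T_D$-continuity hypothesis provides through Proposition~\ref{Diff_otimes}.
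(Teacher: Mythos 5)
Your proposal is correct and takes essentially the same route as the paper: part (a) follows from the (affine/linear) continuity of the leaf evaluation maps $L_k\mapsto L_k(u_{i_k}^{(k)})$ together with the Fr\'echet differentiability of the multilinear tensor product at internal nodes (guaranteed by $T_D$-continuity via Proposition~\ref{Diff_otimes}), combined by the chain rule. For part (b) you simply spell out, as a leaves-to-root application of the product rule, the computation that the paper compresses into the single sentence ``using the chain rule, we obtain (b)''.
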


\begin{proof}
To prove statement (a), observe that for each $\mathbf{u}_{\alpha }\in
U_{\alpha }^{\min }(\mathbf{v}),$ $\alpha \in \mathcal{L}(T_{D}),$ the map 
\begin{equation*}
\Phi _{\mathbf{u}_{\alpha }}:\mathcal{L}(U_{\alpha }^{\min }(\mathbf{v}%
),W_{\alpha }^{\min }(\mathbf{v}))\rightarrow W_{\alpha }^{\min }(\mathbf{v}%
),\quad L_{\alpha }\mapsto L_{\alpha }(\mathbf{u}_{\alpha }),
\end{equation*}%
is linear and continuous, and hence Fr\'{e}chet differentiable. Clearly, its
differential is given by 
\begin{equation*}
\lbrack \Phi _{\mathbf{u}_{\alpha }}^{\prime }(L_{\alpha })](H_{\alpha
})=H_{\alpha }(\mathbf{u}_{\alpha }).
\end{equation*}%
Also, if the tensor product map $\bigotimes $ is $T_{D}$-continuous, by
Proposition~\ref{Diff_otimes}, the tensor product map 
\begin{equation*}
\bigotimes :%
\mathop{\mathchoice{\raise-0.22em\hbox{\huge
 $\times$}} {\raise-0.05em\hbox{\Large $\times$}}{\hbox{\large
 $\times$}}{\times}}_{\beta \in S(\gamma )}(V_{\beta _{\Vert \cdot \Vert
_{\beta }}},\Vert \cdot \Vert _{\beta })\rightarrow \left( \left.
_{a}\bigotimes_{\beta \in S(\gamma )}V_{\beta _{\Vert \cdot \Vert _{\beta
}}}\right. ,\Vert \cdot \Vert _{\gamma }\right) ,
\end{equation*}%
for $\gamma \in T_{D}\setminus \mathcal{L}(T_{D}),$ is also Fr\'{e}chet
differentiable. Then, by the chain rule, the map $\Theta _{\mathbf{v}}^{-1}$
is Fr\'{e}chet differentiable. Since $\mathrm{T}_{\mathbf{v}}\mathfrak{i}(%
\dot{\mathfrak{C}},\dot{\mathfrak{L}})=[(\mathfrak{i}\circ \Theta _{\mathbf{v%
}}^{-1}\circ \chi _{\mathfrak{r}}^{-1}(\mathbf{v}))^{\prime }(\mathfrak{C},%
\mathfrak{0})](\dot{\mathfrak{C}},\dot{\mathfrak{L}}),$ (a) follows. Using
the chain rule, we obtain (b).
\end{proof}

\bigskip

Let $\mathbf{v}\in \mathcal{FT}_{\mathfrak{r}}(\mathbf{V}_{D})\subset 
\mathbf{V}_{D_{\Vert \cdot \Vert _{D}}}$ be such that 
\begin{equation*}
\mathbf{v}=\sum_{\substack{ 1\leq i_{\alpha }\leq r_{\alpha }  \\ \alpha \in
S(D)}}C_{(i_{\alpha })_{\alpha \in S(D)}}^{(D)}\bigotimes_{\alpha \in S(D)}%
\mathbf{u}_{i_{\alpha }}^{(\alpha )},
\end{equation*}%
where for each $\mu \in T_{D}\setminus (\{D\}\cup \mathcal{L}(T_{D}))$ we
have 
\begin{equation*}
\mathbf{u}_{i_{\mu }}^{(\mu )}=\sum_{\substack{ 1\leq i_{\beta }\leq
r_{\beta }  \\ \beta \in S(\mu )}}C_{i_{\mu },(i_{\beta })_{\beta \in
S(\mu )}}^{(\mu )}\bigotimes_{\beta \in S(\mu )}\mathbf{u}_{i_{\beta
}}^{(\beta )}
\end{equation*}%
for $1\leq i_{\mu }\leq r_{\mu }.$ Recall that for $\alpha \in S(D)$ we have 
\begin{equation*}
U_{S(D)\setminus \{\alpha \}}^{\min }(\mathbf{v})=%
\mathrm{span}\,\{\mathbf{U}_{i_{\alpha }}^{(\alpha )}:1\leq i_{\alpha }\leq
r_{\alpha }\},
\end{equation*}%
and for $\mu \in T_{D}\setminus (\{D\}\cup \mathcal{L}(T_{D}))$ we know that 
$U_{\beta }^{\min }(\mathbf{u}_{i_{\mu }}^{(\mu )})=U_{\beta }^{\min }(%
\mathbf{v})$ and 
\begin{equation*}
U_{S(\mu )\setminus \{\beta \}}^{\min }(\mathbf{u}_{i_{\mu }}^{(\mu )})=%
\mathrm{span}\,\{\mathbf{U}_{i_{\mu },i_{\beta }}^{(\beta )}:1\leq i_{\beta
}\leq r_{\beta }\}
\end{equation*}%
for $1\leq i_{\mu }\leq r_{\mu }$ and $\beta \in S(\mu ).$ Hence 
\begin{equation*}
W_{\beta }^{\min }(\mathbf{v})=W_{\beta }^{\min }(\mathbf{u}_{i_{\mu
}}^{(\mu )})\text{ for }1\leq i_{\mu }\leq r_{\mu }\text{ and }\beta \in
S(\mu ).
\end{equation*}%
In the next proposition we prove that $\mathrm{T}_{\mathbf{v}}\mathfrak{i}$
injective when we consider $\mathbf{v}$ in the manifold $\mathcal{M}_{%
\mathbf{r}}(\mathbf{V}_{D}).$ It allows us to characterise the tangent space
for Tucker tensors inside the tensor space $\mathbf{V}_{D_{\Vert \cdot \Vert
_{D}}}.$

\begin{proposition}
\label{rank_one_tangent_space} Assume that $S(D)=\mathcal{L}(T_D)$ and the
tensor product map $\bigotimes$ is $T_D$-continuous. Let $\mathbf{v}\in 
\mathcal{M}_{\mathbf{r}}(\mathbf{V}_{D}),$ then the linear map $\mathrm{T}_{%
\mathbf{v}}\mathfrak{i}$ is injective and 
\begin{equation*}
\mathrm{T}_{\mathbf{v}}\mathfrak{i}(\mathbb{T}_{\mathbf{v}}(\mathcal{M}_{%
\mathbf{r}}(\mathbf{V}_{D}))) = \left._a \bigotimes_{\alpha \in S(D)}
U_{\alpha}^{\min}(\mathbf{v})\right. \oplus \left( \bigoplus_{\alpha \in
S(D)} W_{\alpha}^{\min}(\mathbf{v})\otimes_a U_{S(D)\setminus
\{\alpha\}}^{\min}(\mathbf{v}) \right)
\end{equation*}
is linearly isomorphic to $\mathbb{T}_{\mathbf{v}}(\mathcal{M}_{\mathbf{r}}(%
\mathbf{V}_{D})).$
\end{proposition}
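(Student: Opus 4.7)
The plan is to combine the explicit kinematic formula of Proposition~\ref{characterization_tangent_map}(b), specialised to the Tucker case $S(D)=\mathcal{L}(T_D)$, with the algebraic direct-sum decomposition of $\mathbf{V}_D$ induced by the splittings $V_{\alpha}=U_{\alpha}^{\min}(\mathbf{v})\oplus W_{\alpha}^{\min}(\mathbf{v})$ for $\alpha\in S(D)$ (these exist because $U_{\alpha}^{\min}(\mathbf{v})\in \mathbb{G}_{r_{\alpha}}(V_{\alpha})$, by Example~\ref{normed_grassmann}, hence each $V_{\alpha}$ admits a closed topological complement $W_{\alpha}^{\min}(\mathbf{v})$). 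Under this specialisation, \eqref{kinematic1} reduces to
\begin{equation*}
\dot{\mathbf{w}}=\sum_{(i_{\alpha})}\dot{C}^{(D)}_{(i_{\alpha})_{\alpha\in S(D)}}\bigotimes_{\alpha\in S(D)}\mathbf{u}^{(\alpha)}_{i_{\alpha}}+\sum_{\alpha\in S(D)}\sum_{i_{\alpha}=1}^{r_{\alpha}}\dot{L}_{\alpha}(\mathbf{u}^{(\alpha)}_{i_{\alpha}})\otimes \mathbf{U}^{(\alpha)}_{i_{\alpha}},
\end{equation*}
so the first summand lies in $\left._{a}\bigotimes_{\alpha\in S(D)}U_{\alpha}^{\min}(\mathbf{v})\right.$ and, since $\dot{L}_{\alpha}(\mathbf{u}^{(\alpha)}_{i_{\alpha}})\in W_{\alpha}^{\min}(\mathbf{v})$, the $\alpha$-th term of the second summand lies in $W_{\alpha}^{\min}(\mathbf{v})\otimes_{a}U_{S(D)\setminus\{\alpha\}}^{\min}(\mathbf{v})$. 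This already shows that $\mathrm{T}_{\mathbf{v}}\mathfrak{i}(\mathbb{T}_{\mathbf{v}}(\mathcal{M}_{\mathbf{r}}(\mathbf{V}_D)))$ is contained in the right-hand side of the claimed equality.

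Next I would establish that the sum on the right-hand side is actually direct. Tensoring the splittings $V_{\alpha}=U_{\alpha}^{\min}(\mathbf{v})\oplus W_{\alpha}^{\min}(\mathbf{v})$ over $\alpha\in S(D)$ yields
\begin{equation*}
\mathbf{V}_{D}=\bigoplus_{A\subseteq S(D)}\Bigl(\left._{a}\bigotimes_{\alpha\in A}U_{\alpha}^{\min}(\mathbf{v})\right.\Bigr)\otimes_{a}\Bigl(\left._{a}\bigotimes_{\alpha\notin A}W_{\alpha}^{\min}(\mathbf{v})\right.\Bigr).
\end{equation*}
The summand $\left._{a}\bigotimes_{\alpha\in S(D)}U_{\alpha}^{\min}(\mathbf{v})\right.$ belongs to the block $A=S(D)$, while, using $U_{S(D)\setminus\{\alpha_0\}}^{\min}(\mathbf{v})\subset \left._{a}\bigotimes_{\alpha\neq \alpha_{0}}U_{\alpha}^{\min}(\mathbf{v})\right.$ (Proposition~\ref{inclusin_Umin}), the subspace $W_{\alpha_{0}}^{\min}(\mathbf{v})\otimes_{a}U_{S(D)\setminus\{\alpha_{0}\}}^{\min}(\mathbf{v})$ belongs to the block $A=S(D)\setminus\{\alpha_{0}\}$. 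Since these $d+1$ blocks are pairwise disjoint in the above decomposition, the $d+1$ subspaces appearing on the right of the statement are in direct sum.

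Injectivity and the matching lower bound then follow componentwise. If $\mathrm{T}_{\mathbf{v}}\mathfrak{i}(\dot{\mathfrak{L}},\dot{\mathfrak{C}})=0$, the direct-sum decomposition forces each of the $d+1$ pieces in the above expression for $\dot{\mathbf{w}}$ to vanish separately. The first piece yields $\dot{C}^{(D)}=0$ because $\{\bigotimes_{\alpha}\mathbf{u}^{(\alpha)}_{i_{\alpha}}\}$ is a basis of $\left._{a}\bigotimes_{\alpha\in S(D)}U_{\alpha}^{\min}(\mathbf{v})\right.$. For each $\alpha$, Theorem~\ref{characterization_FT} guarantees that $\{\mathbf{U}^{(\alpha)}_{i_{\alpha}}\}_{i_{\alpha}=1}^{r_{\alpha}}$ is a basis of $U_{S(D)\setminus\{\alpha\}}^{\min}(\mathbf{v})$, so the vanishing of $\sum_{i_{\alpha}}\dot{L}_{\alpha}(\mathbf{u}^{(\alpha)}_{i_{\alpha}})\otimes \mathbf{U}^{(\alpha)}_{i_{\alpha}}$ in $W_{\alpha}^{\min}(\mathbf{v})\otimes_{a}U_{S(D)\setminus\{\alpha\}}^{\min}(\mathbf{v})$ forces $\dot{L}_{\alpha}(\mathbf{u}^{(\alpha)}_{i_{\alpha}})=0$ for every $i_{\alpha}$, hence $\dot{L}_{\alpha}=0$. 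Conversely, an arbitrary element of the right-hand side decomposes as $\mathbf{x}_{0}+\sum_{\alpha}\mathbf{x}_{\alpha}$ with $\mathbf{x}_{\alpha}=\sum_{i_{\alpha}}\mathbf{w}^{(\alpha)}_{i_{\alpha}}\otimes \mathbf{U}^{(\alpha)}_{i_{\alpha}}$, and the preimage is obtained by reading off $\dot{C}^{(D)}$ from the coefficients of $\mathbf{x}_{0}$ in the tensor basis and by defining $\dot{L}_{\alpha}$ on the basis of $U_{\alpha}^{\min}(\mathbf{v})$ through $\dot{L}_{\alpha}(\mathbf{u}^{(\alpha)}_{i_{\alpha}}):=\mathbf{w}^{(\alpha)}_{i_{\alpha}}\in W_{\alpha}^{\min}(\mathbf{v})$. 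The main conceptual point is the direct-sum decomposition of $\mathbf{V}_D$ and the identification of the correct $A$-block occupied by each summand; once this is in hand, the remaining verifications amount to finite-dimensional linear algebra on bases, so no further analytical machinery is required in the Tucker setting.
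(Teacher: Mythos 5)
Your proposal is correct and follows essentially the same route as the paper: the inclusion $\subset$ via the kinematic formula of Proposition~\ref{characterization_tangent_map}(b), the reverse inclusion by reading off $\dot{C}^{(D)}$ and defining $\dot{L}_{\alpha}$ on the basis $\{\mathbf{u}_{i_{\alpha}}^{(\alpha)}\}$, and injectivity from the basis property of $\{\bigotimes_{\alpha}\mathbf{u}_{i_{\alpha}}^{(\alpha)}\}$ and the linear independence of $\{\mathbf{U}_{i_{\alpha}}^{(\alpha)}\}$. Your explicit verification that the sum is direct — tensoring the splittings $V_{\alpha}=U_{\alpha}^{\min}(\mathbf{v})\oplus W_{\alpha}^{\min}(\mathbf{v})$ into the $2^{S(D)}$-block decomposition and locating each summand in its own block via Proposition~\ref{inclusin_Umin} — is a worthwhile addition, since the paper uses this directness implicitly (to split the vanishing of $\dot{\mathbf{w}}$ into separate vanishing conditions) without stating the argument.
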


\begin{proof}
First, observe that if $\mathbf{v}\in \mathcal{M}_{\mathbf{r}}(\mathbf{V}%
_{D})$ and $\dot{\mathbf{w}}=\mathrm{T}_{\mathbf{v}}\mathfrak{i}(\dot{%
\mathfrak{C}},\dot{\mathfrak{L}}),$ then by Proposition~\ref%
{characterization_tangent_map}(b)%
\begin{equation*}
\dot{\mathbf{w}}=\sum_{\substack{ 1\leq i_{\alpha }\leq r_{\alpha }  \\ %
\alpha \in S(D)}}\dot{C}_{(i_{\alpha })_{\alpha \in
S(D)}}^{(D)}\bigotimes_{\alpha \in S(D)}\mathbf{u}_{i_{\alpha }}^{(\alpha
)}+\sum_{\alpha \in S(D)}\sum_{\substack{ 1\leq i_{\alpha }\leq r_{\alpha }}}%
\left( \dot{\mathbf{u}}_{i_{\alpha }}^{(\alpha )}\otimes \mathbf{U}%
_{i_{\alpha }}^{(\alpha )}\right) ,
\end{equation*}%
where 
\begin{equation*}
\mathbf{U}_{i_{\alpha }}^{(\alpha )}=\sum_{\substack{ 1\leq i_{\beta }\leq
r_{\beta }  \\ \beta \in S(D)  \\ \beta \neq \alpha }}C_{(i_{\beta })_{\beta
\in S(D)}}^{(D)}\bigotimes_{\beta \in S(D)}\mathbf{u}_{i_{\beta }}^{(\beta
)}\in U_{S(D)\setminus \{\alpha \}}^{\min }(\mathbf{v}),
\end{equation*}%
and $\dot{\mathbf{u}}_{i_{\alpha }}^{(\alpha )}=\dot{L}(\mathbf{u}%
_{i_{\alpha }}^{(\alpha ))})\in W_{\alpha }^{\min }(\mathbf{v})$ for all $%
\alpha \in \mathcal{L}(T_{D}).$ Hence $\mathrm{T}_{\mathbf{v}}\mathfrak{i}(%
\mathbb{T}_{\mathbf{v}}(\mathcal{M}_{\mathbf{r}}(\mathbf{V}_{D})))\subset 
\mathbf{Z}^{(D)}(\mathbf{v})$ where 
\begin{equation*}
\mathbf{Z}^{(D)}(\mathbf{v})=\left. _{a}\bigotimes_{\alpha \in
S(D)}U_{\alpha }^{\min }(\mathbf{v})\right. \oplus \left( \bigoplus_{\alpha
\in S(D)}W_{\alpha }^{\min }(\mathbf{v})\otimes _{a}U_{S(D)\setminus
\{\alpha \}}^{\min }(\mathbf{v})\right) .
\end{equation*}%
Next, we claim that $\mathbf{Z}^{(D)}(\mathbf{v})\subset \mathrm{T}_{\mathbf{%
v}}\mathfrak{i}(\mathbb{T}_{\mathbf{v}}(\mathcal{M}_{\mathbf{r}}(\mathbf{V}%
_{D}))).$ To prove the claim take $\mathbf{w}\in \mathbf{Z}^{(D)}(\mathbf{v}%
).$ Then we can write 
\begin{equation*}
\mathbf{w}=\sum_{\substack{ 1\leq i_{\alpha }\leq r_{\alpha }  \\ \alpha \in
S(D)}}(\dot{C}^{(D)})_{(i_{\alpha })_{\alpha \in S(D)}}\bigotimes_{\alpha
\in S(D)}\mathbf{u}_{i_{\alpha }}^{(\alpha )}+\sum_{\alpha \in S(D)}\sum 
_{\substack{ 1\leq i_{\alpha }\leq r_{\alpha }}}\left( \mathbf{w}_{i_{\alpha
}}^{(\alpha )}\otimes \mathbf{U}_{i_{\alpha }}^{(\alpha )}\right) ,
\end{equation*}%
where $\mathbf{w}_{i_{\alpha }}^{(\alpha )}=W_{\alpha }^{\min }(\mathbf{v})$
for $1\leq i_{\alpha }\leq r_{\alpha }$ and $\alpha \in S(D).$ Recall that 
\begin{equation*}
U_{S(D)\setminus \{\alpha \}}^{\min }(\mathbf{v})=\mathrm{span}\,\{\mathbf{U}%
_{i_{\alpha }}^{(\alpha )}:1\leq i_{\alpha }\leq r_{\alpha }\}.
\end{equation*}%
Now, define $\dot{L}_{\alpha }\in \mathcal{L}(U_{\alpha }^{\min }(\mathbf{v}%
),W_{\alpha }^{\min }(\mathbf{v}))$ by $\dot{L}_{\alpha }(\mathbf{u}%
_{i_{\alpha }}^{(\alpha )}):=\mathbf{w}_{i_{\alpha }}^{(\alpha )}$ for $%
1\leq i_{\alpha }\leq r_{\alpha }$ and $\alpha \in S(D).$ Then the claim
follows from $\mathbf{w}=\mathrm{T}_{\mathbf{v}}\mathfrak{i}((\dot{L}%
_{\alpha })_{\alpha \in S(D)},\dot{C}^{(D)}).$ To conclude the proof of the
proposition we need to show that the map $\mathrm{T}_{\mathbf{v}}\mathfrak{i}
$ is an injective linear operator. To prove this consider that 
\begin{equation*}
\mathrm{T}_{\mathbf{v}}\mathfrak{i}\left( (\dot{L}_{\beta })_{\beta \in 
\mathcal{L}(T_{D})},\dot{C}^{(D)}\right) =\mathbf{0},
\end{equation*}%
that is, 
\begin{equation*}
\mathbf{0}=\sum_{\substack{ 1\leq i_{\alpha }\leq r_{\alpha }  \\ \alpha \in
S(D)}}(\dot{C}^{(D)})_{(i_{\alpha })_{\alpha \in S(D)}}\bigotimes_{\alpha
\in S(D)}\mathbf{u}_{i_{\alpha }}^{(\alpha )}+\sum_{\alpha \in S(D)}\sum 
_{\substack{ 1\leq i_{\alpha }\leq r_{\alpha }}}\left( \dot{\mathbf{u}}%
_{i_{\alpha }}^{(\alpha )}\otimes \mathbf{U}_{i_{\alpha }}^{(\alpha
)}\right) .
\end{equation*}%
Thus, 
\begin{align*}
\sum_{\substack{ 1\leq i_{\alpha }\leq r_{\alpha }  \\ \alpha \in S(D)}}(%
\dot{C}^{(D)})_{(i_{\alpha })_{\alpha \in S(D)}}\bigotimes_{\alpha \in S(D)}%
\mathbf{u}_{i_{\alpha }}^{(\alpha )}& =\mathbf{0}, \\
\sum_{\substack{ 1\leq i_{\alpha }\leq r_{\alpha }}}\left( \dot{\mathbf{u}}%
_{i_{\alpha }}^{(\alpha )}\otimes \mathbf{U}_{i_{\alpha }}^{(\alpha
)}\right) & =\mathbf{0}\text{ for }\alpha \in S(D),
\end{align*}%
and hence $\dot{C}^{(D)}=\mathfrak{0},$ because $\left\{ \bigotimes_{\alpha
\in S(D)}\mathbf{u}_{i_{\alpha }}^{(\alpha )}\right\} $ is a basis of $%
\left. _{a}\bigotimes_{\alpha \in S(D)}U_{\alpha }^{\min }(\mathbf{v}%
)\right. ,$ and $\dot{L}_{\alpha }(\mathbf{u}_{i_{\alpha }}^{(\alpha
)})\otimes \mathbf{U}_{i_{\alpha }}^{(\alpha )}=\mathbf{0}$ for $1\leq
i_{\alpha }\leq r_{\alpha },$ because the $\{\mathbf{U}_{i_{\alpha
}}^{(\alpha )}:1\leq i_{\alpha }\leq r_{\alpha }\}$ are linearly independent
for $\alpha \in S(D).$ Then $\dot{L}_{\alpha }=0$ for all $\alpha \in S(D)$.
We conclude that

\begin{equation*}
\left( (\dot{L}_{\beta })_{\beta \in \mathcal{L}(T_{D})},\dot{C}%
^{(D)}\right) =((0)_{\beta \in \mathcal{L}(T_{D})},\mathfrak{0})
\end{equation*}%
and, in consequence, $\mathrm{T}_{\mathbf{v}}\mathfrak{i}$ is injective.
\end{proof}

\bigskip

Our next step is to show, by using the above proposition, that if the tensor
product map $\bigotimes$ is $T_D$-continuous then the linear map $\mathrm{T}%
_{\mathbf{v}}\mathfrak{i}$ is always injective for all $\mathbf{v}\in 
\mathcal{FT}_{\mathfrak{r}}(\mathbf{V}_D).$

\begin{proposition}
Assume that the tensor product map $\bigotimes $ is $T_{D}$-continuous. Let $%
\mathbf{v}\in \mathcal{FT}_{\mathfrak{r}}(\mathbf{V}_{D}),$ then the linear
map $\mathrm{T}_{\mathbf{v}}\mathfrak{i}:%
\mathop{\mathchoice{\raise-0.22em\hbox{\huge
$\times$}} {\raise-0.05em\hbox{\Large $\times$}}{\hbox{\large
$\times$}}{\times}}_{\beta \in \mathcal{L}(T_{D})}\mathcal{L}(U_{\beta
}^{\min }(\mathbf{v}),W_{\beta }^{\min }(\mathbf{v}))\times \mathbb{R}^{%
\mathfrak{r}}\rightarrow \mathbf{V}_{D_{\Vert \cdot \Vert _{D}}}$ is
injective.
\end{proposition}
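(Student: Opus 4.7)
My plan is to extend the Tucker-case argument of Proposition~\ref{rank_one_tangent_space} by induction along the tree $T_D$: first I would descend from the root to the leaves to constrain each $\dot{\mathbf{u}}_{i_\mu}^{(\mu)}$ to lie in $U_\mu^{\min}(\mathbf{v})$, then use this together with the leaf condition to force $\dot{L}_k=0$, and finally propagate back up to eliminate all $\dot{C}^{(\mu)}$.

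Suppose $\mathrm{T}_{\mathbf{v}}\mathfrak{i}(\dot{\mathfrak{L}},\dot{\mathfrak{C}})=\dot{\mathbf{w}}=0$. At the root I would mimic the proof of Proposition~\ref{rank_one_tangent_space}: using the direct sum $\mathbf{V}_{\alpha,\Vert\cdot\Vert_\alpha}=U_\alpha^{\min}(\mathbf{v})\oplus W_\alpha^{\min}(\mathbf{v})$ supplied by Lemma~\ref{projection_manifold} for each $\alpha\in S(D)$, decompose $\dot{\mathbf{u}}_{i_\alpha}^{(\alpha)}=p_{\alpha,i_\alpha}+q_{\alpha,i_\alpha}$. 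Inserting this into the formula of Proposition~\ref{characterization_tangent_map}(b), the equation $\dot{\mathbf{w}}=0$ separates into a part in $\left._{a}\bigotimes_{\alpha\in S(D)}U_\alpha^{\min}(\mathbf{v})\right.$ and, for each $\alpha$, a part in $W_\alpha^{\min}(\mathbf{v})\otimes_a U_{S(D)\setminus\{\alpha\}}^{\min}(\mathbf{v})$, and these sit in direct sum. The $W$-piece forces $\sum_{i_\alpha}q_{\alpha,i_\alpha}\otimes\mathbf{U}_{i_\alpha}^{(\alpha)}=0$; since by Theorem~\ref{characterization_FT} the family $\{\mathbf{U}_{i_\alpha}^{(\alpha)}\}_{i_\alpha}$ is a basis of $U_{S(D)\setminus\{\alpha\}}^{\min}(\mathbf{v})$, the $q_{\alpha,i_\alpha}$ will vanish, yielding $\dot{\mathbf{u}}_{i_\alpha}^{(\alpha)}\in U_\alpha^{\min}(\mathbf{v})$ for all $\alpha\in S(D)$.

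I would then iterate this decomposition down the tree. For each $\mu\in T_D\setminus\mathcal{L}(T_D)$ at which the constraint $\dot{\mathbf{u}}_{i_\mu}^{(\mu)}\in U_\mu^{\min}(\mathbf{v})\subset\left._{a}\bigotimes_{\beta\in S(\mu)}U_\beta^{\min}(\mathbf{v})\right.$ has already been obtained (using Proposition~\ref{inclusin_Umin}), I substitute the recursive expression of Proposition~\ref{characterization_tangent_map}(b) for $\dot{\mathbf{u}}_{i_\mu}^{(\mu)}$, split each $\dot{\mathbf{u}}_{i_\beta}^{(\beta)}$ for $\beta\in S(\mu)$ into $U_\beta^{\min}$- and $W_\beta^{\min}$-parts, and extract that the component landing in $W_\beta^{\min}(\mathbf{v})\otimes_a U_{S(\mu)\setminus\{\beta\}}^{\min}(\mathbf{v})$ must vanish. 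Theorem~\ref{characterization_FT} identifies $\{\mathbf{U}_{i_\mu,i_\beta}^{(\beta)}\}_{i_\beta}$ as a basis of $U_{S(\mu)\setminus\{\beta\}}^{\min}(\mathbf{v})$, and linear independence then delivers $\dot{\mathbf{u}}_{i_\beta}^{(\beta)}\in U_\beta^{\min}(\mathbf{v})$ for every $\beta\in S(\mu)$, closing the induction. When the recursion reaches a leaf $k\in\mathcal{L}(T_D)$, the identity $\dot{\mathbf{u}}_{i_k}^{(k)}=\dot{L}_k(\mathbf{u}_{i_k}^{(k)})\in W_k^{\min}(\mathbf{v})\cap U_k^{\min}(\mathbf{v})=\{\mathbf{0}\}$ yields $\dot{L}_k=0$.

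To conclude, I would back-propagate: at each node $\mu$ whose children already satisfy $\dot{\mathbf{u}}_{i_\beta}^{(\beta)}=0$, the recursive formula collapses to $\dot{\mathbf{u}}_{i_\mu}^{(\mu)}=\sum_{(j_\beta)}\dot{C}_{i_\mu,(j_\beta)}^{(\mu)}\bigotimes_{\beta\in S(\mu)}\mathbf{u}_{j_\beta}^{(\beta)}$, and combining this with the descending constraint and the non-degeneracy condition $C^{(\mu)}\in\mathbb{R}_\ast^{\ldots}$ built into the parameter set, one forces $\dot{C}^{(\mu)}=0$ and $\dot{\mathbf{u}}_{i_\mu}^{(\mu)}=0$, so the argument can walk back up to the root. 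The hard part will be precisely this back-propagation: the inclusion $\dot{\mathbf{u}}_{i_\mu}^{(\mu)}\in U_\mu^{\min}(\mathbf{v})$ produced by the downward sweep is only a linear dependence on the basis $\{\mathbf{u}_{j_\mu}^{(\mu)}\}$ rather than a vanishing, so upgrading it to $\dot{C}^{(\mu)}=0$ will require carefully combining the direct-sum constraints accumulated at all levels of $T_D$ with the maximal-rank hypothesis on the matricisations $\mathcal{M}_\beta(C^{(\mu)})$, which is the rigidity that rules out the gauge-like redundancy that would otherwise produce non-trivial kernel elements.
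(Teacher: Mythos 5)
Your downward sweep is sound, and it is in fact more careful than the corresponding step in the paper: splitting each $\dot{\mathbf{u}}_{i_{\alpha}}^{(\alpha)}$ along $\mathbf{V}_{\alpha _{\Vert \cdot \Vert _{\alpha }}}=U_{\alpha}^{\min}(\mathbf{v})\oplus W_{\alpha}^{\min}(\mathbf{v})$ and annihilating the $W$-components level by level does yield $\dot{\mathbf{u}}_{i_{\mu}}^{(\mu)}\in U_{\mu}^{\min}(\mathbf{v})$ for every $\mu\in T_{D}\setminus\{D\}$, hence $\dot{L}_{k}=0$ at the leaves. The genuine gap is exactly where you place it, and it cannot be closed with the ingredients you list. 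Assume $S(D)\neq\mathcal{L}(T_{D})$, pick an interior node $\mu\in S(D)\setminus\mathcal{L}(T_{D})$, and set $\dot{\mathfrak{L}}=\mathfrak{0}$, $\dot{C}^{(\mu)}:=C^{(\mu)}$, $\dot{C}^{(D)}:=-C^{(D)}$, and $\dot{C}^{(\gamma)}:=0$ for every other $\gamma$. The recursion of Proposition~\ref{characterization_tangent_map}(b) then gives $\dot{\mathbf{u}}_{i_{\beta}}^{(\beta)}=0$ for all $\beta$ strictly below $\mu$ and $\dot{\mathbf{u}}_{i_{\mu}}^{(\mu)}=\mathbf{u}_{i_{\mu}}^{(\mu)}\in U_{\mu}^{\min}(\mathbf{v})$, so every constraint your downward sweep produces is satisfied; yet by \eqref{kinematic1} and \eqref{binary_representation_level0},
\begin{equation*}
\dot{\mathbf{w}}=-\sum_{\substack{1\leq i_{\alpha}\leq r_{\alpha}\\ \alpha\in S(D)}}C_{(i_{\alpha})_{\alpha\in S(D)}}^{(D)}\bigotimes_{\alpha\in S(D)}\mathbf{u}_{i_{\alpha}}^{(\alpha)}+\sum_{1\leq i_{\mu}\leq r_{\mu}}\mathbf{u}_{i_{\mu}}^{(\mu)}\otimes\mathbf{U}_{i_{\mu}}^{(\mu)}=-\mathbf{v}+\mathbf{v}=\mathbf{0},
\end{equation*}
with $(\dot{\mathfrak{L}},\dot{\mathfrak{C}})\neq 0$. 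This is the infinitesimal form of the rescaling $C^{(\mu)}\mapsto(1+t)C^{(\mu)}$, $C^{(D)}\mapsto(1+t)^{-1}C^{(D)}$ of the intermediate basis, which leaves $\mathbf{w}$ unchanged. Your proposed rigidity mechanism does not see it: the descending constraint $\dot{\mathbf{u}}_{i_{\mu}}^{(\mu)}\in U_{\mu}^{\min}(\mathbf{v})$ is compatible with $\dot{\mathbf{u}}_{i_{\mu}}^{(\mu)}=\mathbf{u}_{i_{\mu}}^{(\mu)}\neq\mathbf{0}$, and since $\mathbb{R}_{\ast}^{\mathfrak{r}}$ is open the maximal-rank condition on $\mathcal{M}_{\beta}(C^{(\mu)})$ imposes no linear constraint on $\dot{C}^{(\mu)}$ whatsoever. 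So the back-propagation you defer to is not merely hard; the implication you need is false.

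For comparison, the paper does not perform your downward sweep at all: it factorises $\mathfrak{i}=\mathfrak{i}_{D}\circ\mathfrak{i}_{\mathfrak{r},D}$ through the root-level Tucker manifold and invokes the injectivity of $\mathrm{T}_{\mathbf{v}}\mathfrak{i}_{D}$ from Proposition~\ref{rank_one_tangent_space} to conclude the much stronger statement $\dot{\mathbf{u}}_{i_{\beta}}^{(\beta)}=\dot{S}_{\beta}(\mathbf{u}_{i_{\beta}}^{(\beta)})=\mathbf{0}$ (not merely membership in $U_{\beta}^{\min}(\mathbf{v})$) for $\beta\in S(D)$, and then descends the tree to kill the remaining $\dot{C}^{(\gamma)}$. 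The step carrying all the weight there is the tacit assertion that $\dot{S}_{\beta}$ belongs to $\mathcal{L}(U_{\beta}^{\min}(\mathbf{v}),W_{\beta}^{\min}(\mathbf{v}))$, i.e. that $\dot{\mathbf{u}}_{i_{\beta}}^{(\beta)}\in W_{\beta}^{\min}(\mathbf{v})$ for interior $\beta$ --- precisely the membership that your decomposition exposes as unobtainable from $\dot{\mathbf{w}}=\mathbf{0}$ alone, and which the gauge direction above violates. If you want to complete an argument along either route you must supply that membership (equivalently, a gauge-fixing of the intermediate coefficients $C^{(\mu)}$) as part of the definition of the tangent data, rather than derive it in the upward sweep.
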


\begin{proof}
From Proposition~\ref{rank_one_tangent_space} the statement holds when $S(D)=%
\mathcal{L}(T_{D}).$ Thus assume that $S(D)\neq \mathcal{L}(T_{D}).$ Then we
can write the standard inclusion map $\mathfrak{i}:\mathcal{FT}_{\mathfrak{r}%
}(\mathbf{V}_{D})\longrightarrow \mathbf{V}_{D_{\Vert \cdot \Vert _{D}}}$ as 
$\mathfrak{i}_{D}\circ \mathfrak{i}_{\mathfrak{r},D}$ where 
\begin{equation*}
\mathfrak{i}_{\mathfrak{r},D}:\mathcal{FT}_{\mathfrak{r}}(\mathbf{V}%
_{D})\longrightarrow \mathcal{M}_{(r_{\beta })_{\beta \in S(D)}}\left(
\left. _{a}\bigotimes_{\beta \in S(D)}\mathbf{V}_{\beta }\right. \right)
,\quad \mathbf{v}\mapsto \mathbf{v}
\end{equation*}%
is a standard inclusion map and

\begin{equation*}
\mathfrak{i}_{D}:\mathcal{M}_{(r_{\beta })_{\beta \in S(D)}}\left( \left.
_{a}\bigotimes_{\beta \in S(D)}\mathbf{V}_{\beta }\right. \right)
\longrightarrow \mathbf{V}_{D_{\Vert \cdot \Vert _{D}}}
\end{equation*}
is given by 
\begin{equation*}
\mathbf{v}=\mathfrak{i}_{D}(\mathbf{v})=\sum_{\substack{ 1\leq i_{\beta
}\leq r_{\beta }  \\ \beta \in S(\alpha )}}C_{(i_{\beta })_{\beta \in
S(\gamma )}}^{(D)}\bigotimes_{\beta \in S(D)}\mathbf{u}_{i_{\beta }}^{(\beta
)}.
\end{equation*}%
Using the chain rule, we have 
\begin{equation*}
\mathrm{T}_{\mathbf{v}}\mathfrak{i}=\mathrm{T}_{\mathbf{v}}\mathfrak{i}%
_{D}\circ \mathrm{T}_{\mathbf{v}}\mathfrak{i}_{\mathfrak{r},D},
\end{equation*}%
where 
\begin{equation*}
\mathrm{T}_{\mathbf{v}}\mathfrak{i}_{D}:%
\mathop{\mathchoice{\raise-0.22em\hbox{\huge
$\times$}} {\raise-0.05em\hbox{\Large $\times$}}{\hbox{\large
$\times$}}{\times}}_{\beta \in S(D)}\mathcal{L}(U_{\beta }^{\min }(\mathbf{v}%
),W_{\beta }^{\min }(\mathbf{v}))\times \mathbb{R}^{%
\mathop{\mathchoice{\raise-0.22em\hbox{\huge
$\times$}} {\raise-0.05em\hbox{\Large $\times$}}{\hbox{\large
$\times$}}{\times}}_{\beta \in S(D)}r_{\beta }}\rightarrow \mathbf{V}%
_{D_{\Vert \cdot \Vert _{D}}},
\end{equation*}%
is given by 
\begin{equation*}
\mathrm{T}_{\mathbf{v}}\mathfrak{i}_{D}((\dot{L}_{\alpha })_{\alpha \in
S(D)},\dot{C}^{(D)})=\sum_{\substack{ 1\leq i_{\alpha }\leq r_{\alpha }  \\ %
\alpha \in S(D)}}\dot{C}_{(i_{\alpha })_{\alpha \in
S(D)}}^{(D)}\bigotimes_{\alpha \in S(D)}\mathbf{u}_{i_{\alpha }}^{(\alpha
)}+\sum_{\alpha \in S(D)}\sum_{\substack{ 1\leq i_{\alpha }\leq r_{\alpha }}}%
\left( \dot{L}_{\alpha }(\mathbf{u}_{i_{\alpha }}^{(\alpha )})\otimes 
\mathbf{U}_{i_{\alpha }}^{(\alpha )}\right) ,
\end{equation*}%
and 
\begin{equation*}
\mathrm{T}_{\mathbf{v}}\mathfrak{i}_{\mathfrak{r},D}:%
\mathop{\mathchoice{\raise-0.22em\hbox{\huge
$\times$}} {\raise-0.05em\hbox{\Large $\times$}}{\hbox{\large
$\times$}}{\times}}_{\beta \in \mathcal{L}(T_{D})}\mathcal{L}(U_{\beta
}^{\min }(\mathbf{v}),W_{\beta }^{\min }(\mathbf{v}))\times \mathbb{R}^{%
\mathfrak{r}}\rightarrow 
\mathop{\mathchoice{\raise-0.22em\hbox{\huge
$\times$}} {\raise-0.05em\hbox{\Large $\times$}}{\hbox{\large
$\times$}}{\times}}_{\beta \in S(D)}\mathcal{L}(U_{\beta }^{\min }(\mathbf{v}%
),W_{\beta }^{\min }(\mathbf{v}))\times \mathbb{R}^{%
\mathop{\mathchoice{\raise-0.22em\hbox{\huge
$\times$}} {\raise-0.05em\hbox{\Large $\times$}}{\hbox{\large
$\times$}}{\times}}_{\beta \in S(D)}r_{\beta }}
\end{equation*}%
is given by 
\begin{equation*}
\mathrm{T}_{\mathbf{v}}\mathfrak{i}_{\mathfrak{r},D}((\dot{L}_{\beta
})_{\beta \in \mathcal{L}(T_{D})},(\dot{C}^{(\alpha )})_{\alpha \in
T_{D}\setminus \mathcal{L}(T_{D})})=((\dot{S}_{\beta })_{\beta \in S(D)},%
\dot{C}^{(D)}),
\end{equation*}%
where $\dot{S}_{\gamma }=\dot{L}_{\gamma }$ if $\gamma \in \mathcal{L}%
(T_{D}),$ otherwise 
\begin{equation*}
\dot{S}_{\gamma }(\mathbf{u}_{i_{\gamma }}^{(\gamma )})=\sum_{\substack{ %
1\leq i_{\beta }\leq r_{\beta }  \\ \beta \in S(\gamma )}}\dot{C}_{i_{\gamma
},(i_{\beta })_{\beta \in S(\gamma )}}^{(\gamma )}\bigotimes_{\beta \in
S(\gamma )}\mathbf{u}_{i_{\beta }}^{(\beta )}+\sum_{\beta \in S(\gamma
)}\sum_{1\leq i_{\beta }\leq r_{\beta }}\dot{\mathbf{u}}_{i_{\gamma
}}^{(\gamma )}\otimes \mathbf{U}_{i_{\gamma },i_{\beta }}^{(\beta )}
\end{equation*}%
and where for each $\gamma \in T_{D}\setminus \{D\}$ we have 
\begin{equation*}
\dot{\mathbf{u}}_{i_{\gamma }}^{(\gamma )}=\left\{ 
\begin{array}{lcc}
\dot{L}_{\gamma }(\mathbf{u}_{i_{\gamma }}^{(\gamma )}) & \text{ if } & 
\gamma \in \mathcal{L}(T_{D}) \\ 
&  &  \\ 
\sum_{\substack{ 1\leq i_{\beta }\leq r_{\beta }  \\ \beta \in S(\gamma )}}%
\dot{C}_{i_{\gamma },(i_{\beta })_{\beta \in S(\gamma )}}^{(\gamma
)}\bigotimes_{\beta \in S(\gamma )}\mathbf{u}_{i_{\beta }}^{(\beta
)}+\sum_{\beta \in S(\gamma )}\sum_{\substack{ 1\leq i_{\beta }\leq r_{\beta
}}}\left( \dot{\mathbf{u}}_{i_{\beta }}^{(\beta )}\otimes \mathbf{U}%
_{i_{\gamma },i_{\beta }}^{(\beta )}\right) &  & \text{ otherwise. }%
\end{array}%
\right.
\end{equation*}%
Let $\dot{\mathbf{w}}=T_{\mathbf{v}}\mathfrak{i}((\dot{L}_{\beta })_{\beta
\in \mathcal{L}(T_{D})},(\dot{C}^{(\alpha )})_{\alpha \in T_{D}\setminus 
\mathcal{L}(T_{D})})=\mathbf{0}.$ Since $\mathrm{T}_{\mathbf{v}}\mathfrak{i}=%
\mathrm{T}_{\mathbf{v}}\mathfrak{i}_{D}\circ \mathrm{T}_{\mathbf{v}}%
\mathfrak{i}_{\mathfrak{r},D}$ and, by Proposition~\ref%
{rank_one_tangent_space}, the linear map $\mathrm{T}_{\mathbf{v}}\mathfrak{i}%
_{D}$ is injective, then 
\begin{equation*}
\mathrm{T}_{\mathbf{v}}\mathfrak{i}_{\mathfrak{r},D}((\dot{L}_{\beta
})_{\beta \in \mathcal{L}(T_{D})},(\dot{C}^{(\alpha )})_{\alpha \in
T_{D}\setminus \mathcal{L}(T_{D})})=((0)_{\beta \in \mathcal{L}(T_{D})},0).
\end{equation*}%
In particular $\dot{C}^{(D)}=0$ and by Proposition~\ref%
{characterization_tangent_map}(b), we have 
\begin{equation}
\dot{\mathbf{w}}=\mathbf{0}=\sum_{\alpha \in S(D)}\sum_{\substack{ 1\leq
i_{\alpha }\leq r_{\alpha }}}\left( \dot{\mathbf{u}}_{i_{\alpha }}^{(\alpha
)}\otimes \mathbf{U}_{i_{\alpha }}^{(\alpha )}\right) ,  \label{lemma:eq1}
\end{equation}%
where 
\begin{equation*}
\mathbf{U}_{i_{\alpha }}^{(\alpha )}=\sum_{\substack{ 1\leq i_{\beta }\leq
r_{\beta }  \\ \beta \in S(D)  \\ \beta \neq \alpha }}C_{(i_{\beta })_{\beta
\in S(D)}}^{(D)}\bigotimes_{\beta \in S(D)}\mathbf{u}_{i_{\beta }}^{(\beta
)},
\end{equation*}%
and for each $\gamma \in T_{D}\setminus \{D\}$ we have 
\begin{equation*}
\dot{\mathbf{u}}_{i_{\gamma }}^{(\gamma )}=\left\{ 
\begin{array}{lcc}
\dot{L}_{\mu }(\mathbf{u}_{i_{\gamma }}^{(\gamma )})=\dot{S}_{\mu }(\mathbf{u%
}_{i_{\gamma }}^{(\gamma )})=\mathbf{0} & \text{ if } & \gamma \in \mathcal{L%
}(T_{D}) \\ 
&  &  \\ 
\sum_{\substack{ 1\leq i_{\beta }\leq r_{\beta }  \\ \beta \in S(\gamma )}}%
\dot{C}_{i_{\gamma },(i_{\beta })_{\beta \in S(\gamma )}}^{(\gamma
)}\bigotimes_{\beta \in S(\gamma )}\mathbf{u}_{i_{\beta }}^{(\beta
)}+\sum_{\beta \in S(\gamma )}\sum_{\substack{ 1\leq i_{\beta }\leq r_{\beta
}}}\left( \dot{\mathbf{u}}_{i_{\beta }}^{(\beta )}\otimes \mathbf{U}%
_{i_{\gamma },i_{\beta }}^{(\beta )}\right) &  & \text{ otherwise,}%
\end{array}%
\right.
\end{equation*}
where 
\begin{equation*}
\mathbf{U}_{i_{\gamma },i_{\beta }}^{(\beta )}=\sum_{\substack{ 1\leq
i_{\delta }\leq r_{\delta }  \\ \delta \in S(\mu )  \\ \delta \neq \beta }}%
C_{i_{\mu },(i_{\delta })_{\delta \in S(\gamma )}}^{(\gamma )}\bigotimes 
_{\substack{ \delta \neq \beta  \\ \delta \in S(\gamma )}}\mathbf{u}%
_{i_{\delta }}^{(\delta )},
\end{equation*}%
for $1\leq i_{\gamma }\leq r_{\gamma }$ and $1\leq i_{\beta }\leq r_{\beta
}. $ We remark that if $S(\gamma )\subset \mathcal{L}(T_{D})$ then 
\begin{equation*}
\dot{\mathbf{u}}_{i_{\gamma }}^{(\gamma )}=\sum_{\substack{ 1\leq i_{\beta
}\leq r_{\beta }  \\ \beta \in S(\gamma )}}\dot{C}_{i_{\gamma },(i_{\beta
})_{\beta \in S(\gamma )}}^{(\gamma )}\bigotimes_{\beta \in S(\gamma )}%
\mathbf{u}_{i_{\beta }}^{(\beta )}.
\end{equation*}%
From \eqref{lemma:eq1} and the fact that $\sum_{\alpha \in S(D)}\sum 
_{\substack{ 1\leq i_{\alpha }\leq r_{\alpha }}}\left( \dot{\mathbf{u}}%
_{i_{\alpha }}^{(\alpha )}\otimes \mathbf{U}_{i_{\alpha }}^{(\alpha
)}\right) \in \bigoplus_{\alpha \in S(D)}W_{\alpha }^{\min }(\mathbf{v}%
)\otimes _{a}U_{S(D)\setminus \{\alpha \}}^{\min }(\mathbf{v})$ we obtain
that 
\begin{equation*}
\sum_{\substack{ 1\leq i_{\alpha }\leq r_{\alpha }}}\left( \dot{\mathbf{u}}%
_{i_{\alpha }}^{(\alpha )}\otimes \mathbf{U}_{i_{\alpha }}^{(\alpha
)}\right) =0
\end{equation*}%
for each $\alpha \in S(D).$ Finally, $\dot{\mathbf{u}}_{i_{\alpha
}}^{(\alpha )}=\mathbf{0},$ because $\{\mathbf{U}_{i_{\alpha }}^{(\alpha
)}:1\leq i_{\alpha }\leq r_{\alpha }\}$ are linearly independent vectors for
each $\alpha \in S(D).$ In consequence, if $\alpha \in \mathcal{L}(T_{D})$
then nothing has to be done, otherwise we have that for all $\gamma \notin 
\mathcal{L}(T_{D})$ the equality 
\begin{equation*}
\mathbf{0}=\sum_{\substack{ 1\leq i_{\beta }\leq r_{\beta }  \\ \beta \in
S(\gamma )}}\dot{C}_{i_{\gamma },(i_{\beta })_{\beta \in S(\gamma
)}}^{(\gamma )}\bigotimes_{\beta \in S(\gamma )}\mathbf{u}_{i_{\beta
}}^{(\beta )}+\sum_{\beta \in S(\gamma )}\sum_{\substack{ 1\leq i_{\beta
}\leq r_{\beta }}}\left( \dot{\mathbf{u}}_{i_{\beta }}^{(\beta )}\otimes 
\mathbf{U}_{i_{\gamma },i_{\beta }}^{(\beta )}\right)
\end{equation*}%
holds for all $1\leq i_{\gamma }\leq r_{\gamma }.$ We remark that when $%
S(\gamma )\subset \mathcal{L}(T_{D})$ we have 
\begin{equation*}
\mathbf{0}=\sum_{\substack{ 1\leq i_{\beta }\leq r_{\beta }  \\ \beta \in
S(\gamma )}}\dot{C}_{i_{\gamma },(i_{\beta })_{\beta \in S(\gamma
)}}^{(\gamma )}\bigotimes_{\beta \in S(\gamma )}\mathbf{u}_{i_{\beta
}}^{(\beta )}
\end{equation*}%
and hence we obtain that $\dot{C}^{(\gamma )}=0.$ Proceeding from the leaves
to the root in the tree, we check that $\dot{C}^{(\gamma )}=0$ holds for all 
$\gamma \in T_{D}\setminus \mathcal{L}(T_{D})$ and the proposition follows.
\end{proof}

\bigskip

Now, we want to construct for each $\mathbf{v}\in \mathcal{FT}_{\mathfrak{r}%
}(\mathbf{V}_{D})\subset \mathbf{V}_{D_{\Vert \cdot \Vert _{D}}}$ a linear
subspace $\mathbf{Z}^{(D)}(\mathbf{v})\subset \mathbf{V}_{D_{\Vert \cdot
\Vert _{D}}}$ to prove that $\mathbf{Z}^{(D)}(\mathbf{v})=\mathrm{T}_{%
\mathbf{v}}\mathfrak{i}\left( \mathbb{T}_{\mathbf{v}}(\mathcal{FT}_{%
\mathfrak{r}}(\mathbf{V}_{D}))\right).$ To this end assume that 
\begin{equation*}
\mathbf{v}= (\Theta _{\mathbf{v}}^{-1}\circ \chi _{\mathfrak{r}}^{-1}(%
\mathbf{v}))(\mathfrak{0},\mathfrak{C}) = \sum_{\substack{ 1\leq i_{\alpha
}\leq r_{\alpha }  \\ \alpha \in S(D)}}C_{(i_{\alpha })_{\alpha \in
S(D)}}^{(D)}\bigotimes_{\alpha \in S(D)}\mathbf{u}_{i_{\alpha }}^{(\alpha )},
\end{equation*}%
where for each $\mu \in T_{D}\setminus (\{D\}\cup \mathcal{L}(T_{D}))$ we
have 
\begin{equation*}
\mathbf{u}_{i_{\mu }}^{(\mu )}=\sum_{\substack{ 1\leq i_{\beta }\leq
r_{\beta }  \\ \beta \in S(\alpha )}}C_{i_{\mu },(i_{\beta })_{\beta \in
S(\mu )}}^{(\mu )}\bigotimes_{\beta \in S(\mu )}\mathbf{u}_{i_{\beta
}}^{(\beta )}.
\end{equation*}%
Then to define $\mathbf{Z}^{(D)}(\mathbf{v})$ we proceed by the following
steps.

\bigskip

\noindent \textbf{Step 1:} For $\gamma \in T_{D}\setminus \mathcal{L}(T_{D})$
we observe that 
\begin{equation*}
\mathbf{u}_{i_{\gamma }}^{(\gamma )}=\sum_{\substack{ 1\leq i_{\beta }\leq
r_{\beta }  \\ \beta \in S(\gamma )}}C_{i_{\gamma },(i_{\beta })_{\beta \in
S(\alpha )}}^{(\alpha )}\bigotimes_{\beta \in S(\gamma )}\mathbf{u}%
_{i_{\beta }}^{(\beta )}\in \mathcal{M}_{(r_{\beta })_{\beta \in S(\gamma
)}}\left( \left. _{a}\bigotimes_{\beta \in S(\gamma )}\mathbf{V}_{\beta
}\right. \right)
\end{equation*}%
for $1\leq {i_{\gamma }}\leq r_{\gamma }$ and $\beta \in S(\gamma ).$ In
particular, $\mathbf{u}_{1}^{(D)}=\mathbf{v}.$ Let 
\begin{equation*}
\mathfrak{i}_{\gamma }:\mathcal{M}_{(r_{\beta })_{\beta \in S(\gamma
)}}\left( \left. _{a}\bigotimes_{\beta \in S(\gamma )}\mathbf{V}_{\beta
}\right. \right) \longrightarrow \mathbf{V}_{\gamma _{\Vert \cdot \Vert
_{\gamma }}},\quad \mathbf{u}_{\gamma }\mapsto \mathbf{u}_{\gamma },
\end{equation*}%
be the standard inclusion map. Thanks to the proof of Proposition~\ref%
{rank_one_tangent_space} we have a linear injective map 
\begin{equation*}
\mathrm{T}_{\mathbf{u}_{i_{\gamma }}^{(\gamma )}}\mathfrak{i}_{\gamma }:%
\mathop{\mathchoice{\raise-0.22em\hbox{\huge
$\times$}} {\raise-0.05em\hbox{\Large $\times$}}{\hbox{\large
$\times$}}{\times}}_{\beta \in S(\gamma )}\mathcal{L}(U_{\beta }^{\min }(%
\mathbf{v}),W_{\beta }^{\min }(\mathbf{v}))\times \mathbb{R}^{%
\mathop{\mathchoice{\raise-0.22em\hbox{\huge
$\times$}} {\raise-0.05em\hbox{\Large $\times$}}{\hbox{\large
$\times$}}{\times}}_{\beta \in S(\gamma )}r_{\beta }}\rightarrow \mathbf{V}%
_{\gamma _{\Vert \cdot \Vert _{\gamma }}}
\end{equation*}%
given by 
\begin{equation*}
\mathrm{T}_{\mathbf{u}_{i_{\gamma }}^{(\gamma )}}\mathfrak{i}_{\gamma }((%
\dot{L}_{\beta })_{\beta \in S(\gamma )},\dot{C}_{i_{\gamma }}^{(\gamma
)})=\sum_{\substack{ 1\leq i_{\beta }\leq r_{\beta }  \\ \beta \in S(\gamma
) }}\dot{C}_{i_{\gamma },(i_{\beta })_{\beta \in S(\gamma )}}^{(\gamma
)}\bigotimes_{\beta \in S(\gamma )}\mathbf{u}_{i_{\beta }}^{(\beta
)}+\sum_{\beta \in S(\gamma )}\sum_{1\leq i_{\beta }\leq r_{\beta }}\dot{L}%
_{\beta }(\mathbf{u}_{i_{\beta }}^{(\beta )})\otimes \mathbf{U}_{i_{\gamma
},i_{\beta }}^{(\beta )},
\end{equation*}%
where $\mathbf{U}_{i_{\gamma },i_{\beta }}^{(\beta )}=\sum_{\substack{ 1\leq
i_{\delta }\leq r_{\delta }  \\ \delta \in S(\gamma )  \\ \delta \neq \beta 
}}C_{i_{\gamma },(i_{\delta })_{\delta \in S(\gamma )}}^{(\alpha
)}\bigotimes_{\delta \in S(\gamma )}\mathbf{u}_{i_{\delta }}^{(\delta )}$
for $1\leq i_{\beta }\leq r_{\beta }$ and $\beta \in S(\gamma )$ and also a
linear subspace 
\begin{align*}
\mathbf{Z}^{(\gamma )}(\mathbf{u}_{j_{\gamma }}^{(\gamma )})& :=\mathrm{T}_{%
\mathbf{u}_{j_{\gamma }}^{(\gamma )}}\mathfrak{i}_{\gamma }\left( 
\mathop{\mathchoice{\raise-0.22em\hbox{\huge
$\times$}} {\raise-0.05em\hbox{\Large $\times$}}{\hbox{\large
$\times$}}{\times}}_{\beta \in S(\gamma )}\mathcal{L}(U_{\beta }^{\min }(%
\mathbf{v}),W_{\beta }^{\min }(\mathbf{v}))\times \mathbb{R}^{%
\mathop{\mathchoice{\raise-0.22em\hbox{\huge
$\times$}} {\raise-0.05em\hbox{\Large $\times$}}{\hbox{\large
$\times$}}{\times}}_{\beta \in S(\gamma )}r_{\beta }}\right) \\
& \cong 
\mathop{\mathchoice{\raise-0.22em\hbox{\huge
$\times$}} {\raise-0.05em\hbox{\Large $\times$}}{\hbox{\large
$\times$}}{\times}}_{\beta \in S(\gamma )}\mathcal{L}(U_{\beta }^{\min }(%
\mathbf{v}),W_{\beta }^{\min }(\mathbf{v}))\times \mathbb{R}^{%
\mathop{\mathchoice{\raise-0.22em\hbox{\huge
$\times$}} {\raise-0.05em\hbox{\Large $\times$}}{\hbox{\large
$\times$}}{\times}}_{\beta \in S(\gamma )}r_{\beta }}
\end{align*}%
for $1\leq j_{\gamma }\leq r_{\gamma }$ such that 
\begin{equation*}
\mathbf{Z}^{(\gamma )}(\mathbf{u}_{j_{\gamma }}^{(\gamma )})=\left.
_{a}\bigotimes_{\beta \in S(\gamma )}U_{\beta }^{\min }(\mathbf{v})\right.
\oplus \left( \bigoplus_{\beta \in S(\gamma )}W_{\beta }^{\min }(\mathbf{v}%
)\otimes _{a}\mathrm{span}\,\left\{ \mathbf{U}_{j_{\gamma },i_{\beta
}}^{(\beta )}:1\leq i_{\beta }\leq r_{\beta }\right\} \right)
\end{equation*}%
for $1\leq j_{\gamma }\leq r_{\gamma }.$ Since for each $\gamma \in
T_{D}\setminus \mathcal{L}(T_{D})$ we can write 
\begin{equation}
\mathfrak{i}_{\gamma }(\mathbf{u}_{i_{\gamma }}^{(\gamma )})=\sum_{\substack{
1\leq i_{\beta }\leq r_{\beta }  \\ \beta \in S(\alpha )}}C_{i_{\gamma
},(i_{\beta })_{\beta \in S(\gamma )}}^{(\gamma )}\bigotimes_{\beta \in
S(\gamma )}\mathbf{z}^{(\beta )}(\mathbf{u}_{i_{\beta }}^{(\beta )})
\label{frakiG}
\end{equation}
for $1\leq i_{\gamma }\leq r_{\gamma },$ where 
\begin{equation*}
\mathbf{z}^{(\beta )}(\mathbf{u}_{i_{\beta }}^{(\beta )}):=\left\{ 
\begin{array}{ll}
\mathbf{u}_{i_{\beta }}^{(\beta )} & \text{ if }\beta \in \mathcal{L}(T_{D})
\\ 
&  \\ 
\mathfrak{i}_{\beta }(\mathbf{u}_{i_{\beta }}^{(\beta )})=\sum_{\substack{ %
1\leq i_{\mu }\leq r_{\mu }  \\ \mu \in S(\beta )}}C_{i_{\beta },(i_{\mu
})_{\mu \in S(\beta )}}^{(\beta )}\bigotimes_{\mu \in S(\beta )}\mathbf{u}%
_{i_{\mu }}^{(\mu )} & \text{ otherwise, } \\ 
& 
\end{array}%
\right.
\end{equation*}%
represents that either $\mathbf{u}_{i_{\beta }}^{(\beta )}\in \mathbf{V}%
_{\beta _{\Vert \cdot \Vert _{\beta }}}$ if $\beta \in \mathcal{L}(T_{D})$
or $\mathbf{u}_{i_{\beta }}^{(\beta )}\in \mathcal{M}_{(r_{\gamma })_{\gamma
\in S(\beta )}}(\mathbf{V}_{\beta }),$ otherwise. We remark that in any case 
$\mathbf{z}^{(\beta )}(\mathbf{u}_{i_{\beta }}^{(\beta )})=\mathbf{u}%
_{i_{\beta }}^{(\beta )}.$ In particular, for each $\mathbf{v}\in \mathcal{FT%
}_{\mathfrak{r}}(\mathbf{V}_{D})$ we have 
\begin{equation}
\mathfrak{i}_{D}(\mathbf{v})=\sum_{\substack{ 1\leq i_{\beta }\leq r_{\beta
}  \\ \beta \in S(D)}}C_{(i_{\beta })_{\beta \in
S(D)}}^{(D)}\bigotimes_{\beta \in S(D)}\mathbf{z}^{(\beta )}(\mathbf{u}%
_{i_{\beta }}^{(\beta )}).  \label{frakiD}
\end{equation}%
Assume that 
\begin{equation*}
\dot{\mathbf{w}}=\mathrm{T}_{\mathbf{v}}\mathfrak{i}((\dot{L}_{k})_{k\in 
\mathcal{L}(T_{D})},(\dot{C}^{(\alpha )})_{\alpha \in T_{D}\setminus 
\mathcal{L}(T_{D})})=\mathrm{T}_{\mathbf{v}}\mathfrak{i}_{D}((\dot{L}_{\beta
})_{\beta \in S(D)},\dot{C}^{(D)}),
\end{equation*}%
where $((\dot{L}_{\beta })_{\beta \in S(D)},\dot{C}^{(D)})=\mathrm{T}_{%
\mathbf{v}}\mathfrak{i}_{\mathfrak{r},D}((\dot{L}_{k})_{k\in \mathcal{L}%
(T_{D})},(\dot{C}^{(\alpha )})_{\alpha \in T_{D}\setminus \mathcal{L}%
(T_{D})}).$ Then, using the chain rule in \eqref{frakiD} and taking into
account \eqref{frakiG}, we have 
\begin{equation*}
\dot{\mathbf{w}}=\mathrm{T}_{\mathbf{v}}\mathfrak{i}_{D}((\dot{L}_{\beta
})_{\beta \in S(D)},\dot{C}^{(D)})=\sum_{\substack{ 1\leq i_{\beta }\leq
r_{\beta }  \\ \beta \in S(D)}}\dot{C}_{(i_{\beta })_{\beta \in
S(D)}}^{(D)}\bigotimes_{\beta \in S(D)}\mathbf{u}_{i_{\beta }}^{(\beta
)}+\sum_{\beta \in S(D)}\sum_{\substack{ 1\leq i_{\beta }\leq r_{\beta }}}%
\left( \dot{\mathbf{u}}_{i_{\beta }}^{(\beta )}\otimes \mathbf{U}_{i_{\beta
}}^{(\beta )}\right)
\end{equation*}%
where for all $\mu \in T_{D}\setminus \{D\}$ either $\dot{\mathbf{u}}%
_{i_{\mu }}^{(\mu )}=\dot{L}_{\mu }(\mathbf{u}_{i_{\mu }}^{(\mu )})$ if $\mu
\in \mathcal{L}(T_{D})$ or there exists a unique 
\begin{equation*}
(\dot{L}_{\gamma })_{\substack{ \gamma \in S(\mu )  \\ \gamma \notin 
\mathcal{L}(T_{D})}}\in 
\mathop{\mathchoice{\raise-0.22em\hbox{\huge
$\times$}} {\raise-0.05em\hbox{\Large $\times$}}{\hbox{\large
$\times$}}{\times}}_{\substack{ \gamma \in S(\mu )  \\ \gamma \notin 
\mathcal{L}(T_{D})}}\mathcal{L}(U_{\gamma }^{\min }(\mathbf{v}),W_{\gamma
}^{\min }(\mathbf{v}))
\end{equation*}%
such that 
\begin{align*}
\dot{\mathbf{u}}_{i_{\mu }}^{(\mu )}& =\mathrm{T}_{\mathbf{u}_{i_{\mu
}}^{(\mu )}}\mathfrak{i}_{\mu }((\dot{L}_{\gamma })_{\gamma \in S(\mu )},%
\dot{C}_{i_{\mu }}^{(\mu )}) \\
& =\sum_{\substack{ 1\leq i_{\mu }\leq r_{\mu }  \\ \mu \in S(D)}}\dot{C}%
_{i_{\mu },(i_{\gamma })_{\gamma \in S(\mu )}}^{(D)}\bigotimes_{\gamma \in
S(\mu )}\mathbf{u}_{i_{\gamma }}^{(\gamma )}+\sum_{\gamma \in S(\mu )}\sum 
_{\substack{ 1\leq i_{\gamma }\leq r_{\gamma }}}\left( \dot{L}_{\gamma }(%
\mathbf{u}_{i_{\gamma }}^{(\gamma )})\otimes \mathbf{U}_{i_{\mu },i_{\gamma
}}^{(\gamma )}\right) \\
& =\sum_{\substack{ 1\leq i_{\mu }\leq r_{\mu }  \\ \mu \in S(D)}}\dot{C}%
_{i_{\mu },(i_{\gamma })_{\gamma \in S(\mu )}}^{(D)}\bigotimes_{\gamma \in
S(\mu )}\mathbf{u}_{i_{\gamma }}^{(\gamma )}+\sum_{\gamma \in S(\mu )}\sum 
_{\substack{ 1\leq i_{\gamma }\leq r_{\gamma }}}\left( \dot{\mathbf{u}}%
_{i_{\gamma }}^{(\gamma )}\otimes \mathbf{U}_{i_{\mu },i_{\gamma }}^{(\gamma
)}\right) ,
\end{align*}%
where the last equality is given by Lemma~\ref{characterization_tangent_map}%
(b). In consequence, we obtain that 
\begin{equation*}
\dot{\mathbf{u}}_{i_{\gamma }}^{(\gamma )}\in W_{\gamma }^{\min }(\mathbf{v})%
\text{ for all }\gamma \in T_{D}\setminus \{D\}.
\end{equation*}

\bigskip

\noindent \textbf{Step 2:} Now, for each $\gamma \in T_{D}\setminus \{D\}$
we define a linear subspace $\mathcal{H}_{\gamma }(\mathbf{v})\subset
W_{\gamma }^{\min }(\mathbf{v})^{r_{\gamma }}$ as follows. Let $\mathcal{H}%
_{\gamma }(\mathbf{v}):=W_{\gamma }^{\min }(\mathbf{v})^{r_{\gamma }}$ if $%
\gamma \in \mathcal{L}(T_{D}).$ For $\gamma \notin \mathcal{L}(T_{D})$ we
construct $\mathcal{H}_{\gamma }(\mathbf{v})$ in the following way. Let 
\begin{equation*}
\Upsilon _{\gamma ,\mathbf{v}}:\mathbb{R}^{r_{\gamma }\times 
\mathop{\mathchoice{\raise-0.22em\hbox{\huge
$\times$}} {\raise-0.05em\hbox{\Large $\times$}}{\hbox{\large
$\times$}}{\times}}_{\beta \in S(\gamma )}r_{\beta }}\times 
\mathop{\mathchoice{\raise-0.22em\hbox{\huge
$\times$}} {\raise-0.05em\hbox{\Large $\times$}}{\hbox{\large
$\times$}}{\times}}_{\beta \in S(\gamma )}\mathcal{H}_{\beta }(\mathbf{v}%
)\longrightarrow W_{\gamma }^{\min }(\mathbf{v})^{r_{\gamma }}
\end{equation*}%
be a linear map defined by 
\begin{equation*}
\Upsilon _{\gamma ,\mathbf{v}}(\dot{C}^{(\gamma )},((\mathbf{w}_{i_{\beta
}}^{(\beta )})_{i_{\beta }=1}^{r_{\beta }})_{\beta \in S(\gamma )}):=(%
\mathbf{w}_{i_{\gamma }}^{(\gamma )})_{i_{\gamma }=1}^{r_{\gamma }},
\end{equation*}%
where 
\begin{equation*}
\mathbf{w}_{i_{\gamma }}^{(\gamma )}:=\sum_{\substack{ 1\leq i_{\beta }\leq
r_{\beta }  \\ \beta \in S(\gamma )}}\dot{C}_{i_{\gamma },(i_{\beta
})_{\beta \in S(\gamma )}}^{(\gamma )}\bigotimes_{\beta \in S(\gamma )}%
\mathbf{u}_{i_{\beta }}^{(\beta )}+\sum_{\beta \in S(\gamma )}\sum_{1\leq
i_{\beta }\leq r_{\beta }}\mathbf{w}_{i_{\beta }}^{(\beta )}\otimes \mathbf{U%
}_{i_{\gamma },i_{\beta }}^{(\beta )}
\end{equation*}
for $1\leq i_{\gamma }\leq r_{\gamma }.$ Let $\pi _{i_{\gamma }}:\mathbb{R}%
^{r_{\gamma }\times 
\mathop{\mathchoice{\raise-0.22em\hbox{\huge
$\times$}} {\raise-0.05em\hbox{\Large $\times$}}{\hbox{\large
$\times$}}{\times}}_{\beta \in S(\gamma )}r_{\beta }}\rightarrow \mathbb{R}^{%
\mathop{\mathchoice{\raise-0.22em\hbox{\huge
$\times$}} {\raise-0.05em\hbox{\Large $\times$}}{\hbox{\large
$\times$}}{\times}}_{\beta \in S(\gamma )}r_{\beta }}$ be given by $\pi
_{i_{\gamma }}(\dot{C}^{(\gamma )})=\dot{C}_{i_{\gamma }}^{(\gamma )},$ for $%
1\leq i_{\gamma }\leq r_{\gamma }.$ Observe that if we define $\dot{L}%
_{\gamma }(\mathbf{u}_{i_{\gamma }}^{(\gamma )}):=\mathbf{w}_{i_{\gamma
}}^{(\gamma )}$ for $1\leq i_{\gamma }\leq r_{\gamma }$ and $\dot{L}_{\beta
}(\mathbf{u}_{i_{\beta }}):=\mathbf{w}_{i_{\beta }}^{(\beta )}$ for $1\leq
i_{\beta }\leq r_{\beta }$ and $\beta \in S(\gamma ),$ then 
\begin{equation*}
\mathbf{w}_{i_{\gamma }}^{(\gamma )}=\mathrm{T}_{{\mathbf{u}_{i_{\gamma
}}^{(\gamma )}}}\mathfrak{i}_{\gamma }(\pi _{i_{\gamma }}(\dot{C}^{(\gamma
)}),(\dot{L}_{\beta })_{\beta \in S(\gamma )})\in \mathbf{Z}^{(\gamma )}(%
\mathbf{u}_{i_{\gamma }}^{(\gamma )})
\end{equation*}%
for $1\leq i_{\gamma }\leq r_{\gamma },$ and hence by Proposition~\ref%
{characterization_tangent_map} the map $\Upsilon _{\gamma ,\mathbf{v}}$ is
injective. Finally, we define the linear subspace 
\begin{equation*}
\mathcal{H}_{\gamma }(\mathbf{v}):=\Upsilon _{\gamma ,\mathbf{v}}\left( 
\mathbb{R}^{r_{\gamma }\times 
\mathop{\mathchoice{\raise-0.22em\hbox{\huge
$\times$}} {\raise-0.05em\hbox{\Large $\times$}}{\hbox{\large
$\times$}}{\times}}_{\beta \in S(\gamma )}r_{\beta }}\times 
\mathop{\mathchoice{\raise-0.22em\hbox{\huge
$\times$}} {\raise-0.05em\hbox{\Large $\times$}}{\hbox{\large
$\times$}}{\times}}_{\beta \in S(\gamma )}\mathcal{H}_{\beta }(\mathbf{v}%
)\right) .
\end{equation*}%
For $\delta \in T_{D}\setminus \{D\}$ let $\Pi _{i_{\delta }}:W_{\delta
}^{\min }(\mathbf{v})^{r_{\delta }}\rightarrow W_{\delta }^{\min }(\mathbf{v}%
)$ be given by $\Pi _{i_{\delta }}((\mathbf{w}_{k_{\delta }}^{(\delta
)})_{k_{\delta }=1}^{r_{\delta }}):=\mathbf{w}_{i_{\delta }}^{(\delta )}$
for $1\leq i_{\delta }\leq r_{\delta }.$ Observe, that for each $\beta \in
S(\gamma ),$ we can identify $(\mathbf{w}_{i_{\beta }}^{(\beta )})_{i_{\beta
}=1}^{r_{\beta }}\in \mathcal{H}_{\beta }(\mathbf{v})$ with 
\begin{equation*}
\sum_{1\leq i_{\beta }\leq r_{\beta }}\mathbf{w}_{i_{\beta }}^{(\beta
)}\otimes \mathbf{U}_{i_{\gamma },i_{\beta }}^{(\beta )}=\sum_{1\leq
i_{\beta }\leq r_{\beta }}\Pi _{i_{\beta }}((\mathbf{w}_{k_{\beta }}^{(\beta
)})_{k_{\beta }=1}^{r_{\beta }})\otimes \mathbf{U}_{i_{\gamma },i_{\beta
}}^{(\beta )}
\end{equation*}%
for $1\leq i_{\gamma }\leq r_{\gamma }.$ It allows us to construct an
injective linear map 
\begin{equation*}
f_{\beta ,i_{\gamma }}:\mathcal{H}_{\beta }(\mathbf{v})\longrightarrow
V_{\gamma _{\Vert \cdot \Vert _{\gamma }}},\quad (\mathbf{w}_{i_{\beta
}}^{(\beta )})_{i_{\beta }=1}^{r_{\beta }}\mapsto \sum_{1\leq i_{\beta }\leq
r_{\beta }}\mathbf{w}_{i_{\beta }}^{(\beta )}\otimes \mathbf{U}_{i_{\gamma
},i_{\beta }}^{(\beta )},
\end{equation*}%
for $1\leq i_{\gamma }\leq r_{\gamma }.$ Hence $f_{\beta ,i_{\gamma }}(%
\mathcal{H}_{\beta }(\mathbf{v}))$ is a linear subspace of $V_{\gamma
_{\Vert \cdot \Vert _{\gamma }}}$ linearly isomorphic to $\mathcal{H}_{\beta
}(\mathbf{v})$ for $1\leq i_{\gamma }\leq i_{\gamma }.$ Thus, 
\begin{equation*}
\Pi _{i_{\gamma }}(\mathcal{H}_{\gamma }(\mathbf{v}))=\left\{ 
\begin{array}{ll}
\left. _{a}\bigotimes_{\beta \in S(\gamma )}U_{\beta }^{\min }(\mathbf{v}%
)\right. \oplus \left( \bigoplus_{\beta \in S(\gamma )}f_{\beta ,i_{\gamma
}}(\mathcal{H}_{\beta }(\mathbf{v}))\right) & \text{ if }\gamma \notin 
\mathcal{L}(T_{D}), \\ 
W_{\gamma }^{\min }(\mathbf{v}) & \text{ if }\gamma \in \mathcal{L}(T_{D}),%
\end{array}%
\right.
\end{equation*}%
where 
\begin{equation*}
f_{\beta ,i_{\gamma }} (\mathcal{H}_{\beta }(\mathbf{v}))= \left\{ 
\begin{array}{ll}
\bigoplus_{i_{\beta }=1}^{r_{\beta }} \Pi_{i_{\beta }}(\mathcal{H}_{\beta }(%
\mathbf{v})) \otimes_{a}\mathrm{span}\{ \mathbf{U}_{i_{\gamma
},i_{\beta}}^{(\beta )} \} & \text{ if } \beta \notin \mathcal{L}(T_{D}) \\ 
\bigoplus_{i_{\beta }=1}^{r_{\beta }}W_{\beta }^{\min }(\mathbf{v})\otimes
_{a}\mathrm{span}\{\mathbf{U}_{i_{\gamma },i_{\beta }}^{(\beta )}\} & \text{
if }\beta \in \mathcal{L}(T_{D})%
\end{array}
\right.
\end{equation*}%
for $1\leq i_{\gamma }\leq r_{\gamma }.$

\bigskip

\noindent \textbf{Step 3:} Finally, we construct a linear subspace $\mathbf{Z%
}^{(D)}(\mathbf{v}) \subset \mathbf{V}_{D_{\|\cdot\|_D}}$ by using a linear
injective map 
\begin{equation*}
\Upsilon_{D,\mathbf{v}}:\mathbb{R}^{ 
\mathop{\mathchoice{\raise-0.22em\hbox{\huge
$\times$}} {\raise-0.05em\hbox{\Large $\times$}}{\hbox{\large
$\times$}}{\times}}_{\alpha \in S(D)} r_{\alpha}} \times 
\mathop{\mathchoice{\raise-0.22em\hbox{\huge
$\times$}} {\raise-0.05em\hbox{\Large $\times$}}{\hbox{\large
$\times$}}{\times}}_{\alpha \in S(D)} \mathcal{H}_{\alpha}(\mathbf{v})
\longrightarrow \mathbf{V}_{D_{\|\cdot\|_D}}
\end{equation*}
defined by 
\begin{equation*}
\Upsilon_{\gamma,\mathbf{v}}(\dot{C}^{(D)},((\mathbf{w}_{i_{\alpha}}^{(%
\alpha)})_{i_{\alpha}=1}^{r_{\alpha}})_{\alpha \in S(D)}):= \mathbf{w}
\end{equation*}
where 
\begin{equation*}
\mathbf{w}:=\sum_{\substack{ 1\leq i_{\alpha }\leq r_{\alpha }  \\ \alpha
\in S(D )}}\dot{C}_{(i_{\alpha })_{\alpha \in S(D )}}^{(D
)}\bigotimes_{\alpha \in S(D )}\mathbf{u}_{i_{\alpha }}^{(\alpha )}+
\sum_{\alpha \in S(D)} \sum_{1 \le i_{\alpha}\le r_{\alpha}} \mathbf{w}%
_{i_{\alpha}}^{(\alpha)} \otimes \mathbf{U}_{i_{\alpha}}^{(\alpha)}.
\end{equation*}%
Then $\mathbf{Z}^{(D)}(\mathbf{v}):= \Upsilon_{D,\mathbf{v}}\left(\mathbb{R}%
^{ 
\mathop{\mathchoice{\raise-0.22em\hbox{\huge
$\times$}} {\raise-0.05em\hbox{\Large $\times$}}{\hbox{\large
$\times$}}{\times}}_{\alpha \in S(D)} r_{\alpha}} \times 
\mathop{\mathchoice{\raise-0.22em\hbox{\huge
$\times$}} {\raise-0.05em\hbox{\Large $\times$}}{\hbox{\large
$\times$}}{\times}}_{\alpha \in S(D)} \mathcal{H}_{\alpha}(\mathbf{v}%
)\right) $ and from Step 1 we have that 
\begin{equation*}
\mathrm{T}_{\mathbf{v}}\mathfrak{i}(\mathbb{T}_{\mathbf{v}}(\mathcal{FT}_{%
\mathfrak{r}}(\mathbf{V}_{D}))) \subset \mathbf{Z}^{(D)}(\mathbf{v})
\end{equation*}
holds. Moreover, we can introduce for each $\alpha \in S(D)$ a linear
injective map 
\begin{equation*}
f_{D,\alpha}: \mathcal{H}_{\alpha}(\mathbf{v}) \rightarrow \mathbf{V}%
_{D_{\|\cdot\|_D}}, \quad (\mathbf{w}_{i_{\alpha}})_{i_{\alpha}=1}^{r_{%
\alpha}} \mapsto \sum_{1 \le i_{\alpha}\le r_{\alpha}} \mathbf{w}%
_{i_{\alpha}}^{(\alpha)} \otimes \mathbf{U}_{i_{\alpha}}^{(\alpha)}.
\end{equation*}
Then $f_{D,\alpha}(\mathcal{H}_{\alpha}(\mathbf{v}))$ is a linear subspace in $%
\mathbf{V}_{D_{\|\cdot\|_D}}$ linearly isomorphic to $\mathcal{H}_{\alpha}(%
\mathbf{v}).$ It is not difficult to show that 
\begin{equation*}
f_{D,\alpha}(\mathcal{H}_{\alpha}(\mathbf{v}))= \left\{ 
\begin{array}{ll}
\bigoplus_{i_{\alpha}=1}^{r_{\alpha}} \Pi_{i_{\alpha}}(\mathcal{H}_{\alpha}(%
\mathbf{v})) \otimes_a \mathrm{span}\{\mathbf{U}_{i_{\alpha}}^{(\alpha)} \}
& \text{ if } \alpha \notin \mathcal{L}(T_D) \\ 
\bigoplus_{i_{\alpha}=1}^{r_{\alpha}} W_{\alpha}^{\min}(\mathbf{v})
\otimes_a \mathrm{span}\{\mathbf{U}_{i_{\alpha}}^{(\alpha)} \} & \text{ if }
\alpha \in \mathcal{L}(T_D)%
\end{array}
\right.
\end{equation*}
for $\alpha \in S(D).$ By construction, we have 
\begin{equation*}
\mathbf{Z}^{(D)}(\mathbf{v}) = \left._a \bigotimes_{\alpha \in S(D)}
U_{\alpha}^{\min}(\mathbf{v})\right. \oplus \left( \bigoplus_{\alpha \in
S(D)} f_{D,\alpha}(\mathcal{H}_{\alpha}(\mathbf{v})) \right).
\end{equation*}

\bigskip

\begin{proposition}
Assume that $S(D) \neq \mathcal{L}(T_D)$ and the tensor product map $%
\bigotimes$ is $T_D$-continuous. Let $\mathbf{v}\in \mathcal{FT}_{\mathfrak{r%
}}(\mathbf{V}_{D}),$ then $\mathrm{T}_{\mathbf{v}}\mathfrak{i}(\mathbb{T}_{%
\mathbf{v}}(\mathcal{FT}_{\mathfrak{r}}(\mathbf{V}_{D}))) = \mathbf{Z}^{(D)}(%
\mathbf{v})$ and hence it is linearly isomorphic to $\mathbb{T}_{\mathbf{v}}(%
\mathcal{FT}_{\mathfrak{r}}(\mathbf{V}_{D})).$
\end{proposition}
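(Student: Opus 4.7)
The plan is to establish the reverse inclusion $\mathbf{Z}^{(D)}(\mathbf{v}) \subset \mathrm{T}_{\mathbf{v}}\mathfrak{i}(\mathbb{T}_{\mathbf{v}}(\mathcal{FT}_{\mathfrak{r}}(\mathbf{V}_{D})))$, since Step~1 of the preceding construction already supplies the forward inclusion. Combined with the injectivity of $\mathrm{T}_{\mathbf{v}}\mathfrak{i}$ proved in the preceding proposition, this equality will automatically give the claimed linear isomorphism with $\mathbb{T}_{\mathbf{v}}(\mathcal{FT}_{\mathfrak{r}}(\mathbf{V}_{D}))$.

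To obtain the reverse inclusion, I would pick $\mathbf{w} \in \mathbf{Z}^{(D)}(\mathbf{v})$ and reconstruct a tangent vector $(\dot{\mathfrak{L}},\dot{\mathfrak{C}})$ by an inductive descent from the root to the leaves of $T_D$. Since $\mathbf{Z}^{(D)}(\mathbf{v}) = \Upsilon_{D,\mathbf{v}}(\mathbb{R}^{\times_{\alpha\in S(D)}r_\alpha} \times \times_{\alpha\in S(D)}\mathcal{H}_\alpha(\mathbf{v}))$, there exist $\dot{C}^{(D)}$ and tuples $(\mathbf{w}^{(\alpha)}_{i_\alpha})_{i_\alpha=1}^{r_\alpha}\in\mathcal{H}_\alpha(\mathbf{v})$ for each $\alpha\in S(D)$ realising $\mathbf{w}$. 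For each non-leaf $\alpha$, the injectivity of $\Upsilon_{\alpha,\mathbf{v}}$ (noted in Step~2) gives a unique $\dot{C}^{(\alpha)}\in\mathbb{R}^{r_\alpha\times \times_{\beta\in S(\alpha)}r_\beta}$ and unique elements $(\mathbf{w}^{(\beta)}_{i_\beta})_{i_\beta=1}^{r_\beta}\in\mathcal{H}_\beta(\mathbf{v})$ for $\beta\in S(\alpha)$ such that
\begin{equation*}
\mathbf{w}^{(\alpha)}_{i_\alpha} = \sum_{\substack{1\le i_\beta\le r_\beta\\ \beta\in S(\alpha)}}\dot{C}^{(\alpha)}_{i_\alpha,(i_\beta)}\bigotimes_{\beta\in S(\alpha)}\mathbf{u}^{(\beta)}_{i_\beta} + \sum_{\beta\in S(\alpha)}\sum_{1\le i_\beta\le r_\beta}\mathbf{w}^{(\beta)}_{i_\beta}\otimes\mathbf{U}^{(\beta)}_{i_\alpha,i_\beta}.
\end{equation*}
Iterating this extraction node by node down the tree produces a coefficient $\dot{C}^{(\mu)}$ at every $\mu\in T_D\setminus\mathcal{L}(T_D)$, and at each leaf $k\in\mathcal{L}(T_D)$ delivers a tuple $(\mathbf{w}^{(k)}_{i_k})_{i_k=1}^{r_k}\in W_k^{\min}(\mathbf{v})^{r_k}$ from which I define $\dot{L}_k\in\mathcal{L}(U_k^{\min}(\mathbf{v}),W_k^{\min}(\mathbf{v}))$ by $\dot{L}_k(\mathbf{u}^{(k)}_{i_k}):=\mathbf{w}^{(k)}_{i_k}$ on the fixed basis.

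With $(\dot{\mathfrak{L}},\dot{\mathfrak{C}}):=((\dot{L}_k)_{k\in\mathcal{L}(T_D)},(\dot{C}^{(\mu)})_{\mu\in T_D\setminus\mathcal{L}(T_D)})\in\mathbb{T}_{\mathbf{v}}(\mathcal{FT}_{\mathfrak{r}}(\mathbf{V}_D))$ in hand, I would then verify directly that $\mathrm{T}_{\mathbf{v}}\mathfrak{i}(\dot{\mathfrak{L}},\dot{\mathfrak{C}})=\mathbf{w}$ by comparison with Proposition~\ref{characterization_tangent_map}(b): reading the recursive formula for $\dot{\mathbf{u}}^{(\gamma)}_{i_\gamma}$ from the leaves up and using the linear independence of $\{\mathbf{U}^{(\beta)}_{i_\mu,i_\beta}:1\le i_\beta\le r_\beta\}$ (Theorem~\ref{characterization_FT}), an induction on the depth in $T_D$ shows that $\dot{\mathbf{u}}^{(\gamma)}_{i_\gamma}$ coincides with the $\mathbf{w}^{(\gamma)}_{i_\gamma}$ extracted by my inversion procedure. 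Plugging this into the top-level formula \eqref{kinematic1} yields $\mathbf{w}$ back, which is exactly what is needed.

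The main obstacle is not the existence but the consistency of the recursion: one must check that the data recovered by inverting $\Upsilon_{\alpha,\mathbf{v}}$ level by level actually glue into a well-defined tangent vector whose image under $\mathrm{T}_{\mathbf{v}}\mathfrak{i}$ agrees with the original $\mathbf{w}$. This hinges on two facts already established in the paper and that I would invoke explicitly: the injectivity of each $\Upsilon_{\alpha,\mathbf{v}}$, which guarantees the recursive data is unique at every node, and the linear independence of the auxiliary tensors $\{\mathbf{U}^{(\beta)}_{i_\mu,i_\beta}\}_{i_\beta}$ and $\{\mathbf{U}^{(\alpha)}_{i_\alpha}\}_{i_\alpha}$ appearing in the expansions \eqref{Ualpha ialpha} and \eqref{Ubeta igamma ibeta}, which makes the direct-sum decompositions in $\mathbf{Z}^{(D)}(\mathbf{v})$ and its recursive building blocks genuine. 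Once these are in place, the identification $\mathrm{T}_{\mathbf{v}}\mathfrak{i}(\mathbb{T}_{\mathbf{v}}(\mathcal{FT}_{\mathfrak{r}}(\mathbf{V}_D)))=\mathbf{Z}^{(D)}(\mathbf{v})$ follows, and the linear isomorphism with $\mathbb{T}_{\mathbf{v}}(\mathcal{FT}_{\mathfrak{r}}(\mathbf{V}_D))$ is immediate from the injectivity already proved.
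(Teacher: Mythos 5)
Your proposal is correct and follows essentially the same route as the paper: forward inclusion from Step~1, and for the reverse inclusion a root-to-leaves descent that inverts $\Upsilon_{D,\mathbf{v}}$ and then each $\Upsilon_{\alpha,\mathbf{v}}$ to extract the coefficients $\dot{C}^{(\alpha)}$ and the leaf data defining $\dot{L}_k$, concluding via Proposition~\ref{characterization_tangent_map}(b) that the reconstructed tangent vector maps back to $\mathbf{w}$. Your explicit appeal to the injectivity of the maps $\Upsilon_{\alpha,\mathbf{v}}$ and to the linear independence of the tensors $\mathbf{U}^{(\beta)}_{i_\mu,i_\beta}$ to justify the consistency of the recursion is a slightly more careful rendering of the step the paper summarises as ``proceeding in a similar way from the root to the leaves,'' but it is the same argument.
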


\begin{proof}
From Step 1 and the construction of $\mathbf{Z}^{(D)}(\mathbf{v}),$ the
inclusion $\mathrm{T}_{\mathbf{v}}\mathfrak{i}(\mathbb{T}_{\mathbf{v}}(%
\mathcal{FT}_{\mathfrak{r}}(\mathbf{V}_{D}))) \subset \mathbf{Z}^{(D)}(%
\mathbf{v})$ holds. Now, take $\mathbf{w} \in \mathbf{Z}^{(D)}(\mathbf{v}).$
Then we can write 
\begin{equation*}
\mathbf{w} = \sum_{\substack{ 1\leq i_{\alpha }\leq r_{\alpha }  \\ \alpha
\in S(D)}}(\dot{C}^{(D)})_{(i_{\alpha })_{\alpha \in
S(D)}}\bigotimes_{\alpha \in S(D)}\mathbf{u}_{i_{\alpha }}^{(\alpha
)}+\sum_{\alpha \in S(D)}\sum_{\substack{ 1\leq i_{\alpha }\leq r_{\alpha }}}
\left(\mathbf{w}_{i_{\alpha }}^{(\alpha )}\otimes \mathbf{U}_{i_{\alpha
}}^{(\alpha )}\right),
\end{equation*}
where $\dot{C}^{(D)} \in \mathbb{R}^{ 
\mathop{\mathchoice{\raise-0.22em\hbox{\huge
$\times$}} {\raise-0.05em\hbox{\Large $\times$}}{\hbox{\large
$\times$}}{\times}}_{\alpha \in S(D)} r_{\alpha}}$ and $\mathbf{w}%
_{i_{\alpha }}^{(\alpha )} \in W_{\alpha}^{\min}(\mathbf{v}) $ for $1\leq
i_{\alpha }\leq r_{\alpha }.$ Then we can define $\dot{L}_{\alpha} \in 
\mathcal{L}( U_{\alpha}^{\min}(\mathbf{v}) , W_{\alpha}^{\min}(\mathbf{v}) )$
by $\dot{L}_{\alpha}(\mathbf{u}_{i_{\alpha }}^{(\alpha )}):= \mathbf{w}%
_{i_{\alpha }}^{(\alpha )}$ for $1\leq i_{\alpha }\leq r_{\alpha },$ and we
have 
\begin{equation*}
(\dot{C}^{(D)},(\dot{L}_{\alpha})_{\alpha\in S(D)}) \in \mathbb{R}^{ 
\mathop{\mathchoice{\raise-0.22em\hbox{\huge
$\times$}} {\raise-0.05em\hbox{\Large $\times$}}{\hbox{\large
$\times$}}{\times}}_{\alpha \in S(D)} r_{\alpha}} \times 
\mathop{\mathchoice{\raise-0.22em\hbox{\huge
$\times$}} {\raise-0.05em\hbox{\Large $\times$}}{\hbox{\large
$\times$}}{\times}}_{\alpha \in S(D)}\mathcal{L}(U_{\alpha}^{\min}(\mathbf{v}%
),W_{\alpha}^{\min}(\mathbf{v})).
\end{equation*}
Moreover, $\sum_{\substack{ 1\leq i_{\alpha }\leq r_{\alpha }}} \mathbf{w}%
_{i_{\alpha }}^{(\alpha )}\otimes \mathbf{U}_{i_{\alpha }}^{(\alpha )} \in
f_{D,\alpha}(\mathcal{H}_{\alpha}(\mathbf{v}))$ for $\alpha \in S(D).$ If $%
\alpha \notin \mathcal{L}(T_D)$, then $(\mathbf{w}_{i_{\alpha}}^{(%
\alpha)})_{i_{\alpha}=1}^{r_{\alpha}} \in \mathcal{H}_{\alpha}(\mathbf{v}) =
\Upsilon_{\alpha,\mathbf{v}}\left(\mathbb{R}^{r_{\alpha} \times 
\mathop{\mathchoice{\raise-0.22em\hbox{\huge
$\times$}} {\raise-0.05em\hbox{\Large $\times$}}{\hbox{\large
$\times$}}{\times}}_{\beta \in S(\gamma)} r_{\beta}} \times 
\mathop{\mathchoice{\raise-0.22em\hbox{\huge
$\times$}} {\raise-0.05em\hbox{\Large $\times$}}{\hbox{\large
$\times$}}{\times}}_{\beta \in S(\alpha)} \mathcal{H}_{\beta}(\mathbf{v})
\right).$ Hence there exists 
\begin{equation*}
(\dot{C}^{(\alpha)},((\mathbf{w}_{i_{\beta}}^{(\beta)})_{i_{\beta}=1}^{r_{%
\beta}})_{\beta \in S(\alpha)}) \in \mathbb{R}^{r_{\alpha} \times 
\mathop{\mathchoice{\raise-0.22em\hbox{\huge
$\times$}} {\raise-0.05em\hbox{\Large $\times$}}{\hbox{\large
$\times$}}{\times}}_{\beta \in S(\alpha)} r_{\beta}} \times 
\mathop{\mathchoice{\raise-0.22em\hbox{\huge
$\times$}} {\raise-0.05em\hbox{\Large $\times$}}{\hbox{\large
$\times$}}{\times}}_{\beta \in S(\alpha)} \mathcal{H}_{\beta}(\mathbf{v})
\end{equation*}
such that 
\begin{equation*}
\mathbf{w}_{i_{\alpha }}^{(\alpha )} = \sum_{\substack{ 1 \le i_{\beta} \le
r_{\beta}  \\ \beta \in S(\alpha)}} \dot{C}^{(\alpha)}_{i_{\alpha},(i_{%
\beta})_{\beta \in S(\alpha)}}\bigotimes_{\beta \in S(\alpha)}\mathbf{u}%
_{i_{\beta}}^{(\beta)} + \sum_{\beta \in S(\alpha)} \sum_{1 \le i_{\beta}\le
r_{\beta}} \mathbf{w}_{i_{\beta}}^{(\beta)} \otimes \mathbf{U}%
_{i_{\alpha},i_{\beta}}^{(\beta)}
\end{equation*}
for $1 \le i_{\alpha} \le r_{\alpha}.$ Define $\dot{L}_{\beta}(\mathbf{u}%
_{i_{\beta}}^{(\beta)}):= \mathbf{w}_{i_{\beta}}^{(\beta)}$ for $1 \le
i_{\beta} \le r_{\beta}$ and $\beta \in S(\alpha).$ Then 
\begin{equation*}
(\dot{C}^{(\alpha)},(\dot{L}_{\beta})_{\beta \in S(\alpha)}) \in \mathbb{R}%
^{r_{\gamma} \times 
\mathop{\mathchoice{\raise-0.22em\hbox{\huge
$\times$}} {\raise-0.05em\hbox{\Large $\times$}}{\hbox{\large
$\times$}}{\times}}_{\beta \in S(\gamma)} r_{\beta}} \times 
\mathop{\mathchoice{\raise-0.22em\hbox{\huge
$\times$}} {\raise-0.05em\hbox{\Large $\times$}}{\hbox{\large
$\times$}}{\times}}_{\beta \in S(\alpha)}\mathcal{L}(U_{\beta}^{\min}(%
\mathbf{v}),W_{\beta}^{\min}(\mathbf{v})).
\end{equation*}
Moreover, $\sum_{1 \le i_{\beta}\le r_{\beta}} \mathbf{w}_{i_{\beta}}^{(%
\beta)} \otimes \mathbf{U}_{i_{\alpha},i_{\beta}}^{(\beta)} \in
f_{\beta,i_{_{\alpha}}}(\mathcal{H}_{\beta}(\mathbf{v}))$ for $1 \le
i_{\alpha} \le r_{\alpha}.$ If $\beta \notin \mathcal{L}(T_D)$, then $(%
\mathbf{w}_{i_{\beta}}^{(\beta)})_{i_{\beta}=1}^{r_{\beta}} \in \mathcal{H}%
_{\beta}(\mathbf{v}) = \Upsilon_{\beta,\mathbf{v}}\left(\mathbb{R}%
^{r_{\beta} \times 
\mathop{\mathchoice{\raise-0.22em\hbox{\huge
$\times$}} {\raise-0.05em\hbox{\Large $\times$}}{\hbox{\large
$\times$}}{\times}}_{\gamma \in S(\beta)} r_{\gamma}} \times 
\mathop{\mathchoice{\raise-0.22em\hbox{\huge
$\times$}} {\raise-0.05em\hbox{\Large $\times$}}{\hbox{\large
$\times$}}{\times}}_{\gamma \in S(\beta)} \mathcal{H}_{\gamma}(\mathbf{v})
\right).$ Proceeding in a similar way from the root to the leaves, we
construct $(\dot{\mathfrak{L}},\dot{\mathfrak{C}})\in \mathbb{T}_{\mathbf{v}%
}(\mathcal{FT}_{\mathfrak{r}}(\mathbf{V}_{D})),$ where $\dot{\mathfrak{C}}=(%
\dot{C}^{(\alpha )})_{\alpha \in T_{D}\setminus \mathcal{L}(T_{D})} \in 
\mathbb{R}^{\mathfrak{r}}$ and $\dot{\mathfrak{L}}=(\dot{L}_{\alpha
})_{\alpha \in T_{D}\setminus \{D\}}\in \mathcal{L}_{T_D}(\mathbf{v})$ such
that $\mathbf{w}=\mathrm{T}_{\mathbf{v}}\mathfrak{i}(\dot{\mathfrak{C}},\dot{%
\mathfrak{L}}).$ Thus, we can conclude that $\mathbf{Z}^{(D)}(\mathbf{v})
\subset \mathrm{T}_{\mathbf{v}}\mathfrak{i}\left( \mathbb{T}_{\mathbf{v}}(%
\mathcal{FT}_{\mathfrak{r}}(\mathbf{V}_{D}))\right)$ and the equality
follows.
\end{proof}

\begin{figure}[tbp]
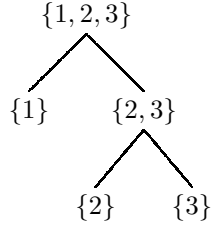

\centering
\synttree[$\{1,2,3\}$
[$\{1\}$]
[$\{2,3\}$[$\{2\}$][$\{3\}$]
]]
\caption{A binary tree $T_D.$}
\label{figTREE}
\end{figure}

\begin{example}
Consider the binary tree $T_D$ given in Figure~\ref{figTREE} and consider TB
ranks $\mathfrak{r}=(1,r_{1},r_{23},r_{2},r_{3}).$ Let $\mathbf{v} \in 
\mathcal{FT}_{\mathfrak{r}}(V_1 \otimes_a V_2 \otimes_a V_3)$ and assume
that the tensor product map $\bigotimes$ is $T_D$-continuous. Then 
\begin{align*}
\mathbf{Z}^{(123)}(\mathbf{v}) = \left(U_1^{\min}(\mathbf{v}) \otimes_a
U_{23}^{\min}(\mathbf{v})\right) \oplus f_{123,1}(\mathcal{H}_1(\mathbf{v}))
\oplus f_{123,23}(\mathcal{H}_{23}(\mathbf{v})),
\end{align*}
where 
\begin{align*}
f_{123,1}(\mathcal{H}_1(\mathbf{v})) & = \bigoplus_{i_{1}=1}^{r_1}
W_1^{\min}(\mathbf{v}) \otimes_a \mathrm{span}\,\{ \mathbf{U}_{i_1}^{(1)}\}
\subset V_{1_{\|\cdot\|_1}} \otimes_a \left(V_{2_{\|\cdot\|_2}} \otimes_a
V_{3_{\|\cdot\|_3}}\right) , \\
f_{123,23}(\mathcal{H}_{23}(\mathbf{v})) & = \bigoplus_{i_{23}=1}^{r_{23}} 
\mathrm{span}\,\{ \mathbf{U}_{i_{23}}^{(23)}\} \otimes_a \Pi_{i_{23}}(%
\mathcal{H}_{23}(\mathbf{v})) \subset V_{1_{\|\cdot\|_1}} \otimes_a
\left(V_{2_{\|\cdot\|_2}} \otimes_a V_{3_{\|\cdot\|_3}} \right),
\end{align*}
and 
\begin{align*}
\Pi_{i_{23}}(\mathcal{H}_{23}(\mathbf{v})) & = \left(U_2^{\min}(\mathbf{v})
\otimes_a U_{3}^{\min}(\mathbf{v})\right) \\
& \oplus \left(\bigoplus_{i_{2}=1}^{r_2} W_2^{\min}(\mathbf{v}) \otimes_a 
\mathrm{span}\,\{ \mathbf{U}_{i_{23},i_{2}}^{(2)}\} \right) \oplus
\left(\bigoplus_{i_{3}=1}^{r_3} \mathrm{span}\,\{ \mathbf{U}%
_{i_{23},i_{3}}^{(3)}\} \otimes_a W_3^{\min}(\mathbf{v}) \right),
\end{align*}
which is a linear subspace in $V_{2_{\|\cdot\|_2}} \otimes_a
V_{3_{\|\cdot\|_3}}.$
\end{example}

\subsubsection{Is the standard inclusion map an immersion?}

Finally, to show that $\mathfrak{i}$ is an immersion, and hence $\mathcal{FT}%
_{\mathfrak{r}}(\mathbf{V}_{D})$ is an immersed submanifold of $\mathbf{V}%
_{D_{\Vert \cdot \Vert _{D}}},$ we need to prove that $\mathrm{T}_{\mathbf{v}%
}\mathfrak{i}\left( \mathbb{T}_{\mathbf{v}}(\mathcal{FT}_{\mathfrak{r}}(%
\mathbf{V}_{D}))\right) \in \mathbb{G}(\mathbf{V}_{\Vert \cdot \Vert _{D}}).$
Let $\{\mathbf{V}_{\alpha _{\Vert \cdot \Vert _{\alpha }}}\}_{\alpha \in
T_{D}\setminus \{D\}}$ be a representation of the Banach tensor space $%
\mathbf{V}_{D_{\Vert \cdot \Vert _{D}}}=\left. _{\Vert \cdot \Vert
_{D}}\bigotimes_{j\in D}V_{j}\right. ,$ in the topological tree-based format
and take $\mathbf{V}_{D}:=\left. _{a}\bigotimes_{j\in D}V_{j}\right. .$ A
first useful result is the following lemma.

\bigskip

\begin{lemma}
\label{two_faces} Assume that \eqref{tree_injective_norm} holds. Let $\alpha
\in T_{D}\setminus \mathcal{L}(T_{D})$ and take $\beta \in S(\alpha ).$ If $%
W_{\beta }\in \mathbb{G}(\mathbf{V}_{\beta _{\Vert \cdot \Vert _{\beta }}})$
satisfies $\mathbf{V}_{\beta _{\Vert \cdot \Vert _{\beta }}}=U_{\beta
}\oplus W_{\beta }$ for some finite-dimensional subspace $U_{\beta }$ in $%
\mathbf{V}_{\beta _{\Vert \cdot \Vert _{\beta }}},$ then $W_{\beta }\otimes
_{a}U_{[\beta ]}\in \mathbb{G}(\mathbf{V}_{\alpha _{\Vert \cdot \Vert
_{\alpha }}})$ for every finite-dimensional subspace $U_{[\beta ]}\subset
\left. _{a}\bigotimes_{\delta \in S(\alpha )\setminus \beta }\mathbf{V}%
_{\delta _{\Vert \cdot \Vert _{\delta }}}\right. .$
\end{lemma}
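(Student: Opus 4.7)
My plan is to exhibit an explicit continuous projection of $\mathbf{V}_{\alpha_{\Vert\cdot\Vert_\alpha}}$ onto $W_\beta\otimes_a U_{[\beta]}$ and then invoke Proposition~\ref{characterize_P}. Concretely, since $U_\beta\in\mathbb{G}(\mathbf{V}_{\beta_{\Vert\cdot\Vert_\beta}})$ (it is finite-dimensional) and $\mathbf{V}_{\beta_{\Vert\cdot\Vert_\beta}}=U_\beta\oplus W_\beta$, Proposition~\ref{characterize_P} furnishes a continuous projection $P_{W_\beta\oplus U_\beta}\in\mathcal{L}(\mathbf{V}_{\beta_{\Vert\cdot\Vert_\beta}},\mathbf{V}_{\beta_{\Vert\cdot\Vert_\beta}})$ with image $W_\beta$. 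On the other side, write $\mathbf{X}_{[\beta]}:=\left._{\Vert\cdot\Vert_{\vee(S(\alpha)\setminus\beta)}}\bigotimes_{\delta\in S(\alpha)\setminus\beta}\mathbf{V}_{\delta_{\Vert\cdot\Vert_\delta}}\right.$. Because $U_{[\beta]}\subset\mathbf{X}_{[\beta]}$ is finite-dimensional, Proposition~\ref{finite_dim_subspaces} provides a continuous projection $\pi_{[\beta]}\in\mathcal{L}(\mathbf{X}_{[\beta]},\mathbf{X}_{[\beta]})$ with image $U_{[\beta]}$; choosing a basis $\{\mathbf{u}_1,\ldots,\mathbf{u}_n\}$ of $U_{[\beta]}$, let $\{\psi_1,\ldots,\psi_n\}\subset U_{[\beta]}^{\ast}$ be its dual basis and set $\varphi_i:=\psi_i\circ\pi_{[\beta]}\in\mathbf{X}_{[\beta]}^{\ast}$, so that $\varphi_i(\mathbf{u}_j)=\delta_{ij}$.

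Next I lift each $\varphi_i$ to a continuous operator on the whole ambient space. By hypothesis $\Vert\cdot\Vert_\alpha\gtrsim\Vert\cdot\Vert_{\vee(\beta,\vee(S(\alpha)\setminus\beta))}=\Vert\cdot\Vert_{\vee(S(\alpha))}$, so Lemma~\ref{extension_Umin} applied to the two-factor splitting $\mathbf{V}_\alpha=\mathbf{V}_\beta\otimes_a\left._{a}\bigotimes_{\delta\in S(\alpha)\setminus\beta}\mathbf{V}_\delta\right.$ guarantees that $id_\beta\otimes\varphi_i$ extends to an operator $\overline{id_\beta\otimes\varphi_i}\in\mathcal{L}(\mathbf{V}_{\alpha_{\Vert\cdot\Vert_\alpha}},\mathbf{V}_{\beta_{\Vert\cdot\Vert_\beta}})$. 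Moreover, since $\Vert\cdot\Vert_\alpha\gtrsim\Vert\cdot\Vert_{\vee(S(\alpha))}$, Proposition~\ref{injective bounded below}(c) ensures the continuity of the tensor product map $\mathbf{V}_{\beta_{\Vert\cdot\Vert_\beta}}\times\mathbf{X}_{[\beta]}\to\mathbf{V}_{\alpha_{\Vert\cdot\Vert_\alpha}}$, so each single-factor map $w\mapsto w\otimes\mathbf{u}_i$ is continuous from $\mathbf{V}_{\beta_{\Vert\cdot\Vert_\beta}}$ into $\mathbf{V}_{\alpha_{\Vert\cdot\Vert_\alpha}}$.

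Putting these pieces together, I define
\begin{equation*}
P:\mathbf{V}_{\alpha_{\Vert\cdot\Vert_\alpha}}\longrightarrow\mathbf{V}_{\alpha_{\Vert\cdot\Vert_\alpha}},\qquad P(\mathbf{v}):=\sum_{i=1}^{n}\bigl(P_{W_\beta\oplus U_\beta}\circ\overline{id_\beta\otimes\varphi_i}\bigr)(\mathbf{v})\otimes\mathbf{u}_i.
\end{equation*}
The map $P$ is a finite sum of compositions of continuous linear maps, hence $P\in\mathcal{L}(\mathbf{V}_{\alpha_{\Vert\cdot\Vert_\alpha}},\mathbf{V}_{\alpha_{\Vert\cdot\Vert_\alpha}})$, with image obviously contained in $W_\beta\otimes_a U_{[\beta]}$. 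The verification that $P$ is a projection with image exactly $W_\beta\otimes_a U_{[\beta]}$ reduces to the elementary identity $\overline{id_\beta\otimes\varphi_j}(w\otimes\mathbf{u}_i)=\varphi_j(\mathbf{u}_i)\,w=\delta_{ij}\,w$ for $w\in\mathbf{V}_{\beta_{\Vert\cdot\Vert_\beta}}$ together with $P_{W_\beta\oplus U_\beta}|_{W_\beta}=\mathrm{id}_{W_\beta}$. Invoking Proposition~\ref{characterize_P}(b) then finishes the proof.

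The only delicate point is the existence and continuity of the extended contractions $\overline{id_\beta\otimes\varphi_i}$ from $\mathbf{V}_{\alpha_{\Vert\cdot\Vert_\alpha}}$ to $\mathbf{V}_{\beta_{\Vert\cdot\Vert_\beta}}$; this is precisely what Lemma~\ref{extension_Umin} delivers, provided the tree-wise injective-norm lower bound \eqref{tree_injective_norm} is in force, and is the one place where hypothesis \eqref{tree_injective_norm} is genuinely used. Everything else is straightforward bookkeeping with finite-dimensional duals.
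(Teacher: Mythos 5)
Your proof is correct and follows essentially the same route as the paper: both arguments exhibit a continuous projection of $\mathbf{V}_{\alpha_{\Vert\cdot\Vert_\alpha}}$ onto $W_{\beta}\otimes_{a}U_{[\beta]}$ and conclude via Proposition~\ref{characterize_P}; indeed your operator $P=\sum_{i}\bigl(P_{W_\beta\oplus U_\beta}\circ\overline{id_\beta\otimes\varphi_i}\bigr)(\cdot)\otimes\mathbf{u}_i$ is exactly the paper's $\overline{P_{W_{\beta}\oplus U_{\beta}}\otimes P_{U_{[\beta]}\oplus W_{[\beta]}}}$ with the finite-rank factor expanded into rank-one pieces. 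The only (harmless) difference is how continuity is certified: the paper bounds $id_\beta\otimes P_{U_{[\beta]}\oplus W_{[\beta]}}$ directly via Lemma~3.18 of \cite{FALHACK} and subtracts $P_{U_\beta\oplus W_\beta}\otimes P_{U_{[\beta]}\oplus W_{[\beta]}}$, whereas you apply Lemma~\ref{extension_Umin} to each contraction $id_\beta\otimes\varphi_i$ for the two-factor splitting, both hinging on \eqref{tree_injective_norm}.
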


\begin{proof}
First, observe that if $W_{\beta }$ is a finite-dimensional subspace, then $%
W_{\beta }\otimes _{a}U_{[\beta ]}$ is also finite dimensional, and hence
the lemma follows. Thus, assume that $W_{\beta }$ is an infinite-dimensional
closed subspace of $\mathbf{V}_{\beta _{\Vert \cdot \Vert _{\beta }}},$ and
to simplify the notation write 
\begin{equation*}
\mathbf{X}_{\beta }:=\left. _{\Vert \cdot \Vert _{\vee (S(\alpha )\setminus
\beta )}}\bigotimes_{\delta \in S(\alpha)\setminus \{\beta \}}\mathbf{V}_{\delta
_{\Vert \cdot \Vert _{\delta }}}\right. .
\end{equation*}%
If $U_{[\beta ]}\subset \mathbf{X}_{\beta }$ is a finite-dimensional
subspace, then there exists $W_{[\beta ]}\in \mathbb{G}(\mathbf{X}_{\beta })$
such that $\mathbf{X}_{\beta }=U_{[\beta ]}\oplus W_{[\beta ]}.$ Since the
tensor product map 
\begin{equation*}
\bigotimes :(\mathbf{V}_{\beta _{\Vert \cdot \Vert _{\beta }}},\Vert \cdot
\Vert _{\beta })\times \left( \mathbf{X}_{\beta },\Vert \cdot \Vert _{\vee
(S(\alpha )\setminus \beta )}\right) \rightarrow (\mathbf{V}_{\alpha _{\Vert
\cdot \Vert _{\alpha }}},\Vert \cdot \Vert _{\alpha })
\end{equation*}%
is continuous and by Lemma~3.18 in \cite{FALHACK}, for each elementary
tensor $\mathbf{v}_{\beta }\otimes \mathbf{v}_{[\beta ]}\in \mathbf{V}%
_{\beta _{\Vert \cdot \Vert _{\beta }}}\otimes _{a}\mathbf{X}_{\beta }$ we
have 
\begin{align*}
\Vert (id_{\beta }\otimes P_{_{U_{[\beta ]}\oplus W_{[\beta ]}}})(\mathbf{v}%
_{\beta }\otimes \mathbf{v}_{[\beta ]})\Vert _{\alpha }& \leq C\sqrt{\dim
U_{[\beta ]}}\,\Vert \mathbf{v}_{\beta }\Vert _{\beta }\Vert \mathbf{v}%
_{[\beta ]}\Vert _{\vee (S(\alpha )\setminus \beta )} \\
& =C\,\sqrt{\dim U_{[\beta ]}}\,\Vert \mathbf{v}_{\beta }\otimes \mathbf{v}%
_{[\beta ]}\Vert _{\vee (S(\alpha ))} \\
& \leq C^{\prime }\,\sqrt{\dim U_{[\beta ]}}\,\Vert \mathbf{v}_{\beta
}\otimes \mathbf{v}_{[\beta ]}\Vert _{\alpha }.
\end{align*}%
Thus, $(id_{\beta }\otimes P_{_{U_{[\beta ]}\oplus W_{[\beta ]}}})$ is
continuous over $\mathbf{V}_{\beta _{\Vert \cdot \Vert _{\beta }}}\otimes
_{a}\mathbf{X}_{\beta },$ and hence in $\mathbf{V}_{\alpha _{\Vert \cdot
\Vert _{\alpha }}}.$ Now, take into account the fact that 
\begin{equation*}
id_{\beta }=P_{_{U_{\beta }\oplus W_{\beta }}}+P_{_{W_{\beta }\oplus
U_{\beta }}},
\end{equation*}%
so that
\begin{equation*}
id_{\beta }\otimes P_{_{U_{[\beta ]}\oplus W_{[\beta ]}}}=P_{_{U_{\beta
}\oplus W_{\beta }}}\otimes P_{_{U_{[\beta ]}\oplus W_{[\beta
]}}}+P_{_{W_{\beta }\oplus U_{\beta }}}\otimes P_{_{U_{[\beta ]}\oplus
W_{[\beta ]}}}.
\end{equation*}%
Observe that $id_{\beta }\otimes P_{_{U_{[\beta ]}\oplus W_{[\beta ]}}}$ and 
$P_{_{U_{\beta }\oplus W_{\beta }}}\otimes P_{_{U_{[\beta ]}\oplus W_{[\beta
]}}}$ are continuous linear maps over $\mathbf{V}_{\beta _{\Vert \cdot \Vert
_{\beta }}}\otimes _{a}\mathbf{X}_{\beta },$ and then $P_{_{W_{\beta }\oplus
U_{\beta }}}\otimes P_{_{U_{[\beta ]}\oplus W_{[\beta ]}}}$ is a continuous
linear map over $\mathbf{V}_{\beta _{\Vert \cdot \Vert _{\beta }}}\otimes
_{a}\mathbf{X}_{\beta }.$ Thus, 
\begin{equation*}
\mathcal{P}_{\alpha }:=\overline{P_{_{W_{\beta }\oplus U_{\beta }}}\otimes
P_{_{U_{[\beta ]}\oplus W_{[\beta ]}}}}\in \mathcal{L}(\mathbf{V}_{\alpha
_{\Vert \cdot \Vert _{\alpha }}},\mathbf{V}_{\alpha _{\Vert \cdot \Vert
_{\alpha }}})
\end{equation*}%
and $\mathcal{P}_{\alpha }\circ \mathcal{P}_{\alpha }=\mathcal{P}_{\alpha }.$
Since $\mathcal{P}_{\alpha }(\mathbf{V}_{\alpha _{\Vert \cdot \Vert _{\alpha
}}})=W_{\beta }\otimes _{a}U_{[\beta ]},$ 
the lemma follows by Proposition~\ref{characterize_P}.
\end{proof}

\bigskip

\begin{lemma}
\label{direct_sum_G} Let $X$ be a Banach space and assume that $U,V \in 
\mathbb{G}(X).$ If $U\cap V=\{0\},$ then $U\oplus V\in \mathbb{G}(X).$
Moreover, $U\cap V\in \mathbb{G}(X)$ holds.
\end{lemma}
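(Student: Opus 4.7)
The ``Moreover'' clause is immediate: by hypothesis $U\cap V=\{0\}$, and the trivial subspace is complemented in $X$ with $X$ itself as complement, so $U\cap V\in\mathbb{G}(X)$.

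For the main claim, by Proposition~\ref{characterize_P} it suffices to construct a continuous projection of $X$ whose image is $U\oplus V$. The hypothesis provides continuous projections $P_U,P_V\in\mathcal{L}(X,X)$ with $\mathrm{Im}\,P_U=U$, $\mathrm{Im}\,P_V=V$ and closed kernels $W_U,W_V\in\mathbb{G}(X)$. My plan is to reduce the problem to a complementation question inside $W_U$ by introducing $V':=(I-P_U)(V)\subset W_U$. Since the decomposition $v=P_U v+(I-P_U)v$ holds for every $v\in V$, we obtain $U+V=U+V'$, and because $V'\subset W_U$ while $U\cap W_U=\{0\}$ this sum is direct, so $U\oplus V=U\oplus V'$. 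Suppose one can show that $V'$ is closed in $W_U$ and admits a closed complement $W_U=V'\oplus W'$ with $W'\in\mathbb{G}(W_U)\subset\mathbb{G}(X)$. Then, defining $\tilde P_{W'}:=P_{W'\oplus V'}\circ(I-P_U)\in\mathcal{L}(X,X)$ and using the fact that $(I-P_U)|_{W_U}=I_{W_U}$, one checks directly that $\tilde P_{W'}^{2}=\tilde P_{W'}$ and that $\mathrm{Ker}\,\tilde P_{W'}=\{x\in X:(I-P_U)x\in V'\}=U+V'=U\oplus V$; consequently $Q:=I-\tilde P_{W'}$ is the required continuous projection with $\mathrm{Im}\,Q=U\oplus V$, and Proposition~\ref{characterize_P} finishes the argument.

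The main obstacle, therefore, is establishing that $V'$ is closed in $W_U$ and admits a closed complement there. The restriction $\pi:=(I-P_U)|_V:V\to V'$ is a continuous linear bijection (injectivity comes from $U\cap V=\{0\}$), so by the open mapping theorem $V'$ is closed in $X$ precisely when $\pi$ is bounded below, i.e.\ $\|(I-P_U)v\|\gtrsim\|v\|$ for all $v\in V$. The natural way to derive this estimate is to combine both projections: for $v\in V$ one has $v=P_V v=P_V P_U v+P_V(I-P_U)v$, whence
\[
\|v\|\le\|P_V P_U v\|+\|P_V\|\,\|(I-P_U)v\|,
\]
reducing matters to a control of $\|P_V P_U v\|$ by $\|(I-P_U)v\|$ on $V$. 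In the tensor-space setting where this lemma will be invoked, the concrete subspaces arising from the decomposition of the tangent space carry a tensor-product structure in which at least one factor is finite-dimensional (cf.\ Lemma~\ref{two_faces}), so that $V'$ inherits a finite-dimensional factor and Proposition~\ref{finite_dim_subspaces} supplies a closed complement of $V'$ in $W_U$ directly; the closedness of $V'$ and its complementability then follow without needing to derive the generic angle estimate, and the projection $Q$ is obtained as constructed above.
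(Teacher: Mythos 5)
Your reduction of the problem to the closedness and complementability of $V':=(I-P_U)(V)$ inside $W_U=\mathrm{Ker}\,P_U$ is sound, and you have put your finger on exactly the right obstruction: one needs $(I-P_U)|_V$ to be bounded below, which is equivalent to $U+V$ being closed (a positive ``gap'' between $U$ and $V$). But your proposal then abandons the general statement and argues only that the required estimate is available ``in the tensor-space setting where this lemma will be invoked''. That is not a proof of the lemma, which is stated for arbitrary complemented subspaces of an arbitrary Banach space. Moreover, the gap you leave open cannot be closed: the lemma as stated is false. In $X=\ell^2$ take $U=\overline{\mathrm{span}}\{e_{2n}:n\geq 1\}$ and $V=\overline{\mathrm{span}}\{e_{2n}+n^{-1}e_{2n+1}:n\geq 1\}$. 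Both are closed subspaces of a Hilbert space, hence complemented, and $U\cap V=\{0\}$; but $U+V$ contains every $e_{2n+1}$ while missing $\sum_n n^{-1}e_{2n+1}$, so it is a proper dense subspace of its closure, hence not closed and a fortiori not in $\mathbb{G}(X)$. No argument from the stated hypotheses alone can therefore succeed; some extra structure is indispensable (e.g.\ that one summand is finite-dimensional, or that the two subspaces are ranges of bounded projections $P,Q$ with $PQ=QP=0$, in which case $P+Q$ is a bounded projection onto $U\oplus V$ --- which is what the applications via Lemma~\ref{two_faces} and Theorem~\ref{closed_linear_subspace} actually supply).

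For the record, the paper's own proof takes an entirely different, purely algebraic route: choosing complements $X=U\oplus U'=V\oplus V'$, it asserts $U=(V\oplus V')\cap U=U\cap V'$ and concludes $X=U\oplus V\oplus(U'\cap V')$. This rests on distributivity of the lattice of subspaces, which fails already in $\mathbb{R}^2$ (with $U=\mathrm{span}(e_1)$, $V=\mathrm{span}(e_1+e_2)$, $V'=\mathrm{span}(e_1-e_2)$ one has $(V\oplus V')\cap U=U$ but $U\cap V'=\{0\}$), and it must fail somewhere in view of the counterexample above. So although your proposal does not prove the lemma, your analysis isolates the genuine analytic issue that the paper's manipulation silently assumes away; your final paragraph, restricted to the case where one of the factors is finite-dimensional, is essentially the corrected statement that the rest of the paper actually needs.
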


\begin{proof}
To prove the first statement assume that $U \cap V = \{0\}.$ Since $U,V \in 
\mathbb{G}(X)$ there exist $U^{\prime },V^{\prime }\in \mathbb{G}(X),$ such
that $X=U \oplus U^{\prime }= V \oplus V^{\prime }.$ Then $U=X \cap U = (V
\oplus V^{\prime }) \cap U = U \cap V^{\prime }$ and $V = X \cap V = (U
\oplus U^{\prime }) \cap V = V \cap U^{\prime }.$ In consequence, we can
write 
\begin{equation*}
U \oplus V \oplus (U^{\prime }\cap V^{\prime }) = (U \cap V^{\prime })
\oplus (V \cap U^{\prime }) \oplus (U^{\prime }\cap V^{\prime }) = (U \oplus
U^{\prime }) \cap ( V \oplus V^{\prime }) = X,
\end{equation*}
and the first statement follows. To prove the second one, observe that $X =
(U \cap V) \oplus (U \cap V^{\prime }) \oplus (V \cap U^{\prime }) \oplus
(U^{\prime }\cap V^{\prime }). $
\end{proof}

\bigskip

A very useful consequence of the above two lemmas is the following Theorem.

\begin{theorem}
\label{closed_linear_subspace} Let $\{\mathbf{V}_{\alpha _{\Vert \cdot \Vert
_{\alpha }}}\}_{\alpha \in T_{D}\setminus \{D\}}$ be a representation of a
tensor Banach space $\mathbf{V}_{D_{\Vert \cdot \Vert _{D}}}=\left. _{\Vert
\cdot \Vert _{D}}\bigotimes_{j\in D}V_{j}\right. $ in the topological 
tree-based format and assume that \eqref{tree_injective_norm} holds. Then $%
\mathbf{Z}^{(D)}(\mathbf{v})\in \mathbb{G}(\mathbf{V}_{D_{\Vert \cdot \Vert
_{D}}}),$ and hence $\mathcal{FT}_{\mathfrak{r}}(\mathbf{V}_{D})$ is an
immersed submanifold of $\mathbf{V}_{D_{\Vert \cdot \Vert _{D}}}.$
\end{theorem}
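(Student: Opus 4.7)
The plan is to prove $\mathbf{Z}^{(D)}(\mathbf{v}) \in \mathbb{G}(\mathbf{V}_{D_{\Vert \cdot \Vert_{D}}})$, from which the claim about $\mathcal{FT}_{\mathfrak{r}}(\mathbf{V}_D)$ being an immersed submanifold follows at once: by the two preceding propositions, $\mathrm{T}_{\mathbf{v}}\mathfrak{i}$ is injective with image equal to $\mathbf{Z}^{(D)}(\mathbf{v})$, so Proposition~\ref{prop_inmersion} yields that $\mathfrak{i}$ is an injective immersion at every $\mathbf{v}\in \mathcal{FT}_{\mathfrak{r}}(\mathbf{V}_D)$, and $\mathfrak{i}$ being globally injective completes the argument.

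The first step will be to completely unfold the recursive definition of $\mathbf{Z}^{(D)}(\mathbf{v})$, using the injective maps $\Upsilon_{D,\mathbf{v}}$ and $\Upsilon_{\gamma,\mathbf{v}}$ from the construction, all the way down to the leaves. A bookkeeping argument on the tree shows that $\mathbf{Z}^{(D)}(\mathbf{v})$ decomposes as an algebraic direct sum of elementary subspaces, each of which is either the finite-dimensional core $\left._{a}\bigotimes_{\alpha \in S(D)}U_{\alpha}^{\min}(\mathbf{v})\right.$ or, after reorganizing tensor factors, takes the form $W_{k}^{\min}(\mathbf{v})\otimes_{a}\mathcal{F}_{k}$ for some leaf $k \in \mathcal{L}(T_D)$, where $\mathcal{F}_{k}$ is a finite-dimensional subspace of $\left._{a}\bigotimes_{j\in D\setminus \{k\}}V_{j_{\Vert\cdot\Vert_{j}}}\right.$ built from the basis vectors $\mathbf{u}_{i_\beta}^{(\beta)}$ and the coefficients $C^{(\beta)}$ appearing along the root-to-leaf path ending at $k$.

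Second, I would establish complementability of each elementary summand in $\mathbf{V}_{D_{\Vert\cdot\Vert_{D}}}$. The finite-dimensional core is handled by Proposition~\ref{finite_dim_subspaces}. For a leaf-tagged summand $W_{k}^{\min}(\mathbf{v})\otimes_{a}\mathcal{F}_{k}$, I would climb the path from $k$ to $D$ one tree level at a time; at each node along the path, Lemma~\ref{two_faces} is invoked with the leaf-level $W_{k}^{\min}(\mathbf{v})$ in the role of "$W_\beta$" (its complement $U_{k}^{\min}(\mathbf{v})$ being finite-dimensional by construction) and with the finite-dimensional sibling factor playing the role of "$U_{[\beta]}$". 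The hypothesis~\eqref{tree_injective_norm} supplies the injective-norm lower bound required by Lemma~\ref{two_faces} at every node. Once every elementary summand is known to be complemented, repeated application of Lemma~\ref{direct_sum_G} (using that the algebraic sum is already direct, as witnessed by the composition of the injective $\Upsilon$'s from Step~1) assembles them into the complemented subspace $\mathbf{Z}^{(D)}(\mathbf{v}) \in \mathbb{G}(\mathbf{V}_{D_{\Vert\cdot\Vert_{D}}})$.

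The main obstacle is the unfolding in Step~1: a naive induction that attempts to propagate $\Pi_{i_{\alpha}}(\mathcal{H}_{\alpha}(\mathbf{v}))$ up the tree as a single complemented subspace of $\mathbf{V}_{\alpha_{\Vert\cdot\Vert_{\alpha}}}$ breaks down, because $\Pi_{i_{\alpha}}(\mathcal{H}_{\alpha}(\mathbf{v}))$ is not of finite codimension as soon as $\mathbf{V}_{\alpha_{\Vert\cdot\Vert_{\alpha}}}$ is infinite-dimensional, so the finite-codimension hypothesis of Lemma~\ref{two_faces} fails at internal nodes. Lemma~\ref{two_faces} can be applied only when the "$W_\beta$" is a genuinely finite-codimensional subspace, which forces the unfolding to go all the way down to a leaf before any complementability argument is launched. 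Carefully verifying that the resulting tree-climbing application of Lemma~\ref{two_faces} is consistent across all distinct root-to-leaf paths, and that the elementary summands so obtained are genuinely in direct sum, will be the combinatorially delicate part of the argument.
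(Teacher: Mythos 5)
Your overall strategy coincides with the paper's: reduce everything to showing $\mathbf{Z}^{(D)}(\mathbf{v})\in \mathbb{G}(\mathbf{V}_{D_{\Vert \cdot \Vert _{D}}})$, obtain the tensor-product summands as complemented subspaces via Lemma~\ref{two_faces}, and assemble them with Lemma~\ref{direct_sum_G}. The organisation, however, is genuinely different. The paper keeps the nested structure and runs a leaf-to-root induction, showing at each node that $\Pi _{i_{\alpha }}(\mathcal{H}_{\alpha }(\mathbf{v}))\in \mathbb{G}(\mathbf{V}_{\alpha _{\Vert \cdot \Vert _{\alpha }}})$ and $f_{\beta ,i_{\alpha }}(\mathcal{H}_{\beta }(\mathbf{v}))\in \mathbb{G}(\mathbf{V}_{\alpha _{\Vert \cdot \Vert _{\alpha }}})$, whereas you flatten the recursion completely into finitely many summands, each finite-dimensional or of the form $W_{k}^{\min }(\mathbf{v})\otimes _{a}\mathcal{F}_{k}$ for a leaf $k$. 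Your flattened decomposition is correct, and the directness of the sum does follow from the injectivity of the $\Upsilon$'s. But note that the ``naive induction'' you dismiss as breaking down \emph{is} the paper's actual proof. Your objection — that $\Pi _{i_{\beta }}(\mathcal{H}_{\beta }(\mathbf{v}))$ has infinite codimension, so the hypotheses of Lemma~\ref{two_faces} are not literally met at internal nodes — is a fair reading of the lemma as stated, but the induction is repairable: the candidate projection $Q_{\beta }\otimes P_{U_{[\beta ]}\oplus W_{[\beta ]}}$, with $Q_{\beta }$ an arbitrary bounded projection on $\mathbf{V}_{\beta _{\Vert \cdot \Vert _{\beta }}}$ and $P$ of finite rank, is still bounded because it factors as $id_{\beta }\otimes P_{U_{[\beta ]}\oplus W_{[\beta ]}}$ (bounded by the injective-norm estimate of Lemma~\ref{two_faces}'s proof) followed by $Q_{\beta }\otimes id$ restricted to $\mathbf{V}_{\beta _{\Vert \cdot \Vert _{\beta }}}\otimes _{a}U_{[\beta ]}$, which is bounded since $U_{[\beta ]}$ is finite-dimensional and the coordinate functionals $id_{\beta }\otimes \varphi _{i}$ are continuous. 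So what you buy with the flattening is that the infinite-codimensional factor in every summand is always a single leaf space $W_{k}^{\min }(\mathbf{v})$, for which Lemma~\ref{two_faces} applies verbatim; what the paper buys with the nested induction is that it only ever invokes the splittings $\mathbf{V}_{\alpha }=\mathbf{V}_{\beta }\otimes _{a}(\bigotimes _{\delta \in S(\alpha )\setminus \{\beta \}}\mathbf{V}_{\delta })$ for which hypothesis~\eqref{tree_injective_norm} is stated.

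That trade-off is exactly where your sketch has a gap. For a leaf $k\notin S(D)$, the summand $W_{k}^{\min }(\mathbf{v})\otimes _{a}\mathcal{F}_{k}$ lives in the splitting $\mathbf{V}_{D}=V_{k}\otimes _{a}\mathbf{V}_{[k]}$, which is \emph{not} a node/sons splitting of the tree, so neither Lemma~\ref{two_faces} nor hypothesis~\eqref{tree_injective_norm} applies to it directly, and ``climbing the path one level at a time with $W_{k}^{\min }(\mathbf{v})$ in the role of $W_{\beta }$'' is not literally possible: at any node $\alpha$ strictly above the parent of $k$, the son $\beta \in S(\alpha )$ containing $k$ satisfies $\beta \neq \{k\}$, and the relevant subspace of $\mathbf{V}_{\beta }$ is again of infinite codimension — the very obstruction you raised against the paper. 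To make your route work you must first prove that $\Vert \cdot \Vert _{D}\gtrsim \Vert \cdot \Vert _{\vee (V_{k_{\Vert \cdot \Vert _{k}}},\,\mathbf{X}_{[k]})}$ for the leaf-versus-rest splitting, by iterating~\eqref{tree_injective_norm} along the root-to-leaf path through Proposition~\ref{injective bounded below}(a) (equivalently, by composing the extended contractions $\overline{id\otimes \boldsymbol{\varphi }}$ down the tree as in the discussion following Lemma~\ref{extension_Umin}), and then rerun the proof of Lemma~\ref{two_faces} for that splitting. This is routine but it is a missing step, not something supplied by ``Lemma~\ref{two_faces} at every node'' as written.
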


\begin{proof}
Since the tensor product map is $T_{D}$-continuous, Proposition~\ref%
{characterization_tangent_map} gives us the differentiability of $\mathrm{T}%
_{\mathbf{v}}i.$ Assume first that $S(D)=\mathcal{L}(T_{D}).$ From Corollary~%
\ref{rank_one_tangent_space} we have 
\begin{equation*}
\mathbf{Z}^{(D)}(\mathbf{v})=\left. _{a}\bigotimes_{\alpha \in
S(D)}U_{\alpha }^{\min }(\mathbf{v})\right. \oplus \left( \bigoplus_{\alpha
\in S(D)}W_{\alpha }^{\min }(\mathbf{v})\otimes _{a}U_{S(D)\setminus
\{\alpha \}}^{\min }(\mathbf{v})\right) .
\end{equation*}%
For each $\alpha \in S(D)$ we have $W_{\alpha }^{\min }(\mathbf{v})\in 
\mathbb{G}(\mathbf{V}_{\alpha _{\Vert \cdot \Vert _{\alpha }}})$ and $%
U_{S(D)\setminus \{\alpha \}}^{\min }(\mathbf{v})\subset \left.
_{a}\bigotimes_{\delta \in S(D)\setminus \{\alpha \}}\mathbf{V}_{\delta
_{\Vert \cdot \Vert _{\delta }}}\right. $ is a finite-dimensional subspace.
From Lemma~\ref{two_faces} we have $W_{\alpha }^{\min }(\mathbf{v})\otimes
_{a}U_{S(D)\setminus \{\alpha \}}^{\min }(\mathbf{v})\in \mathbb{G}(\mathbf{V%
}_{D_{\Vert \cdot \Vert _{D}}})$ for all $\alpha \in S(D).$ Since $\left.
_{a}\bigotimes_{\alpha \in S(D)}U_{\alpha }^{\min }(\mathbf{v})\right. \in 
\mathbb{G}(\mathbf{V}_{D_{\Vert \cdot \Vert _{D}}}),$ by Lemma~\ref%
{direct_sum_G}, we obtain that $\mathbf{Z}^{(D)}(\mathbf{v})\in \mathbb{G}(%
\mathbf{V}_{D_{\Vert \cdot \Vert _{D}}}).$

\bigskip

Now, assume that $S(D)\neq \mathcal{L}(T_{D}).$ Then 
\begin{equation*}
\mathbf{Z}^{(D)}(\mathbf{v})=\left. _{a}\bigotimes_{\alpha \in
S(D)}U_{\alpha }^{\min }(\mathbf{v})\right. \oplus \left( \bigoplus_{\alpha
\in S(D)}f_{D,\alpha }(\mathcal{H}_{\alpha }(\mathbf{v}))\right)
\end{equation*}
and 
\begin{equation*}
f_{D,\alpha }(\mathcal{H}_{\alpha }(\mathbf{v}))=\left\{ 
\begin{array}{ll}
\bigoplus_{i_{\alpha }=1}^{r_{\alpha }}\Pi _{i_{\alpha }}(\mathcal{H}%
_{\alpha }(\mathbf{v}))\otimes _{a}\mathrm{span}\{\mathbf{U}_{i_{\alpha
}}^{(\alpha )}\} & \text{ if }\alpha \notin \mathcal{L}(T_{D}) \\ 
\bigoplus_{i_{\alpha }=1}^{r_{\alpha }}W_{\alpha }^{\min }(\mathbf{v}%
)\otimes _{a}\mathrm{span}\{\mathbf{U}_{i_{\alpha }}^{(\alpha )}\} & \text{
if }\alpha \in \mathcal{L}(T_{D})%
\end{array}%
\right.
\end{equation*}%
for $\alpha \in S(D).$ For $\alpha \in \mathcal{L}(T_{D})$ we have $%
W_{\alpha }^{\min }(\mathbf{v})\in \mathbb{G}(\mathbf{V}_{\alpha _{\Vert
\cdot \Vert _{\alpha }}})$ and $\mathrm{span}\{\mathbf{U}_{i_{\alpha
}}^{(\alpha )}\}$ is a finite-dimensional subspace for $1\leq i_{\alpha
}\leq r_{\alpha }$, and from Lemma~\ref{two_faces}, $W_{\alpha }^{\min }(%
\mathbf{v})\otimes _{a}\mathrm{span}\{\mathbf{U}_{i_{\alpha }}^{(\alpha
)}\}\in \mathbb{G}(\mathbf{V}_{D_{\Vert \cdot \Vert _{D}}})$ for $1\leq
i_{\alpha }\leq r_{\alpha }.$ By Lemma~\ref{direct_sum_G}, $f_{D,\alpha }(%
\mathcal{H}_{\alpha }(\mathbf{v}))\in \mathbb{G}(\mathbf{V}_{D_{\Vert \cdot
\Vert _{D}}}).$ Otherwise, if $\alpha \notin \mathcal{L}(T_{D})$ then 
\begin{equation*}
f_{D,\alpha }(\mathcal{H}_{\alpha }(\mathbf{v}))=\bigoplus_{i_{\alpha
}=1}^{r_{\alpha }}\Pi _{i_{\alpha }}(\mathcal{H}_{\alpha }(\mathbf{v}%
))\otimes _{a}\mathrm{span}\{\mathbf{U}_{i_{\alpha }}^{(\alpha )}\},
\end{equation*}%
where 
\begin{equation*}
\Pi _{i_{\alpha }}(\mathcal{H}_{\alpha }(\mathbf{v}))=\left.
_{a}\bigotimes_{\beta \in S(\alpha )}U_{\beta }^{\min }(\mathbf{v})\right.
\oplus \left( \bigoplus_{\beta \in S(\alpha )}f_{\beta ,i_{\alpha }}(%
\mathcal{H}_{\beta }(\mathbf{v}))\right)
\end{equation*}%
for $1\leq i_{\alpha }\leq r_{\alpha }.$ Now, 
\begin{equation*}
f_{\beta ,i_{\alpha }}(\mathcal{H}_{\beta }(\mathbf{v}))=\left\{ 
\begin{array}{ll}
\bigoplus_{i_{\beta }=1}^{r_{\beta }}\Pi _{i_{\beta }}(\mathcal{H}_{\beta }(%
\mathbf{v}))\otimes _{a}\mathrm{span}\{\mathbf{U}_{i_{\alpha },i_{\beta
}}^{(\beta )}\} & \text{ if }\beta \notin \mathcal{L}(T_{D}) \\ 
\bigoplus_{i_{\beta }=1}^{r_{\beta }}W_{\beta }^{\min }(\mathbf{v})\otimes
_{a}\mathrm{span}\{\mathbf{U}_{i_{\alpha },i_{\beta }}^{(\beta )}\} & \text{
if }\beta \in \mathcal{L}(T_{D})%
\end{array}%
\right.
\end{equation*}%
for $1\leq i_{\alpha }\leq r_{\alpha }.$ Clearly, if $\beta \in \mathcal{L}%
(T_{D})$ then $f_{\beta ,i_{\alpha }}(\mathcal{H}_{\beta }(\mathbf{v}))\in 
\mathbb{G}(\mathbf{V}_{\alpha _{\Vert \cdot \Vert _{\alpha }}})$ for $1\leq
i_{\alpha }\leq r_{\alpha }.$ Then we can write, 
\begin{equation*}
\Pi _{i_{\alpha }}(\mathcal{H}_{\alpha }(\mathbf{v}))=\left.
_{a}\bigotimes_{\beta \in S(\alpha )}U_{\beta }^{\min }(\mathbf{v})\right.
\oplus \left( \bigoplus_{\substack{ \beta \in S(\alpha )  \\ \beta \in 
\mathcal{L}(T_{D})}}f_{\beta ,i_{\alpha }}(\mathcal{H}_{\beta }(\mathbf{v}%
))\right) \oplus \left( \bigoplus_{\substack{ \beta \in S(\alpha )  \\ \beta
\notin \mathcal{L}(T_{D})}}f_{\beta ,i_{\alpha }}(\mathcal{H}_{\beta }(%
\mathbf{v}))\right)
\end{equation*}%
for $1\leq i_{\alpha }\leq r_{\alpha }.$ Starting from the leaves, that is $%
\gamma \in \mathcal{L}(T_{D}),$ we have that  $\Pi _{i_{\gamma }}(%
\mathcal{H}_{\gamma }(\mathbf{v}))=W_{\gamma }^{\min }(\mathbf{v})\in 
\mathbb{G}(\mathbf{V}_{\gamma _{\Vert \cdot \Vert _{\gamma }}})$ for $1\leq
i_{\gamma }\leq r_{\gamma },$ and hence for $\delta \in T_{D}$ such that $%
\gamma \in S(\delta )$ we obtain $f_{\gamma ,i_{\delta }}(\mathcal{H}%
_{\gamma }(\mathbf{v}))\in \mathbb{G}(\mathbf{V}_{\delta _{\Vert \cdot \Vert
_{\delta }}})$ for $1\leq i_{\delta }\leq r_{\delta }.$ Proceeding
inductively from the leaves to the root, we obtain that $f_{\beta ,i_{\alpha
}}(\mathcal{H}_{\beta }(\mathbf{v}))\in \mathbb{G}(\mathbf{V}_{\alpha
_{\Vert \cdot \Vert _{\alpha }}}),$ for $\beta \in S(\alpha )$ with $\beta
\notin \mathcal{L}(T_{D})$ and $1\leq i_{\alpha }\leq r_{\alpha }.$ Lemma~%
\ref{direct_sum_G} says us that $\Pi _{i_{\alpha }}(\mathcal{H}_{\alpha }(%
\mathbf{v}))\in \mathbb{G}(\mathbf{V}_{\alpha _{\Vert \cdot \Vert _{\alpha
}}})$ for $1\leq i_{\alpha }\leq r_{\alpha }.$ From Lemma~\ref{two_faces}
and Lemma~\ref{direct_sum_G} we obtain that $f_{D,\alpha }(\mathcal{H}%
_{\alpha }(\mathbf{v}))\in \mathbb{G}(\mathbf{V}_{D_{\Vert \cdot \Vert
_{D}}}).$ Also by Lemma~\ref{direct_sum_G}, we have $\mathbf{Z}^{(D)}(%
\mathbf{v})\in \mathbb{G}(\mathbf{V}_{D_{\Vert \cdot \Vert _{D}}})$, that proves the theorem.
\end{proof}

\begin{example}
Let us recall the topological tensor spaces introduced in Example \ref{Bsp HNp}%
. Let $I_{j}\subset \mathbb{R}$ $\left( 1\leq j\leq d\right) $ and $1\leq
p<\infty.$ Given tree $T_D,$ let $\mathbf{I}_{\alpha}:= 
\mathop{\mathchoice{\raise-0.22em\hbox{\huge
$\times$}} {\raise-0.05em\hbox{\Large $\times$}}{\hbox{\large
$\times$}}{\times}}_{j\in \alpha} I_j$ for $\alpha \in T_D$. Hence $L^p(\mathbf{I}_{\alpha})$
is a tensor Banach space for all $\alpha \in T_D.$ In this example we denote
the usual norm of $L^p(\mathbf{I}_{\alpha})$ by $\|\cdot\|_{\alpha,p}.$
Since $\|\cdot\|_{\alpha,p}$ is a reasonable crossnorm (see Example 4.72 in 
\cite{Hackbusch}), then \eqref{tree_injective_norm} holds for all $\alpha
\in T_D.$ From Theorem \ref{closed_linear_subspace} we obtain that $\mathcal{%
FT}_{\mathfrak{r}}\left(\left._a \bigotimes_{j=1}^d L^p(I_j)\right.\right)$
is an immersed submanifold of $L^p(\mathbf{I}_D).$
\end{example}

\begin{example}
Now, we return to Example~\ref{example_BM1}. From Example 4.42 in \cite%
{Hackbusch} we know that the norm \newline
$\|\cdot\|_{(0,1),p}$ is a crossnorm on $H^{1,p}(I_1) \otimes_a
H^{1,p}(I_2), $ and hence it is not weaker than the injective norm. In
consequence, from Theorem~\ref{closed_linear_subspace}, we obtain that $%
\mathcal{FT}_{\mathfrak{r}}(H^{1,p}(I_1) \otimes_a H^{1,p}(I_2))$ is an
immersed submanifold in $H^{1,p}(I_1) \otimes_{\|\cdot\|_{(0,1),p}}
H^{1,p}(I_2).$
\end{example}

Since in a reflexive Banach space every closed linear subspace is proximinal
(see p. 61 in \cite{Floret}), we have the following corollary.

\begin{corollary}
\label{approximation_corollary} Let $\{\mathbf{V}_{\alpha _{\Vert \cdot
\Vert _{\alpha }}}\}_{\alpha \in T_{D}\setminus \{D\}}$ be a representation
of a reflexive tensor Banach space $\mathbf{V}_{D_{\Vert \cdot \Vert
_{D}}}=\left. _{\Vert \cdot \Vert _{D}}\bigotimes_{j\in D}V_{j}\right.$ in
the topological tree-based format and assume that \eqref{tree_injective_norm}%
 holds. Let $\mathbf{v}\in \mathcal{FT}_{\mathfrak{r}}(\mathbf{V}_{D}),$
then for each $\dot{\mathbf{u}}\in \mathbf{V}_{D_{\Vert \cdot \Vert _{D}}}$
there exists $\dot{\mathbf{v}}_{best}\in \mathbf{Z}^{(D)}(\mathbf{v})$ such
that 
\begin{equation*}
\Vert \dot{\mathbf{u}}-\dot{\mathbf{v}}_{best}\Vert =\min_{\dot{\mathbf{v}}%
\in \mathbf{Z}^{(D)}(\mathbf{v})}\Vert \dot{\mathbf{u}}-\dot{\mathbf{v}}%
\Vert .
\end{equation*}
\end{corollary}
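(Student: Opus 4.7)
The plan is essentially a one-line deduction from Theorem~\ref{closed_linear_subspace} combined with the classical fact cited immediately before the corollary. First I would invoke Theorem~\ref{closed_linear_subspace}: under the standing assumption \eqref{tree_injective_norm}, the tangent image $\mathbf{Z}^{(D)}(\mathbf{v})$ is an element of the Grassmann manifold $\mathbb{G}(\mathbf{V}_{D_{\Vert \cdot \Vert_D}})$. By the very definition of $\mathbb{G}(\mathbf{V}_{D_{\Vert \cdot \Vert_D}})$, this means $\mathbf{Z}^{(D)}(\mathbf{v})$ is a complemented (and in particular norm-closed) linear subspace of $\mathbf{V}_{D_{\Vert \cdot \Vert_D}}$.

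Next I would appeal to the reflexivity hypothesis on $\mathbf{V}_{D_{\Vert \cdot \Vert_D}}$. The classical result (see the cited reference in \cite{Floret}, p.~61) asserts that in a reflexive Banach space every norm-closed linear subspace is proximinal, i.e.\ each element of the ambient space admits at least one best approximation in the subspace. Applying this to $\mathbf{Z}^{(D)}(\mathbf{v})$ and to an arbitrary $\dot{\mathbf{u}} \in \mathbf{V}_{D_{\Vert \cdot \Vert_D}}$ yields the existence of $\dot{\mathbf{v}}_{best} \in \mathbf{Z}^{(D)}(\mathbf{v})$ realizing the infimum $\min_{\dot{\mathbf{v}} \in \mathbf{Z}^{(D)}(\mathbf{v})} \Vert \dot{\mathbf{u}} - \dot{\mathbf{v}} \Vert_D$, which is exactly the claim of the corollary.

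There is really no obstacle here, since the hard analytic content (verifying that $\mathbf{Z}^{(D)}(\mathbf{v})$ belongs to $\mathbb{G}(\mathbf{V}_{D_{\Vert \cdot \Vert_D}})$) has been absorbed into Theorem~\ref{closed_linear_subspace}, whose proof in turn relies on Lemma~\ref{two_faces} and Lemma~\ref{direct_sum_G}. The only minor point I would make explicit for the reader is that complementedness in the sense of the Grassmann manifold implies closedness, which is immediate from the definition in Section~\ref{preliminary}. Thus the final write-up is two short sentences: one citing Theorem~\ref{closed_linear_subspace} to obtain closedness, and one citing the reflexivity-proximinality principle to conclude existence of $\dot{\mathbf{v}}_{best}$.
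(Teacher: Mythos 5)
Your argument is exactly the paper's: Theorem~\ref{closed_linear_subspace} gives $\mathbf{Z}^{(D)}(\mathbf{v})\in\mathbb{G}(\mathbf{V}_{D_{\Vert\cdot\Vert_{D}}})$, hence a closed subspace, and the corollary then follows from the cited fact that closed subspaces of reflexive Banach spaces are proximinal. This is precisely how the paper derives the statement, so the proposal is correct and takes the same route.
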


Using the standard inclusion map $\mathfrak{i}:\mathcal{FT}_{\leq \mathfrak{r%
}}(\mathbf{V}_{D})\rightarrow \mathbf{V}_{D_{\Vert \cdot \Vert _{D}}}$ the
following result can be shown.

\begin{corollary}
Let $\{\mathbf{V}_{\alpha _{\Vert \cdot \Vert _{\alpha }}}\}_{\alpha \in
T_{D}\setminus \{D\}}$ be a representation of a tensor Banach space $\mathbf{%
V}_{D_{\Vert \cdot \Vert _{D}}}=\left. _{\Vert \cdot \Vert
_{D}}\bigotimes_{j\in D}V_{j}\right. ,$ in the topological tree-based format
and assume that \eqref{tree_injective_norm} holds. Then $\mathcal{FT}_{\leq 
\mathfrak{r}}(\mathbf{V}_{D})$ is an immersed submanifold of $\mathbf{V}%
_{D_{\Vert \cdot \Vert _{D}}}.$
\end{corollary}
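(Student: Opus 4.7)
The plan is to reduce the statement to Theorem~\ref{closed_linear_subspace} via the decomposition \eqref{connected_id}, i.e.\
$$\mathcal{FT}_{\le \mathfrak{r}}(\mathbf{V}_{D}) = \{\mathbf{0}\} \cup \bigcup_{\mathfrak{s} \le \mathfrak{r}} \mathcal{FT}_{\mathfrak{s}}(\mathbf{V}_{D}),$$
the union being taken over admissible tuples. The atlas supplied by Theorem~\ref{Bounded_Banach_Manifold} is, by construction (in analogy with $\mathbb{G}_{\le n}(X)$ of Example~\ref{fixed_rank_grassmann}), set up so that each stratum $\mathcal{FT}_{\mathfrak{s}}(\mathbf{V}_{D})$ is an open connected component of $\mathcal{FT}_{\le \mathfrak{r}}(\mathbf{V}_{D})$ equipped with precisely the manifold structure produced by Theorem~\ref{Tucker_Banach_Manifold}. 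Hence, fixing $\mathbf{v} \in \mathcal{FT}_{\le \mathfrak{r}}(\mathbf{V}_{D}) \setminus \{\mathbf{0}\}$ and letting $\mathfrak{s}$ be the unique admissible tuple with $\mathbf{v} \in \mathcal{FT}_{\mathfrak{s}}(\mathbf{V}_{D})$, the chart of the larger manifold at $\mathbf{v}$ coincides with the chart of $\mathcal{FT}_{\mathfrak{s}}(\mathbf{V}_{D})$ at $\mathbf{v}$, and the inclusion $\mathfrak{i} : \mathcal{FT}_{\le \mathfrak{r}}(\mathbf{V}_{D}) \to \mathbf{V}_{D_{\Vert \cdot \Vert _{D}}}$ factors locally through $\mathcal{FT}_{\mathfrak{s}}(\mathbf{V}_{D})$, being expressed in local coordinates by the formula of Proposition~\ref{characterization_tangent_map}.

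From this local factorisation the three ingredients required by Proposition~\ref{prop_inmersion} are inherited directly from Theorem~\ref{closed_linear_subspace}: the map $\mathfrak{i}$ is Fr\'echet differentiable at $\mathbf{v}$ (so it is a morphism), $\mathrm{T}_{\mathbf{v}}\mathfrak{i}$ is injective, and its image $\mathbf{Z}^{(D)}(\mathbf{v})$ is a complemented subspace of $\mathbf{V}_{D_{\Vert \cdot \Vert _{D}}}$. Since $\mathfrak{i}$ is set-theoretically the inclusion, global injectivity is immediate. Thus $\mathfrak{i}$ is an injective immersion at every non-zero point of $\mathcal{FT}_{\le \mathfrak{r}}(\mathbf{V}_{D})$, and the image is locally an immersed submanifold there.

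The only point which requires separate consideration is $\mathbf{v} = \mathbf{0}$. One handles it either by viewing $\mathbf{0}$ as an element of the stratum associated to the trivial admissible tuple (all non-leaf components of $\mathfrak{s}$ equal to zero), in which case the construction in Section~\ref{sec:banach_manifold_tucker_fixed_rank} applies verbatim and yields $\mathbf{Z}^{(D)}(\mathbf{0}) = \{\mathbf{0}\} \in \mathbb{G}(\mathbf{V}_{D_{\Vert \cdot \Vert _{D}}})$; or by noting that $\mathbf{0}$ is isolated in its stratum and checking the immersion condition at $\mathbf{0}$ directly from the definition of $\chi_{\mathfrak{r}}(\mathbf{0})$. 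Combining pointwise immersion with the injectivity of $\mathfrak{i}$ gives the conclusion. The only real obstacle is the bookkeeping across the stratification by $\mathfrak{s}$, and this was already resolved in Theorem~\ref{Bounded_Banach_Manifold} so no new analytic input is needed here.
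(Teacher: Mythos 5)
Your proposal is correct and follows exactly the route the paper intends: the paper offers no written proof beyond invoking the inclusion map $\mathfrak{i}:\mathcal{FT}_{\leq \mathfrak{r}}(\mathbf{V}_{D})\rightarrow \mathbf{V}_{D_{\Vert \cdot \Vert _{D}}}$, and the implicit argument is precisely your stratification of $\mathcal{FT}_{\leq \mathfrak{r}}(\mathbf{V}_{D})$ via \eqref{connected_id} into the components $\mathcal{FT}_{\mathfrak{s}}(\mathbf{V}_{D})$, on each of which Theorem~\ref{closed_linear_subspace} supplies the immersion property. Your separate treatment of $\mathbf{v}=\mathbf{0}$ as an isolated (trivially immersed) stratum is a detail the paper glosses over, and it is handled correctly.
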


\section{On the Dirac--Frenkel variational principle on tensor Banach spaces}

\label{Dirac_Frenkel}

\subsection{Model Reduction in tensor Banach spaces}

In this section we consider the abstract ordinary differential equation in a
reflexive tensor Banach space $\mathbf{V}_{D_{\Vert \cdot \Vert _{D}}},$
given by 
\begin{align}
\dot{\mathbf{u}}(t)& =\mathbf{F}(t,\mathbf{u}(t)),\text{ for }t\geq 0
\label{BODE1} \\
\mathbf{u}(0)& =\mathbf{u}_{0},  \label{BODE2}
\end{align}%
where we assume $\mathbf{u}_{0}\neq \mathbf{0}$ and $\mathbf{F}:[0,\infty
)\times \mathbf{V}_{D_{\Vert \cdot \Vert _{D}}}\longrightarrow \mathbf{V}%
_{D_{\Vert \cdot \Vert _{D}}}$ satisfying the usual conditions 
to have existence and unicity of solutions. Let $\{%
\mathbf{V}_{\alpha _{\Vert \cdot \Vert _{\alpha }}}\}_{\alpha \in
T_{D}\setminus \{D\}}$ be a representation of $\mathbf{V}_{D_{\Vert \cdot
\Vert _{D}}}=\left. _{\Vert \cdot \Vert _{D}}\bigotimes_{j\in D}V_{j}\right. 
$ in the topological tree-based format and assume that (\ref%
{tree_injective_norm}) holds. As usual we will consider $\mathbf{V}%
_{D}=\left. _{a}\bigotimes_{j\in D}V_{j}\right. .$ We want to approximate $%
\mathbf{u}(t),$ for $t\in I:=(0,\varepsilon )$ for some $\varepsilon >0,$ by
a differentiable curve $t\mapsto \mathbf{v}_{r}(t)$ from $I$ to $\mathcal{FT}%
_{\mathfrak{r}}(\mathbf{V}_{D}),$ where $\mathfrak{r}\in \mathbb{N}^{T_{D}}$
is such that $\mathbf{v}_{r}(0)=\mathbf{u}(0)=\mathbf{u}_{0}\in \mathcal{FT}%
_{\mathfrak{r}}(\mathbf{V}_{D}).$

\bigskip

Our main goal is to construct a Reduced Order Model of \eqref{BODE1}--\eqref%
{BODE2} over the Banach manifold $\mathcal{FT}_{\mathfrak{r}}(\mathbf{V}%
_{D}).$ Since $\mathbf{F}(t,\mathbf{v}_{r}(t))\in \mathbf{V}_{D_{\Vert
\cdot \Vert _{D}}},$ for each $t\in I,$ and $\mathbf{Z}^{(D)}(\mathbf{v}%
_{r}(t))$ is a closed linear subspace in $\mathbf{V}_{D_{\Vert \cdot \Vert
_{D}}},$ we have the existence of a $\dot{\mathbf{v}}_{r}(t)\in \mathbf{Z}%
^{(D)}(\mathbf{v}_{r}(t))$ such that 
\begin{equation*}
\Vert \dot{\mathbf{v}}_{r}(t)-\mathbf{F}(t,\mathbf{v}_{r}(t))\Vert
_{D}=\min_{\dot{\mathbf{v}}(t)\in \mathbf{Z}^{(D)}(\mathbf{v}_{r}(t))}\Vert 
\dot{\mathbf{v}}(t)-\mathbf{F}(t,\mathbf{v}_{r}(t))\Vert _{D}.
\end{equation*}
It is well known that, if $\mathbf{V}_{D_{\Vert \cdot \Vert _{D}}}$ is a
Hilbert space, then $\dot{\mathbf{v}}_{r}(t)=\mathcal{P}_{\mathbf{v}_{r}(t)}(%
\mathbf{F}(t,\mathbf{v}_{r}(t))),$ where 
\begin{equation*}
\mathcal{P}_{\mathbf{v}_{r}(t)}=\mathcal{P}_{\mathbf{Z}^{(D)}(\mathbf{v}%
_{r}(t))\oplus \mathbf{Z}^{(D)}(\mathbf{v}_{r}(t))^{\bot }}
\end{equation*}%
is called the \emph{metric projection.} It has the following important
property: $\dot{\mathbf{v}}_{r}(t)=\mathcal{P}_{\mathbf{v}_{r}(t)}(\mathbf{F}%
(t,\mathbf{v}_{r}(t)))$ if and only if 
\begin{equation*}
\langle \dot{\mathbf{v}}_{r}(t)-\mathbf{F}(t,\mathbf{v}_{r}(t)),\dot{\mathbf{%
v}}(t)\rangle _{D}=0\text{ for all }\dot{\mathbf{v}}(t)\in \mathbf{Z}^{(D)}(%
\mathbf{v}_{r}(t)).
\end{equation*}

The concept of a metric projection can be extended to the Banach space
setting. To this end we recall the following definitions. A Banach space $X$
with norm $\Vert \cdot \Vert $ is said to be \emph{strictly convex} if $%
\Vert x+y\Vert /2<1$ for all $x,y\in X$ with $\Vert x\Vert =\Vert y\Vert =1$
and $x\neq y.$ It is \emph{uniformly convex} if $\lim_{n\rightarrow \infty
}\Vert x_{n}-y_{n}\Vert =0$ for any two sequences $\{x_{n}\}_{n\in \mathbb{N}%
}$ and $\{y_{n}\}_{n\in \mathbb{N}}$ such that $\Vert x_{n}\Vert =\Vert
y_{n}\Vert =1$ and $\lim_{n\rightarrow \infty }\Vert x_{n}+y_{n}\Vert /2=1.$
It is known that a uniformly convex Banach space is reflexive and strictly
convex. A Banach space $X$ is said to be \emph{smooth} if the limit 
\begin{equation*}
\lim_{t\rightarrow 0}\frac{\Vert x+ty\Vert -\Vert x\Vert }{t}
\end{equation*}%
exists for all $x,y\in U:=\{z\in X:\Vert z\Vert =1\}.$ Finally, a Banach
space $X$ is said to be \emph{uniformly smooth} if its modulus of smoothness 
\begin{equation*}
\rho (\tau )=\sup_{x,y\in U}\left\{ \frac{\Vert x+\tau y\Vert +\Vert x-\tau
y\Vert }{2}-1\right\} ,\,\tau >0,
\end{equation*}%
satisfies the condition $\lim_{\tau \rightarrow 0}\rho (\tau )=0.$ In
uniformly smooth spaces, and only in such spaces, the norm is uniformly Fr%
\'{e}chet differentiable. A uniformly smooth Banach space is smooth. The
converse is true if the Banach space is finite dimensional. It is known that
the space $L^{p}$ $(1<p<\infty )$ is a uniformly convex and uniformly smooth
Banach space.

\bigskip

Let $\langle \cdot ,\cdot \rangle :X\times X^{\ast }\longrightarrow \mathbb{R%
}$ denote the duality pairing, i.e.,%
\begin{equation*}
\langle x,f\rangle :=f(x).
\end{equation*}%
The normalised duality mapping $J:X\longrightarrow 2^{X^{\ast }}$ is defined
by 
\begin{equation*}
J(x):=\{f\in X^{\ast }:\langle x,f\rangle =\Vert x\Vert ^{2}=(\Vert f\Vert
^{\ast })^{2}\},
\end{equation*}%
and it has the following properties (see \cite%
{Alber}):

\begin{enumerate}
\item[(a)] If $X$ is smooth, the map $J$ is single-valued;

\item[(b)] if $X$ is smooth, then $J$ is norm--to--weak$^{\ast}$ continuous;

\item[(c)] if $X$ is uniformly smooth, then $J$ is uniformly norm--to--norm
continuous on each bounded subset of $X.$
\end{enumerate}

\begin{remark}
Notice that, in a Hilbert space and after identifying $X$ with $X^*,$ 
it can be shown (see Proposition~4.8(i) in \cite{Cioranescu}) 
that the normalised duality mapping is the identity operator. 
\end{remark}

Let $\{\mathbf{V}_{\alpha _{\Vert \cdot \Vert _{\alpha }}}\}_{\alpha \in
T_{D}\setminus \{D\}}$ be a representation of a reflexive and strictly convex
tensor Banach space $\mathbf{V}_{D_{\Vert \cdot \Vert _{D}}}=\left. _{\Vert
\cdot \Vert _{D}}\bigotimes_{j\in D}V_{j}\right. ,$ in the topological 
tree-based format and assume that \eqref{tree_injective_norm} holds. For $%
\mathbf{F}(t,\mathbf{v}_{r}(t))$ in $\mathbf{V}_{D_{\Vert \cdot \Vert _{D}}},
$ with a fixed $t\in I,$ it is known that the set 
\begin{equation*}
\left\{ \dot{\mathbf{v}}_{r}(t):\Vert \dot{\mathbf{v}}_{r}(t)-\mathbf{F}(t,%
\mathbf{v}_{r}(t))\Vert _{D}=\min_{\dot{\mathbf{v}}(t)\in \mathbf{Z}^{(D)}(%
\mathbf{v}_{r}(t))}\Vert \dot{\mathbf{v}}(t)-\mathbf{F}(t,\mathbf{v}%
_{r}(t))\Vert _{D}\right\}
\end{equation*}%
is always a singleton. Let $\mathcal{P}_{\mathbf{v}_{r}(t)}$ be the mapping
from $\mathbf{V}_{D_{\Vert \cdot \Vert _{D}}}$ onto $\mathbf{Z}^{(D)}(\mathbf{v%
}_{r}(t))$ defined by $\dot{\mathbf{v}}_{r}(t):=\mathcal{P}_{\mathbf{v}%
_{r}(t)}(\mathbf{F}(t,\mathbf{v}_{r}(t)))$ if and only if 
\begin{equation*}
\Vert \dot{\mathbf{v}}_{r}(t)-\mathbf{F}(t,\mathbf{v}_{r}(t))\Vert
_{D}=\min_{\dot{\mathbf{v}}(t)\in \mathbf{Z}^{(D)}(\mathbf{v}_{r}(t))}\Vert 
\dot{\mathbf{v}}(t)-\mathbf{F}(t,\mathbf{v}_{r}(t))\Vert _{D}.
\end{equation*}%
It is also called \emph{the metric projection.} The classical
characterisation of the metric projection allows us to state the next result.

\begin{theorem}
\label{th_metric_projection} Let $\{\mathbf{V}_{\alpha _{\Vert \cdot \Vert
_{\alpha }}}\}_{\alpha \in T_{D}\setminus \{D\}}$ be a representation of
reflexive and strictly convex tensor Banach space $\mathbf{V}_{D_{\Vert
\cdot \Vert _{D}}}=\left. _{\Vert \cdot \Vert _{D}}\bigotimes_{j\in
D}V_{j}\right. $ in the topological tree-based format and assume that (\ref%
{tree_injective_norm}) holds. Then for each $t\in I$ we have 
\begin{equation*}
\dot{\mathbf{v}}_{r}(t)=\mathcal{P}_{\mathbf{v}_{r}(t)}(\mathbf{F}(t,\mathbf{%
v}_{r}(t)))
\end{equation*}%
if and only if 
\begin{equation*}
\langle \dot{\mathbf{v}}_{r}(t)-\dot{\mathbf{v}}(t),J(\mathbf{F}(t,\mathbf{v}%
_{r}(t))-\dot{\mathbf{v}}_{r}(t))\rangle \geq 0\text{ for all }\dot{\mathbf{v%
}}(t)\in \mathbf{Z}^{(D)}(\mathbf{v}_{r}(t)).
\end{equation*}
\end{theorem}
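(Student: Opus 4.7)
The plan is to recognize this statement as the classical variational characterization of the metric projection onto a closed convex subset of a reflexive, strictly convex Banach space, applied here to the subspace $\mathbf{Z}^{(D)}(\mathbf{v}_r(t))$. The hard work has already been done in Theorem~\ref{closed_linear_subspace}, which identifies $\mathbf{Z}^{(D)}(\mathbf{v}_r(t))$ as a complemented (hence closed) linear subspace of $\mathbf{V}_{D_{\Vert\cdot\Vert_D}}$. Reflexivity then gives proximinality (as in Corollary~\ref{approximation_corollary}), while strict convexity of $\Vert\cdot\Vert_D$ gives uniqueness of the nearest point, so $\mathcal{P}_{\mathbf{v}_r(t)}$ is well-defined as a single-valued mapping onto $\mathbf{Z}^{(D)}(\mathbf{v}_r(t))$.

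For the forward implication, I would fix $\dot{\mathbf{v}}(t)\in \mathbf{Z}^{(D)}(\mathbf{v}_r(t))$ and, using that $\mathbf{Z}^{(D)}(\mathbf{v}_r(t))$ is linear (hence convex), introduce the auxiliary scalar function
\begin{equation*}
\phi(\lambda):=\tfrac{1}{2}\bigl\Vert \mathbf{F}(t,\mathbf{v}_r(t))-(1-\lambda)\dot{\mathbf{v}}_r(t)-\lambda\dot{\mathbf{v}}(t)\bigr\Vert_D^2,\qquad \lambda\in[0,1].
\end{equation*}
The minimizing property of $\dot{\mathbf{v}}_r(t)$ forces $\phi$ to attain its minimum at $\lambda=0$, so its right derivative there is non-negative. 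Since $J$ is (a selection of) the subdifferential of $\tfrac{1}{2}\Vert\cdot\Vert_D^2$, computing the one-sided derivative yields
\begin{equation*}
0\leq \phi'(0^+)=-\bigl\langle \dot{\mathbf{v}}(t)-\dot{\mathbf{v}}_r(t),\,J(\mathbf{F}(t,\mathbf{v}_r(t))-\dot{\mathbf{v}}_r(t))\bigr\rangle,
\end{equation*}
which is exactly the claimed inequality.

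For the converse, I would use the elementary convexity inequality for the square norm, valid in any Banach space: for every $x,y\in\mathbf{V}_{D_{\Vert\cdot\Vert_D}}$,
\begin{equation*}
\tfrac{1}{2}\Vert y\Vert_D^2\geq \tfrac{1}{2}\Vert x\Vert_D^2+\langle y-x,\,J(x)\rangle.
\end{equation*}
Applying this with $x:=\mathbf{F}(t,\mathbf{v}_r(t))-\dot{\mathbf{v}}_r(t)$ and $y:=\mathbf{F}(t,\mathbf{v}_r(t))-\dot{\mathbf{v}}(t)$ gives
\begin{equation*}
\tfrac{1}{2}\Vert \mathbf{F}(t,\mathbf{v}_r(t))-\dot{\mathbf{v}}(t)\Vert_D^2\geq \tfrac{1}{2}\Vert \mathbf{F}(t,\mathbf{v}_r(t))-\dot{\mathbf{v}}_r(t)\Vert_D^2+\bigl\langle \dot{\mathbf{v}}_r(t)-\dot{\mathbf{v}}(t),\,J(\mathbf{F}(t,\mathbf{v}_r(t))-\dot{\mathbf{v}}_r(t))\bigr\rangle,
\end{equation*}
and the hypothesis makes the inner-product term non-negative. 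Since this holds for every $\dot{\mathbf{v}}(t)\in\mathbf{Z}^{(D)}(\mathbf{v}_r(t))$, uniqueness of the best approximant identifies $\dot{\mathbf{v}}_r(t)$ with $\mathcal{P}_{\mathbf{v}_r(t)}(\mathbf{F}(t,\mathbf{v}_r(t)))$.

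The only genuine subtlety is notational: to interpret $J$ as single-valued one must have smoothness of $\mathbf{V}_{D_{\Vert\cdot\Vert_D}}$, which is not explicitly hypothesised; if $J$ is taken set-valued the same subdifferential inequality goes through with any selection, so the theorem is robust. The real content is geometric and has already been supplied by Theorem~\ref{closed_linear_subspace}, which guarantees that the candidate tangent-space image is a bona fide closed linear subspace of the ambient tensor Banach space and therefore admits the standard Banach-space projection theory.
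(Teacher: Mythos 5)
Your proof is correct and is exactly the ``classical characterisation of the metric projection'' that the paper invokes without writing out: the paper gives no proof of Theorem~\ref{th_metric_projection}, so your argument (one-sided derivative of $\lambda\mapsto\tfrac12\Vert \mathbf{F}-(1-\lambda)\dot{\mathbf{v}}_r-\lambda\dot{\mathbf{v}}\Vert_D^2$ at $\lambda=0$ for necessity, the subdifferential inequality for $\tfrac12\Vert\cdot\Vert_D^2$ for sufficiency, with closedness of $\mathbf{Z}^{(D)}(\mathbf{v}_r(t))$ from Theorem~\ref{closed_linear_subspace}, reflexivity for existence and strict convexity for uniqueness) supplies precisely the standard argument being cited. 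Your side remark is also well taken: the paper defines $J$ as a set-valued map into $2^{X^\ast}$ and only assumes reflexivity and strict convexity here, so for the displayed pairing $\langle\cdot,J(\cdot)\rangle$ to be unambiguous one either needs smoothness (imposed only in the later Theorem~\ref{th_generalized_projection}) or must read the statement with a selection of $J$, exactly as you observe.
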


\bigskip

An alternative approach is the use of the so-called \emph{generalised
projection operator} (see also \cite{Alber}). To formulate this, we will use
the following framework. Let $T_D$ a given tree and assume that for each $%
\alpha \in T_D$ we have a Banach space $\mathbf{V}_{\alpha_{\|\cdot\|_{%
\alpha}}},$ such that \eqref{tree_injective_norm} holds and where $\mathbf{V}%
_{D_{\Vert \cdot \Vert _{D}}}$ is a reflexive, strictly convex and smooth
tensor Banach space. Following \cite{KamiTaka}, we can define a function $%
\phi :\mathbf{V}_{D_{\Vert \cdot \Vert _{D}}}\times \mathbf{V}_{D_{\Vert
\cdot \Vert _{D}}}\longrightarrow \mathbb{R}$ by 
\begin{equation*}
\phi (\mathbf{u},\mathbf{v})=\Vert \mathbf{u}\Vert _{D}^{2}-2\langle \mathbf{%
u},J(\mathbf{v})\rangle +\Vert \mathbf{v}\Vert _{D}^{2},
\end{equation*}%
where $\langle \cdot ,\cdot \rangle $ denotes the duality pairing and $J$ is the
normalised duality mapping. It is known that the set 
\begin{equation*}
\left\{ \dot{\mathbf{v}}_{r}(t):\phi (\dot{\mathbf{v}}_{r}(t),\mathbf{F}(t,%
\mathbf{v}_{r}(t)))=\min_{\dot{\mathbf{v}}(t)\in \mathbf{Z}^{(D)}(\mathbf{v}%
_{r}(t))}\phi (\dot{\mathbf{v}}(t),\mathbf{F}(t,\mathbf{v}_{r}(t)))\right\}
\end{equation*}%
is always a singleton. It allows us to define a map $\Pi _{\mathbf{v}%
_{r}(t)}:\mathbf{V}_{D_{\Vert \cdot \Vert _{D}}}\longrightarrow \mathbf{Z}%
^{(D)}(\mathbf{v}_{r}(t)) $ by $\dot{\mathbf{v}}_{r}(t):=\Pi _{\mathbf{v}%
_{r}(t)}(\mathbf{F}(t,\mathbf{v}_{r}(t)))$ if and only if 
\begin{equation*}
\phi (\dot{\mathbf{v}}_{r}(t),\mathbf{F}(t,\mathbf{v}_{r}(t)))=\min_{\dot{%
\mathbf{v}}(t)\in \mathbf{Z}^{(D)}(\mathbf{v}_{r}(t))}\phi (\dot{\mathbf{v}}%
(t),\mathbf{F}(t,\mathbf{v}_{r}(t))).
\end{equation*}%
The map $\Pi _{\mathbf{v}_{r}(t)}$ is called \emph{the generalised
projection.} It coincides with the metric projection when $\mathbf{V}%
_{D_{\Vert \cdot \Vert _{D}}}$ is a Hilbert space.

\begin{remark}
We point out that, in general, the operators $\mathcal{P}_{\mathbf{v}_r(t)} $
and $\Pi_{\mathbf{v}_r(t)}$ are nonlinear in Banach (not Hilbert) spaces.
\end{remark}

Again, a classical characterisation of the generalised projection gives us
the following theorem.

\begin{theorem}
\label{th_generalized_projection} Let $\{\mathbf{V}_{\alpha _{\Vert \cdot
\Vert _{\alpha }}}\}_{\alpha \in T_{D}\setminus \{D\}}$ be a representation
of reflexive, strictly convex and smooth tensor Banach space $\mathbf{V}%
_{D_{\Vert \cdot \Vert _{D}}}=\left. _{\Vert \cdot \Vert
_{D}}\bigotimes_{j\in D}V_{j}\right. $ in the topological tree-based format
and assume that \eqref{tree_injective_norm} holds. Then for each $t\in I$ we
have 
\begin{equation*}
\dot{\mathbf{v}}_{r}(t)=\Pi _{\mathbf{v}_{r}(t)}(\mathbf{F}(t,\mathbf{v}%
_{r}(t)))
\end{equation*}%
if and only if 
\begin{equation*}
\langle \dot{\mathbf{v}}_{r}(t)-\dot{\mathbf{v}}(t),J(\mathbf{F}(t,\mathbf{v}%
_{r}(t)))-J(\dot{\mathbf{v}}_{r}(t))\rangle \geq 0\text{ for all }\dot{%
\mathbf{v}}(t)\in \mathbf{Z}^{(D)}(\mathbf{v}_{r}(t)).
\end{equation*}
\end{theorem}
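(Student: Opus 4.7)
The plan is to recognise that Theorem~\ref{th_generalized_projection} is an instance of the classical variational characterisation of the generalised projection onto a closed convex subset of a reflexive, strictly convex and smooth Banach space. The hypotheses allow me to invoke the framework set up by Alber in \cite{Alber}: strict convexity guarantees that the minimiser of $\phi(\cdot,\mathbf{F}(t,\mathbf{v}_{r}(t)))$ on the closed convex set is unique, and smoothness guarantees that the normalised duality mapping $J$ is single-valued and equals (up to the factor $\tfrac{1}{2}$) the Fr\'echet derivative of the squared norm. Moreover, by Theorem~\ref{closed_linear_subspace}, $\mathbf{Z}^{(D)}(\mathbf{v}_{r}(t))$ is a closed linear subspace of $\mathbf{V}_{D_{\Vert\cdot\Vert_{D}}}$, which in particular is closed convex, so the generalised projection $\Pi_{\mathbf{v}_{r}(t)}$ is well-defined.

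For the direct implication, I would fix $t\in I$, write $\mathbf{w}:=\mathbf{F}(t,\mathbf{v}_{r}(t))$ and $\dot{\mathbf{z}}:=\dot{\mathbf{v}}_{r}(t)=\Pi_{\mathbf{v}_{r}(t)}(\mathbf{w})$. For any $\dot{\mathbf{v}}\in \mathbf{Z}^{(D)}(\mathbf{v}_{r}(t))$ and $\lambda\in (0,1)$, convexity of the subspace gives $\dot{\mathbf{z}}+\lambda(\dot{\mathbf{v}}-\dot{\mathbf{z}})\in \mathbf{Z}^{(D)}(\mathbf{v}_{r}(t))$, so the scalar function
\begin{equation*}
h(\lambda):=\phi\!\left(\dot{\mathbf{z}}+\lambda(\dot{\mathbf{v}}-\dot{\mathbf{z}}),\mathbf{w}\right)=\left\Vert \dot{\mathbf{z}}+\lambda(\dot{\mathbf{v}}-\dot{\mathbf{z}})\right\Vert_{D}^{2}-2\langle \dot{\mathbf{z}}+\lambda(\dot{\mathbf{v}}-\dot{\mathbf{z}}),J(\mathbf{w})\rangle +\Vert \mathbf{w}\Vert_{D}^{2}
\end{equation*}
is minimised at $\lambda=0$. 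Since smoothness gives $\tfrac{d}{d\lambda}\Vert u+\lambda y\Vert_{D}^{2}\big|_{\lambda=0}=2\langle y,J(u)\rangle$, the right derivative $h'(0^{+})=2\langle \dot{\mathbf{v}}-\dot{\mathbf{z}},J(\dot{\mathbf{z}})-J(\mathbf{w})\rangle\geq 0$, which is exactly the required inequality after rearrangement.

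For the converse, I would use that the functional $u\mapsto \phi(u,\mathbf{w})$ is convex (as the sum of the convex $\Vert u\Vert_{D}^{2}$, the linear $-2\langle u,J(\mathbf{w})\rangle$ and a constant) and Gâteaux differentiable with derivative $2(J(u)-J(\mathbf{w}))$ by smoothness. The standard subgradient inequality for a convex Gâteaux-differentiable functional then yields
\begin{equation*}
\phi(\dot{\mathbf{v}},\mathbf{w})\geq \phi(\dot{\mathbf{z}},\mathbf{w})+2\langle \dot{\mathbf{v}}-\dot{\mathbf{z}},J(\dot{\mathbf{z}})-J(\mathbf{w})\rangle
\end{equation*}
for all $\dot{\mathbf{v}}\in \mathbf{Z}^{(D)}(\mathbf{v}_{r}(t))$. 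The assumed inequality is precisely the statement that the last term is non-negative, so $\dot{\mathbf{z}}$ minimises $\phi(\cdot,\mathbf{w})$ over $\mathbf{Z}^{(D)}(\mathbf{v}_{r}(t))$; uniqueness of the minimiser (strict convexity) forces $\dot{\mathbf{z}}=\Pi_{\mathbf{v}_{r}(t)}(\mathbf{w})$.

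The bulk of the work has already been done in Theorem~\ref{closed_linear_subspace} (making $\mathbf{Z}^{(D)}(\mathbf{v}_{r}(t))$ a closed, hence proximinal and Chebyshev, set) and in the cited properties of $J$; the only genuinely technical point is justifying the differentiation of $\Vert \cdot\Vert_{D}^{2}$ at $\dot{\mathbf{z}}+\lambda(\dot{\mathbf{v}}-\dot{\mathbf{z}})$, which I would handle by appealing to the Gâteaux differentiability equivalence with smoothness and the identification of the derivative with $2J$ (Asplund's theorem). I do not anticipate a serious obstacle, but I would be careful to distinguish the duality pairing $\langle\cdot,\cdot\rangle$ acting between $\mathbf{V}_{D_{\Vert\cdot\Vert_{D}}}$ and its dual from the $\mathbb{R}$-valued evaluation used here, so that the variational inequality is unambiguous.
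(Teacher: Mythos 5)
Your proposal is correct and follows exactly the route the paper intends: the paper offers no written proof, simply invoking the ``classical characterisation of the generalised projection'' from Alber's theory, and your argument is precisely that classical characterisation spelled out (first-variation inequality for the forward direction, convexity plus the subgradient inequality and uniqueness of the minimiser for the converse). The only ingredients you need beyond Theorem~\ref{closed_linear_subspace} (which makes $\mathbf{Z}^{(D)}(\mathbf{v}_{r}(t))$ a closed linear, hence closed convex, subspace) are the standard facts that smoothness makes $\tfrac{1}{2}\Vert\cdot\Vert_{D}^{2}$ G\^{a}teaux differentiable with gradient $J$ and that strict convexity plus reflexivity gives a unique minimiser of $\phi(\cdot,\mathbf{w})$, both of which you correctly identify.
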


\subsection{The time--dependent Hartree method}

Let $\left\langle \cdot,\cdot\right\rangle _{j}$ be a scalar product defined
on $V_{j}$ $\left( 1\leq j\leq d\right) $, i.e., $V_{j}$ is a pre-Hilbert
space. Then $\mathbf{V}=\left. _{a}\bigotimes_{j=1}^{d}V_{j}\right. $ is
again a pre-Hilbert space with a scalar product which is defined for
elementary tensors $\mathbf{v}=\bigotimes_{j=1}^{d}v^{(j)}$ and $\mathbf{w}%
=\bigotimes_{j=1}^{d}w^{(j)}$ by%
\begin{equation}
\left\langle \mathbf{v,w}\right\rangle =\left\langle
\bigotimes_{j=1}^{d}v^{(j)},\bigotimes_{j=1}^{d}w^{(j)}\right\rangle
:=\prod_{j=1}^{d}\left\langle v^{(j)},w^{(j)}\right\rangle _{j}\qquad\text{%
for all }v^{(j)},w^{(j)}\in V_{j}.
\label{(Skalarprodukt fur Elementarprodukte}
\end{equation}
This bilinear form has a unique extension $\left\langle \cdot,\cdot
\right\rangle :\mathbf{V}\times\mathbf{V}\rightarrow\mathbb{R}.$ One
verifies that $\left\langle \cdot,\cdot\right\rangle $ is a scalar product,
called the \emph{induced scalar product}. Let $\mathbf{V}$ be equipped with
the norm $\left\Vert \cdot\right\Vert $ corresponding to the induced scalar
product $\left\langle \cdot,\cdot\right\rangle .$ As usual, the Hilbert
tensor space $\mathbf{V}_{\left\Vert \cdot\right\Vert }=\left. _{\left\Vert
\cdot \right\Vert }\bigotimes_{j=1}^{d}V_{j}\right. $ is the completion of $%
\mathbf{V}$ with respect to $\left\Vert \cdot\right\Vert $. Since the norm $%
\left\Vert \cdot\right\Vert $ is derived via \eqref{(Skalarprodukt fur
Elementarprodukte}, it is easy to see that $\left\Vert \cdot\right\Vert $
is a reasonable and even uniform crossnorm.

Let us consider in $\mathbf{V}_{\|\cdot\|}$ a flow generated by a densely
defined operator $A \in L(\mathbf{V}_{\|\cdot\|},\mathbf{V}_{\|\cdot\|}).$
More precisely, there exists a collection of bijective maps $\boldsymbol{%
\varphi}_t:\mathcal{D}(A) \longrightarrow \mathcal{D}(A),$ here $\mathcal{D}%
(A)$ denotes the domain of $A,$ satisfying

\begin{enumerate}
\item[(i)] $\boldsymbol{\varphi}_0 = \mathbf{id},$

\item[(ii)] $\boldsymbol{\varphi}_{t+s} = \boldsymbol{\varphi}_t \circ 
\boldsymbol{\varphi}_s,$ and

\item[(iii)] for $\mathbf{u}_0 \in \mathcal{D}(A),$ the map $t \mapsto 
\boldsymbol{\varphi}_t$ is differentiable as a curve in $\mathbf{V}%
_{\|\cdot\|},$ and $\mathbf{u}(t):= \boldsymbol{\varphi}_t(\mathbf{u}_0)$
satisfies 
\begin{align*}
\dot{\mathbf{u}} & = A \mathbf{u}, \\
\mathbf{u}(0) & = \mathbf{u}_0.
\end{align*}
\end{enumerate}

In this framework we want to study the approximation of a solution $\mathbf{u%
}(t)=\boldsymbol{\varphi }_{t}(\mathbf{u}_{0})\in \mathbf{V}_{\Vert \cdot
\Vert }$ by a curve $\mathbf{v}_{r}(t):=\lambda (t)\otimes
_{j=1}^{d}v_{j}(t) $ in the Hilbert manifold $\mathcal{M}_{(1,\ldots ,1)}(%
\mathbf{V}),$ also called in \cite{Lubish} the \emph{Hartree manifold.} The
time--dependent Hartree method consists in the use of the Dirac--Frenkel
variational principle on the Hartree manifold. More precisely, we want to
solve the following Reduced Order Model: 
\begin{align*}
\dot{\mathbf{v}}_{r}(t)& =\mathcal{P}_{\mathbf{v}_{r}(t)}(A\mathbf{v}_{r}(t))%
\text{ for }t\in I, \\
\mathbf{v}_{r}(0)& =\mathbf{v}_{0},
\end{align*}%
with $\mathbf{v}_{0}=\lambda _{0}\otimes _{j=1}^{d}v_{0}^{(j)}\in \mathcal{M}%
_{(1,\ldots ,1)}(\mathbf{V})$ being an approximation of $\mathbf{u}_{0}$%
\footnote{%
Indeed, $\mathbf{v}_{0}$ can be chosen as the best approximation of $\mathbf{%
u}_{0}$ in $\mathcal{M}_{(1,\ldots ,1)}(\mathbf{V})$ because $\mathcal{M}%
_{(1,\ldots ,1)}(\mathbf{V})=\mathcal{T}_{(1,\ldots ,1)}(\mathbf{V}%
)\setminus \{\mathbf{0}\}.$}. By using the characterisation of the metric
projection in a Hilbert space, for each $t>0$ we would like to find $\dot{%
\mathbf{v}}_{r}(t)\in \mathrm{T}_{\mathbf{v}_{r}(t)}\mathfrak{i}\left( 
\mathbb{T}_{\mathbf{v}_{r}(t)}(\mathcal{M}_{(1,\ldots ,1)}(\mathbf{V}%
))\right) $ such that 
\begin{align}
\langle \dot{\mathbf{v}}_{r}(t)-A\mathbf{v}_{r}(t),\dot{\mathbf{v}}%
(t)\rangle & =0\text{ for all }\dot{\mathbf{v}}(t)\in \mathrm{T}_{\mathbf{v}%
_{r}(t)}\mathfrak{i}\left( \mathbb{T}_{\mathbf{v}_{r}(t)}(\mathcal{M}%
_{(1,\ldots ,1)}(\mathbf{V}))\right),  \label{eq:11}
\end{align}
\begin{align*}
\mathbf{v}_{r}(0)& =\mathbf{v}_{0}=\lambda _{0}\otimes _{j=1}^{d}v_{0}^{(j)},
\end{align*}%
and where, without loss of generality, we may assume $\Vert v_{0}^{(j)}\Vert
_{j}=1$ for $1\leq j\leq d.$ A first result is the following Lemma.

\begin{lemma}
\label{previous_HF} Let $\mathbf{v}\in \mathcal{C}^{1}(I,\mathcal{U}(\mathbf{%
v}_{0})),$ where $\mathbf{v}(0)=\mathbf{v}_{0}\in \mathcal{M}_{(1,\ldots
,1)}(\mathbf{V})$ and $(\mathcal{U}(\mathbf{v}_{0}),\Theta _{\mathbf{v}%
_{0}}) $ is a local chart for $\mathbf{v}_{0}$ in $\mathcal{M}_{(1,\ldots
,1)}(\mathbf{V}).$ Assume that $\mathbf{v}$ is also a $\mathcal{C}^{1}$%
-morphism between the manifolds $I\subset \mathbb{R}$ and $\mathcal{U}(%
\mathbf{v}_{0})\subset \mathcal{M}_{(1,\ldots ,1)}(\mathbf{V})$ such that $%
\mathbf{v}(t)=\lambda (t)\bigotimes_{j=1}^{d}v_{j}(t)$ for some $\lambda \in 
\mathcal{C}^{1}(I,\mathbb{R})$ and $v_{j}\in \mathcal{C}^{1}(I,V_{j})$ for $%
1\leq j\leq d.$ Then 
\begin{equation}  \label{curve_derivative}
\dot{\mathbf{v}}(t)=\dot{\lambda}(t)\bigotimes_{j=1}^{d}v_{j}(t)+\lambda
(t)\sum_{j=1}^{d}\dot{v}_{j}(t)\otimes \bigotimes_{k\neq j}v_{k}(t)=\mathrm{T%
}_{\mathbf{v}(t)}\mathfrak{i}(\mathrm{T}_{t}\mathbf{v}(1)).
\end{equation}%
Moreover, if $v_{j}(t)\in \mathbb{S}_{V_{j}},$ i.e., $\Vert v_{j}(t)\Vert
_{j}=1,$ for $t\in I$ and $1\leq j\leq d,$ then $\dot{v}_{j}(t)\in \mathbb{T}%
_{v_{j}(t)}(\mathbb{S}_{V_{j}})$ for $t\in I$ and $1\leq j\leq d.$
\end{lemma}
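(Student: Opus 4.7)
The plan is to view $\mathbf{v}(t) = \lambda(t)\bigotimes_{j=1}^d v_j(t)$ as the composition of the smooth curve $t \mapsto (\lambda(t),v_1(t),\ldots,v_d(t))$ from $I$ into $\mathbb{R}\times V_1\times\cdots\times V_d$ with the $(d{+}1)$-linear tensor product map $(\lambda,v_1,\ldots,v_d)\mapsto \lambda\otimes v_1\otimes\cdots\otimes v_d$ into $\mathbf{V}_{\|\cdot\|}$, and then apply the chain rule. Since $\|\cdot\|$ is derived from the induced scalar product it is a reasonable (in particular, a uniform) crossnorm, hence $\|\cdot\|\gtrsim\|\cdot\|_{\vee(V_1,\ldots,V_d)}$ by Proposition~\ref{injective bounded below}(b), and Proposition~\ref{injective bounded below}(c) combined with Proposition~\ref{Diff_otimes} guarantees that the tensor product map is continuous and Fr\'echet differentiable with the product-rule differential. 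Evaluating this differential at the velocity vector $(\dot\lambda(t),\dot v_1(t),\ldots,\dot v_d(t))$ produces the first equality in \eqref{curve_derivative}.

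Next I would establish the second equality. By hypothesis, $\mathbf{v}:I\to\mathcal{U}(\mathbf{v}_0)\subset\mathcal{M}_{(1,\ldots,1)}(\mathbf{V})$ is a $\mathcal{C}^1$-morphism of Banach manifolds, and by Proposition~\ref{characterization_tangent_map}(a) so is the inclusion $\mathfrak{i}$. The functorial chain rule for tangent maps then gives
\begin{equation*}
\mathrm{T}_t(\mathfrak{i}\circ\mathbf{v}) \;=\; \mathrm{T}_{\mathbf{v}(t)}\mathfrak{i}\,\circ\,\mathrm{T}_t\mathbf{v},
\end{equation*}
and evaluating both sides at the canonical generator $1\in\mathbb{T}_t(I)\cong\mathbb{R}$ identifies the right-hand side with $\mathrm{T}_{\mathbf{v}(t)}\mathfrak{i}(\mathrm{T}_t\mathbf{v}(1))$ and the left-hand side with the classical derivative $\dot{\mathbf{v}}(t)$ of the curve $t\mapsto \mathfrak{i}(\mathbf{v}(t))$ in the Banach space $\mathbf{V}_{\|\cdot\|}$, which has just been computed in the previous step.

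For the moreover part, the assumption $v_j(t)\in\mathbb{S}_{V_j}$ for all $t\in I$ means that $t\mapsto\langle v_j(t),v_j(t)\rangle_j$ is constantly equal to $1$. Differentiating and using the symmetry and bilinearity of $\langle\cdot,\cdot\rangle_j$ yields $2\langle \dot v_j(t),v_j(t)\rangle_j=0$, so $\dot v_j(t)\in\mathrm{span}\{v_j(t)\}^{\perp}=\mathbb{T}_{v_j(t)}(\mathbb{S}_{V_j})$ by Example~\ref{unit_sphere}.

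I do not expect any serious obstacle: the only mildly delicate point is keeping the chain rule consistent between the intrinsic manifold derivative $\mathrm{T}_t\mathbf{v}$ (expressed through the chart $\Theta_{\mathbf{v}_0}\circ\chi_{\mathfrak{r}}(\mathbf{v}_0)$) and the concrete derivative of the elementary tensor $\lambda(t)\bigotimes_j v_j(t)$ in the ambient Banach space, but this consistency is precisely what Proposition~\ref{characterization_tangent_map} was designed to encode, so both computations necessarily agree.
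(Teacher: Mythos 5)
Your proof is correct, but it takes a genuinely different route from the paper's. For the identity \eqref{curve_derivative} the paper works entirely through the chart $\Theta_{\mathbf{v}_0}$: it writes $\mathrm{T}_t\mathbf{v}(1)=(\dot\lambda(t),\dot L_1(t),\ldots,\dot L_d(t))$, substitutes this into the explicit formula for $\mathrm{T}_{\mathbf{v}(t)}\mathfrak{i}$ from Proposition~\ref{characterization_tangent_map}(b), and the whole technical content is the identity $\dot v_j(t)=\dot L_j(t)(v_0^{(j)})=\dot L_j(t)(v_j(t))$, obtained from the projection identities of Lemma~\ref{Char_Projections}, which ties the chart coordinate $\dot L_j(t)$ to the concrete factor derivative $\dot v_j(t)$. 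You instead stay in the ambient space: the product rule for the continuous multilinear tensor product map (via Propositions~\ref{injective bounded below} and \ref{Diff_otimes}) gives the first equality, and the functorial chain rule $\mathrm{T}_t(\mathfrak{i}\circ\mathbf{v})=\mathrm{T}_{\mathbf{v}(t)}\mathfrak{i}\circ\mathrm{T}_t\mathbf{v}$, evaluated at the generator $1$, gives the second. Your route is shorter and is insensitive to which factorization of the elementary tensor one chooses (the product-rule expression is gauge-invariant), but it yields less information: the paper's chart computation also exhibits how the coordinates $(\dot\lambda,\dot L_1,\ldots,\dot L_d)$ of a tangent vector are read off from the factors $v_j(t)$, which is precisely what is exploited in the time-dependent Hartree theorem that follows. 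For the \emph{moreover} part, your argument --- differentiating the constant function $t\mapsto\langle v_j(t),v_j(t)\rangle_j\equiv 1$ --- is cleaner and more direct than the paper's, which instead observes that $\dot v_j(t)=\dot L_j(t)(v_0^{(j)})$ lies in $W_j^{\min}(\mathbf{v}_0)$ and then takes that complement to be the orthogonal complement $\mathrm{span}\{v_j(t)\}^{\bot}$. Both approaches are sound.
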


\begin{proof}
First at all, we recall that by the construction of $\mathcal{U}(\mathbf{v}%
_{0})$ it follows that $W_{j}^{\min }(\mathbf{v}_{0})=W_{j}^{\min }(\mathbf{v%
}(t))$ and that $U_{j}^{\min }(\mathbf{v}_{0})=\mathrm{span}\{v_{0}^{(j)}\}$
is linearly isomorphic to $U_{j}^{\min }(\mathbf{v}(t))$ for all $t\in I$
and $1\leq j\leq d.$ Assume $\Theta _{\mathbf{v}_{0}}(\mathbf{v}%
(t))=(\lambda (t),L_{1}(t),\ldots ,L_{d}(t)),$ i.e., 
\begin{equation*}
\mathbf{v}(t):=\lambda (t)\bigotimes_{j=1}^{d}\left( id_{j}+L_{j}(t)\right)
(v_{0}^{(j)}),
\end{equation*}%
where $\lambda \in \mathcal{C}^{1}(I,\mathbb{R}\setminus \{0\}),$ $L_{j}\in 
\mathcal{C}^{1}(I,\mathcal{L}(U_{j}^{\min }(\mathbf{v}_{0}),W_{j}^{\min }(%
\mathbf{v}_{0})))$ and $(id_{j}+L_{j}(t))(v_{0}^{(j)})\in U_{j}^{\min }(%
\mathbf{v}(t))$ for $1\leq j\leq d.$ We point out that the linear map $%
\mathrm{T}_{t}\mathbf{v}:\mathbb{R}\rightarrow \mathbb{T}_{\mathbf{v}(t)}(%
\mathcal{M}_{(1,\ldots ,1)}(\mathbf{V}))$ is characterised by 
\begin{equation}
\mathrm{T}_{t}\mathbf{v}(1)=(\Theta _{\mathbf{v}_{0}}\circ \mathbf{v}%
)^{\prime }(t)=(\dot{\lambda}(t),\dot{L}_{1}(t),\ldots ,\dot{L}_{d}(t)).
\label{tangent_derivative}
\end{equation}%
Since $L_{j}\in \mathcal{C}^{1}(I,\mathcal{L}(U_{j}^{\min }(\mathbf{v}%
_{0}),W_{j}^{\min }(\mathbf{v}_{0})))$ then $\dot{L}_{j}\in \mathcal{C}%
^{0}(I,\mathcal{L}(U_{j}^{\min }(\mathbf{v}_{0}),W_{j}^{\min }(\mathbf{v}%
_{0}))).$ Observe that $U_{j}^{\min }(\mathbf{v}_{0})$ and $U_{j}^{\min }(%
\mathbf{v}(t))$ have $W_{j}^{\min }(\mathbf{v}_{0})$ as a common complement.
From Lemma~\ref{Char_Projections} we know that 
\begin{equation*}
P_{U_{j}^{\min }(\mathbf{v}_{0})\oplus W_{j}^{\min }(\mathbf{v}%
_{0})}|_{U_{j}^{\min }(\mathbf{v}(t))}:U_{j}^{\min }(\mathbf{v}%
(t))\longrightarrow U_{j}^{\min }(\mathbf{v}_{0})
\end{equation*}%
is a linear isomorphism. We can write 
\begin{equation*}
L_{j}(t)=L_{j}(t)P_{U_{j}^{\min }(\mathbf{v}_{0})\oplus W_{j}^{\min }(%
\mathbf{v}_{0})}\text{ and }\dot{L}_{j}(t)=\dot{L}_{j}(t)P_{U_{j}^{\min }(%
\mathbf{v}_{0})\oplus W_{j}^{\min }(\mathbf{v}_{0})},
\end{equation*}%
and then in \eqref{tangent_derivative} we identify $\dot{L}_{j}(t)\in 
\mathcal{L}(U_{j}^{\min }(\mathbf{v}_{0}),W_{j}^{\min }(\mathbf{v}_{0})))$
with 
\begin{equation*}
\dot{L}_{j}(t)P_{U_{j}^{\min }(\mathbf{v}_{0})\oplus W_{j}^{\min }(\mathbf{v}%
_{0})}|_{U_{j}^{\min }(\mathbf{v}(t))}\in \mathcal{L}(U_{j}^{\min }(\mathbf{v%
}(t)),W_{j}^{\min }(\mathbf{v}_{0}))).
\end{equation*}%
Introduce $v_{j}(t):=(id_{j}+L_{j}(t))(v_{0}^{(j)})$ for $1\leq j\leq d.$
Then 
\begin{equation*}
\dot{L}_{j}(t)(v_{j}(t))=\dot{L}_{j}(t)P_{U_{j}^{\min }(\mathbf{v}%
_{0})\oplus W_{j}^{\min }(\mathbf{v}_{0})}|_{U_{j}^{\min }(\mathbf{v}%
(t))}(v_{0}^{(j)}+L_{j}(t)(v_{0}^{(j)}))=\dot{L}_{j}(t)(v_{0}^{(j)})
\end{equation*}%
holds for all $t\in I$ and $1\leq j\leq d.$ Hence 
\begin{equation}
\dot{v}_{j}(t)=\dot{L}_{j}(t)(v_{0}^{(j)})=\dot{L}_{j}(t)(v_{j}(t))
\label{derivative}
\end{equation}%
holds for all $t\in I$ and $1\leq j\leq d.$ From Lemma \ref%
{characterization_tangent_map}(b) and \eqref{tangent_derivative}, we have 
\begin{equation*}
\mathrm{T}_{\mathbf{v}(t)}\mathfrak{i}(\mathrm{T}_{t}\mathbf{v}(1))=\dot{%
\lambda}(t)\bigotimes_{j=1}^{d}v_{j}(t)+\lambda (t)\sum_{j=1}^{d}\dot{L}%
_{j}(t)(v_{j}(t))\otimes \bigotimes_{k\neq j}v_{k}(t),
\end{equation*}%
and, by using \eqref{derivative} for $\mathbf{v}(t)=\lambda
(t)\bigotimes_{j=1}^{d}v_{j}(t),$ we obtain \eqref{curve_derivative}.

To prove the second statement, recall that $U_{j}^{\min }(\mathbf{v}(t))=%
\mathrm{span}\,\{v_{j}(t)\}$ and $V_{j}=U_{j}^{\min }(\mathbf{v}(t))\oplus
W_{j}^{\min }(\mathbf{v}_{0})$ for $1\leq j\leq d.$ Then we consider 
\begin{equation*}
W_{j}^{\min }(\mathbf{v}_{0})=\mathrm{span}\,\{v_{j}(t)\}^{\bot }=\{u_{j}\in
V_{j}:\langle u_{j},v_{j}(t)\rangle _{j}=0\}\text{ for }1\leq j\leq d,
\end{equation*}%
and hence $\langle \dot{v}_{j}(t)),v_{j}(t)\rangle _{j}=0$ holds for $1\leq
j\leq d.$ From Remark~\ref{unit_sphere}, we have $(\dot{v}_{1}(t),\ldots ,%
\dot{v}_{d}(t))\in \mathcal{C}(I,%
\mathop{\mathchoice{\raise-0.22em\hbox{\huge
$\times$}} {\raise-0.05em\hbox{\Large $\times$}}{\hbox{\large
$\times$}}{\times}}_{j=1}^{d}\mathbb{T}_{v_{j}(t)}(\mathbb{S}_{V_{j}})),$
because $W_{j}^{\min }(\mathbf{v}_{0})=\mathbb{T}_{v_{j}(t)}(\mathbb{S}%
_{V_{j}})$ for $1\leq j\leq d.$
\end{proof}

\bigskip

Before stating the next result, we introduce for $\mathbf{v}_{r}(t)=\lambda
(t)\bigotimes_{j=1}^{d}v_{j}(t)$ the following time dependent bilinear forms 
\begin{equation*}
\mathrm{a}_{k}(t;\cdot ,\cdot ):V_{k}\times V_{k}\longrightarrow \mathbb{R},
\end{equation*}%
by 
\begin{equation*}
\mathrm{a}_{k}(t;z_{k},y_{k}):=\left\langle A\left( z_{k}\otimes
\bigotimes_{j\neq k}v_{j}(t)\right) ,\left( y_{k}\otimes \bigotimes_{j\neq
k}v_{j}(t)\right) \right\rangle
\end{equation*}%
for each $1\leq k\leq d.$ Now, we will show the next result (compare with
Theorem~3.1 in \cite{Lubish}).

\begin{theorem}[Time dependent Hartree method]
The solution $\mathbf{v}_{r}(t)=\lambda (t)\bigotimes_{j=1}^{d}v_{j}(t)$ for 
$(v_{1}(t),\ldots ,v_{d}(t))\in 
\mathop{\mathchoice{\raise-0.22em\hbox{\huge
$\times$}} {\raise-0.05em\hbox{\Large $\times$}}{\hbox{\large
$\times$}}{\times}}_{j=1}^{d}\mathbb{S}_{V_{j}}$ of 
\begin{align*}
\dot{\mathbf{v}}_{r}(t)& =\mathcal{P}_{\mathbf{v}_{r}(t)}(A\mathbf{v}_{r}(t))%
\text{ for }t\in I, \\
\mathbf{v}_{r}(0)& =\mathbf{v}_{0},
\end{align*}%
satisfies 
\begin{equation*}
\langle \dot{v}_{j}(t),\dot{w}_{j}(t)\rangle _{j}-\mathrm{a}_{j}(t;v_{j}(t),%
\dot{w}_{j}(t))=0\text{\quad for all }\dot{w}_{j}(t)\in \mathbb{T}%
_{v_{j}(t)}(\mathbb{S}_{V_{j}}),\quad 1\leq j\leq d,
\end{equation*}%
and 
\begin{equation*}
\lambda (t)=\lambda _{0}\exp \left( \int_{0}^{t}\left\langle A\left( \otimes
_{j=1}^{d}v_{j}(s)\right) ,\otimes _{j=1}^{d}v_{j}(s)\right\rangle ds\right)
.
\end{equation*}
\end{theorem}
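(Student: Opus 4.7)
The plan is to substitute the explicit local description of the tangent vectors into the Galerkin-type condition \eqref{eq:11} and then exploit the product structure of the induced scalar product on $\mathbf{V}_{\|\cdot\|}$ to separate the resulting scalar identity into one equation for $\lambda(t)$ and $d$ equations for the factors $v_j(t)$.

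First, I would parametrise the curve. Applying Lemma~\ref{previous_HF} to $\mathbf{v}_r \in \mathcal{C}^1(I,\mathcal{U}(\mathbf{v}_0))$ yields
\[
\dot{\mathbf{v}}_r(t) \;=\; \dot\lambda(t)\bigotimes_{j=1}^d v_j(t) \;+\; \lambda(t)\sum_{j=1}^d \dot v_j(t)\otimes \bigotimes_{k\neq j} v_k(t),
\]
with $\dot v_j(t)\in \mathbb{T}_{v_j(t)}(\mathbb{S}_{V_j}) = \mathrm{span}\{v_j(t)\}^\perp$, which by Example~\ref{unit_sphere} coincides with $W_j^{\min}(\mathbf{v}_r(t))$. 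By Proposition~\ref{rank_one_tangent_space} an arbitrary $\dot{\mathbf{v}}(t)\in \mathrm{T}_{\mathbf{v}_r(t)}\mathfrak{i}(\mathbb{T}_{\mathbf{v}_r(t)}(\mathcal{M}_{(1,\ldots,1)}(\mathbf{V})))$ admits exactly the same structural form, namely
\[
\dot{\mathbf{v}}(t) \;=\; \dot c\bigotimes_{j=1}^d v_j(t) \;+\; \sum_{j=1}^d \dot w_j \otimes \bigotimes_{k\neq j} v_k(t),
\qquad \dot c \in \mathbb{R},\; \dot w_j\in \mathrm{span}\{v_j(t)\}^\perp.
\]

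Second, I would insert both expressions into \eqref{eq:11} and compute, using the crossnorm identity \eqref{(Skalarprodukt fur Elementarprodukte} together with $\|v_k(t)\|_k=1$ and $\langle \dot v_k(t), v_k(t)\rangle_k = \langle \dot w_k, v_k(t)\rangle_k = 0$. The product structure forces every mixed inner product between a ``$\lambda$-term'' of one side and a ``$\dot w_j$-term'' of the other to vanish, and likewise for two ``$\dot w_j$, $\dot w_k$'' terms with $j\neq k$. What survives is
\[
\langle \dot{\mathbf{v}}_r(t)-A\mathbf{v}_r(t),\dot{\mathbf{v}}(t)\rangle
\;=\; \bigl(\dot\lambda(t)-\langle A\mathbf{v}_r(t),\otimes_{j}v_j(t)\rangle\bigr)\,\dot c
\;+\; \lambda(t)\sum_{j=1}^d \bigl(\langle \dot v_j(t),\dot w_j\rangle_j - \mathrm{a}_j(t;v_j(t),\dot w_j)\bigr),
\]
after using $\langle A\mathbf{v}_r(t),\dot w_j\otimes\otimes_{k\neq j}v_k(t)\rangle = \lambda(t)\,\mathrm{a}_j(t;v_j(t),\dot w_j)$ by definition of $\mathrm{a}_j$.

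Third, I would test with well chosen directions. Taking $\dot c=0$ and all but one $\dot w_j$ equal to zero isolates, for each $j$,
\[
\langle \dot v_j(t),\dot w_j\rangle_j - \mathrm{a}_j(t;v_j(t),\dot w_j)=0 \qquad \text{for all }\dot w_j\in \mathbb{T}_{v_j(t)}(\mathbb{S}_{V_j}),
\]
which is the announced Hartree equation for the $j$-th factor. Taking instead $\dot c=1$, all $\dot w_j=0$, gives $\dot\lambda(t) = \lambda(t)\langle A(\otimes_{j}v_j(t)),\otimes_{j}v_j(t)\rangle$, whose integration with initial value $\lambda(0)=\lambda_0$ yields the stated exponential formula. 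The main obstacle is the bookkeeping in the second step, in particular verifying carefully that the cross-terms coming from $A\mathbf{v}_r(t)$ paired against tangent vectors do reorganise exactly as the forms $\mathrm{a}_j$ and that no term couples distinct indices; once the orthogonality relations $\langle v_k(t),\dot w_k\rangle_k=\langle v_k(t),\dot v_k(t)\rangle_k=0$ and $\|v_k(t)\|_k=1$ are used systematically, the decoupling is clean.
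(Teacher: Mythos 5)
Your proposal is correct and follows essentially the same route as the paper: parametrise the tangent vectors via Lemma~\ref{previous_HF}, expand the orthogonality condition \eqref{eq:11} using the induced crossnorm scalar product and the relations $\Vert v_k\Vert_k=1$, $\langle \dot v_k,v_k\rangle_k=\langle \dot w_k,v_k\rangle_k=0$, and then decouple into the $\lambda$-ODE and the $d$ factor equations. The only cosmetic differences are that the paper keeps an explicit factor $\lambda(t)$ in front of the test directions (yielding $\lambda(t)^2$ in the expanded identity) and phrases the decoupling as "choose $\lambda$ to annihilate the $\dot\beta$-term" rather than testing with specific directions, but these are equivalent.
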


\begin{proof}
From Lemma~\ref{previous_HF} we have $\mathbb{T}_{\mathbf{v}_{r}(t)}\left( 
\mathcal{M}_{(1,\ldots ,1)}(\mathbf{V})\right) =\mathbb{R}\times 
\mathop{\mathchoice{\raise-0.22em\hbox{\huge
$\times$}} {\raise-0.05em\hbox{\Large $\times$}}{\hbox{\large
$\times$}}{\times}}_{j=1}^{d}\mathbb{T}_{v_{j}(t)}(\mathbb{S}_{V_{j}}),$
Thus, for each $\dot{\mathbf{w}}(t)\in \mathrm{T}_{\mathbf{v}(t)}i\left( 
\mathbb{T}_{\mathbf{v}(t)}\left( \mathcal{M}_{(1,\ldots ,1)}(\mathbf{V}%
)\right) \right) $ there exists $(\dot{\beta}(t),\dot{w}_{1}(t),\ldots ,\dot{%
w}_{d}(t))\in \mathbb{R}\times 
\mathop{\mathchoice{\raise-0.22em\hbox{\huge $\times$}}
{\raise-0.05em\hbox{\Large $\times$}}{\hbox{\large $\times$}}{\times}}%
_{j=1}^{d}\mathbb{T}_{v_{j}(t)}(\mathbb{S}_{V_{j}}),$ such that 
\begin{equation*}
\dot{\mathbf{w}}(t)=\dot{\beta}(t)\bigotimes_{j=1}^{d}v_{j}(t)+\lambda
(t)\sum_{j=1}^{d}\dot{w}_{j}(t)\otimes \bigotimes_{k\neq j}v_{k}(t).
\end{equation*}%
Then \eqref{eq:11} holds if and only if 
\begin{equation*}
\left\langle \dot{\mathbf{v}}_{r}(t)-A\mathbf{v}_{r}(t),\dot{\beta}%
(t)\bigotimes_{j=1}^{d}v_{j}(t)+\lambda (t)\sum_{j=1}^{d}\dot{w}%
_{j}(t)\otimes \bigotimes_{k\neq j}v_{k}(t)\right\rangle =0
\end{equation*}%
for all $(\dot{\beta}(t),\dot{w}_{1}(t),\ldots ,\dot{w}_{d}(t))\in \mathbb{R}%
\times 
\mathop{\mathchoice{\raise-0.22em\hbox{\huge $\times$}}
{\raise-0.05em\hbox{\Large $\times$}}{\hbox{\large $\times$}}{\times}}%
_{j=1}^{d}\mathbb{T}_{v_{j}(t)}(\mathbb{S}_{V_{j}}).$ Then 
\begin{align*}
\dot{\lambda}(t)\dot{\beta}(t)+\lambda (t)^{2}\sum_{j=1}^{d}\left( \langle 
\dot{v}_{j}(t),\dot{w}_{j}(t)\rangle _{j}-\langle
A\bigotimes_{s=1}^{d}v_{s}(t),\dot{w}_{j}(t)\otimes \bigotimes_{k\neq
j}v_{k}(t)\rangle \right)& \\
-\lambda (t)\dot{\beta}(t)\langle
A\bigotimes_{j=1}^{d}v_{j}(t),\bigotimes_{j=1}^{d}v_{j}(t)\rangle &=0,
\end{align*}%
i.e., 
\begin{equation}  \label{HF5}
\begin{array}{l}
\dot{\beta}(t)\left( \dot{\lambda}(t)-\lambda (t)\langle
A\bigotimes_{j=1}^{d}v_{j}(t),\bigotimes_{j=1}^{d}v_{j}(t)\rangle \right) \\ 
+\lambda (t)^{2}\sum_{j=1}^{d}\left( \langle \dot{v}_{j}(t),\dot{w}%
_{j}(t)\rangle _{j}-\langle A\bigotimes_{s=1}^{d}v_{s}(t),\dot{w}%
_{j}(t)\otimes \bigotimes_{k\neq j}v_{k}(t)\rangle \right) =0%
\end{array}%
\end{equation}%
holds for all $\dot{\beta}(t)\in \mathbb{R}$ and $(\dot{w}_{1}(t),\ldots ,%
\dot{w}_{d}(t))\in 
\mathop{\mathchoice{\raise-0.22em\hbox{\huge $\times$}}
{\raise-0.05em\hbox{\Large $\times$}}{\hbox{\large $\times$}}{\times}}%
_{j=1}^{d}\mathbb{T}_{v_{j}(t)}(\mathbb{S}_{V_{j}}).$ If $\lambda (t)$
solves the differential equation 
\begin{align*}
\dot{\lambda}(t)& =\left\langle A\left( \otimes _{j=1}^{d}v_{j}(t)\right)
,\otimes _{j=1}^{d}v_{j}(t)\right\rangle \lambda (t) \\
\lambda (0)& =\lambda _{0},
\end{align*}%
i.e., 
\begin{equation*}
\lambda (t)=\lambda _{0}\exp \left( \int_{0}^{t}\left\langle A\left( \otimes
_{j=1}^{d}v_{j}(s)\right) ,\otimes _{j=1}^{d}v_{j}(s)\right\rangle ds\right)
,
\end{equation*}%
then the first term of \eqref{HF5} is equal to $0.$ Therefore, from (\ref%
{HF5}) we obtain that for all $j\in D$, 
\begin{equation*}
\langle \dot{v}_{j}(t),\dot{w}_{j}(t)\rangle _{j}-\langle
A\bigotimes_{s=1}^{d}v_{s}(t),\dot{w}_{j}(t)\otimes \bigotimes_{k\neq
j}v_{k}(t)\rangle =0,
\end{equation*}%
that is, 
\begin{equation*}
\langle \dot{v}_{j}(t),\dot{w}_{j}(t)\rangle _{j}-\mathrm{a}_{j}(t;v_{j}(t),%
\dot{w}_{j}(t))=0
\end{equation*}%
holds for all $\dot{w}_{j}(t)\in \mathbb{T}_{v_{j}(t)}(\mathbb{S}_{V_{j}}),$
and the theorem follows.
\end{proof}

\end{document}